\newtheorem*{mthm}{H\"{o}rmander's theorem}
\newtheorem{thm}{Theorem}[section]
\newtheorem{ass}{Assumption}
\newtheorem{ex}{Example}
\newtheorem{lem}[thm]{Lemma}
\newtheorem{prop}[thm]{Proposition}
\newtheorem{defn}[thm]{Definition}
\newtheorem{rem}[thm]{Remark}
\numberwithin{equation}{section}
\newcommand{\ddh}{\hat{\delta}}
\newcommand{\ddc}{\check{\delta}}
\newcommand{\vv}{\vvvert}
\newcommand{\Sym}{\mathrm{Sym}}
\newcommand{\HH}{\mathcal{H}}
\newcommand{\LL}{\mathcal{L}}
\newcommand{\DD}{\mathcal{D}}
\newcommand{\CC}{\mathcal{C}}
\newcommand{\CP}{\mathcal{P}}
\newcommand{\MM}{\mathcal{M}}
\newcommand{\FF}{\mathcal{F}}
\newcommand{\KK}{\mathcal{K}}
\newcommand{\II}{\mathcal{I}}
\newcommand{\CM}{\mathcal{CM}}
\newcommand{\R}{\mathbb{R}}
\newcommand{\N}{\mathbb{N}}
\newcommand{\X}{\mathbb{X}}
\newcommand{\B}{\mathbb{B}}
\newcommand{\Prob}{\mathbb{P}}
\newcommand{\E}{\mathbb{E}}
\newcommand{\cD}{\mathscr{D}}
\newcommand{\cA}{\mathscr{A}}
\newcommand{\cM}{\mathscr{M}}
\newcommand{\cC}{\mathscr{C}}
\newcommand{\rI}{\mathrm{I}}
\newcommand{\rII}{\mathrm{I\!I}}
\newcommand{\rIII}{\mathrm{I\!I\!I}}
\newcommand{\rIV}{\mathrm{I\!V}}
\newcommand{\rV}{\mathrm{V}}
\newcommand{\rVI}{\mathrm{V\!I}}
\newcommand{\rVII}{\mathrm{V\!I\!I}}
\newcommand{\rVIII}{\mathrm{V\!I\!I\!I}}
\newcommand{\ind}{\mathbbm{1}}
\colorlet{darkblue}{blue!90!black}
\colorlet{darkred}{red!90!black}
\def\eqref#1{(\ref{#1})}
\def\Poly{\mathrm{Poly}}
\def\XX{\mathbf{X}}
\def\id{\mathrm{id}}
\def\Strat{{\mathrm{Strat}}}
\def\Ito{{\mathrm{It\hat o}}}
\def\loc{{\mathrm{loc}}}
\def\pvar{{p\text{-}\mathrm{var}}}
\def\scal#1{\langle#1\rangle}
\def\curl{\mathop{\mathrm{curl}}\nolimits}
\def\tr{\mathop{\mathrm{tr}}\nolimits}
\def\back{\leftarrow}
\def\slash{\unskip\kern0.2em/\penalty\exhyphenpenalty\kern0.2em\ignorespaces}
\def\dash{\unskip\kern0.2em--\penalty\exhyphenpenalty\kern0.2em\ignorespaces}
\begin{document}

\author{Andris Gerasimovi\v cs,  Martin Hairer}
\date{7 November 2019}
\title{H\"{o}rmander's theorem for semilinear SPDEs}

\institute{Imperial College London, UK\\
\email{a.gerasimovics17@imperial.ac.uk, m.hairer@imperial.ac.uk}}

\maketitle
\begin{abstract}
We consider a broad class of semilinear SPDEs with multiplicative noise driven by a finite-dimensional Wiener process.
We show that, provided that an infinite-dimensional analogue of H\"ormander's bracket condition holds, the
Malliavin matrix of the solution is an operator with dense range. In particular, we show that the laws of 
finite-dimensional projections of such solutions admit smooth densities with respect to Lebesgue measure.
The main idea is to develop a robust pathwise solution theory for such SPDEs using rough paths theory,
which then allows us to use a pathwise version of Norris's lemma to work directly on the Malliavin matrix,
instead of the ``reduced Malliavin matrix'' which is not available in this context.

On our way of proving this result, we develop some new tools for the theory of rough paths like
a rough Fubini theorem and a deterministic mild Itô formula for rough PDEs.\\[.5em]
\textbf{Keywords:} Rough paths, rough PDEs, rough Fubini theorem, H\"{o}rmander's condition. 
\end{abstract}

\tableofcontents

\section{Introduction}

The goal of this paper is to generalise the series of articles \cite{hairer2006ergodicity,bakhtin2007malliavin,hypoelipticity}
where the authors developed Malliavin calculus for semilinear stochastic partial differential equations (SPDEs) with 
additive degenerate noise and showed non-degeneracy of the Malliavin matrix under H\"ormander's bracket
condition. The main novelty of the present article is that we are able to extend these results to 
equations driven by multiplicative noise. 
In particular, we conclude that finite-dimensional projections of the solutions admit densities
and, provided that suitable a priori bounds are satisfied, that the corresponding Markov semigroup 
satisfies the asymptotic strong Feller property introduced in \cite{hairer2006ergodicity}. 
This can be understood as a genuinely infinite-dimensional ``smoothing property'' for the Markov 
semigroup that holds at infinite time. 

Equations considered in this article can formally be written in Stratonovich form as
\begin{equ} 
du_t = Lu_tdt +N(u_t)dt + \sum_{i=1}^d F_i(u_t)\circ dB^i_t,\quad u_0 \in \HH, \label{SPDE1}
\end{equ}
where $L$ is a negative definite selfadjoint operator on a separable Hilbert space $\HH$, $N,F_i$ are 
smooth non-linearities, and $B_t = (B^1_t,B^2_t,\ldots ,B^d_t)$ is a standard $d$-dimensional Brownian motion. We potentially
allow $N$ to lose derivatives and can consider for instance the 2D Navier-Stokes equations. Reaction-diffusion equations and the Cahn-Hilliard equation 
also fall in the category of equations that we consider. 
Setting $F_0(u) = Lu + N(u)$, we will make regularity assumptions guaranteeing that all iterated Lie brackets
of the $F_i$ can be given a canonical meaning, so that we can formulate an infinite-dimensional
version of H\"ormander's condition. Recall that Lie brackets
are formally given by $[F_i,F_j](u) = DF_j(u)F_i(u) - DF_i(u)F_j(u)$.

While the Malliavin matrix is not invertible on the whole space $\HH$, 
we will show that it is invertible on every finite-dimensional subspace. 
(See \cite{daprato_erg} for the exceptional case when Malliavin 
matrix of the linear equation is invertible on the whole space, see also \cite{eckmann,daprato_elworty,flandoli,cerrai} for 
situations where the Malliavin matrix 
is invertible on the image of the Jacobian.) 
Note that the situation considered in this article is orthogonal to the one considered in 
\cite{hairer2016strong,cannizzaro2017malliavin}.
There, the authors considered a situation in which the noise already acts in a ``full'' way 
in every direction of the state space, so that no Lie brackets need to be considered.
Instead, the problem addressed there is that solutions can be very singular, so that sophisticated solution
theories need to be considered, which do not interplay nicely with Malliavin calculus.

We now state the main result of the present article:
\begin{mthm}
	Let $T>0$ let $0\leq \delta < 2/3$ and $N: \HH \to \HH_{-\delta}$, and $F_i : \HH \to \HH$ be $C^\infty$ vector fields of polynomial type satisfying H\"{o}rmander's condition, Assumption~\ref{ass2}. Assume that $u$ is a global mild solution of the equation~(\ref{SPDE1}) such that both $\|u\|_{L^\infty({[0,T],\HH})}$ and $\|J\|_{L^\infty({[0,T],\LL(\HH,\HH)})}$ have moments of all orders. (here $J$ is Jacobian of the solution). Then for every finite rank orthogonal projection $\Pi: \HH \to \HH$, $a \in (0,1)$ and  every $p\geq 1$ there exist a constant $C_p$ such that the Malliavin matrix $\cM_T$ satisfies the following bound for every initial condition $u_0$:
	 \begin{equ}
	 	\Prob\bigg(\,\inf_{\|\Pi\varphi\| > a\|\varphi\|}\frac{\langle\cM_T\varphi, \varphi \rangle}{\|\varphi\|^2} \leq \varepsilon\bigg) \leq C_p \varepsilon^p.
	 \end{equ}
 	Moreover the law of $\Pi u_T$ has a smooth density with respect to Lebesgue measure on $\Pi(\HH)$.
\end{mthm}

The classical approach to proving a statement of this type was
initiated by Malliavin in \cite{Mal76} and further developed and refined by a number of 
authors in the eighties \cite{Bismut81,KSAMI,KSAMII,KSAMIII,norris1986simplified}. See also
\cite{Mal97SA,Nual:06,ProofShort} for surveys of a more expository nature.
The argument goes by contradiction: assume that $\langle\cM_T\varphi,\varphi\rangle$ 
is small (in a suitable probabilistic sense) and use this as the starting point for a chain of implications that eventually lead to an impossibility, resulting in the
conclusion that the probability of $\langle\cM_T\varphi,\varphi\rangle$ being small is (very) small. 
First note that the Jacobian $J_{t,s}$ of equation~(\ref{SPDE1}) solves 
\begin{equ} 
	dJ_{t,s} = LJ_{t,s} dt + DN(u_t)J_{t,s}dt + \sum_{i=1}^dDF_i(u_t)J_{t,s}\circ dB^i_t,\quad J_{s,s} = \id\;,\label{Jacob}
\end{equ}
and that Duhamel's formula yields the following expression for the Malliavin matrix: 
\begin{equ} 
	\langle\cM_T\varphi,\varphi\rangle = \sum_{i=1}^d \int^T_0 \langle J_{T,s}F_i(u_s),\varphi\rangle^2 ds. \label{mal_matrix1}
\end{equ}
Since we assumed that $\langle\cM_T\varphi,\varphi\rangle$ is small, this implies that each $\langle J_{T,s}F_i(u_s),\varphi\rangle$ is small too. Formally differentiating such an expression, we obtain
\begin{equ} 
	d\langle J_{T,s}G(u_s),\varphi\rangle = \langle J_{T,s}[F_0,G](u_s),\varphi\rangle ds + \sum_{j=1}^d\langle J_{T,s}[F_j,G](u_s),\varphi\rangle\circ dB^j_s\;, \label{formal_diff1}
\end{equ}
where we used Stratonovich integration to avoid the appearance of It\^{o}'s correction. Norris's lemma 
\cite{norris1986simplified} (see also \cite{ProofShort} for a version that is slightly easier to parse) then 
allows to conclude that if $\langle J_{T,s}G(u_s),\varphi\rangle$ is small for a ``nice enough'' 
function $G$, then all 
$\langle J_{T,s}[F_j,G](u_s),\varphi\rangle$ for $j \ge 0$ are small as well. 
We can iterate this procedure and define recursively 
$$\cA_0  = \{F_i\, :\, 1\leq i \leq d\}\;,\quad \cA_{k+1}  =\cA_k \cup \{[F_i,A]\, :\, A \in \cA_k,\, 0\leq i \leq d\}\;.$$ 
Then the above argument tells us that for all $k \in \N_0$ and $A \in \cA_k$ the quantity $\langle J_{T,s}A(u_s),\varphi\rangle$ is small. This contradicts H\"{o}rmander's condition, namely that for every $v \in \HH$ the set $\bigcup_{k \in \N_0} \{A(v): A \in \cA_k\}$ is dense in $\HH$. This is because by density we can necessarily
find $A \in \cA_k$ to make the quantity $\langle J_{T,s}A(u_s),\varphi\rangle$ of order one, thus
concluding the argument.

The problem with this argument is that it is not clear what the meaning of the stochastic integral 
appearing in~(\ref{formal_diff1}) is, since the process 
$\langle J_{T,s}[F_j,F_i](u_s),\varphi\rangle$ is not adapted, hence not It\^{o} (nor \textit{a fortiori} Stratonovich) integrable. In a similar vein, 
Norris's lemma applies only to semimartingales, which $\langle J_{T,s}[F_j,F_i](u_s),\varphi\rangle$ is not. 
In finite dimension, this problem has traditionally been 
circumvented by considering instead the \textit{reduced Malliavin matrix} $\hat{\cM}_T$:
$$\cM_T = J_{T,0}\hat{\cM}_TJ^*_{T,0}\;,\quad  \langle\hat{\cM}_T\varphi,\varphi\rangle = \int^T_0 \langle J^{-1}_{s,0}F(u_s),\varphi\rangle^2 ds\;.$$
This is possible in finite dimension because solutions typically 
generate an invertible flow whose Jacobian factorises as $J_{t,s} = J_{t,0}J^{-1}_{s,0}$. 
A similar calculation then yields an expression analogous to~\eqref{formal_diff1}, but this time
the stochastic integrand is of the form $\langle J^{-1}_{s,0}F_i(u_s),\varphi\rangle$,
which is a semimartingale. With the help of this trick, the argument sketched above can be made rigorous and entails
the non-degeneracy of the Malliavin matrix, which in turn implies that the law of the 
solution has a smooth density with respect to Lebesgue measure. 

In 
infinite dimension, the solution to SPDEs rarely produces a flow, so that this trick cannot be used
in general. (But see \cite{teichmann} for a situation where it can be used.) For the special case of additive noise with a `polynomial' nonlinearity $N$, \cite{hypoelipticity}
were able to use an alternative to Norris's lemma \dash a certain non-degeneracy bound 
on Wiener polynomials \dash but this approach seems to be of little use in the case of multiplicative noise. 
The idea implemented in the present article is to use the theory of rough paths in order to give meaning 
to~(\ref{formal_diff1}) directly and to be able to exploit the `deterministic' version of Norris's lemma 
for rough paths from \cite{hairer2013regularity}.

The theory of rough paths provides a pathwise approach to a stochastic integration and was originally developed by 
Lyons \cite{lyons98,lyons02} building on the works of Young and Chen \cite{young,Chen}.
The 
idea is that, in order to solve (finite-dimensional) equations of the type 
$dY_t = F(Y_t)dX_t$ with $X \in\CC^\gamma$ for $\gamma < 1/2$, one augments $X$ with a function $\X_{t,s}$ for 
$t\geq s$ that \textit{postulates} the values of the integrals $\int_s^t \delta X_{r,s}\,dX_r$ 
(we write $\delta X_{t,s} = X_t - X_s$) and that satisfies the bound $|\X_{t,s}| 
\lesssim |t-s|^{2\gamma}$ consistent with the regularity of $X$, as well as the algebraic identity
$\X_{t,s} - \X_{u,s} - \X_{t,u} = \delta X_{t,u}\otimes \delta X_{u,s}$. The pair $(X,\X)$ is then called a 
\textit{rough path}. Once $\X$ is given, integrals of the form $\int Y_t\,dX_t$ can be defined 
in a consistent way for a
class of integrands $Y$ that locally ``look like $X$ at small scales'', see \cite{controlled} where this
notion was introduced. 
Formally, this can be expressed as
\begin{equ}[e:GubSDE]
\delta Y_{t,s} = Y'_s\,\delta X_{t,s} + R^Y_{t,s}\,,
\end{equ}
where $Y' \in\CC^\gamma$ is the `Gubinelli derivative' of $Y$ and the remainder $R^Y$ satisfies $|R^Y_{t,s}|\lesssim |t-s|^{2\gamma}$.
One then sets
\begin{equ}
\int_0^T Y_t\,dX_t = \lim_{|\CP| \to 0} \sum_{[s,t] \in \CP} \bigl(Y_s \delta X_{t,s} + Y'_s \X_{t,s}\bigr)\;,
\end{equ}
where $\CP$ denotes a partition of $[0,T]$ by intervals and $|\CP|$ the length of its largest
element.
It turns 
out that there exists a canonical lift $X \mapsto (X,\X)$ for a wide range of stochastic processes, 
including Brownian motion (with $\X$ defined by Stratonovich integration), fractional Brownian 
motion and other Gaussian processes.

In \cite{evolution} Gubinelli and Tindel generalised theory of rough paths to solve not only SDEs but also SPDEs: 
evolution equations driven by the infinite dimensional Gaussian process. For that, they introduce 
operator-valued rough paths and use a slightly different kind of 
local (in time) expansion of the controlled processes, taking into account the solution to the linearised 
equation. This means that we no longer compare $Y_t$ to $Y_s$ at small scales, 
but instead to $e^{L(t-s)}Y_s$. More formally, we 
replace \eqref{e:GubSDE} by an expansion of the type 
\begin{equ} 
	Y_t-e^{L(t-s)}Y_s = e^{L(t-s)}Y'_s\,\delta X_{t,s} + R^Y_{t,s}. \label{RP_intro}
\end{equ}
Since in our case the driving noise is finite-dimensional, we use similar ideas 
to \cite{evolution}, but then stick closely to the classical theory of finite-dimensional 
rough paths as in \cite{friz}. The main difference and complication arises when one wants 
to show that if $Y$ satisfies an expansion like~(\ref{RP_intro}) then so does $F(Y)$ for any smooth enough 
function $F$. This requires an estimate on $F(Y_t) -e^{L(t-s)}F(Y_s)$ while only having a good bound on
$F(Y_t) -F(e^{L(t-s)}Y_s)$, thus requiring commutator bounds of the type 
$$
	\|e^{L(t-s)}F(Y_s) -F(e^{L(t-s)}Y_s)\| \lesssim |t-s|^{2\gamma},
$$ 
which is possible for instance if $Y_s$ itself has better space regularity. We therefore need
to obtain bounds on the space regularity 
of the path $Y_s$ that are better than the space regularity in which we measure the rough path norms.

One of the main technical difficulties we encounter is to prove that~(\ref{formal_diff1}) holds. An obstacle is that we cannot simply 
differentiate $\scal{J_{T,s}G(u_s),\varphi}$ because rough path theory only allows us to use a mild formulation of the solution to~(\ref{SPDE1}). 
This however turns out to be sufficient once we obtain a rough Fubini Theorem and a mild version of It\^o's formula for $G(u_s)$.
Once we obtain~(\ref{formal_diff1}), we follow closely the approach from \cite{hairer2013regularity}, making use of the rough Norris lemma. 
We try in most cases to work with general rough paths, not just the one lifted from Brownian motion, so that
part of our results carry over immediately to SPDEs driven by fractional Brownian motion for example. 
We do however show that in the Brownian case the solutions constructed here coincide with those obtained
from It\^o calculus, which connects our result with existing objects and allows us to exploit information known for 
the solutions to It\^o SPDEs like Malliavin differentiability, a priori bounds and global existence.
Such information might be much harder to obtain for more general Gaussian rough paths.
We want to emphasise again that once we translate our problem to the language of rough paths, most of the 
arguments are deterministic. We will only use probabilistic tools (and very basic ones at that) in the proof of 
H\"{o}rmander's theorem itself and in order to obtain global well-posedness of solutions.

\textit{Outline of the article:}
In Section~\ref{prelim}, we introduce a reduced increment $\ddh $ and reduced H\"{o}lder spaces as well as 
a version of the sewing lemma from \cite{evolution} for this reduced increment. Section~\ref{CRP} gives a self-contained 
introduction to the spaces of controlled rough paths with the semigroup and how composition with regular functions 
preserves these spaces. We also describe an integration in these spaces with respect to rough path which follows directly 
from the sewing lemma. Section~\ref{RPDEs} is devoted to the solution theory, continuity of the solution map and the 
properties of the solution. In particular, we show in Section~\ref{SPDEs} that the solutions obtained by viewing
\eqref{SPDE1} as an RPDE and driving it by the Stratonovich lift of Brownian motion coincide almost surely with the solutions
constructed using classical stochastic calculus as in \cite{daprato} for example. 
Section~\ref{fubini} is about the proof of rough Fubini theorem. In Section~\ref{weak&ito}  
we show equivalence of mild solutions and weak solutions. We later use this in order to show a mild It\^{o} formula. 
Section~\ref{R_BPDE} talks about the backwards equations. There we provide an equation for the adjoint of the Jacobian and also prove the differentiation statement~(\ref{formal_diff1}). Finally in Section~\ref{spectral} we recall the ``rough Norris lemma'' and
combine it with the previous results to prove in Theorem~\ref{main} the H\"{o}rmander-type theorem announced in the introduction.
We also show in Theorem~\ref{densities} how this immediately yields smooth densities for finite-dimensional marginals of the solution.

\subsection*{Acknowledgements}

{\small
MH gratefully acknowledges support by the Leverhulme Trust through a leadership award and by 
the ERC through a consolidator grant, project 615897 (Critical). 
}

\section{Preliminaries} \label{prelim}

\subsection{Semigroup theory}\label{prelim_semigroup}

Throughout this paper we consider a separable Hilbert space $\HH$ with inner product $\langle \cdot, \cdot \rangle$ and a negative definite selfadjoint operator $L$ such that there exists some constant $c< 0$ such that $\langle u, Lu \rangle \leq c \langle u, u \rangle$. We write $(S_t)_{t \geq 0}$ as well as $e^{Lt}$ for the semigroup generated by $L$. 
For $\alpha\geq 0$, the interpolation space $\HH_\alpha = \mathrm{Dom}((-L)^\alpha)$ is a Hilbert space when 
endowed with the norm $\|\cdot\|_{\HH_\alpha} = \|(-L)^\alpha\cdot\|_{\HH}$. Since $(-L)^{-\alpha}$ is  
bounded on $\HH$, $\|\cdot\|_{\HH_\alpha}$ is equivalent to the graph norm of $(-L)^\alpha$.
Similarly, $\HH_{-\alpha}$ is defined as the completion of $\HH$ with respect to the norm 
$\|\cdot\|_{\HH_{-\alpha}}  = \|(-L)^{-\alpha}\cdot \|_\HH$.

For any $\alpha,\beta \in \R$, denote the space of bounded operators from $\HH_\alpha$ to $\HH_\beta$ by 
$\LL^{\alpha,\beta} := \LL(\HH_\alpha;\HH_\beta)$ and write $\LL^\alpha := \LL^{\alpha,\alpha}$. 
We define a reduced semigroup operator $\tilde{S}_t = S_t - \id$. 

Defining $\HH_\infty = \bigcap_{\alpha} \HH_\alpha$ 
and $\HH_{-\infty} = \bigcup_{\alpha} \HH_\alpha$ the operators $S_t$ map $\HH_{-\infty}$ to $\HH_\infty$ for every 
$t > 0$ and $\HH_{\infty} \subset \HH_{-\infty}$ densely.
We will use extensively the fact that, for every $\alpha\geq \beta$ and every $\gamma \in [0,1]$, one has
\begin{equ}[semigroup]
	\|S_t u\|_{\HH_\alpha} \lesssim t^{\beta-\alpha} \|u\|_{\HH_\beta}\;,\qquad 
	\|\tilde{S}_t u\|_{\HH_{\beta-\gamma}} \lesssim t^{\gamma} \|u\|_{\HH_{\beta}}\;, 
\end{equ}
uniformly over $t \in (0,1]$ and $u \in \HH_\beta$. For an introduction to analytic semigroup theory, see for example \cite{Pazy,spde}.

\subsection{Increment spaces}

We now define spaces of time increments of functions taking values in some Banach space. 
We follow closely the definitions in \cite{evolution,deya2012non,deya2013malliavin}. 
Fix $T>0$ and, for $n\in \N$, define the $n$-simplex  
$\Delta_n=\{(t_1,\ldots ,t_n): T\geq t_1\geq t_2 \geq \ldots  \geq t_n\geq 0\}$. 
We will often omit the fact that spaces depend 
on $T$ since its precise value is not relevant.
\begin{defn}
	Given a Banach space $V$, $n \in \N$ and $T>0$ define $\CC_n(V) := \CC(\Delta_n, V)$ the
	space of continuous functions from $\Delta_n$ to $V$ and $\delta : \CC
	_{n-1}(V) \to \CC_{n}(V)$ by 
$$
	\delta f_{t_1t_2\ldots t_n}=\sum_i^d(-1)^if_{t_1\ldots \hat{t}_i\ldots t_n}\;,
$$
where $\hat{t}_i$ indicates that the corresponding argument is omitted.
\end{defn}
We are mostly going to use the two special cases 
\begin{equ}
\delta f_{t,s} = f_t-f_s\;,\qquad \delta g_{t,u,s} = g_{t,s}-g_{t,u}-g_{u,s}\;.
\end{equ}
One can check that $\delta\delta = 0$ as an operator $\CC
_{n-1}(V) \to \CC_{n+1}(V)$ and that for each $f \in \CC
_{n}(V)$ such that $\delta f = 0$ there exists $g\in \CC_{n-1}(V)$ such that $f = \delta g$. 

\begin{defn}
For $V$ either the space $\LL^{\alpha,\beta}$ or $\HH_\alpha$ for $\alpha,\beta \in \R$, the reduced increment operator $\hat{\delta} :  \mathcal{C}_{n-1}(V) \to \mathcal{C}_{n}(V)$ 
is given by $\hat{\delta}f = \delta f - \tilde{S}f$,
where $(\tilde{S}f)_{t_1\cdots t_n} = \tilde{S}_{t_1-t_2}f_{t_2\cdots t_n}$ with $\tilde S_t = S_t - \id$. 
\end{defn}

Again, the two most common cases will be
\begin{equ}
		\ddh  f_{t,s} = f_t - S_{t-s}f_s\;,\qquad \hat{\delta }g_{t,u,s} = g_{t,s}-g_{t,u} - S_{t-u}g_{u,s}\;.
\end{equ}
Whenever we talk about $\ddh $ on $\CC_n$ we will assume from now on that the underlying space $V$ is one 
of the spaces on which the action of the semigroup $S$ makes sense. 
Similarly to $\delta$, one verifies that $\ddh \ddh  = 0$ and that $\ddh  f = 0$ 
implies that $f = \ddh  g$ (see \cite{evolution}).

\subsection{H\"{o}lder type spaces}

\begin{defn}
Let $V$ be a Banach space and denote by $\|\cdot\|_V$ the corresponding norm. Then, for $\gamma,\mu > 0$, $n \geq 2$ 
and $f \in \CC_n(V)$, we set
\begin{equ}[e:defNormV]
|f|_{\gamma,V} = \sup_{t \in \Delta_n} {\|f(t)\|_V \over |t_n - t_1|^\gamma}\;.
\end{equ}
We then define the spaces and notations
\begin{equs}[2]
	\CC_n^\gamma &= \{f \in \CC_n\,:\, |f|_{\gamma,V} < \infty\}\;,\quad&
	\hat \CC_n^{\gamma,\mu} &= \{f \in \CC_n^\gamma\,:\, \hat \delta f \in  \CC_{n+1}^\mu\}\;,\\
	\CC^\gamma &= \{f \in \CC_1\,:\, \delta f \in \CC_{2}^\gamma \}\;,\quad& \hat \CC^\gamma &= \{f \in \CC_1\,:\, \hat \delta f \in \CC_{2}^\gamma\}\;.
\end{equs}
\end{defn}
Since~\eqref{e:defNormV} doesn't make any sense for $n=1$, we make an abuse of notation 
by writing $|f|_{\gamma,V}$ for $|\delta f|_{\gamma,V}$ for $f \in \CC^\gamma$. Later on, it will be clear 
from context whether we use $|\cdot|_{\gamma,V}$ as in~\eqref{e:defNormV} or as the seminorm on 
$\CC^\gamma$. Similarly, we define a seminorm on $\hat \CC^\gamma$ by $\|f\|_{\gamma,V} = |\ddh f|_{\gamma,V}$ and we endow $\CC_1$ 
with the supremum norm $\|f\|_{\infty,V} = \sup_{{0\leq s \leq T}} 
\|f_s\|_V$. Finally we equip $\CC^\gamma$ and $\hat \CC^\gamma$ with norms $\|f\|_{\CC^\gamma V} = \| f_0\| _{V}+|f|_{\gamma,V}$ and $\| f\| _{\hat{\CC}^\gamma V} = \| f_0\|_{V}+\| f\| 
_{\gamma,V}$.

In the case $V = \HH_\alpha$, we will write $\|f\|_{\gamma,\HH_{\alpha}} = \|f\|_{\gamma,\alpha}$, $\|f\|_{\infty,\HH_{\alpha+2\gamma}} = \|f\|_{\infty,\alpha+2\gamma}$, etc.
An important feature of elements $\Xi \in\hat{\CC}^{\gamma,\mu}_2V$ is that they can be ``integrated'' in the sense 
that $\Xi_{t,s}$ `almost' looks like $\ddh F_{t,s}$ for some function $F\in \hat{\CC}^\gamma V$. 
More precisely, one has the following version of the sewing lemma.
\begin{thm}[Sewing Lemma] \label{sewing}
 Let $\alpha \in \R$ and let  $0<\gamma\leq1<\mu$. Then there exist a unique continuous linear map $\II: \hat{\CC}^{\gamma,\mu}_2\HH_\alpha \to \CC_2^{\gamma}\HH_{\alpha}$ such that $\hat\delta\II\Xi = 0$ and
\begin{equ} 
\| \II\Xi_{t,s}-\Xi_{t,s}\| _{\HH_{\alpha}} \lesssim | \ddh \Xi| _{\mu,\alpha}|t-s|^{\mu}.\label{e:sewing1}
\end{equ}
If in addition $\ddh \Xi_{v,m,u} = S_{v-m}\tilde{\Xi}_{v,m,u}$ for some $\tilde{\Xi} \in \CC_3 \HH_\alpha$ such that there exists $M > 0$ with
\begin{equ} 
	\|\tilde{\Xi}_{v,m,u}\|_{\HH_\alpha} \leq M |v-m|^{\mu-1}|v-u|\;, \label{e:sewing2}
\end{equ}
then for every $\beta \in [0,\mu)$ the following inequality holds: 
\begin{equ}
\| \II\Xi_{t,s}-\Xi_{t,s}\| _{\HH_{\alpha+\beta}} \lesssim_{\mu,\beta} M |t-s|^{\mu-\beta}.  \label{e:sewing3}
\end{equ}
Finally, one has the identity
\begin{equ}[e:approximation]
\II\Xi_{t,s} = \lim_{|\CP|\to 0} \sum_{[u,v]\in \CP}S_{t-v}\Xi_{v,u}\;,
\end{equ}
where $|\CP|$ denotes the length of the largest element of a partition $\CP$ of $[s,t]$ into non-overlapping closed intervals. 
The same is true if we replace $\HH_\alpha$ by $\HH_\alpha^n$.
\end{thm}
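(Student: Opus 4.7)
The strategy is to construct $\II\Xi$ explicitly as the limit of semigroup-weighted Riemann sums and to verify the stated properties directly from the construction. For a partition $\CP$ of $[s,t]$ set $R_\CP := \sum_{[u,v]\in\CP} S_{t-v}\Xi_{v,u}$. If $\CP'$ refines $\CP$ by bisecting some interval $[u,v]\in\CP$ at a point $m$, then using $S_{t-m}=S_{t-v}S_{v-m}$,
$$R_\CP - R_{\CP'} = S_{t-v}\bigl(\Xi_{v,u}-\Xi_{v,m}-S_{v-m}\Xi_{m,u}\bigr) = S_{t-v}\ddh\Xi_{v,m,u}\;.$$
This is the algebraic identity driving the whole proof. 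Taking $\CP_n$ to be the uniform dyadic partition of $[s,t]$ into $2^n$ pieces of length $h_n=(t-s)/2^n$ and using the contraction bound $\|S_r\|_{\LL^{\alpha,\alpha}}\leq 1$ (which follows from $L$ being negative self-adjoint), the identity yields $\|R_{\CP_n}-R_{\CP_{n+1}}\|_{\HH_\alpha} \lesssim 2^{-n(\mu-1)}|\ddh\Xi|_{\mu,\alpha}|t-s|^\mu$. Since $\mu>1$, this is summable in $n$, so we may define $\II\Xi_{t,s}:=\lim_n R_{\CP_n}$; telescoping back to $R_{\CP_0}=\Xi_{t,s}$ produces \eqref{e:sewing1}.

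Applying the same construction on the common refinement of dyadic partitions of $[s,u]$ and $[u,t]$ yields the cocycle identity $\II\Xi_{t,s}=\II\Xi_{t,u}+S_{t-u}\II\Xi_{u,s}$, i.e.\ $\ddh\II\Xi=0$. For uniqueness, any $F\in\CC_2\HH_\alpha$ with $\ddh F=0$ and $\|F_{t,s}\|_{\HH_\alpha}\lesssim|t-s|^\mu$ iterates along an $N$-partition $s=t_0<\cdots<t_N=t$ as $F_{t,s}=\sum_{i=0}^{N-1} S_{t-t_{i+1}}F_{t_{i+1},t_i}$, so that $\|F_{t,s}\|_{\HH_\alpha}\lesssim N^{1-\mu}|t-s|^\mu\to 0$; applied to the difference of two candidate maps this forces uniqueness. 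Comparing $R_\CP$ to $R_{\CP_n}$ on a common refinement via the same bisection identity produces the general approximation formula \eqref{e:approximation}.

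The main technical work is the improved regularity bound \eqref{e:sewing3}. Under the hypothesis $\ddh\Xi_{v,m,u}=S_{v-m}\tilde{\Xi}_{v,m,u}$, each bisection increment simplifies to $S_{t-v}\ddh\Xi_{v,m,u}=S_{t-m}\tilde{\Xi}_{v,m,u}$, which we now measure in $\HH_{\alpha+\beta}$ by combining the smoothing estimate $\|S_{t-m}v\|_{\HH_{\alpha+\beta}}\lesssim(t-m)^{-\beta}\|v\|_{\HH_\alpha}$ from \eqref{semigroup} with the assumed bound \eqref{e:sewing2}. Writing $m_j=s+(j+\tfrac12)h_n$ for the dyadic bisection midpoints gives
$$\|R_{\CP_n}-R_{\CP_{n+1}}\|_{\HH_{\alpha+\beta}} \lesssim M\sum_{j=0}^{2^n-1}(t-m_j)^{-\beta}(h_n/2)^{\mu-1}h_n\;.$$
The discrete Riemann sum $\sum_j(t-m_j)^{-\beta}$ behaves like $2^{n(1-\beta)}/(1-\beta)$ for $\beta<1$, like $n\log 2$ for $\beta=1$, and like a convergent series (in $n$) for $\beta\in(1,\mu)$; in all three regimes the $n$-th level is controlled by $CM|t-s|^{\mu-\beta}\cdot 2^{-n(\mu-1)}$, possibly times an additional factor of $n$ when $\beta=1$. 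This is summable in $n$ and telescopes to \eqref{e:sewing3}. The main obstacle is precisely this last summation: the weights $(t-m_j)^{-\beta}$ are singular at the right endpoint and must be absorbed by the dyadic gain $2^{-n(\mu-1)}$ uniformly across all $\beta\in[0,\mu)$, which requires treating the three regimes of $\beta$ separately and checking that the exponent $\mu-\beta$ comes out correctly, in particular surviving the logarithmic loss at $\beta=1$.
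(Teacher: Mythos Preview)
Your proof is correct and follows the same overall strategy as the paper: construct $\II\Xi$ as a dyadic semigroup-weighted limit, exploit the bisection identity $R_{\CP}-R_{\CP'}=S_{t-v}\,\ddh\Xi_{v,m,u}$, and telescope. The one place where you diverge from the paper is in the proof of \eqref{e:sewing3}. You estimate the raw sum $\sum_j(t-m_j)^{-\beta}$ directly and split into the regimes $\beta<1$, $\beta=1$, $\beta>1$; this works, though your stated uniform decay rate $2^{-n(\mu-1)}$ is slightly off for $\beta>1$ (there it is $2^{-n(\mu-\beta)}$, still summable). The paper instead avoids any case distinction by a borrowing trick: choose $\delta$ with $\beta-1<\delta<\mu-1$ and use $|t-m|\ge|v-m|$ to write $|t-m|^{-\beta}|v-m|^{\mu-1}\le|t-m|^{\delta-\beta}|v-m|^{\mu-1-\delta}$, so the singular weight now has exponent $\delta-\beta>-1$ (and the resulting Riemann sum is bounded by the convergent integral $\int_s^t|t-r|^{\delta-\beta}\,dr$) while the uniform factor $|v-m|^{\mu-1-\delta}$ supplies the geometric decay $2^{-n(\mu-1-\delta)}$. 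Your approach is more elementary; the paper's is more uniform and sidesteps the marginal logarithm at $\beta=1$.
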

\begin{proof}
The proof is almost identical to that of \cite[Lemma 4.2]{friz}, so we only focus on the details that differ.
We first show that the limit~(\ref{e:approximation}) exists over the dyadic partition: let $\CP_0 = \{[s,t]\}$ and recursively set 
$$
\CP_{n+1}  = \bigcup_{[u,v]\in \CP_n}\{[u,m],[m,v]\}\;,
$$
where $m = (u+v)/2$, so that $\CP_n$ contains $2^n$ intervals of length $2^{-n}|t-s|$.
We then define an approximation to $\II  \Xi$ by: 
$$\II ^{n+1}\Xi_{t,s} := \sum_{[u,v]\in \CP_{n+1}}S_{t-v}\Xi_{v,u} = \II ^{n}\Xi_{t,s} \quad -  \sum_{[u,v]\in \CP_n}S_{t-v}(\ddh \Xi)_{v,m,u}\;,
$$
with $m$ the midpoint between $u$ and $v$ as before.
We focus on~(\ref{e:sewing3}) under assumption~(\ref{e:sewing2}) since showing~(\ref{e:sewing1}) is even closer
to \cite[Lemma 4.2]{friz}. We assume $\ddh \Xi_{v,m,u} = S_{v-m}\tilde{\Xi}_{v,m,u}$ and choose $\delta \geq 0$ 
such that $\mu - 1 > \delta > \beta -1$. Using the semigroup smoothing property~(\ref{semigroup}) we then have:
\begin{equs}
\| \II ^{n+1}&\Xi_{t,s}- \II ^{n}\Xi_{t,s}\| _{\HH_{\alpha+\beta}} \le \Big\|\sum_{[u,v]\in \CP_n}S_{t-m}\tilde{\Xi}_{v,m,u}\Big\|_{\HH_{\alpha+\beta}}\\
& \lesssim M\sum_{[u,v]\in \CP_n} |t-m|^{-\beta} |v-m|^{\mu-1} |v-u| \\
&\le M\sum_{[u,v]\in \CP_n} |t-m|^{-\beta+\delta}|v-m|^{\mu-1-\delta}|v-u|\\
&\le M 2^{n(1-\mu + \delta)} |t-s|^{\mu-\delta}\sum_{[u,v]\in \CP_n} |t-m|^{-\beta+\delta}2^{-n} \\
&\le M |t-s|^{\mu-\delta-1}2^{n(1-\mu+\delta)}\int_s^t |t-r|^{\delta -\beta}\,dr\\
&\lesssim M|t-s|^{\mu-\beta}2^{n(1-\mu+\delta)}\;.
\end{equs}

Going from second to the third line we used that, by convexity of the integrand, the Riemann sum is bounded by 
the integral. In the last inequality we used that $\int_0^1r^{-\beta+\delta}dr < \infty$ since $\beta-\delta<1$.
Since $\delta$ is chosen so that $1-\mu + \delta<0$, this is summable and yields desired bound \eqref{e:sewing3}. 

It may appear a priori that we only have $\II \Xi \in \CC_2$, but a similar argument to \cite{controlled,friz} shows that
actually \eqref{e:approximation} holds, which immediately implies that
$\ddh (\II \Xi) = 0$ as desired.
The fact that $\II \Xi \in {\CC}^{\gamma}_2\HH_{\alpha}$ follows easily by taking $\beta = 0$ and noting that we then have $\| \II \Xi_{t,s}\| _{\HH_{\alpha}} \lesssim |\Xi|_{\gamma,\alpha}|t-s|^\gamma + | \ddh \Xi| _{\mu,\alpha}|t-s|^{\mu}$. The continuity of $\II $ 
follows exactly as in \cite{controlled,friz} and is left to the reader.
\end{proof}

%
%

\section{Controlled Rough Paths according to the Semigroup} \label{CRP}

We now recall the notion of rough path introduced by Lyons in the 90's (see for example \cite{lyons98} or \cite{lyons02}). To treat SPDEs in 
Hilbert spaces, we could use an operator-valued definition of rough path as in \cite{evolution,Ismael}. However, we will focus on equations
driven by finite-dimensional Brownian motion and we would like to reuse already known results like Norris's Lemma or Malliavin calculus for rough paths of finite dimensions. We will therefore pursue a compromise and use the ``classical'' definition of a rough path for our driving noise,
while slightly modifying the notion of a ``controlled rough path'' from \cite{evolution} to encode the interaction of our class of
integrands with the semigroup.
\begin{defn}[Rough Path]
We say that a pair of functions $(X,\X) \in \CC_2(\R^d) \times \CC_2(\R^{d \times d})$  satisfies 
Chen's relations if for all $s \le u \le t$:
\begin{equs} 
	\delta X_{t,u,s} &= X_{t,s}- X_{t,u}-X_{u,s}= 0\;,\\
	\delta\X_{t,u,s} &=\X_{t,s}-\X_{t,u}-\X_{u,s}=  X_{t,u}\otimes X_{u,s}\;. \label{chen}
\end{equs} 
For $\gamma \in (1/3,1/2]$ and for two such pairs $\XX = (X,\X)$, $\tilde{\XX} = (\tilde{X},\tilde{\X})$ we define the rough path 
metric $\varrho_\gamma$ as: 
\begin{equ}
	\varrho_\gamma(\XX,\tilde{\XX}) = |X-\tilde{X}|_\gamma + |\X-\tilde{\X}|_{2\gamma}.
\end{equ}
Finally for $\gamma \in (1/3,1/2]$ we define the space of rough paths $\cC^\gamma([0,T], \R^d)$ to be the completion with respect to $\varrho_\gamma$ of all smooth pairs $(X,\X) \in C^\infty(\Delta_2, \R^d)\times C^\infty(\Delta_2, \R^{d\times d})$ satisfying~\eqref{chen}.
\end{defn}
For simplicity we write $\varrho_\gamma(\XX) := \varrho_\gamma(0,\XX)$. Note that the convergence with respect to above metric is implying the pointwise convergence thus Chen's relation is true for elements of $\cC^\gamma([0,T], \R^d)$. Here instead of writing $|X|_{\gamma,\R^d}$ and $|\X|_{2\gamma,\R^{d\times d}}$ we made an abuse of notation by simply writing 
$|X|_\gamma$ and $|\X|_{2\gamma}$ and hope that no confusion will arise from this. 

The first equation in Chen's relation~\eqref{chen} actually tells us that $X$ belongs to $\CC_1$ in a sense that we can write $X_{t,s} = \delta (X_{\cdot, 0})_{t,s}$ and so $X$ is completely determined by $X_{\cdot,0} \in \CC_1$. We decide not to use $\CC_1$ in the definition of the rough path since in our analysis we only care about the increments of functions and not about their precise value. Nevertheless we might sometimes neglect this and talk about $X$ as a one time parameter function.
One should think of $\X^{i,j}_{t,s}$ as \textit{postulating} the value of the integral $\int_s^t X^i_{u,s}dX_u^j$ which may 
not be defined classically through the theory of Young's integration \cite{young} since for that we need $\gamma > 1/2$
in general. This motivates us to define a canonical lift of the smooth path to the rough paths and the definition of the geometric rough paths:

\begin{defn} 
	For every $X \in C^\infty([0,T],\R^d)$ define the canonical lift of $X$ to the space of rough paths $\XX^c(X) = (\delta X, \X^c)$,  where $\X^c_{t,s} = \int_s^t\delta X_{u,s} \otimes dX_u$ and the right hand side is a Riemann integral.
	
	For $\gamma \in (1/3,1/2]$ we 
	say that the rough path $\XX \in  \cC^\gamma([0,T], \R^d)$ is geometric if it satisfies 
	\begin{equ}[geometric]
		2\, \Sym(\X_{t,s})= X_{t,s} \otimes X_{t,s}\; ,
	\end{equ}
	which always holds for the canonical lift of a smooth path. We write $\cC^\gamma_g([0,T], \R^d) \subset \cC^\gamma([0,T], \R^d)$ for the subspace of geometric rough paths.
\end{defn}
One can show that the space of geometric rough paths is the closure of all the smooth lifts with respect to the rough path metric $\varrho_\gamma$.

Equations of interest to us are driven by $\sum_{i=1}^{d}F_i(u_t)dX^i_t$, where $F_i:\HH_\alpha\to \HH_\beta$ 
and $X = (X^1,\ldots,X^d)$ is a rough path. We will typically use instead the shorthand notation $F(u_t)dX_t$, 
where we view $F:\HH_\alpha \to \LL(\R^d,\HH_\beta)$. For simplicity we will denote the space $\LL(\R^d,\HH_\beta)$ by $\HH^d_\beta$. 
With this notation, our spaces of integrands are defined as follows.
\begin{defn}
	Let $\XX \in \cC^{\gamma}([0,T],\R^d)$ for some $\gamma\in (1/3,1/2]$ and let $m\in \N$. We say that $(Y,Y') \in \hat{\CC}^\gamma \HH^{m}_\alpha \times \hat{\CC}^\gamma \HH^{m\times d}_\alpha$ is controlled by $\XX$ according to the semigroup $S$ if the remainder term $R^Y$ defined by
	\begin{equ} 
		R^Y_{t,s}=\ddh  Y_{t,s} - S_{t-s}Y'_s X_{t,s}\;, \label{controlled}
	\end{equ}
belongs to $\CC^{2\gamma}_2\HH_\alpha^m$. We then write $(Y,Y') \in \cD^{2\gamma}_{S,X}([0,T],V)$ and define a seminorm on 
this space by: 
 \begin{equ}
	\| Y,Y'\| _{X,2\gamma,\alpha} =  \| Y'\| _{\gamma,\alpha} + |R^Y|_{2\gamma, \alpha}.
\end{equ}
Similarly, its norm is given by
$$\| Y,Y'\| _{\cD^{2\gamma}_{S,X}} = \| Y_0\| _{\HH_\alpha^m} + \| Y'_0\| _{\HH_\alpha^{m\times d}}+	\| Y,Y'\| _{X,2\gamma,\alpha}.$$
\end{defn}

\begin{rem}
In the special case $S = \id$, this is nothing but the usual notion of a controlled rough path
introduced by Gubinelli in \cite{controlled} (see also \cite{Danyu} for a different perspective).
In this case, we will omit the subscript $S$ in the notations introduced above.
\end{rem}

Note that we have the bound
 \begin{equ}
\| Y\| _{\gamma,\alpha} \leq |R^Y|_{\gamma,\alpha} + \|Y'\|_{\infty,\alpha}|X|_\gamma \leq C (1 + |X|_\gamma)(\| Y'_0\| _{\HH_\alpha^{m\times d}}+	\| Y,Y'\| _{X,2\gamma,\alpha})\;,
\end{equ}
where the constant $C$ depends only on $\gamma$ and $T$ and can be chosen uniform over $T \in (0,1]$.
Given a controlled rough path according to $S$, we can define a corresponding `stochastic convolution'. 
\begin{thm} \label{integral}
	Let $T>0$ and $\XX = (X,\X) \in \cC^{\gamma}([0,T],\R^d)$ for some $\gamma\in (1/3,1/2]$. Let $(Y,Y') \in \cD^{2\gamma}_{S,X}([0,T],\HH_\alpha^d)$ Then 
	the integral defined by 
	\begin{equ}[e:integral1]
	\int_s^tS_{t-u}Y_udX_u  := \lim_{|\CP|\to 0} \sum_{[u,v]\in \CP}S_{t-u}(Y_u X_{v,u} + Y'_u\X_{v,u}), 
	\end{equ} 
	exists as an element of $\hat{\CC}^\gamma \HH_{\alpha}$ and satisfies for every $0 \leq \beta < 3\gamma$:
	\begin{equs}
	\Big\| \int_s^tS_{t-u}Y_udX_u - S_{t-s}Y_s X_{t,s}-& S_{t-s}Y'_s\X_{t,s}\Big\| _{\HH_{\alpha+\beta}} \lesssim\\ \lesssim \big(|R^Y|_{2\gamma,\alpha}|&X|_{\gamma}+\| Y'\| _{\gamma,\alpha}|\X|_{2\gamma}\big) |t-s|^{3\gamma-\beta}.\label{e:integral2}
	\end{equs}
	Moreover the map 
	\begin{equ}
	(Y,Y') \to (Z,Z') := \Big(	\int_0^\cdot S_{\cdot-u}Y_udX_u, Y\Big),
	\end{equ}
	is continuous from $\cD^{2\gamma}_{S,X}([0,T],\HH_\alpha^d)$ to $\cD^{2\gamma}_{S,X}([0,T],\HH_\alpha)$ and one has the bound: 
	\begin{equ}
	\| Z,Z'\| _{X,2\gamma\alpha} \lesssim 	\| Y\| _{\gamma, \alpha} + (\|Y'_0\|_{\HH_{\alpha}}+\| (Y,Y') \| _{X,2\gamma,\alpha})(|X|_\gamma+|\X|_{2\gamma}). \label{e:integral3}
	\end{equ}
	Here the underlying constant depends on $\gamma,\,d$ and $T$ and is uniform over $T \in (0,1]$.
\end{thm}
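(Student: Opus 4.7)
The plan is to realise the integral as the output $\II\Xi$ of the sewing lemma (Theorem~\ref{sewing}) applied to the local approximation
\begin{equ}
\Xi_{v,u} := S_{v-u}\bigl(Y_u X_{v,u} + Y'_u \X_{v,u}\bigr)\;.
\end{equ}
Thanks to $S_{t-v}S_{v-u}=S_{t-u}$, the Riemann-type sums in~\eqref{e:approximation} for $\Xi$ are exactly the sums on the right-hand side of~\eqref{e:integral1}, so the limit in~\eqref{e:integral1} will automatically equal $\II\Xi_{t,s}$ once we check that $\Xi\in\hat\CC_2^{\gamma,\mu}\HH_\alpha$ for some $\mu>1$.

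The crux of the argument is the algebraic expansion of $\ddh\Xi_{t,m,s}=\Xi_{t,s}-\Xi_{t,m}-S_{t-m}\Xi_{m,s}$. After pulling out the common factor $S_{t-m}$ via $S_{t-s}=S_{t-m}S_{m-s}$, inserting Chen's identities $X_{t,s}=X_{t,m}+X_{m,s}$ and $\X_{t,s}-\X_{t,m}-\X_{m,s}=X_{t,m}\otimes X_{m,s}$ from~\eqref{chen}, and substituting the controlled expansion $\ddh Y_{m,s}=S_{m-s}Y'_s X_{m,s}+R^Y_{m,s}$, a careful index bookkeeping shows that the term $S_{t-s}(Y'_s X_{m,s})X_{t,m}$ arising from the linear part of $\ddh Y$ exactly cancels $S_{t-s}Y'_s(X_{t,m}\otimes X_{m,s})$ coming from Chen, leaving
\begin{equ}
\ddh\Xi_{t,m,s}=-S_{t-m}\bigl(R^Y_{m,s}X_{t,m}+(\ddh Y'_{m,s})\X_{t,m}\bigr)\;.
\end{equ}
This is of the form $S_{t-m}\tilde\Xi_{t,m,s}$ required by~\eqref{e:sewing2}, with
\begin{equ}
\|\tilde\Xi_{t,m,s}\|_{\HH_\alpha}\lesssim M\,|t-s|^{3\gamma}\;,\qquad M:=|R^Y|_{2\gamma,\alpha}|X|_\gamma+\|Y'\|_{\gamma,\alpha}|\X|_{2\gamma}\;,
\end{equ}
and since $3\gamma>1$ we have verified both the basic assumption $\Xi\in\hat\CC_2^{\gamma,3\gamma}\HH_\alpha$ and the refined structural hypothesis of Theorem~\ref{sewing}.

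The first two claims are then immediate: \eqref{e:integral1} is the defining approximation~\eqref{e:approximation}, while \eqref{e:integral2} is~\eqref{e:sewing3} applied with $\mu=3\gamma$, using that $\II\Xi_{t,s}-S_{t-s}Y_s X_{t,s}-S_{t-s}Y'_s\X_{t,s}=\II\Xi_{t,s}-\Xi_{t,s}$. To identify $(Z,Z'):=(\II\Xi_{\cdot,0},Y)$ as an element of $\cD^{2\gamma}_{S,X}$, I will use that $\ddh\II\Xi=0$ together with the Chen-type identity it implies for $\II\Xi\in\CC_2$ yields $\ddh Z_{t,s}=\II\Xi_{t,s}$, hence
\begin{equ}
R^Z_{t,s}=\ddh Z_{t,s}-S_{t-s}Y_s X_{t,s}=\bigl(\II\Xi_{t,s}-\Xi_{t,s}\bigr)+S_{t-s}Y'_s\X_{t,s}\;.
\end{equ}
By~\eqref{e:sewing1} the first summand is $O(M\,|t-s|^{3\gamma})$ in $\HH_\alpha$, and the second is $O(\|Y'\|_{\infty,\alpha}|\X|_{2\gamma}|t-s|^{2\gamma})$. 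Combining with $\|Z'\|_{\gamma,\alpha}=\|Y\|_{\gamma,\alpha}$ and the elementary bound $\|Y'\|_{\infty,\alpha}\leq\|Y'_0\|_{\HH_\alpha^{d\times d}}+\|Y'\|_{\gamma,\alpha}T^\gamma$ yields~\eqref{e:integral3}. Continuity of the map $(Y,Y')\mapsto(Z,Z')$ is then automatic: it is linear in $(Y,Y')$ for fixed $\XX$, so the operator bound~\eqref{e:integral3} is continuity.

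The main obstacle is the tensor-level cancellation in computing $\ddh\Xi$: one has to keep straight the two $\R^d$ slots of $Y'$ and verify the identity $(Y'_s X_{m,s})X_{t,m}=Y'_s(X_{t,m}\otimes X_{m,s})$ under the index convention inherited from~\eqref{chen} and from viewing $Y'\in\LL(\R^d,\HH_\alpha^d)$. Absent this cancellation, a residual term of order $|X|_\gamma^2|t-s|^{2\gamma}$ would survive and the product $\ddh\Xi$ would only be in $\CC_3^{2\gamma}$, which is insufficient since $2\gamma\leq 1$. Once the cancellation is established, the semigroup factor $S_{t-m}$ appearing in $\ddh\Xi$ is precisely what is needed to feed into the refined sewing bound~\eqref{e:sewing3} and thereby gain the extra space regularity $\beta<3\gamma$ demanded by~\eqref{e:integral2}.
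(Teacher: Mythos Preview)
Your proof is correct and follows essentially the same route as the paper: define the germ $\Xi_{v,u}=S_{v-u}(Y_uX_{v,u}+Y'_u\X_{v,u})$, compute $\ddh\Xi_{t,m,s}=-S_{t-m}(R^Y_{m,s}X_{t,m}+\ddh Y'_{m,s}\,\X_{t,m})$ via Chen's relation and the controlled expansion, and feed this into the sewing lemma with $\mu=3\gamma$. One small imprecision: to invoke the refined bound~\eqref{e:sewing3} you should record the estimate on $\tilde\Xi$ in the specific form $\|\tilde\Xi_{t,m,s}\|_{\HH_\alpha}\leq M|t-m|^{3\gamma-1}|t-s|$ required by~\eqref{e:sewing2} (which does follow from your explicit terms, since e.g.\ $|m-s|^{2\gamma}|t-m|^\gamma\leq|t-m|^{3\gamma-1}|t-s|$ when $2\gamma\leq1$), rather than only the weaker $M|t-s|^{3\gamma}$.
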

\begin{proof}
	For $\Xi_{v,u} = S_{v-u}Y_u X_{v,u}+ S_{v-u}Y'_u \X_{v,u}$ we have 
	$$\ddh  \Xi _{v,m,u} = - S_{v-m}R_{m,u}^Y X_{v,m}- S_{v-m}\ddh Y'_{m,u}\X_{v,m}.$$ 
	This follows from the definition of controlled rough path~(\ref{controlled}) and Chen's relation~(\ref{chen}). 
Since $\ddh \Xi_{v,m,u} = S_{v-m}\tilde{\Xi}_{v,m,u}$ for some $\tilde{\Xi}$ satisfying~(\ref{e:sewing2}) with $\mu = 3\gamma > 1$ and $M = |R^Y|_{2\gamma,\alpha}|X|_{\gamma}+\| Y'\| _{\gamma,\alpha}|\X|_{2\gamma}$, the existence of the limit in~(\ref{e:integral1}) and the bound~(\ref{e:integral2}) follow directly 
	from~(\ref{e:sewing3}).
	If we define $Z_t = \int_0^t S_{t-u}Y_udX_u$ then it is not hard to see that $\ddh  Z_{t,s} = \int_s^tS_{t-u}Y_udX_u$, 
so that~(\ref{e:integral3}) follows immediately from~(\ref{e:integral2}). We will address the continuity of integration map in Section~\ref{stability}
below. \end{proof}

\subsection{Composition with regular functions}

Now, we need to restrict our study to a suitable class of non-linearities.
\begin{defn}
	For some fixed $\alpha,\beta \in \R$ and $k \in \N_0$ we define the space
	$C^k_{\alpha,\beta}(\HH^m,\HH^n)$ as the space of $k$-differentiable functions $G : \HH^m_\theta \to \HH^n_{\theta+\beta}$ for every $\theta\geq\alpha$, and $n,m \in \mathbf{N}_0$ and such that $D^iG$ sends bounded subsets of $\HH^m_\theta$ to bounded sets of $\HH^n_{\theta+\beta}$, for all $i = 0, \dots k$. For such functions $\|G\|_{C^k}$ will represent some norm which depends on the first $k$ derivatives and its exact form will be clear from the context. When $m = n$ we will simply write $C^k_{\alpha,\beta}(\HH^m)$.
\end{defn}

\noindent With this at hand:

\begin{lem} \label{compos1}
Let $F \in C^2_{\alpha,0}(\HH^m,\HH^n)$ with all derivatives up to order $2$ bounded, let $T>0$ and $(Y,Y') \in \cD^{2\gamma}_{S,X}([0,T],\HH_\alpha^m)$ for some $(X,\X) \in \cC^{\gamma}([0,T],\R^d)$, $\gamma\in (1/3,1/2]$. Moreover assume that in addition $Y \in L^\infty([0,T],\HH_{\alpha+2\gamma}^m)$ and $Y' \in L^\infty([0,T],\HH_{\alpha+2\gamma}^{m\times d})$. Define $(Z_t,Z'_t) = (F(Y_t),DF(Y_t)\circ Y'_t)$ then $(Z,Z') \in \cD^{2\gamma}_{S,X}([0,T],\HH_\alpha^n)$ and satisfies the bound: 
\begin{equ} 
\| (Z,Z') \| _{X,2\gamma,\alpha} \leq C_{F} (1+|X|_\gamma)^2(1+\|Y\|_{\infty,{\alpha+2\gamma}}+\|Y'\|_{\infty,{\alpha+2\gamma}}+\| (Y,Y')\| _{X,2\gamma})^2. \label{e:compos1}
\end{equ}
The constant $C_F$ depends on the bounds on $F$ and its derivatives. It also depends on time $T$, but is uniform over $T\in (0,1]$.
\end{lem}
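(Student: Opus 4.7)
The core task is to control the remainder
\begin{equ}
R^Z_{t,s} = F(Y_t) - S_{t-s}F(Y_s) - S_{t-s}DF(Y_s)Y'_s X_{t,s}
\end{equ}
in $\CC_2^{2\gamma}\HH_\alpha$, and to show $Z' = DF(Y)\circ Y' \in \hat\CC^\gamma \HH^{n\times d}_\alpha$. The main decomposition I would use is to add and subtract $F(S_{t-s}Y_s)$:
\begin{equ}
R^Z_{t,s} = \bigl[F(Y_t) - F(S_{t-s}Y_s)\bigr] + \bigl[F(S_{t-s}Y_s) - S_{t-s}F(Y_s)\bigr] - S_{t-s}DF(Y_s)Y'_s X_{t,s}\;.
\end{equ}

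For the first bracket I would Taylor expand $F$ at $S_{t-s}Y_s$ in the direction $\ddh Y_{t,s} = S_{t-s}Y'_s X_{t,s} + R^Y_{t,s}$. The linear term splits as $DF(S_{t-s}Y_s) R^Y_{t,s}$, which is trivially bounded by $\|DF\|_\infty |R^Y|_{2\gamma,\alpha}|t-s|^{2\gamma}$, plus $DF(S_{t-s}Y_s) S_{t-s} Y'_s X_{t,s}$ which must be matched against the desired term $S_{t-s}DF(Y_s)Y'_s X_{t,s}$. Their difference I would decompose via the identity
\begin{equ}
DF(S_{t-s}Y_s)S_{t-s} - S_{t-s}DF(Y_s) = DF(S_{t-s}Y_s)\tilde S_{t-s} + \bigl[DF(S_{t-s}Y_s) - DF(Y_s)\bigr] - \tilde S_{t-s}DF(Y_s)\;,
\end{equ}
each summand of order $|t-s|^{2\gamma}$ by the semigroup estimate $\|\tilde S_{t-s}u\|_\alpha \lesssim |t-s|^{2\gamma}\|u\|_{\alpha+2\gamma}$ applied to $Y_s, Y'_s$ (using the $L^\infty(\HH_{\alpha+2\gamma})$ hypothesis) and the boundedness of $D^2F$. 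Multiplying by $X_{t,s}$ yields a gain of $|t-s|^\gamma \leq T^\gamma$. The quadratic Taylor remainder is bounded by $\|D^2F\|_\infty\|\ddh Y_{t,s}\|^2_\alpha$, and expanding via $\ddh Y_{t,s} = S_{t-s}Y'_s X_{t,s} + R^Y_{t,s}$ produces the $(1+|X|_\gamma)^2$ factor in the final bound.

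For the second bracket $F(S_{t-s}Y_s) - S_{t-s}F(Y_s)$, which is the genuine commutator and the conceptual heart of the lemma, I would again Taylor expand, this time at $Y_s$ in direction $\tilde S_{t-s}Y_s$, to write it as $DF(Y_s)\tilde S_{t-s}Y_s - \tilde S_{t-s}F(Y_s) + O(\|\tilde S_{t-s}Y_s\|_\alpha^2)$. Each term is $O(|t-s|^{2\gamma})$: the first by $Y_s \in \HH_{\alpha+2\gamma}$, the second because $F \in C^2_{\alpha,0}$ sends $\HH_{\alpha+2\gamma}$ into itself so $F(Y_s) \in \HH_{\alpha+2\gamma}$, and the third because $\|\tilde S_{t-s}Y_s\|_\alpha^2 \lesssim |t-s|^{4\gamma}$. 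This is the step where the extra spatial regularity of $Y$ is crucial and is, in my view, the main obstacle — without it, there is no way to exchange $S_t$ and $F$ up to an order-$|t-s|^{2\gamma}$ error.

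To finish, I would bound $\|\ddh Z'\|_{\gamma,\alpha}$ by the analogous split
\begin{equ}
\ddh Z'_{t,s} = DF(Y_t)\,\ddh Y'_{t,s} + \bigl[DF(Y_t)S_{t-s} - S_{t-s}DF(Y_s)\bigr]Y'_s\;,
\end{equ}
treating the second term with the same commutator decomposition used above. Collecting all estimates and using the inequality $\|Y\|_{\gamma,\alpha} \lesssim (1+|X|_\gamma)(\|Y'_0\|+\|(Y,Y')\|_{X,2\gamma,\alpha})$ stated just before the lemma, the bounds assemble into the claimed quadratic estimate~(\ref{e:compos1}), with $C_F$ depending only on $\sup \|DF\|$ and $\sup \|D^2F\|$ on the relevant interpolation spaces, and the uniformity over $T\in(0,1]$ following from the fact that all "extra" time factors appear with positive power.
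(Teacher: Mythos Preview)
Your proof is correct and follows essentially the same strategy as the paper: isolate the commutator $F(S_{t-s}Y_s)-S_{t-s}F(Y_s)$ by adding and subtracting $F(S_{t-s}Y_s)$, bound it using the extra spatial regularity $Y,Y'\in L^\infty(\HH_{\alpha+2\gamma})$ and the semigroup estimate $\|\tilde S_{t-s}u\|_\alpha\lesssim |t-s|^{2\gamma}\|u\|_{\alpha+2\gamma}$, and handle the rest via Taylor expansion. The only cosmetic differences are that the paper Taylor expands $F$ at $Y_t$ rather than at $S_{t-s}Y_s$ (yielding a term $F(Y_t)-F(S_{t-s}Y_s)-DF(Y_t)\ddh Y_{t,s}$), and that it bounds the commutator by a direct triangle inequality $\|F(S_{t-s}Y_s)-F(Y_s)\|+\|F(Y_s)-S_{t-s}F(Y_s)\|$ rather than a further Taylor step; the paper also organises the $\ddh Z'$ estimate into four named terms which it then reuses verbatim in the $R^Z$ estimate, but your regrouping is equivalent.
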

\begin{proof}
We only consider the case $d=m=n=1$, the generalisation to higher dimensions being purely a matter of notations.
From~(\ref{semigroup}) since $0< \gamma< 2\gamma \leq 1$ we have for any $V\in \CC_1([0,T])$ and for any $u \in [0,T]$ and any $\alpha \in \R$ the following:
\begin{equ}
\| S_{t-s}V_u-V_u\| _{\HH_\alpha} \lesssim |t-s|^{2\gamma} \|V_u\|_{\HH_{\alpha+2\gamma}}, \quad
|V|_{\gamma,\alpha} \lesssim \|V\|_{\gamma,\alpha}+\|V\|_{\infty,\alpha+2\gamma}. \label{e:compos2}
\end{equ}
\noindent\textbf{Bound of $Z'$:} First, we write $\ddh Z'$ as  
\begin{equs}
(\ddh Z')_{t,s} &= (DF(Y_t)Y'_t - DF(Y_t)S_{t-s}Y'_s)+(DF(Y_t)S_{t-s}Y'_s-DF(Y_s)S_{t-s}Y'_s) \\
&\quad +(DF(Y_s)S_{t-s}Y'_s-DF(Y_s)Y'_s)+(DF(Y_s)Y'_s-S_{t-s}DF(Y_s)Y'_s) \\
&= \rI + \rII + \rIII +\rIV\;.
\end{equs}
Using~(\ref{e:compos2}) we can bound these terms as follows:
\begin{equs}
	\| \rI\| _{\HH_\alpha}  &\leq \| DF(Y_t)\| _{\LL^{\alpha,\alpha}}\| Y'\| _{\gamma,\alpha}|t-s|^\gamma \lesssim C_F \| Y'\| _{\gamma,\alpha}|t-s|^\gamma\;,\\
	\| \rII\| _{\HH_\alpha}  &\leq \| DF(Y_t)-DF(Y_s)\| _{\LL^{\alpha,\alpha}}\| Y'_s\| _{\HH_\alpha} \lesssim C_F |Y|_{\gamma,\HH_{\alpha}}|t-s|^\gamma\|Y'\|_{\infty,{\alpha+2\gamma}} \\
	&\lesssim C_F (\| Y\| _{\gamma,\alpha}+\|Y\|_{\infty,{\alpha+2\gamma}})\|Y'\|_{\infty,{\alpha+2\gamma}}|t-s|^\gamma\;,\\
	\| \rIII\| _{\HH_\alpha}  &\lesssim C_F \|Y'\|_{\infty,{\alpha+2\gamma}}|t-s|^\gamma\;,\\
	\| \rIV\| _{\HH_\alpha} &\lesssim \| DF(Y_s)Y'_s\| _{\HH_{\alpha+2\gamma}}|t-s|^\gamma \lesssim C_F \|Y'\|_{\infty,\alpha+2\gamma}|t-s|^\gamma\;.
\end{equs}
Combining these bounds all together we obtain:
$$\| Z'\| _{\gamma,\alpha} \lesssim C_F(\| Y'\| _{\gamma,\alpha}+\|Y'\|_{\infty,\HH_{\alpha+2\gamma}})(1+\| Y\| _{\gamma,\alpha}+\|Y\|_{\infty,{\alpha+2\gamma}}).$$
\textbf{Bound of $R^Z$: }
\begin{equs}
	R^Z_{t,s}  &= \ddh Z_{t,s} -  S_{t-s}Z'_s X_{t,s} \\
	&= \ddh Z_{t,s} - DF(Y_t) S_{t-s}Y'_s X_{t,s}+ DF(Y_t) S_{t-s}Y'_s X_{t,s}-  S_{t-s}DF(Y_s)Y'_sX_{t,s}\\
	&= \Big(F(Y_t)-S_{t-s}F(Y_s) - DF(Y_t)(Y_t - S_{t-s}Y_s)\Big) \\
	&\qquad- \Big(DF(Y_t)R^Y_{t,s} + (\rII+\rIII+\rIV) X_{t,s}\Big) = \rV - \rVI.
\end{equs}
The term $\rVI$ can easily be bounded using bounds for $\rII$, $\rIII$ and $\rIV$: 
\begin{equ}
	\| \rVI\| _{\HH_\alpha} \lesssim C_F(|R^Y|_{2\gamma,\alpha}+|X|_\gamma\|Y'\|_{\infty,{\alpha+2\gamma}}(1+\| Y\| _{\gamma,\alpha}+\|Y\|_{\infty,{\alpha+2\gamma}}))|t-s|^{2\gamma}.
\end{equ}
For $\rV$ we have 
\begin{equs}
	\rV &= \Big(F(Y_t) - F(S_{t-s}Y_s) - DF(Y_t)(Y_t - S_{t-s}Y_s)\Big) + \Big(F(S_{t-s}Y_s)-S_{t-s}F(Y_s)\Big)\\
	&= \rVII + \rVIII.
\end{equs}
By Taylor's theorem we obtain 
\begin{equ}
	\| \rVII\| _{\HH_\alpha} \lesssim C_F \| Y\| ^2_{\gamma,\alpha}|t-s|^{2\gamma}\;,
\end{equ}
while the `commutator' $\rVIII$ is bounded by
\begin{equs}
	\| \rVIII\| _{\HH_\alpha} &\lesssim \| F(S_{t-s}Y_s)-F(Y_s)\|_{\HH_\alpha}+\| F(Y_s)-S_{t-s}F(Y_s)\| _{\HH_\alpha}\\ 
	&\lesssim (C_F\|Y\|_{\infty,{\alpha+2\gamma}} + \| F(Y_s)\| _{\HH_{\alpha+2\gamma}})|t-s|^{2\gamma}\\
	& \leq C_F(1+\|Y\|_{\infty,{\alpha+2\gamma}})|t-s|^{2\gamma}\;,
\end{equs}
where we used~(\ref{e:compos2}) to go from the first to the second line. Combining both bounds on $Z'$ and $R^Z$ we obtain the desired result. 
\end{proof}

\subsection{$\cD^{2\gamma,\beta,\eta}_{S,X}$ spaces}

\begin{defn}
	Let $\XX \in \cC^{\gamma}([0,T],\R^d)$ for some $\gamma\in (1/3,1/2]$. Then for and $\beta \in \R$ and $\eta \in [0,1]$ define a space 
	$$\cD^{2\gamma,\beta,\eta}_{S,X}([0,T],\HH_\alpha) = \cD^{2\gamma}_{S,X}([0,T],\HH_\alpha)\cap  \big(\hat{\CC}^\eta([0,T],\HH_{\alpha+\beta})\times L^\infty([0,T],\HH^d_{\alpha+\beta})\big)\;.$$
\end{defn}
We also wrote $\hat{\CC}^0 = L^\infty$ for $\eta = 0$ and, as usual, we will drop the subscript $S$ when $S = \id$.
Note that by Lemma~\ref{compos1}, composition with regular functions maps $\cD^{2\gamma,2\gamma,\eta}_{S,X}([0,T],\HH_\alpha)$ to $\cD^{2\gamma,2\gamma,0}_{S,X}([0,T],\HH_\alpha)$ for every $\eta \in[0,1]$. For simplicity, we also introduce the 
useful notation 
\begin{equ}
\DD^{2\gamma}_{X}([0,T],\HH_{\alpha}):= \cD^{2\gamma,2\gamma,\gamma}_{S,X}([0,T],\HH_{\alpha-2\gamma})\;.
\end{equ}
\textbf{Warning:} we have shifted the space regularity in the definition of $\DD^{2\gamma}_X$ by $2\gamma$ in the right hand side. 
We will later solve our equations in the space $\DD^{2\gamma}([0,T],\HH)$ ($\alpha = 0$). 

\medskip
With this at hand we now show:
\begin{prop} \label{space_equiv1}
For $1/3<\varepsilon\leq \gamma \leq1/2$, the spaces $\cD^{2\varepsilon,2\gamma,0}_{X}([0,T],\HH_\alpha)$ and $\cD^{2\varepsilon,2\gamma,0}_{S,X}([0,T],\HH_\alpha)$ are the same.
\end{prop}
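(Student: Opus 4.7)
The plan is to establish both set inclusions simultaneously by directly comparing the two notions of remainder. Write $R^{Y,S}_{t,s} := \hat\delta Y_{t,s} - S_{t-s}Y'_s X_{t,s}$ for the $S$-controlled remainder and $R^Y_{t,s} := \delta Y_{t,s} - Y'_s X_{t,s}$ for the classical one. A direct expansion gives
\begin{equ}
R^Y_{t,s} - R^{Y,S}_{t,s} = \tilde S_{t-s}Y_s + \tilde S_{t-s}Y'_s\, X_{t,s}\;,
\end{equ}
and an analogous identity $\delta Y_{t,s} - \hat\delta Y_{t,s} = \tilde S_{t-s}Y_s$ (with a parallel version for $Y'$) relates the two notions of increment. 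The entire proof reduces to showing that these two correction terms are negligible at the Hölder scales that define our spaces.

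To that end, I would invoke the smoothing bound from~\eqref{semigroup} in the form $\|\tilde S_{t-s}v\|_{\HH_\alpha}\lesssim |t-s|^{2\gamma}\|v\|_{\HH_{\alpha+2\gamma}}$. Applied to $v=Y_s$ this yields $\|\tilde S_{t-s}Y_s\|_{\HH_\alpha}\lesssim |t-s|^{2\gamma}\|Y\|_{\infty,\alpha+2\gamma}$; applied to $v=Y'_s X_{t,s}$ (using $|X_{t,s}|\le |X|_\gamma|t-s|^\gamma$) it yields $\|\tilde S_{t-s}Y'_s X_{t,s}\|_{\HH_\alpha}\lesssim |t-s|^{3\gamma}|X|_\gamma\|Y'\|_{\infty,\alpha+2\gamma}$. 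Since $\varepsilon\le\gamma$, both bounds are dominated by $|t-s|^{2\varepsilon}$ uniformly on $[0,T]$, so $R^Y\in\CC^{2\varepsilon}_2\HH_\alpha$ if and only if $R^{Y,S}\in\CC^{2\varepsilon}_2\HH_\alpha$. The same estimate, now used with $2\gamma\ge\varepsilon$, shows that $\delta Y$ and $\hat\delta Y$ have the same $\varepsilon$-Hölder regularity, and likewise for $Y'$, so that the path pair belongs to the two ambient spaces $\CC^\varepsilon$ and $\hat\CC^\varepsilon$ simultaneously.

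There is no serious obstacle here: morally, the statement is that once the pair $(Y,Y')$ carries the additional $\HH_{\alpha+2\gamma}$ regularity demanded by the subscript $(2\gamma,0)$, the semigroup factors $S_{t-s}$ are invisible at the $\varepsilon$- and $2\varepsilon$-Hölder scales. The only care required is to verify the exponent inequalities $2\gamma\ge 2\varepsilon$, $3\gamma\ge 2\varepsilon$, and $2\gamma\ge\varepsilon$, all of which follow from $\varepsilon\le\gamma$ (with margin to spare since $\gamma>1/3$). Combining the two comparisons gives both inclusions, so the spaces coincide as sets.
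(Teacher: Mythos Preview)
Your proof is correct and follows essentially the same approach as the paper: both arguments reduce to the observation that the discrepancy between the classical and semigroup-controlled data is $\tilde S_{t-s}Y_s + \tilde S_{t-s}Y'_s X_{t,s}$ (and $\tilde S_{t-s}Y'_s$ for the derivative), and then use the smoothing bound \eqref{semigroup} together with the $L^\infty(\HH_{\alpha+2\gamma})$ control on $(Y,Y')$ to show these corrections are $|t-s|^{2\gamma}$-small, hence negligible at the $2\varepsilon$- and $\varepsilon$-scales. Your presentation is slightly cleaner in that it handles both inclusions at once via the explicit difference formula, whereas the paper writes out one direction and declares the converse analogous.
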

\begin{proof}
First consider $(Y,Y') \in \cD^{2\gamma,2\gamma,0}_{S,X}([0,T],\HH_\alpha)$, we rewrite \eqref{controlled} as
	$$Y_t-Y_s = Y'_s X_{t,s} + R^Y_{t,s}+S_{t-s}Y_s-Y_s+(S_{t-s}Y'_s-Y'_s) X_{t,s}.$$
Combining this with
\begin{equs}
\|S_{t-s}Y_s-Y_s\|_{\HH_\alpha} &\lesssim |t-s|^{2\gamma}\|Y\|_{\infty,\alpha+2\gamma}\;,\\ 
\|(S_{t-s}Y'_s-Y'_s) X_{t,s}\|_{\HH_\alpha} &\lesssim |t-s|^{3\gamma}\|Y'\|_{\infty,\alpha+2\gamma}|X|_\gamma\;,
\end{equs}
we conclude that the remainder $R^Y_{t,s}+S_{t-s}Y_s-Y_s+(S_{t-s}Y'_s-Y'_s) X_{t,s}$ is of regularity $|t-s|^{2\varepsilon}$. We can similarly show that $Y' \in \CC^\varepsilon$ and therefore $\cD^{2\varepsilon,2\gamma,0}_{X}([0,T],\HH_\alpha)\subseteq \cD^{2\varepsilon,2\gamma,0}_{S,X}([0,T],\HH_\alpha)$. The proof of the converse implication is analogous. 
\end{proof}

For the next proposition we use that the inner product on $\HH$ extends uniquely 
to a continuous bilinear map $\langle \cdot,\cdot \rangle : \HH_{-\alpha}\times \HH_{\alpha} \to \R$ for every $\alpha \geq 0$. 

\begin{prop} \label{space_equiv2}
For any $(Y,Y') \in \cD^{2\gamma,2\gamma,0}_{S,X}([0,T],\HH_{-2\gamma})$ and $\psi \in \HH_{2\gamma}$, one has $(\langle Y,\psi \rangle, \langle Y',\psi \rangle)\in \cD^{2\gamma}_{X}([0,T],\R)$. Also for any fixed $t \leq T$ we get a controlled rough path $(\langle S_{t-\cdot}Y,\psi \rangle, \langle S_{t-\cdot}Y',\psi \rangle ) \in \cD^{2\gamma}_{X}([0,t],\R)$. Moreover for fixed $t>0$ and $h \in \HH$, setting $Z_v = \int_v^t\langle S_{s-v}Y_v,h \rangle ds$ and $Z'_v = \int_v^t\langle S_{s-v}Y'_v,h \rangle ds$, we have $(Z,Z') \in \cD^{2\gamma}_{X}([0,t],\R)$ satisfying
	$$\|(Z,Z')\|_{X,2\gamma} \lesssim_T (1+|X|_\gamma)\|(Y,Y')\|_{\cD^{2\gamma,2\gamma,0}_{S,X}}\|h\|.$$
	Similar bound holds for the other two controlled rough paths stated in the proposition, but with $\|h\|$ replaced by $\|\psi\|_{\HH_{2\gamma}}$. 
\end{prop}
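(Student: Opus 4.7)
My overall strategy is to move the semigroup onto the test function using selfadjointness ($\tilde S^*_t = \tilde S_t$ and $S^*_t = S_t$ on the dual scale of Hilbert spaces) and then exploit two consequences of \eqref{semigroup}: the smoothing bound $\|S_r h\|_{\HH_{2\gamma}}\lesssim r^{-2\gamma}\|h\|$ and the reducing bound $\|\tilde S_r\psi\|_{\HH_0}\lesssim r^{2\gamma}\|\psi\|_{\HH_{2\gamma}}$. In each of the three statements the candidate Gubinelli derivative is the obvious one, so the real content is estimating the classical (non-reduced) remainder.

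For the first statement, I will write $\delta Y_{t,s} = \ddh Y_{t,s} + \tilde S_{t-s}Y_s$ and expand $\ddh Y_{t,s} = S_{t-s}Y'_s X_{t,s} + R^Y_{t,s}$ to obtain
\[
\delta\langle Y,\psi\rangle_{t,s} = \langle Y'_s,\psi\rangle X_{t,s} + \langle R^Y_{t,s},\psi\rangle + \langle\tilde S_{t-s}Y'_s X_{t,s},\psi\rangle + \langle\tilde S_{t-s}Y_s,\psi\rangle.
\]
The first remainder piece pairs $\HH_{-2\gamma}$ with $\HH_{2\gamma}$ and contributes order $|t-s|^{2\gamma}$; the other two exploit $Y_s, Y'_s \in \HH_0$ together with $\langle\tilde S_{t-s}v,\psi\rangle = \langle v,\tilde S_{t-s}\psi\rangle$ and the reducing bound, giving orders $|t-s|^{2\gamma}$ and $|t-s|^{3\gamma}$ respectively. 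The $\gamma$-Hölder regularity of $\langle Y',\psi\rangle$ follows from the analogous splitting $\delta Y' = \ddh Y' + \tilde S Y'$ applied to $Y' \in \hat\CC^\gamma\HH^d_{-2\gamma} \cap L^\infty\HH^d_0$. The second statement becomes particularly clean because the identity $S_{t-u} = S_{t-v}S_{v-u}$ gives
\[
\delta\langle S_{t-\cdot}Y,\psi\rangle_{v,u} = \langle S_{t-v}\ddh Y_{v,u},\psi\rangle = \langle S_{t-u}Y'_u,\psi\rangle X_{v,u} + \langle R^Y_{v,u},S_{t-v}\psi\rangle,
\]
identifying $\langle S_{t-\cdot}Y',\psi\rangle$ as the Gubinelli derivative and a remainder bounded by $|R^Y|_{2\gamma,-2\gamma}|v-u|^{2\gamma}\|\psi\|_{\HH_{2\gamma}}$ via the uniform estimate $\|S_{t-v}\|_{\LL^{2\gamma,2\gamma}} \lesssim 1$; the Hölder bound for $\langle S_{t-\cdot}Y',\psi\rangle$ is analogous.

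The third statement is the main technical point. I will split
\[
Z_v - Z_u = -\int_u^v\langle S_{s-u}Y_u,h\rangle\,ds + \int_v^t\langle S_{s-v}\ddh Y_{v,u},h\rangle\,ds,
\]
substitute $\ddh Y_{v,u} = S_{v-u}Y'_u X_{v,u} + R^Y_{v,u}$ into the second integral and use $\int_v^t\langle S_{s-u}Y'_u,h\rangle\,ds = Z'_u - \int_u^v\langle S_{s-u}Y'_u,h\rangle\,ds$ to pick out $Z'_u$ as the Gubinelli derivative, leaving a three-term remainder. The two short $[u,v]$-integrals are bounded trivially in $\HH_0$ using $Y, Y' \in L^\infty\HH_0$, producing $\lesssim_T|v-u|^{2\gamma}\|h\|$ after absorbing $|v-u|^{1-2\gamma} \le T^{1-2\gamma}$. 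The main obstacle is the long integral $\int_v^t\langle R^Y_{v,u},S_{s-v}h\rangle\,ds$: since $h \in \HH$ is not regular enough to pair uniformly with $R^Y_{v,u} \in \HH_{-2\gamma}$, I move the semigroup onto $h$ and use the smoothing bound $\|S_{s-v}h\|_{\HH_{2\gamma}}\lesssim(s-v)^{-2\gamma}\|h\|$; the resulting singular integral $\int_v^t(s-v)^{-2\gamma}\,ds \le T^{1-2\gamma}/(1-2\gamma)$ is finite precisely because $2\gamma < 1$, yielding $\lesssim_T |R^Y|_{2\gamma,-2\gamma}|v-u|^{2\gamma}\|h\|$. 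The Hölder bound on $Z'$ comes from the same decomposition applied to $Y'$, using $\hat\delta Y' \in \CC^\gamma_2\HH^d_{-2\gamma}$ together with the same singular integral. Assembling the bounds produces the stated estimate with constant proportional to $(1+|X|_\gamma)\|(Y,Y')\|_{\cD^{2\gamma,2\gamma,0}_{S,X}}\|h\|$.
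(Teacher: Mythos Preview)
Your proposal is correct and follows essentially the same route as the paper. For the first two claims the paper simply invokes Proposition~\ref{space_equiv1}, whereas you write out the decomposition $\delta Y = \ddh Y + \tilde S Y$ explicitly; for the third claim your splitting of $Z_v - Z_u$, identification of $Z'_u$, and handling of the singular integral $\int_v^t (s-v)^{-2\gamma}\,ds$ via the smoothing bound on $S_{s-v}h$ match the paper's argument line by line. (One tiny slip: in the first paragraph the orders $|t-s|^{2\gamma}$ and $|t-s|^{3\gamma}$ attached to the two $\tilde S$-terms are swapped---the $Y'$ term carries the extra factor $X_{t,s}$ and hence gives $|t-s|^{3\gamma}$---but this does not affect the conclusion.)
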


\begin{proof}
The fact that the first two functions are controlled rough paths follows easily from Proposition~\ref{space_equiv1}. 
For the third one we cannot use Cauchy-Schwarz straight away because $h$ is not regular enough. Instead, we use~\eqref{controlled} and write 
\begin{equs}
Z_v - Z_u &= \int_v^t(\langle S_{s-v}Y_v,h \rangle - \langle S_{s-u}Y_u,h \rangle)ds - \int_u^v\langle S_{s-u}Y_u,h \rangle ds\\ 
&=\int_v^t\langle S_{s-v}\ddh Y_{v,u},h \rangle ds - \int_u^v\langle S_{s-u}Y_u,h \rangle ds \\
&=\int_v^t\langle S_{s-v}S_{v-u}Y'_u X_{v,u} + S_{s-v}R^Y_{v,u},h \rangle ds  - \int_u^v\langle S_{s-u}Y_u,h \rangle ds \\
&=\int_v^t\langle S_{s-u}Y'_u,h \rangle ds \, X_{v,u} + \int_v^t\langle S_{s-v}R^Y_{v,u},h \rangle ds - \int_u^v\langle S_{s-u}Y_u,h \rangle ds \\
&= Z'_u X_{v,u} + R^Z_{v,u}\;,
\end{equs}
where $$R^Z_{v,u} = \int_v^t\langle S_{s-v}R^Y_{v,u},h \rangle ds - \int_u^v\langle S_{s-u}Y_u,h \rangle ds - \int_u^v\langle S_{s-u}Y'_u,h \rangle ds\;.$$
	Since $\|Y\|_{\infty,\HH}$ and $\|Y'\|_{\infty,\HH^d_0}$ are finite we see that the last two terms of $R^Z$ are bounded by
	$|v-u| (\|Y\|_{\infty,\HH}+\|Y'\|_{\infty,\HH^d_0})\|h\|$. For the first term we have: 
\begin{equs}
\Big|\int_v^t\langle S_{s-v}R^Y_{v,u},h \rangle ds\Big| &\leq \int_v^t\|S_{s-v}R^Y_{v,u}\|\,\|h\| ds \lesssim \int_v^t |s-v|^{-2\gamma}\|R^Y_{v,u}\|_{\HH_{-2\gamma}}\,\|h\| ds \\
= |t-v|^{1-2\gamma}|R^Y|&_{2\gamma,-2\gamma}|v-u|^{2\gamma}\|h\| \lesssim T^{1-2\gamma}|R^Y|_{2\gamma,-2\gamma}|v-u|^{2\gamma}\|h\|\;,
\end{equs}
where we have used  that $2\gamma < 1 $. One similarly shows that $|Z'|_\gamma < \infty$. 
\end{proof}

We finish this subsection by extending Lemma~\ref{compos1} to functions that lose some space regularity. 
Since the proof is identical to that of Lemma~\ref{compos1}, we omit it.
\begin{lem} \label{compos4}
	Let $\sigma \geq 0$ and $F \in C^2_{\alpha,-\sigma}(\HH,\HH^d)$ with all respective derivatives bounded. Let $T>0$ and $(Y,Y') \in \cD^{2\gamma,2\gamma,0}_{S,X}([0,T],\HH_{\alpha})$ for some $(X,\X) \in \cC^{\gamma}([0,T],\R^d)$, $\gamma\in (1/3,1/2]$. Then $(Z_t,Z'_t) = (F(Y_t),DF(Y_t)\circ Y'_t) \in \cD^{2\gamma,2\gamma,0}_{S,X}([0,T],\HH_{\alpha-\sigma}^d)$ and one has
	\begin{equ}
	\| (Z,Z') \| _{X,2\gamma,\alpha-\sigma} \lesssim_F (1+|X|_\gamma)^2(1+\|Y,Y\|_{\cD^{2\gamma,2\gamma,0}_{S,X}})^2.
	\end{equ}
\end{lem}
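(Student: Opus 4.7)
The plan is to mimic the proof of Lemma~\ref{compos1} essentially verbatim, replacing the target space $\HH_\alpha$ by $\HH_{\alpha-\sigma}$ throughout and exploiting that $F$ and its derivatives send $\HH_\theta$ into $\HH_{\theta-\sigma}$ (boundedly on bounded sets) for every $\theta\ge\alpha$, since $F\in C^2_{\alpha,-\sigma}$. Crucially, membership in $\cD^{2\gamma,2\gamma,0}_{S,X}([0,T],\HH_\alpha)$ supplies the uniform bounds $\|Y\|_{\infty,\alpha+2\gamma}$ and $\|Y'\|_{\infty,\alpha+2\gamma}$, which are precisely the ingredients needed to handle the commutator-type terms that appear when one wants to compare $F(S_{t-s}Y_s)$ with $S_{t-s}F(Y_s)$. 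As before, I restrict to the scalar case $d=m=n=1$.

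For the Gubinelli derivative, I decompose $\ddh Z'_{t,s}$ into the same four telescoping pieces $\rI,\rII,\rIII,\rIV$ as in Lemma~\ref{compos1}. The estimates carry over verbatim once each statement of the form $\|DF(\cdot)\|_{\LL^{\alpha,\alpha}}\le C_F$ is replaced by $\|DF(\cdot)\|_{\LL^{\alpha,\alpha-\sigma}}\le C_F$, valid because $F\in C^2_{\alpha,-\sigma}$. The piece $\rIV$ still requires $DF(Y_s)Y'_s\in\HH_{\alpha+2\gamma-\sigma}$ in order to gain the factor $|t-s|^{2\gamma}$ from the semigroup smoothing~\eqref{semigroup}; this holds because $Y'_s\in\HH_{\alpha+2\gamma}$ and $DF$ maps $\HH_{\alpha+2\gamma}$ boundedly into $\HH_{\alpha+2\gamma-\sigma}$. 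This produces the analogue of the bound on $\|Z'\|_{\gamma,\alpha-\sigma}$.

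For the remainder $R^Z_{t,s}$, I keep the decomposition $R^Z=\rV-\rVI$ and $\rV=\rVII+\rVIII$. Term $\rVI$ is controlled by the estimates just obtained for $\rII,\rIII,\rIV$ together with $|R^Y|_{2\gamma,\alpha}$, with the understanding that $|R^Y|_{2\gamma,\alpha-\sigma}$ is automatically dominated by $|R^Y|_{2\gamma,\alpha}$. Term $\rVII$ is handled by Taylor's theorem exactly as in Lemma~\ref{compos1}, the output lying in $\HH_{\alpha-\sigma}$ because $D^2F$ loses $\sigma$ derivatives. The main obstacle, precisely as in the unshifted case, is the commutator $\rVIII=F(S_{t-s}Y_s)-S_{t-s}F(Y_s)$: I split it as $(F(S_{t-s}Y_s)-F(Y_s))+(F(Y_s)-S_{t-s}F(Y_s))$; the first summand is estimated via the Lipschitz bound on $F:\HH_\alpha\to\HH_{\alpha-\sigma}$ combined with $\|S_{t-s}Y_s-Y_s\|_{\HH_\alpha}\lesssim|t-s|^{2\gamma}\|Y\|_{\infty,\alpha+2\gamma}$, and the second via $F(Y_s)\in\HH_{\alpha+2\gamma-\sigma}$ (since $Y_s\in\HH_{\alpha+2\gamma}$ and $F\in C^2_{\alpha,-\sigma}$) together with $\|\tilde S_{t-s}F(Y_s)\|_{\HH_{\alpha-\sigma}}\lesssim|t-s|^{2\gamma}\|F(Y_s)\|_{\HH_{\alpha+2\gamma-\sigma}}$. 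Putting all these pieces together and grouping them into a single factor of the form $(1+|X|_\gamma)^2(1+\|(Y,Y')\|_{\cD^{2\gamma,2\gamma,0}_{S,X}})^2$ yields the announced estimate, and shows at the same time that $(Z,Z')$ still satisfies the $L^\infty$-integrability requirement in the definition of $\cD^{2\gamma,2\gamma,0}_{S,X}([0,T],\HH_{\alpha-\sigma}^d)$.
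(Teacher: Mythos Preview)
Your proposal is correct and follows exactly the approach the paper intends: the authors explicitly state that the proof of Lemma~\ref{compos4} is identical to that of Lemma~\ref{compos1} and omit it. Your adaptation---shifting the target space by $\sigma$, using that $F\in C^2_{\alpha,-\sigma}$ maps $\HH_\theta\to\HH_{\theta-\sigma}$ for all $\theta\ge\alpha$, and handling the commutator $\rVIII$ via the extra regularity $Y_s\in\HH_{\alpha+2\gamma}$---is precisely what is required.
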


\subsection{Stability of integration and composition} \label{stability}

First we will give a meaning to the ``distance'' between two controlled paths that are controlled by two different rough paths. Then with the notion of these two distances we will state the continuity of two maps: integration and composition.
\begin{defn}
	For $(Y,Y') \in \cD^{2\gamma}_{S,X}([0,T],\HH^m_\alpha)$ and $(V,V') \in \cD^{2\gamma}_{S,\tilde{X}}([0,T],\HH^m_\alpha)$ define a distance:
	\begin{equ}
		d_{X,\tilde{X},2\gamma,\alpha}(Y,V) = \| Y'-V'\| _{\gamma,\alpha} + |R^Y-R^V|_{2\gamma,\alpha}.
	\end{equ}
	We also measure the distance between two functions $(Y,Y') \in \cD^{2\gamma,\beta,\eta}_{S,X}([0,T],\HH^m_\alpha)$ and $(V,V') \in \cD^{2\gamma,\beta,\eta}_{S,\tilde{X}}([0,T],\HH^m_\alpha)$ with:
	\begin{equ} 	
	d_{2\gamma,\beta,\eta}(Y,V) = \|Y'-V'\|_{\infty,\alpha+\beta} +\|Y-V\|_{\eta,\alpha+\beta}+ d_{X,\tilde{X},2\gamma,\alpha}(Y,V).
	\end{equ}
	We make an abuse of notation by not writing dependence of $d_{X,\tilde{X},2\gamma}$ and $d_{2\gamma,\beta,\eta}$ on $Y'$ and $V'$.
\end{defn}

For the next two lemmas we are going to assume with $X,\tilde{X},Y,V$ as above that there exists $M>0$ such that 
$|X|_\gamma,|\X|_{2\gamma},\| (Y,Y')\| _{\cD^{2\varepsilon,2\gamma,\eta}_{S,X}} < M$ and the same is true for $\tilde{X}$ and $V$. 
We are not presenting the proofs of the following stability results since the ideas are exactly the same as in the proofs 
of their analogues Theorems 4.16 and 7.5 from \cite{friz}. The modifications needed for our case only involve replacing
the sewing lemma by Lemma~\ref{sewing} and exploiting the fact that the regularity assumptions on $F$ yield control
on commutators of the type $F(S_{t-s}Y_s)-S_{t-s}F(Y_s)$. These modifications were already used to similar effect 
in the proofs of Theorem~\ref{integral} and Lemma~\ref{compos1}.
\begin{lem} \label{stab_int}
	 Let $1/3 < \varepsilon \leq \gamma \leq 1/2$, $0\leq \eta < 3\varepsilon - 2\gamma$ and  $\XX,\tilde{\XX} \in \cC^\gamma([0,T],\R^d)$. Consider $(Y,Y')\in \cD^{2\varepsilon,2\gamma,0}_{S,X}([0,T],\HH_\alpha^d)$ and
	$(V,V')\in \cD^{2\varepsilon,2\gamma,0}_{S,\tilde{X}}([0,T],\HH_\alpha^d)$ that both satisfy the bounds with respect to $M$ as above. Define 
	$$(Z,Z') := \Big(	\int_0^\cdot S_{\cdot-u}Y_udX_u, Y\Big), $$
	and similarly $(W,W')$ as a rough integral of $(V,V')$. Then the following local Lipschitz estimates are true: 
	\begin{equs} 
	d_{X,\tilde{X},2\varepsilon,\alpha}(Z,W) &\lesssim_M \varrho_\gamma(\XX,\tilde{\XX}) + \| Y'_0-V'_0\|_{\HH_\alpha} +d_{X,\tilde{X},2\varepsilon,\alpha}(Y,V)T^{\gamma-\varepsilon}.\qquad  \label{e:estabInt1}\\
	\| Z-W\| _{\eta,\alpha+2\gamma} &\lesssim_M \varrho_\gamma(\XX,\tilde{\XX})  + \| Y_0-V_0\|_{\HH_{\alpha+2\gamma}} \label{e:estabInt2} \\
	&\quad + \| Y'_0-V'_0\|_{\HH_{\alpha+2\gamma}} +d_{2\varepsilon,2\gamma,0}(Y,V)T^{\gamma-\varepsilon}\;,
	\end{equs}
	with the underlying $T$-dependent constants uniform for $T\leq 1$.
\end{lem}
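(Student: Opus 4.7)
The strategy is to imitate the proof of Theorem~\ref{integral}, but applied to the \emph{difference} of the two integrands and integrators simultaneously. Introduce the germs
\[
\Xi^{Y,X}_{v,u} = S_{v-u}\bigl(Y_u X_{v,u} + Y'_u \X_{v,u}\bigr), \qquad \Xi^{V,\tilde X}_{v,u} = S_{v-u}\bigl(V_u \tilde X_{v,u} + V'_u \tilde \X_{v,u}\bigr),
\]
so that by Theorem~\ref{integral} one has $\hat\delta Z_{t,s} = \II\Xi^{Y,X}_{t,s}$ and $\hat\delta W_{t,s} = \II\Xi^{V,\tilde X}_{t,s}$, and set $\Delta\Xi := \Xi^{Y,X} - \Xi^{V,\tilde X}$. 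Using the identity already established for Theorem~\ref{integral}, namely $\hat\delta\Xi^{Y,X}_{v,m,u} = -S_{v-m}\bigl(R^Y_{m,u} X_{v,m} + \hat\delta Y'_{m,u} \X_{v,m}\bigr)$, and telescoping via
\[
R^Y_{m,u} X_{v,m} - R^V_{m,u} \tilde X_{v,m} = (R^Y - R^V)_{m,u} X_{v,m} + R^V_{m,u}(X_{v,m} - \tilde X_{v,m})
\]
(and analogously for the $\X$-level term), I would verify that $\hat\delta \Delta\Xi$ factors as $S_{v-m}\tilde\Xi$ with $\tilde\Xi$ satisfying~\eqref{e:sewing2} for $\mu = 2\varepsilon+\gamma > 1$ and
\[
M' \lesssim_M \varrho_\gamma(\XX,\tilde\XX) + d_{X,\tilde X,2\varepsilon,\alpha}(Y,V).
\]

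For~\eqref{e:estabInt1} I then apply Lemma~\ref{sewing} with $\beta=0$, obtaining $\|\II\Delta\Xi - \Delta\Xi\|_{\HH_\alpha} \lesssim M' |t-s|^{2\varepsilon+\gamma}$. Since
\[
R^Z_{t,s} - R^W_{t,s} = (\II\Delta\Xi - \Delta\Xi)_{t,s} + S_{t-s}\bigl(Y'_s \X_{t,s} - V'_s \tilde\X_{t,s}\bigr),
\]
the remaining term is bounded directly in $\HH_\alpha$ by $|t-s|^{2\gamma}\bigl(\|Y'-V'\|_{\infty,\alpha}+\varrho_\gamma(\XX,\tilde\XX)\bigr)$; dividing by $|t-s|^{2\varepsilon}$ absorbs excess exponents into $T^{\gamma-\varepsilon}$. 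The Gubinelli-derivative piece $\|Z'-W'\|_{\gamma,\alpha} = \|Y-V\|_{\gamma,\alpha}$ is obtained by expanding $\hat\delta(Y-V)_{t,s} = S_{t-s}(Y'_s-V'_s)X_{t,s} + S_{t-s} V'_s(X_{t,s}-\tilde X_{t,s}) + (R^Y-R^V)_{t,s}$ and bounding each summand by the same quantities (upgrading $\|Y'_s-V'_s\|_{\HH_\alpha}$ to $\|Y'_0-V'_0\|_{\HH_\alpha} + T^\gamma \|Y'-V'\|_{\gamma,\alpha}$). For~\eqref{e:estabInt2} I instead invoke Lemma~\ref{sewing} with $\beta = 2\gamma$, which is allowed because the factorisation $\hat\delta\Delta\Xi = S_{v-m}\tilde\Xi$ was built into the construction. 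This gives $\|(\II\Delta\Xi - \Delta\Xi)_{t,s}\|_{\HH_{\alpha+2\gamma}} \lesssim M' |t-s|^{2\varepsilon+\gamma-2\gamma}$, and since $\eta < 3\varepsilon-2\gamma \leq 2\varepsilon - \gamma$ there is room to spare. The germ $\Delta\Xi_{t,s}$ itself is estimated in $\HH_{\alpha+2\gamma}$ directly, using $Y,V \in L^\infty([0,T],\HH_{\alpha+2\gamma})$ and $Y',V'\in L^\infty([0,T],\HH^d_{\alpha+2\gamma})$ (part of the $\cD^{2\varepsilon,2\gamma,0}_{S,X}$-norm), which produces the $\|Y_0-V_0\|_{\HH_{\alpha+2\gamma}}$ and $\|Y'_0-V'_0\|_{\HH_{\alpha+2\gamma}}$ contributions together with the $d_{2\varepsilon,2\gamma,0}(Y,V)\, T^{\gamma-\varepsilon}$ term.

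The main obstacle is the bookkeeping that separates the $(Y,V)$-dependent terms from the $(\XX,\tilde\XX)$-dependent terms so that precisely the former pick up the sharp smallness factor $T^{\gamma-\varepsilon}$ (by trading the $(2\varepsilon+\gamma)$-regularity delivered by the sewing lemma against the $2\varepsilon$-regularity in which $R^Z-R^W$ is measured, and similarly $2\varepsilon-\gamma$ versus $\eta$), while the $\varrho_\gamma(\XX,\tilde\XX)$-contributions are allowed to carry a constant uniform in $T\leq 1$. Beyond this, all estimates reduce to the same commutator-type manipulations and semigroup smoothing bounds~\eqref{semigroup} already exploited in the proofs of Theorem~\ref{integral} and Lemma~\ref{compos1}, so no genuinely new technical tool is required.
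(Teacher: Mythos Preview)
Your proposal is correct and follows exactly the route the paper indicates: the paper omits the proof and states that it is obtained by mimicking \cite[Thms~4.16, 7.5]{friz} with the classical sewing lemma replaced by Lemma~\ref{sewing}, which is precisely your strategy of applying $\II$ to the difference germ $\Delta\Xi$ and telescoping. One small clarification: the commutator bounds $F(S_{t-s}Y_s)-S_{t-s}F(Y_s)$ that you mention at the end are needed only for the composition stability (Lemma~\ref{stab_comp}), not for integration; here the semigroup interacts with the integrand only through $S_{v-m}$, which is handled entirely by the factorisation hypothesis~\eqref{e:sewing2} of the sewing lemma.
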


It may look like we are far from obtaining the stability result in the same H\"{o}lder regularity as our rough path $X$, but here $\varepsilon$ can be taken arbitrarily close to $\gamma$ which itself allows to take $\eta$ arbitrarily close to $\varepsilon$. 
Note also that inequality~(\ref{e:estabInt1}) is true in spaces $\cD^{2\varepsilon}_{S,X}$ and not just in $ \cD^{2\varepsilon,2\gamma,\varepsilon}_{S,X}$.

\begin{lem} \label{stab_comp}
	Let $\XX,\tilde{\XX}$, $1/3 < \varepsilon \leq \gamma \leq 1/2$ and $\eta \in [0,1]$. Let $(Y,Y')\in \cD^{2\varepsilon,2\gamma,\eta}_{S,{X}}([0,T],\HH_\alpha)$ and
	$(V,V')\in \cD^{2\varepsilon,2\gamma,\eta}_{S,\tilde{X}}([0,T],\HH_\alpha)$ satisfy the bounds with respect to $M$ as above. Let $\sigma \geq 0$ and $F \in C^3_{\alpha,-\sigma}(\HH,\HH^d)$. Define 
	$$(Z,Z') := (	F(Y), DF(Y)\circ Y')\quad  \text{and} \quad (W,W') :=(	F(V), DF(V)\circ V').$$
	Then the following local Lipschitz estimates are true: 
	\begin{equ} 
	d_{2\varepsilon,2\gamma,0}(Z,W) \lesssim_M \varrho_\gamma(\XX,\tilde{\XX}) + \| Y_0-V_0\|_{\HH_{\alpha+2\gamma}} + d_{2\varepsilon,2\gamma,\eta}(Y,V)\;. \label{e:stab_comp1}
	\end{equ}
	Here $d_{2\varepsilon,2\gamma,0}(Z,W)$ contains $\HH_{\alpha-\sigma}$ and $\HH_{\alpha+2\gamma-\sigma}$ spatial norms and $d_{2\varepsilon,2\gamma,\eta}(Y,V)$ contains $\HH_{\alpha}$ and $\HH_{\alpha+2\gamma}$ spatial norms.
\end{lem}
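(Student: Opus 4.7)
The plan is to mirror the decomposition used in the proof of Lemma~\ref{compos1} at the level of differences between the quantities built from $(Y,Y')$ and those built from $(V,V')$. There are three pieces of $d_{2\varepsilon,2\gamma,0}(Z,W)$ to estimate: the supremum differences $\|Z-W\|_{\infty,\alpha+2\gamma-\sigma}$ and $\|Z'-W'\|_{\infty,\alpha+2\gamma-\sigma}$; the H\"{o}lder difference $\|Z'-W'\|_{\varepsilon,\alpha-\sigma}$; and the remainder difference $|R^Z - R^W|_{2\varepsilon,\alpha-\sigma}$.

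The $L^\infty$ estimates come from the first-order expansions
\begin{equ}
Z_t - W_t = \int_0^1 DF\bigl(V_t + \theta(Y_t - V_t)\bigr)(Y_t - V_t)\,d\theta\;,
\end{equ}
and $Z'_t - W'_t = \bigl(DF(Y_t) - DF(V_t)\bigr)Y'_t + DF(V_t)(Y'_t - V'_t)$, together with the $C^3_{\alpha,-\sigma}$ bounds on $F$ on $\HH_{\alpha+2\gamma}$-balls of radius $M$. These are majorised by $\|Y-V\|_{\infty,\alpha+2\gamma} + \|Y'-V'\|_{\infty,\alpha+2\gamma}$, which together with the initial difference $\|Y_0 - V_0\|_{\HH_{\alpha+2\gamma}}$ are bounded by the right-hand side of~\eqref{e:stab_comp1}. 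For the H\"{o}lder bound on $Z' - W'$ I would take the $\rI$--$\rIV$ decomposition of $\ddh Z'$ from the proof of Lemma~\ref{compos1} and subtract its analogue for $W'$. Each pairwise difference $\rI_Y - \rI_W,\ldots,\rIV_Y - \rIV_W$ is split further by inserting one mixed term at a time, interchanging $Y \leftrightarrow V$, $Y' \leftrightarrow V'$, or $DF(Y) \leftrightarrow DF(V)$. The resulting estimates pick up either $\|Y'-V'\|$, $\|Y-V\|$, or a difference of $DF$ which is controlled by $D^2F$ times $\|Y - V\|$; all of these are pieces of $d_{2\varepsilon,2\gamma,\eta}(Y,V)$ or $\|Y_0 - V_0\|_{\HH_{\alpha+2\gamma}}$.

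For the remainder difference I would follow the $\rV$--$\rVIII$ decomposition of $R^Z$ from Lemma~\ref{compos1}. The contribution $\rVI_Y - \rVI_W$ recombines the $\rII$--$\rIV$ estimates with the difference $DF(Y_t)R^Y_{t,s} - DF(V_t)R^W_{t,s}$; the rough path metric $\varrho_\gamma(\XX,\tilde{\XX})$ enters through the factors $X_{t,s}$ and $\X_{t,s}$ that cannot be cancelled against $\tilde{X}_{t,s}$ and $\tilde{\X}_{t,s}$. The Taylor term $\rVII_Y - \rVII_W$ is rewritten in the second-order integral form
\begin{equ}
F(a) - F(b) - DF(a)(a-b) = \int_0^1 (1-\theta)\,D^2F\bigl(a + \theta(b-a)\bigr)(a-b,a-b)\,d\theta\;,
\end{equ}
and the quadratic argument $(Y_t - S_{t-s}Y_s)^{\otimes 2}$ (respectively $(V_t - S_{t-s}V_s)^{\otimes 2}$) is polarised: one factor supplies the $|t-s|^\gamma$ regularity while the other absorbs the combination $d_{2\varepsilon,2\gamma,\eta}(Y,V) + \|Y_0 - V_0\|_{\HH_{\alpha+2\gamma}} + \varrho_\gamma(\XX,\tilde{\XX})$. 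Finally, the commutator $\rVIII_Y - \rVIII_W$ is reorganised as
\begin{equ}
\bigl(F(S_{t-s}Y_s) - F(S_{t-s}V_s) - F(Y_s) + F(V_s)\bigr) - \tilde{S}_{t-s}\bigl(F(Y_s) - F(V_s)\bigr)\;,
\end{equ}
and bounded using the semigroup smoothing $\|\tilde{S}_{t-s}g\|_{\HH_{\alpha-\sigma}} \lesssim |t-s|^{2\gamma}\|g\|_{\HH_{\alpha+2\gamma-\sigma}}$ with $g = F(Y_s) - F(V_s)$, together with Taylor's theorem applied to the mixed-argument piece.

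The main obstacle is the bookkeeping: the target spatial regularity $\alpha - \sigma$ for $R^Z - R^W$ lies two scales below the $\alpha + 2\gamma$ at which the $L^\infty$ information on $Y$ and $V$ is available, so at each subterm one has to verify that the commutator estimate and the Taylor remainder jointly deliver the required $|t-s|^{2\varepsilon}$ regularity at the correct spatial scale. Apart from this, the argument reduces to a line-by-line adaptation of \cite[Theorem~7.5]{friz} with the semigroup commutator estimate substituted in wherever $S_{t-s}$ appears.
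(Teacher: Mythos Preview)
Your proposal is correct and matches the approach the paper itself indicates: the paper omits the proof, pointing instead to \cite[Theorem~7.5]{friz} with the modification that the commutator bounds $F(S_{t-s}Y_s) - S_{t-s}F(Y_s)$ from the proof of Lemma~\ref{compos1} must be inserted wherever the semigroup appears. Your plan of running the $\rI$--$\rVIII$ decomposition at the level of differences, with the rough path metric $\varrho_\gamma(\XX,\tilde\XX)$ entering through the uncancelled $X$ versus $\tilde X$ factors, is precisely this adaptation.
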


\section{Rough PDEs} \label{RPDEs}

We now use the results obtained in the previous section to solve RPDEs in the Hilbert space $\HH$.
First we consider equations without non-linear drift of the type
\begin{equ} 
	dY_t = LY_tdt + F(Y_t)dX_t\quad  \text{and}\quad  Y_0 = \xi \in \HH.
\end{equ}
Here $L$ is as above, $F$ is a $C^3$ function on $\HH$ and $\XX = (X, \X) \in \cC^\gamma(\R_+,\R^d)$ (meaning $|X|_{\gamma, [S,T]}$ and $|\X|_{2\gamma, [S,T]}$ are finite for all intervals $[S,T]$).

We will show that Lemma~\ref{compos1} and Theorem~\ref{integral} guarantee that if $(Y,Y') \in \DD^{2\gamma}_{X}([0,T],\HH)$, then 
\begin{equ} 
\MM_T(Y,Y')_t  :=  \Big(S_t\xi + \int_0^t S_{t-u}F(Y_u)d\XX_u,\, F(Y_t)\Big) \label{solnMap1}
\end{equ}
yields again an element of $\DD^{2\gamma}_{X}([0,T],\HH)$.
We now show that for $T$ small enough this map has a unique fixed point:
\begin{thm}[Rough Evolution Equation] \label{RPDE1}
	Given $\xi \in \HH$, $F \in C^3_{-2\gamma,0}(\HH,\HH^d)$ and $\XX = (X, \X) \in \cC^\gamma(\R_+,\R^d)$, there exists $\tau >0$ and a unique element $(Y,Y') \in \DD^{2\gamma}_{X}([0,\tau),\HH)$ such that $Y' = F(Y)$ and
\begin{equ}[e:MildSoln]
	Y_t = S_t\xi + \int_0^t S_{t-u}F(Y_u)d\XX_u\, , \quad \, t<\tau.
\end{equ}
\end{thm}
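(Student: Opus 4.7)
The plan is a Banach fixed-point argument for the solution map $\MM_\tau$ defined in~\eqref{solnMap1}, applied to a closed ball in $\DD^{2\gamma}_X([0,\tau],\HH) = \cD^{2\gamma,2\gamma,\gamma}_{S,X}([0,\tau],\HH_{-2\gamma})$ for a small enough $\tau>0$, following the standard Picard scheme for rough equations (cf.~\cite[Thm.~8.4]{friz}) adapted to the semigroup-controlled setting.

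The first step is to check that $\MM_\tau$ preserves $\DD^{2\gamma}_X$. Given $(Y,Y')\in\DD^{2\gamma}_X([0,\tau],\HH)$, Lemma~\ref{compos4} with $\alpha=-2\gamma$ and $\sigma=0$ yields $(F(Y),DF(Y)Y')\in\cD^{2\gamma,2\gamma,0}_{S,X}([0,\tau],\HH^d_{-2\gamma})$, so Theorem~\ref{integral} produces the rough integral $I$ with $(I,F(Y))\in\cD^{2\gamma}_{S,X}$. Estimate~\eqref{e:integral2} with $\beta=2\gamma$, combined with the fact that both $F(Y)$ and $DF(Y)Y'$ are bounded in $L^\infty_t\HH^d$ (this is precisely the extra information encoded in the $(2\gamma,\gamma)$ indices of $\DD^{2\gamma}_X$), then upgrades $I$ to $\hat\CC^\gamma\HH$. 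The free-evolution term $S_\cdot\xi$ has vanishing $\ddh$-increments and therefore contributes zero to all the relevant seminorms; it only fixes the initial datum $(\xi,F(\xi))$.

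Combining the quadratic bound of Lemma~\ref{compos4} with the linear bound~\eqref{e:integral3}, one obtains an estimate of the form $\|\MM_\tau(Y,Y')\|_{\DD^{2\gamma}_X}\lesssim_{F,\XX}(1+\|\xi\|_\HH)+(1+\|(Y,Y')\|_{\DD^{2\gamma}_X})^2$, uniformly in $\tau\in(0,1]$, which singles out a radius $R$ so that the affine ball $B_R\subset\DD^{2\gamma}_X$ of controlled rough paths with prescribed initial datum $(\xi,F(\xi))$ is mapped into itself. For contraction, I would embed $\DD^{2\gamma}_X\hookrightarrow\cD^{2\varepsilon,2\gamma,0}_{S,X}$ for some $\varepsilon\in(1/3,\gamma)$ and apply Lemma~\ref{stab_comp} to the non-linearity and Lemma~\ref{stab_int} to the rough integration. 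For two candidates sharing the same initial datum the boundary terms in~\eqref{e:estabInt1}--\eqref{e:estabInt2} drop out, giving
\begin{equ}
d_{2\varepsilon,2\gamma,0}\bigl(\MM_\tau(Y^1),\MM_\tau(Y^2)\bigr)\le C_{R,\XX}\,\tau^{\gamma-\varepsilon}\,d_{2\varepsilon,2\gamma,0}(Y^1,Y^2),
\end{equ}
so that $\MM_\tau$ is a strict contraction in the weaker metric once $\tau$ is small enough.

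An application of Banach's theorem on $B_R$ equipped with this weaker metric (on which $B_R$ remains complete because the stronger $\DD^{2\gamma}_X$-norm is lower semicontinuous under pointwise convergence) yields a unique fixed point $(Y,Y')\in\DD^{2\gamma}_X([0,\tau],\HH)$, which by construction solves~\eqref{e:MildSoln}. The main technical obstacle is precisely the mismatch between the space in which the contraction estimates live \dash $\cD^{2\varepsilon,2\gamma,0}_{S,X}$ with $\varepsilon<\gamma$ strict, as dictated by the $\tau^{\gamma-\varepsilon}$ factor in Lemma~\ref{stab_int} \dash and the stronger target space $\DD^{2\gamma}_X$; running the contraction in the weaker metric on a ball that is closed in the stronger norm is the standard workaround, exactly as in the finite-dimensional rough ODE case.
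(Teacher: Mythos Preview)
Your contraction step is fine, but the invariance step has a genuine gap. The bound you write down,
\[
\|\MM_\tau(Y,Y')\|_{\DD^{2\gamma}_X}\lesssim_{F,\XX}(1+\|\xi\|_\HH)+(1+\|(Y,Y')\|_{\DD^{2\gamma}_X})^2,
\]
is indeed uniform in $\tau\in(0,1]$, but that is precisely the problem: with no small parameter, the inequality $C(1+\|\xi\|)+C(1+R)^2\le R$ has no solution $R>0$ unless $C$ and $\|\xi\|$ happen to be small. H\"older seminorms at the critical exponent do not shrink as the time interval shrinks, so working directly at level $\gamma$ you never extract a factor of $\tau$ from~\eqref{e:integral3}; the term $\|F(Y)\|_{\gamma,-2\gamma}$ coming from $Z'=F(Y)$ is of order one, and so is $\|Y'\|_{\infty}|\X|_{2\gamma}$ in the remainder. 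Your strong-ball\slash weak-contraction scheme therefore fails at the very first step, since invariance must hold in the \emph{strong} norm.

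The paper resolves this by running the whole fixed-point argument at a strictly lower level $\varepsilon<\gamma$, in the space $\DD^{2\varepsilon}_X([0,T],\HH_{2\varepsilon-2\gamma})$, where the factor $\varrho_\varepsilon(\XX)\lesssim T^{\gamma-\varepsilon}\varrho_\gamma(\XX)$ furnishes smallness for both invariance and contraction (this is exactly what \cite[Thm~8.4]{friz} does in the ODE case as well). The ball is moreover centred at the explicit tangent $(S_t\xi+S_tF(\xi)X_{t,0},\,S_tF(\xi))$, which has vanishing $\cD^{2\varepsilon}_{S,X}$-seminorm. Only \emph{after} obtaining the fixed point does the paper bootstrap the regularity: using the mild representation~\eqref{e:more_reg1}--\eqref{e:more_reg2} together with~\eqref{e:integral2} at $\beta=2\gamma$, one reads off that the solution already constructed actually lies in $\DD^{2\gamma}_X([0,T],\HH)$. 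So the target space $\DD^{2\gamma}_X$ is reached a posteriori, not by a direct fixed-point argument.
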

\begin{proof}
	First note $\XX = (X, \X) \in \cC^\gamma \subset \cC^\varepsilon$ for $1/3 < \varepsilon < \gamma \leq 1/2$. Let $T<1$ we will find a solution $(Y,Y') \in \DD^{2\varepsilon}_{X}([0,T],\HH_{2\varepsilon-2\gamma})$ as a fixed point of the map $\MM_T$ given by~(\ref{solnMap1}). Then in the end we will briefly describe that one can actually make an improvement and show that $(Y,Y') \in \DD^{2\gamma}_{X}([0,T],\HH)$. 
	The proof is analogous to \cite[Thm~8.4]{friz}, the  only difference being that we have two different scales of space regularity for which we need to be able to obtain the bound~(\ref{e:compos1}). We will therefore show only invariance of the solution map~(\ref{solnMap1}), because proving it already contains all the techniques that are not present in the \cite[Thm~8.4]{friz}.
	
	Any semi-norm $\|\cdot\|_{X,2\varepsilon}$ will be taken in the $\HH_{-2\gamma}$ space so sometimes we won't indicate this.
	Note that if $(Y,Y')$ is such that $(Y_0,Y'_0) = (\xi,F(\xi))$ then the same is true for $\MM_T(Y,Y')$. We can therefore view $\MM_T$ as a map on the complete metric space: $$\{(Y,Y') \in \DD^{2\varepsilon}_{X}: \, Y_0 = \xi,\; Y'_0 = F(\xi)\}.$$
	This is also true for the closed unit ball $B_T$ centred at $t \to (S_t\xi + S_tF(\xi) X_{t0},S_tF(\xi) )\in \DD^{2\varepsilon}_{X}([0,T],\HH_{2\varepsilon-2\gamma})$. One can show using $\| (S_\cdot \xi+S_\cdot F(\xi) X_{\cdot0},S_\cdot F(\xi) )\| _{X,2\varepsilon,-2\gamma}$ $= 0$ (since that $\ddh (S_\cdot\xi)_{t,s} = 0$) that in fact:
\begin{equs}
B_T &= \{(Y,Y') \in \DD^{2\varepsilon}_{X}([0,T],\HH_{2\varepsilon-2\gamma}): \, Y_0 = \xi,\; Y'_0 = F(\xi), \\
&\qquad \|Y -S_\cdot F(\xi) X_{\cdot0}\|_{\varepsilon,2\varepsilon-2\gamma}  +\|Y' - S_\cdot F(\xi)\|_{\infty,2\varepsilon-2\gamma}+ \| (Y, Y' )\| _{X,2\varepsilon} \leq 1\}\;.
\end{equs}
	Note that by the triangle inequality for $(Y,Y') \in B_T$ we have $$\|(Y,Y')\|_{\DD^{2\varepsilon}_{X}}  \lesssim (1 +\|\xi\| + \|F(\xi)\|)(1+|X|_\gamma).$$
	It remains to show that for $T$ small enough $\MM_T$ leaves $B_T$ invariant and is contracting there, so that the claim follows
	from the Banach fixed point theorem. Constants below denoted by $C$ may change from line to line and depend on $\gamma,\varepsilon,X, \X$ and $\xi$ without mentioning. Nevertheless they are uniform in $T \in (0,1]$. Without loss of generality we assume that $F$ is $C^3_b$, since by definition of $C^3_{-2\gamma,0}(\HH,\HH^d)$ function $F$ sends bounded sets to bounded sets, which is the case for us since for $(Y,Y') \in B_T$, both $|Y|_{\infty,2\varepsilon-2\gamma}$ and $|Y'|_{\infty,2\varepsilon-2\gamma}$ are uniformly bounded by a constant depending on $\xi$.
	For $(Z_t,Z'_t) = (F(Y_t),DF(Y_t)\circ Y'_t)$ we have by Lemma~\ref{compos1}  
	$$\| (Z,Z') \| _{X,2\varepsilon} \leq C_F (1+\| (Y,Y')\| _{\DD^{2\varepsilon}_{X}})^2\leq C_F(1+\| \xi\| +\| F(\xi)\|)^2 \leq C_{F,\xi}\;,$$ 
	and from~(\ref{e:integral3}): 
	\begin{equs}
		\| \MM_T(Y, Y')\| _{X,2\varepsilon} &= \Big\| \Big(\int_0^\cdot S_{\cdot-u}Z_udX_u, Z\Big)  \Big\| _{X,2\varepsilon} \\
		&\lesssim \| Z\| _{\varepsilon,-2\gamma} +(\|Z'_0\|_{\HH_{-2\gamma}}+\| (Z,Z') \| _{X,2\varepsilon,-2\gamma}) \varrho_\varepsilon(X)\\
		&\lesssim \| Z\| _{\varepsilon,-2\gamma} + (\|Z'_0\|_{\HH_{-2\gamma}}+\| (Z,Z') \| _{X,2\varepsilon})T^{\gamma-\varepsilon}.
	\end{equs}
	Since $(Y,Y') \in B_T$, we obtain from~(\ref{controlled}) that $\| Y\| _{\varepsilon,-2\gamma} \leq (|X|_\gamma+1)T^{\gamma-\varepsilon}$. One can also show along the same lines as in Lemma~\ref{compos1} that 
	\begin{equs}
		\| \ddh Z_{t,s}\| _{\HH^d_{-2\gamma}} &\lesssim C_F \| \ddh Y_{t,s}\| _{\HH_{-2\gamma}}+C_F\| S_{t-s}Y_s - Y_s\| _{\HH_{-2\gamma}} + |t-s|^{2\varepsilon}\| F(Y_s)\| _{\HH_{2\varepsilon-2\gamma}}\\
		&\lesssim C_F\big(T^{\gamma-\varepsilon}|t-s|^\varepsilon + |t-s|^{2\varepsilon}\| Y_s\| _{\HH_{2\varepsilon-2\gamma}}+T^\varepsilon|t-s|^{\varepsilon}\big)\\ &\lesssim C_{F,\xi}\big(T^{\gamma-\varepsilon}+T^{\gamma+\varepsilon}+T^\varepsilon\big)|t-s|^\varepsilon.
	\end{equs}
	Therefore since $T < 1$ we conclude that $\| Z\| _{\varepsilon,-2\gamma} \lesssim C_{F,\xi}T^{\gamma-\varepsilon}$, where 
	$C_{F,\xi}$ is a constant that also depends on initial condition.
	
 To estimate $\|\MM_T(Y) -S_\cdot F(\xi) X_{\cdot,0}\|_{\varepsilon,2\varepsilon-2\gamma}$ we use $\ddh (S_\cdot F(\xi) X_{\cdot,0})_{t,s} = S_tF(\xi) X_{t,s}$ and since $2\varepsilon < 1$ we can use a better bound from~(\ref{e:integral2}) to deduce:
	\begin{equs}
		\|\ddh (\MM_T(&Y) - S_\cdot F(\xi) X_{\cdot,0})_{t,s}\|_{\HH_{2\varepsilon-2\gamma}} = \Big\|\int_s^t S_{t-u}F(Y_u)dX_u - S_tF(\xi) X_{t,s}\Big\|_{\HH_{2\varepsilon-2\gamma}}\\
		&\leq (\|F(\xi)\| + \|Z\|_{\infty,-2\gamma})|X|_\varepsilon|t-s|^\varepsilon + \|Z'\|_{\infty,-2\gamma} |\X|_{2\varepsilon}|t-s|^{2\varepsilon} \\ &\qquad+C(|X|_{\varepsilon}|R^Z|_{2\varepsilon}+|\X|_{2\varepsilon}\| Z'\| _{\varepsilon})|t-s|^{3\varepsilon-2\varepsilon} \\
		&\lesssim (\|F(\xi)\|  + |Z'_0|_{\HH_{-2\gamma}} + \| (Z,Z') \| _{X,2\varepsilon}) \varrho_\varepsilon(X)|t-s|^{\varepsilon} \leq C_{F,\xi}T^{\gamma-\varepsilon}|t-s|^\varepsilon.
	\end{equs}
	Finally we estimate the term $\|\MM_T(Y)'_t - S_tF(\xi)\|_{\HH_{2\varepsilon-2\gamma}}$:
	\begin{equs}
		\|\MM_T(Y)_t' &- S_t F(\xi)\|_{\HH_{2\varepsilon-2\gamma}} = \\
		&= \|F(Y_t) -F(S_t\xi) + F(S_t\xi) - F(\xi) + F(\xi)-S_tF(\xi)\|_{\HH_{2\varepsilon-2\gamma}}\\
		&\lesssim_F\|Y_t-S_t\xi\|_{\HH_{2\varepsilon-2\gamma}} + \|S_t\xi-\xi\|_{\HH_{2\varepsilon-2\gamma}} + \|F(\xi)-S_tF(\xi)\|_{\HH_{2\varepsilon-2\gamma}}\\
		&\lesssim_F \|Y_t-S_t\xi-S_tF(\xi) X_{t,0}\|_{\HH_{2\varepsilon-2\gamma}}
		+\|F(\xi)\| |X|_\gamma T^\gamma \\
		&\qquad +t^{2\gamma-2\varepsilon}\|\xi\|+t^{2\gamma-2\varepsilon}\|F(\xi)\|\\
		&\lesssim_{F,\xi}(\|Y -S_\cdot F(\xi) X_{\cdot,0}\|_{\varepsilon,2\varepsilon-2\gamma}T^\varepsilon +T^\gamma+ T^{2\gamma-2\varepsilon})\leq C_{F,\xi}T^{\gamma-\varepsilon}.
	\end{equs}
	Putting it all together we can get that
	\begin{equs}
		\|\MM_T(Y) -S_\cdot F(\xi) X_{\cdot,0}\|_{\varepsilon,2\varepsilon-2\gamma}  +&\|\MM_T(Y)' - S_\cdot F(\xi)\|_{\infty,2\varepsilon-2\gamma}+\| \MM_T(Y,Y')\| _{X,2\varepsilon}\\
		&\lesssim C_{F,\xi}T^{\gamma-\varepsilon}.
	\end{equs}
	If $T$ is small enough we guarantee that the left hand side of the above expression is smaller than $1$, thus proving that $B_T$ is invariant under $\MM_T$.
	In order to show contractivity of $\MM_T$, one can use analogous steps to first show
	\begin{equ}
		\| \MM_T(Y, Y')-\MM_T(V, V')\| _{\DD^{2\varepsilon}_{X}} \leq C_{F,\xi} \| (Y-V,Y'-V')\| _{\DD^{2\varepsilon}_{X}}T^{\gamma-\varepsilon}.
	\end{equ} 
This guarantees contractivity for small enough $T$, completing the fixed point argument and thus showing the existence of 
the unique maximal solution to~\eqref{e:MildSoln}.

	Let now $(Y,Y') \in {\DD^{2\varepsilon}_{X}}([0,T],\HH_{2\varepsilon-2\gamma})$ be the solution constructed above, we sketch an argument 
	showing that in fact it belongs to ${\DD^{2\gamma}_{X}}([0,T],\HH)$. We know that 
	\begin{equs} 
		Y_t &= S_t\xi + S_tF(\xi) X_{t,0}+S_tDF(\xi)F(\xi)+R_{t,0},\label{e:more_reg1}\\ 
		Y_t - S_{t-s}Y_s &= S_{t-s}F(Y_s) X_{t,s}+S_{t-s}DF(Y_s)F(Y_s)\X_{t,s}+R_{t,s}. \label{e:more_reg2}
	\end{equs}
	Here $R_{t,s} = \int_s^tS_{t-r}F(Y_r)dX_r - S_{t-s}F(Y_s) X_{t,s}-S_{t-s}DF(Y_s)F(Y_s)\X_{t,s}$. From the  estimate on $R_{t,0}$ using~(\ref{e:integral2}) and since $\xi \in \HH$, we see that~(\ref{e:more_reg1}) implies $Y \in  L^\infty([0,T],\HH)$. Moreover~(\ref{e:more_reg2}) implies  $Y\in \hat{\CC}^\gamma([0,T],\HH_{-2\gamma})$ which, together with $Y\in L^\infty([0,T],\HH)$, implies $F(Y) \in \hat{\CC}^\gamma([0,T],\HH^d_{-2\gamma})\cap L^\infty([0,T],\HH^d_{2\varepsilon-2\gamma})$. This itself implies that $(Y,F(Y)) \in {\cD^{2\gamma}_{S,X}}([0,T],\HH_{-2\gamma})$ (using again~(\ref{e:more_reg2})) and $(F(Y),DF(Y)F(Y)) \in {\cD^{2\gamma}_{S,X}}([0,T],\HH_{-2\gamma})$ which enables us to get an estimate for every  $\beta < 3\gamma$:
	$$\|R_{t,s}\|_{\HH_{\alpha+\beta}} \lesssim_X \|F(Y),DF(Y)F(Y)\|_{X,2\gamma}|t-s|^{3\gamma - \beta}.$$
	Taking $\beta = 2\gamma$ and using~(\ref{e:more_reg2}) again we show that  $Y\in \hat{\CC}^\gamma([0,T],\HH)$, which completes the
proof that $(Y,Y') \in {\DD^{2\gamma}_{X}}([0,T],\HH)$. 
\end{proof}

For $N$ satisfying the same assumptions as the nonlinearities $F_i$ in Theorem~\ref{RPDE1}, we immediately get 
local solutions to equations of the type 
$$du_t = Lu_tdt + N(u_t)dt + \sum_{i=1}^d F_i(u_t)dX^i_t,$$
 for the rough path $X_t = (X^1_t,\ldots,X^d_t) \in \cC^\gamma([0,T],\R^d)$ for  $\gamma \in (1/3,1/2]$. This is because we can simply treat this equation as driven by the rough path $\tilde{X}_t = (X^1_t,\ldots,X^d_t,t)$. However, we can do a bit better than that and obtain weaker assumptions on $N$.
\begin{defn}
Let $k,n \in \N_0$, we call a function $N \in C^k_{\alpha,\beta}(\HH)$ to be of polynomial type $n$ and write $N \in \Poly^{k,n}_{\alpha,\beta}(\HH)$ if for all $\sigma\geq\alpha$ and $0 \leq i \leq k $ there exists $C_{\sigma,i}>0$ such that for all $x,y \in \HH_\sigma$
\begin{equ}
	\| D^iN(x)-D^iN(y)\|_{\LL(\HH^{\otimes i}_\sigma, \HH_{\sigma+\beta})} \leq C_{\sigma,i} \| x-y\|_\sigma(1+\| x\|_\sigma+\| y\|_\sigma)^{n-i-1}\;.
\end{equ}
\end{defn}
\begin{thm}[Rough Nonlinear PDE] \label{RPDE2}
	Let $\gamma \in (1/3,1/2]$ and $\XX = (X, \X) \in \cC^\gamma(\R_+,\R^d)$. Then, given $\xi \in \HH$, $F \in C^3_{-2\gamma,0}(\HH,\HH^d)$, and $N \in \Poly^{0,n}_{-\kappa,-\delta}(\HH)$
	for some $n\geq1$, some $1-\delta > \gamma$ and some small $\kappa > 0$, there exists $\tau >0$ a unique element $(u,u') \in \DD^{2\gamma}_{X}([0,\tau),\HH)$ such that $u' = F(u)$ and
	\begin{equ} 
	u_t = S_t\xi + \int_0^t S_{t-r}N(u_r)dr+ \int_0^t S_{t-r}F(u_r)dX_r\, , \quad t<\tau. \label{e:rpde1}
	\end{equ}
	We call such $u_t$ a mild solution to the Rough PDE: 
	\begin{equ} 
	du_t = Lu_tdt +N(u_t)dt + F(u_t)dX_t\quad \text{and}\quad  u_0 = \xi \in \HH. \label{e:rpde2}
	\end{equ}
\end{thm}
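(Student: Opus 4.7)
The plan is to extend the Banach fixed-point argument of Theorem~\ref{RPDE1} by treating the drift as a perturbation that vanishes on short time intervals. Define
\begin{equ}
w(u)_t := \int_0^t S_{t-r}N(u_r)\,dr\;,\qquad
\MM_T(u,u')_t := \Big(S_t\xi + w(u)_t + \int_0^t S_{t-r}F(u_r)\,d\XX_r,\, F(u_t)\Big)\;,
\end{equ}
and view $\MM_T$ as an operator on the closed ball $B_T \subset \DD^{2\varepsilon}_X([0,T],\HH_{2\varepsilon-2\gamma})$ used in the proof of Theorem~\ref{RPDE1}, where $\varepsilon \in (1/3,\gamma)$ is chosen close enough to $\gamma$ so that $\kappa \geq 2\gamma-2\varepsilon$, which is the condition ensuring that $\HH_{2\varepsilon-2\gamma}$ lies in the domain of $N$. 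The semigroup and rough-integral summands of $\MM_T$ are handled verbatim as in Theorem~\ref{RPDE1}, so it suffices to show that $(w(u),0)$ (with vanishing Gubinelli derivative, hence trivial controlled structure) sits in $\DD^{2\varepsilon}_X([0,T],\HH_{2\varepsilon-2\gamma})$ with norm and local Lipschitz constant of order $T^\tau$ for some $\tau>0$.

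Since $\DD^{2\varepsilon}_X \hookrightarrow L^\infty\HH_{2\varepsilon-2\gamma}$, the polynomial growth in the $\Poly^{0,n}$ assumption applied at $\sigma = 2\varepsilon-2\gamma$ yields $\|N(u_r)\|_{\HH_{2\varepsilon-2\gamma-\delta}} \lesssim (1+\|u_r\|_{\HH_{2\varepsilon-2\gamma}})^n$, uniformly bounded on $B_T$ by a constant depending only on $\xi$ and $\XX$. Combining $\hat\delta w(u)_{t,s} = \int_s^t S_{t-r}N(u_r)\,dr$ with the semigroup bound~\eqref{semigroup} (with, when needed, the continuous embedding $\HH_{2\varepsilon-2\gamma-\delta}\hookrightarrow \HH_{-2\gamma}$ which holds for $\varepsilon$ close enough to $\gamma$) gives, for $\theta \in \{-2\gamma,\, 2\varepsilon-2\gamma\}$, a bound of the form
\begin{equ}
\|\hat\delta w(u)_{t,s}\|_{\HH_\theta} \lesssim T^{a(\theta)}(t-s)^{b(\theta)}\;,
\end{equ}
with $b(-2\gamma) = 2\varepsilon$, $b(2\varepsilon-2\gamma) = \varepsilon$, and $a(\theta)>0$. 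The positivity of the $a(\theta)$'s is precisely where the assumption $1-\delta>\gamma$ is used: the one unit of time regularity gained by the $dr$ integration has to absorb the H\"older index of the controlled path space and at most a $\gamma$-worth of semigroup singularity, and $1-\delta > \gamma$ leaves strictly positive budget. This establishes $\|(w(u),0)\|_{\DD^{2\varepsilon}_X} \lesssim T^\tau$.

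Replacing $N(u_r)$ by $N(u_r)-N(\tilde u_r)$ and using the polynomial-type Lipschitz estimate in the definition of $\Poly^{0,n}$ gives, by the same computation,
\begin{equ}
\|(w(u)-w(\tilde u),\,0)\|_{\DD^{2\varepsilon}_X} \lesssim T^\tau\,\|u-\tilde u\|_{\DD^{2\varepsilon}_X}\;,
\end{equ}
uniformly for $u,\tilde u \in B_T$, with constant depending on $\xi$ and $\XX$ only. Adding these perturbations to the invariance and contractivity estimates for the other summands of $\MM_T$ (already proved in Theorem~\ref{RPDE1}), one sees that for $T$ small enough $\MM_T$ maps $B_T$ into itself and is a strict contraction, whence the Banach fixed-point theorem provides a unique solution in $\DD^{2\varepsilon}_X([0,T],\HH_{2\varepsilon-2\gamma})$. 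The bootstrap $\DD^{2\varepsilon}_X([0,T],\HH_{2\varepsilon-2\gamma}) \to \DD^{2\gamma}_X([0,T],\HH)$ at the end of the proof of Theorem~\ref{RPDE1} applies unchanged: the first display evaluated at $\theta=0$ shows $w(u) \in L^\infty([0,T],\HH)$ via $\int_s^t(t-r)^{-\delta}dr = O(T^{1-\delta})$, so $w$ does not interfere with the regularity upgrade.

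The main difficulty is purely one of bookkeeping: one must simultaneously track two scales of spatial regularity in $\cD^{2\varepsilon,2\varepsilon,\varepsilon}_{S,X}(\HH_{-2\gamma})$ and verify that the drift contribution gains a strictly positive power of $T$ in each. The hypothesis $1-\delta > \gamma$ is precisely what guarantees this, while the assumption $\kappa > 0$ is benign and only ensures that $N$ can be evaluated on the slightly less regular spaces naturally appearing in the fixed-point argument.
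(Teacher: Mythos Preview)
Your proposal is correct and follows essentially the same route as the paper's proof. Both treat the drift $w(u)_t=\int_0^t S_{t-r}N(u_r)\,dr$ as a controlled rough path with vanishing Gubinelli derivative, bound $\hat\delta w(u)$ in the two relevant scales via semigroup smoothing and the polynomial growth of $N$, and extract a strictly positive power of $T$ from the condition $1-\delta>\gamma$; the only cosmetic difference is that the paper routes the $\HH_{-2\gamma}$ estimate through an auxiliary exponent $\beta\in(0,\delta]$ rather than invoking the embedding $\HH_{2\varepsilon-2\gamma-\delta}\hookrightarrow\HH_{-2\gamma}$ directly, and your observation that $\kappa\geq 2\gamma-2\varepsilon$ forces the right choice of $\varepsilon$ is exactly the content of the paper's remark that ``$\beta-2\gamma\geq-\kappa$ is possible by an appropriate choice of $\beta$''.
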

\begin{proof}
The proof is almost identical to that of Theorem~\ref{RPDE1} once we can deal with the non-linearity $N$. 
First we take $\varepsilon  \in (1/3,1/2]$ so that $\varepsilon < \gamma$ hence $1-\delta-\varepsilon > 0$
by our assumption on $\delta$. 
For $(u,u') \in B_T$, we show that $V_t = \int_0^t S_{t-r}N(u_r)dr \in \DD^{2\varepsilon}_{X}$. This is possible if we take $V' = 0$ and thus $R^V = \ddh V$. Since the assumption $1-\delta>\gamma$ implies $\delta < 2/3$ it is possible to find $\beta > 0$ such that $\beta \leq \delta $ and $1-2\varepsilon > \delta - \beta$ whence
\begin{equs}
	\| &V_t - S_{t-s}V_s\| _{\HH_{-2\gamma}} = \Big\| \int_s^t S_{t-r}N(u_r)dr\Big\| _{\HH_{-2\gamma}} \\ 
	&\lesssim \int_s^t |t-r|^{\beta-\delta}\| N(u_r)\| _{\HH_{-2\gamma+\beta-\delta}}dr \lesssim |t-s|^{2\varepsilon} T^{1-\delta+\beta-2\varepsilon} (1+\| u\| _{\infty,-2\gamma+\beta})^n\\ 
	&\lesssim C_{F,N,\xi} |t-s|^{2\varepsilon}T^{1+\beta-\delta-2\varepsilon}.
\end{equs}
We have used above that $\beta \leq \delta < 2/3 \leq 2\varepsilon$ and hence: $$\| u\| _{\infty,-2\gamma+\beta} \lesssim \| u\| _{\infty,2\varepsilon-2\gamma} \leq \| \xi\| + \| u\| _{\varepsilon,2\varepsilon-2\gamma} \lesssim C_{F,\xi}.$$ 
Here we need to further impose $\beta-2\gamma \geq -\kappa$ (which is possible by an appropriate choice of $\beta$) 
so that we can evaluate $N(u)$ for $u \in \HH_{\beta-2\gamma}$.
Similarly to above we get $$\| V_t - S_{t-s}V_s\| \lesssim |t-s|^\gamma T^{1-\delta-\gamma + (2\varepsilon-2\gamma) } (1+\| u\|_{\infty,2\varepsilon-2\gamma})^n.$$
The last inequality serves two roles:

First, since $\varepsilon<\gamma$ can be taken arbitrarily close to $\gamma$, it follows from 
$1-\gamma-\delta > 0$ that for some $\sigma>0$ we have
$$\|V\|_{\varepsilon,2\varepsilon-2\gamma}+\|V'\|_{\infty,2\varepsilon-2\gamma}+\| (V,0)\| _{X,2\varepsilon} \leq C_{F,N,\xi}T^\sigma.$$
Together with the invariance estimates established in the proof of Theorem~\ref{RPDE1}, we 
conclude that the fixed point map 
$\MM_T$ leaves $B_T$ invariant for sufficiently small $T$.
This bound also shows that $\|\int_{0}^{\cdot}S_{\cdot -r}N(u_r)dr\|_{\gamma} < \infty$ which is needed to prove that this solution actually lives in $\DD^{2\gamma}_{X}$.

The contractivity of $\MM_T$ is obtained in a similar way, now using the local Lipschitz property of $N$.
\end{proof}
\begin{rem} \label{rem nonlinearity}
Assumption $N \in \Poly^{0,n}_{-\kappa,-\delta}$ leads to a small problem when we want for instance to take $\HH = L^2$ because then $N$ is nonlinear function that needs to act on the space of distributions $\HH_{-\kappa}$. One can actually remove this problem and show the existence of the solution in $\DD^{2\gamma}_{X}$ for $N \in \Poly^{0,n}_{0,-\delta}$. This can be achieved by first solving the equation in the spaces $$ \cD^{2\varepsilon}_{S,X}([0,T],\HH_{-2\gamma})\cap  \big(\hat{\CC}^\eta([0,T],\HH)\times L^\infty([0,T],\HH_{2\varepsilon-2\gamma}^{d})\big),$$ for some $\eta<\varepsilon<\gamma$ and then again show that all the regularities can be improved and that the solution is indeed in $\DD^{2\gamma}_{X}([0,T],\HH)$. We decided to avoid this and not to use even more norms on the different space time scales for simplicity.
\end{rem}

Since we proved that $\cD^{2\gamma,2\gamma,0}_{S,X}=\cD^{2\gamma,2\gamma,0}_{X}$ and since both integration and composition 
with smooth functions preserves $\cD^{2\varepsilon,2\gamma,0}_{X}$, one might ask why not to solve these equations in 
$\cD^{2\gamma,2\gamma,0}_{X}$ or even in $\cD^{2\gamma}_{X}$ in the first place. First if we would solve our equations in 
$\cD^{2\gamma}_{X}([0,T],\HH)$ with initial condition in $\HH$ then we will run into problem of estimating the term 
$\|S_t\xi-S_s\xi\|_{\HH}$.
This term would have to be bounded by $|t-s|^{2\gamma}$ which is not true for general 
$\xi \in \HH$ but true for $\xi \in \HH_{2\gamma}$. This suggests that one must look for the solution in the space 
like $\cD^{2\gamma,2\gamma,0}_{X}$. We believe that this indeed can be done. This approach would have an advantage 
that estimates on the composition with the regular function in space $\cD^{2\gamma,2\gamma,0}_{X}$ automatically 
follows from the usual estimate on the control rough paths. Nevertheless we decided to stick to the space 
$\cD^{2\gamma,2\gamma,0}_{S,X}$ because the operator $\ddh $ acts nicely on the integrals of the form 
$\int_0^tS_{t-s}Y_sdX_s$. Otherwise we would always have to deal with estimating two kinds of expressions: 
$\int_s^tS_{t-r}F(u_r)dX_r$ and $\int_0^s(S_{t-r}-S_{s-r})F(u_r)dX_r$. In conclusion, it seems that working in 
spaces $\cD^{2\gamma,2\gamma,0}_{S,X}$ and $\cD^{2\gamma,2\gamma,0}_{X}$ is essentially equivalent but in one space 
it is easier to estimate integrals and in the other it is easier to estimate composition with the functions.

\subsection{Continuity of the solution map}
\label{sec:continuityRPDE}

In this subsection we are going to use stability results for integration and composition in order to prove continuity of the solution map of the RPDEs (which in the classic literature for solutions of RDE's is called It\^{o}-Lyons map).
\begin{thm}[Stability of solution to RPDE] \label{stab_sol1}
Let $\gamma \in (1/3,1/2]$ and $\XX,\tilde{\XX} \in \cC^\gamma$. Let $\xi,\tilde{\xi} \in \HH$, let $F \in C^3_{-2\gamma,0}(\HH,\HH^d)$, and $N \in \Poly^{0,n}_{0,-\delta}(\HH)$ for some $n\geq1$ is a function of polynomial type for some $1-\delta > \gamma$. Define $(u,F(u))\in \DD^{2\gamma}_{X}([0,\tau_1),\HH)$ to be a maximal solution to the RPDE:
$$du_t = Lu_tdt +N(u_t)dt + F(u_t)dX_t\, ,\quad u_0 = \xi \in \HH;$$
and $(v,F(v))\in \DD^{2\gamma}_{S,\tilde{X}}([0,\tau_2),\HH)$ to be a maximal solution of the same RPDE but driven by the rough path $\tilde{\XX}$ and initial condition $\tilde{\xi}$. Assume that  $\varrho_\gamma(X) = |X|_\gamma + |\X|_{2\gamma} < M$ and $\| \xi\|  <M$ and same with $\tilde{\XX}$ and $\tilde{\xi}$. Then for every $1/3 < \varepsilon < \gamma$ and $0\leq \eta < 3\varepsilon-2\gamma$ there exists time $T< 1\wedge \tau_1 \wedge \tau_2$ such that for the following seminorm taken with respect to this time $T$ we have:
\begin{equ} 
d_{2\varepsilon,2\gamma,\eta}(u,v) \leq C_{M} (\varrho_\gamma(\XX,\tilde{\XX}) + \| \xi-\tilde{\xi}\|). \label{e:cty1}
\end{equ}
Moreover if both solutions are global (i.e. $\tau_1 = \tau_2 = \infty$) then \eqref{e:cty1} holds for all $T > 0$.
\end{thm}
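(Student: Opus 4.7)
The proof mirrors the contraction argument of Theorems~\ref{RPDE1} and~\ref{RPDE2}, except that one now applies the stability estimates of Lemmas~\ref{stab_int} and~\ref{stab_comp} to the \emph{differences} of the two solutions rather than to their contractivity counterparts. Let $\MM^{\XX,\xi}$ denote the solution map defined as in~\eqref{solnMap1} augmented with the drift $\int_0^\cdot S_{\cdot-r}N(\cdot_r)\,dr$, parameterised by the driving rough path $\XX$ and initial condition $\xi$. By the fixed-point identities $u = \MM^{\XX,\xi}(u)$ and $v = \MM^{\tilde\XX,\tilde\xi}(v)$, the triangle inequality gives
\begin{equ}
d_{2\varepsilon,2\gamma,\eta}(u,v) \le d_{2\varepsilon,2\gamma,\eta}\bigl(\MM^{\XX,\xi}(u),\MM^{\XX,\xi}(v)\bigr) + d_{2\varepsilon,2\gamma,\eta}\bigl(\MM^{\XX,\xi}(v),\MM^{\tilde\XX,\tilde\xi}(v)\bigr)\;.
\end{equ}
The first summand is handled exactly as in the existence proof: by choosing $T$ small enough so that $u,v \in B_T$ and using contractivity of $\MM^{\XX,\xi}$, this term is bounded by $C_M T^\sigma d_{2\varepsilon,2\gamma,\eta}(u,v)$ for some $\sigma>0$ coming from the $T^{\gamma-\varepsilon}$ factor in Lemma~\ref{stab_int} together with the corresponding factor in the drift estimate from Theorem~\ref{RPDE2}.

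For the second summand I would proceed in three steps. First, apply Lemma~\ref{stab_comp} to bound the distance between $(F(v),DF(v)\circ F(v))$ viewed as controlled by $\XX$ versus by $\tilde\XX$ in $\cD^{2\varepsilon,2\gamma,0}_{S,\cdot}$, which contributes $C_M(\varrho_\gamma(\XX,\tilde\XX) + \|\xi-\tilde\xi\|_\HH)$ thanks to $F \in C^3_{-2\gamma,0}$ and the fact that $F(\tilde\xi)$ is Lipschitz in $\tilde\xi$ in the relevant norms. Second, apply Lemma~\ref{stab_int} to the resulting pair of rough integrals; note that the Gubinelli derivative at $0$ of the integral vanishes, while the path-level difference at $0$ is $\xi - \tilde\xi$, so $\|\xi-\tilde\xi\|_\HH$ suffices to feed the bounds~(\ref{e:estabInt1})--(\ref{e:estabInt2}). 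Third, the drift contribution $\int_0^\cdot S_{\cdot-r}N(v_r)\,dr$ is identical in both maps and cancels, while the initial-condition contribution $\|S_\cdot(\xi-\tilde\xi)\|$ is controlled directly using the semigroup smoothing~(\ref{semigroup}). Combining everything yields
\begin{equ}
d_{2\varepsilon,2\gamma,\eta}(u,v) \le C_M T^\sigma\, d_{2\varepsilon,2\gamma,\eta}(u,v) + C_M\bigl(\varrho_\gamma(\XX,\tilde\XX) + \|\xi-\tilde\xi\|\bigr)\;,
\end{equ}
and choosing $T$ small enough that $C_M T^\sigma \le 1/2$ gives~\eqref{e:cty1} on $[0,T]$.

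For the global assertion, when $\tau_1 = \tau_2 = \infty$ the hypothesis provides a uniform bound $M$ on both solutions on every finite time interval, so one can partition $[0,T]$ into finitely many subintervals of length $T_0$ on which the local estimate applies. On each subinterval $[kT_0,(k+1)T_0]$ the local bound is driven by $\|u_{kT_0}-v_{kT_0}\|_\HH$, which is in turn controlled by the estimate on the previous subinterval; after a finite number of iterations one recovers~\eqref{e:cty1} with a constant of the form $\tilde C_M^n$. The main obstacle I expect is the bookkeeping across the three regularity scales hidden inside $d_{2\varepsilon,2\gamma,\eta}$ (namely $\HH_{-2\gamma}$ for the controlled rough path norm, $\HH$ for the $\eta$-H\"older part, and $\HH_{-2\gamma}^d$ for the Gubinelli derivative) and making sure that the various powers of $T$ produced by Lemmas~\ref{stab_int},~\ref{stab_comp} and by the drift all combine into a genuine factor $T^\sigma$; it is precisely to secure this that the hypothesis $\eta < 3\varepsilon-2\gamma$ inherited from~\eqref{e:estabInt2} is needed.
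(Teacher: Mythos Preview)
Your decomposition through the intermediate object $\MM^{\XX,\xi}(v)$ has a genuine gap: this object is not well-defined. The rough integral $\int_0^\cdot S_{\cdot-r}F(v_r)\,dX_r$ requires $(F(v),DF(v)F(v))$ to belong to $\cD^{2\varepsilon}_{S,X}$, but $v$ is controlled by $\tilde X$, not by $X$. If you try to force $(F(v),DF(v)F(v))$ into $\cD^{2\varepsilon}_{S,X}$ with the same Gubinelli derivative, the remainder picks up the term $S_{t-s}DF(v_s)F(v_s)(\tilde X_{t,s}-X_{t,s})$, which is only of order $|t-s|^\gamma$; since $\gamma<2\varepsilon$ this is \emph{not} in $\CC^{2\varepsilon}_2$, so the sewing lemma does not apply. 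Consequently neither the ``contractivity'' bound on the first summand nor the object in your second summand makes sense as written.

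The paper avoids this by never introducing an intermediate term. Lemmas~\ref{stab_int} and~\ref{stab_comp} are set up precisely to compare a path controlled by $X$ with one controlled by $\tilde X$ via the mixed distance $d_{X,\tilde X,2\varepsilon}$. One applies Lemma~\ref{stab_comp} directly to the pair $(F(u),DF(u)F(u))\in\cD^{2\varepsilon,2\gamma,\eta}_{S,X}$ and $(F(v),DF(v)F(v))\in\cD^{2\varepsilon,2\gamma,\eta}_{S,\tilde X}$, then feeds the output into Lemma~\ref{stab_int} to compare $\int S\,F(u)\,dX$ with $\int S\,F(v)\,d\tilde X$; the drift pieces $\int S\,N(u)\,dr$ and $\int S\,N(v)\,dr$ are compared separately using the local Lipschitz property of $N$ (they do \emph{not} cancel, contrary to what you wrote). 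This yields in one stroke
\[
d_{2\varepsilon,2\gamma,\eta}(u,v)\;\le\; C_M\bigl(\varrho_\gamma(\XX,\tilde\XX)+\|\xi-\tilde\xi\|+T^\sigma d_{2\varepsilon,2\gamma,\eta}(u,v)\bigr)\;,
\]
which is the inequality you were aiming for. Your remarks on the role of the hypothesis $\eta<3\varepsilon-2\gamma$ and on the global iteration are correct.
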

\begin{proof}
Note that continuity of the solution is proven in a bit worse H\"{o}lder regularity, but the space regularity remains the same. Moreover case when $\eta = 0$ is immediate by the if we prove the case for $\eta >0$ simply because $d_{2\varepsilon,2\gamma,0}(u,v) \lesssim_{T,\eta} \|\xi-\tilde{\xi}\| + d_{2\varepsilon,2\gamma,\eta}(u,v)$. First we will take $T$ small enough such that both solutions $u$ and $v$ satisfy for some constant $C_{F,N,\xi,\tilde{\xi}}$.
$$\| (u,F(u))\| _{\DD^{2\gamma}_{X}},\| (v,F(v))\| _{\DD^{2\gamma}_{\tilde{X}}} \leq C_{F,N,\xi,\tilde{\xi}}.$$
The fact that such $T$ exists was shown in the proof of invariance in Theorems~\ref{RPDE1} and~\ref{RPDE2}. This guarantees that all bounds with respect to $M$ in Lemma~\ref{stab_int} and Lemma~\ref{stab_comp} are satisfied and moreover the right hand side of inequality~(\ref{e:cty1}) is independent of solutions $u$ and $v$. From now on we will use without further mentioning that $\| F(\xi)-F(\tilde{\xi})\|  \lesssim_F\| \xi - \tilde{\xi}\|$.

First let's write $U_t = \int_0^t S_{t-r}N(u_r)dr$ and $V_t  = \int_0^t S_{t-r}N(v_r)dr$. Recall that $R^U = \ddh U$ and $R^V = \ddh V$ and $V' = U' = 0$. Since $N$ is locally Lipschitz then similarly as in Theorem~\ref{RPDE2} we can show that we can pick a $\beta>0$ such that $1+\beta-\delta-2\varepsilon>0$ and: 
\begin{equs}
	d_{2\varepsilon,2\gamma,\eta}(U,V) &= \|U-V\|_{\eta} + \| 0-0\| _{\infty}+ \| 0-0\| _{\varepsilon,-2\gamma} + |R^U-R^V|_{2\varepsilon,-2\gamma}\\
	&\lesssim(T^{1-\delta-\gamma} +T^{1+\beta-\delta-2\varepsilon} )\| u-v\| _{\infty} (1+\| u\| _{\infty}+\| v\| _{\infty})^{n-1} \\
	&\lesssim_M \| \xi - \tilde{\xi}\|   +d_{2\varepsilon,2\gamma,\eta}(u,v)T^{\sigma};
\end{equs}
for some $\sigma > 0$. In the last step we have used inequality $\|Y\|_{\infty} \lesssim \|Y_0\| + \|Y\|_{\eta}T^\eta$ and that $T\leq 1$. Denote $Z_t = \int_0^t S_{t-r}F(u_r)dX_r$ and $W_t = \int_0^t S_{t-r}F(v_r)d\tilde{X}_r$ as well as $\Xi_t = F(u_t)$ and $\tilde{\Xi}_t = F(v_t)$. Note that $Z' = \Xi_t$ and $W' = \tilde{\Xi}_t$.  
\begin{equs}
	d_{2\varepsilon,2\gamma,\eta}(Z,W) &= \|Z-W\|_{\eta} + \|\Xi-\tilde{\Xi}\|_{\infty} + d_{X,\tilde{X},2\varepsilon}(Z,W) \\ 
	&\lesssim \varrho_\gamma(\XX,\tilde{\XX})  + \| \xi - \tilde{\xi}\| +d_{2\varepsilon,2\gamma,0}(\Xi,\tilde{\Xi})T^{\gamma-\varepsilon} + \|\Xi-\tilde{\Xi}\|_{\infty} \\
	&\lesssim \varrho_\gamma(\XX,\tilde{\XX})  + \| \xi - \tilde{\xi}\|+d_{2\varepsilon,2\gamma,\eta}(u,v)T^{\gamma-\varepsilon}+\|\Xi-\tilde{\Xi}\|_{\infty}.
\end{equs}
We have used above Lemma~\ref{stab_int} for the first inequality and Lemma~\ref{stab_comp} for the second inequality. To deal with the term $\|\Xi-\tilde{\Xi}\|_{\infty}$ we use 
$$\|F(u_t)-F(v_t)\| \lesssim_F \|u_t-v_t\| \lesssim \| \xi - \tilde{\xi}\|+\|u-v\|_{\eta}T^\eta,$$ 
to finally deduce that for some potentially even smaller $\sigma>0$:
$$d_{2\varepsilon,2\gamma,\eta}(Z,W) \lesssim_{F,M} \varrho_\gamma(\XX,\tilde{\XX})  + \| \xi - \tilde{\xi}\|+d_{2\varepsilon,2\gamma,\eta}(u,v)T^{\sigma}.$$
Now $(u,F(u))$ is a fixed point of the map:
$$\MM_T(u,u')_t = (S_t\xi + \int_0^t S_{t-r}N(u_r)dr+ \int_0^t S_{t-r}F(u_r)dX_r,\, F(u_t)) \in \DD^{2\gamma}_{X}([0,T],\HH),$$ 
and similarly for $(v,F(v))$ with $\tilde{X}$. Putting the bounds on $d_{2\varepsilon,2\gamma,\eta}(Z,W)$ and on $d_{2\varepsilon,2\gamma,\eta}(U,V)$ together we get: 
\begin{equs}
	d_{2\varepsilon,2\gamma,\eta}(u,v) &\leq d_{2\varepsilon,2\gamma,\eta}(Z,W)+d_{2\varepsilon,2\gamma,\eta}(U,V) + \|\xi-\tilde{\xi}\|\\
	 &\leq C_{M} \big(\varrho_\gamma(\XX,\tilde{\XX})  + \| \xi - \tilde{\xi}\|   +d_{2\varepsilon,2\gamma,\eta}(u,v)T^{\sigma}\big).
\end{equs}
If we take $T = \tau(M,F,\varepsilon,\gamma,\eta)$ sufficiently small such that $C_{M} T^{\sigma} \leq 1/2$ then we get:
$$d_{2\varepsilon,2\gamma,\eta}(u,v) \leq 2C_{M} (\varrho_\gamma(\XX,\tilde{\XX}) + \| \xi-\tilde{\xi}\| ).$$

Now if we know that both solutions $(u,F(u))$ and $(v,F(v))$ are global in time we can iterate stability result~\ref{e:cty1} in order to obtain it for an arbitrary $T>0$. This can be done by investigating more carefully the proof of Theorems~\ref{RPDE1} and~\ref{RPDE2} and observing that the inverse of time $T_0$ within which invariance and contraction holds bounded from above by some powers of $\varrho_\gamma(X)$ and $\|\xi\|$. This then allows to show that we can bound from above the number of times we would need to iterate~\ref{e:cty1} to get to the time $T$. 
\end{proof}

 Next we state that for every global solution $u$ with the noise $X$ and initial condition $\xi$ there is a small ball around $X$ and small ball around $\xi$ such that for every noise $\tilde{X}$ and initial condition $\tilde{\xi}$ inside these balls the size of solution $v = v(\tilde{X},\xi)$ is not much bigger than the size of the solution $u$. Proof of the following proposition is quite standard and again uses the iteration of~\ref{e:cty1}.

\begin{prop} \label{stab_sol3}
Let $\gamma \in (1/3,1/2]$ and $\XX\in \cC^\gamma$ $\xi \in \HH$ and $(u,F(u)) \in \DD^{2\gamma}_{{X}}(\R_+,\HH)$ be a global solution to~(\ref{e:rpde2}) with $F$ and $N$ as in Theorem~\ref{stab_sol1}. Let $T \in [0,\infty)$ and assume that $\| u\| _{\infty,[0,T]} \leq M$ and $\varrho_{\gamma,[0,T]}(X) \leq R$ for some $M,R > 0$. Then there exists $\sigma = \sigma(M,R,T)$ such that for all $\tilde{\XX}$ and $\tilde{\xi}$ with $\varrho_\gamma(\XX,\tilde{\XX}) + \| \xi-\tilde{\xi}\|  \leq \sigma$ and for every $1/3 < \varepsilon < \gamma$ and $0< \eta < 3\varepsilon-2\gamma$ we have that the solution $(v,F(v))$ of the RPDE~(\ref{e:rpde2}) with data $(\tilde{\XX},\tilde{\xi})$ satisfies: 
\begin{equs} 
d_{2\varepsilon,2\gamma,\eta}(u,v) &\leq C (\varrho_\gamma(\XX,\tilde{\XX}) + \| \xi-\tilde{\xi}\| ), \label{e:cty2}\\
\| u-v\| _{\eta, [0,T]} &\leq C (\varrho_\gamma(\XX,\tilde{\XX}) + \| \xi-\tilde{\xi}\| )\quad \text{and}\quad \| v\| _{\infty,[0,T]} \leq 2\| u\| _{\infty,[0,T]}. 
\end{equs}
Constant $C = C(M,R,T)$ is locally bounded function in all three variables.
 \end{prop}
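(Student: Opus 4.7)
The plan is to iterate the local stability estimate~\eqref{e:cty1} of Theorem~\ref{stab_sol1} over a partition of $[0,T]$ whose mesh is controlled by $M$ and $R$ but not by the particular solutions. Inspecting the proofs of Theorems~\ref{RPDE1}--\ref{stab_sol1}, the local time of validity of~\eqref{e:cty1} depends only on upper bounds $M'$, $R'$ for $\|\xi\| \vee \|\tilde\xi\|$ and $\varrho_\gamma(\XX) \vee \varrho_\gamma(\tilde\XX)$, and is a decreasing positive function of these. Setting $\tau_0 := \tau(2M, 2R, \varepsilon, \gamma, \eta)$, $N := \lceil T/\tau_0 \rceil$ and $t_k := (k\tau_0) \wedge T$, I would work with the partition $0 = t_0 < t_1 < \cdots < t_N = T$ whose mesh is at most $\tau_0$.

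The core of the argument is an induction on $k \in \{0,\ldots,N\}$. For each $k$, I would produce constants $C_k, \sigma_k > 0$, depending only on $M, R, T$, such that whenever $\varrho_\gamma(\XX,\tilde\XX) + \|\xi-\tilde\xi\| \leq \sigma_k$, the solution $v$ exists on $[0,t_k]$ with $\|v\|_{\infty,[0,t_k]} \leq 2M$ and the stability bound \eqref{e:cty2} holds on $[0,t_k]$ with constant $C_k$. For the inductive step one invokes Theorem~\ref{stab_sol1} on the subinterval $[t_k, t_{k+1}]$ with initial conditions $(u_{t_k}, v_{t_k})$: by the induction hypothesis both are bounded by $2M$ and both shifted rough paths by $2R$ (provided $\sigma_k \leq R$), so the common validity time $\tau_0$ covers the whole of $[t_k, t_{k+1}]$. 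The resulting local bound has a constant $C_* = C_*(2M, 2R)$ depending only on $M, R$; combining it with $\|u_{t_k} - v_{t_k}\| \leq C_k (\varrho_\gamma(\XX,\tilde\XX) + \|\xi - \tilde\xi\|)$ from the induction hypothesis gives a recursion for $C_{k+1}$, after which $\sigma_{k+1}$ is chosen small enough that $\|v_{t_{k+1}}\| \leq \|u_{t_{k+1}}\| + 1 \leq 2M$, preserving the induction.

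The final task is to patch the local estimates into a single bound on $[0,T]$. The $L^\infty$ and $\hat\CC^\eta$ parts glue directly, by splitting any interval $[s,t]$ across the partition points and combining the subinterval bounds with the semigroup smoothing~\eqref{semigroup}. For the full controlled rough path distance, one must also control cross-interval increments $R^u_{t,s} - R^v_{t,s}$ with $s$ and $t$ in different pieces of the partition; these are handled by telescoping through the intermediate partition points via the $\ddh$-coboundary identity $\ddh R = 0$ combined with Chen's relation, which expresses the cross-interval remainder in terms of on-interval quantities already controlled by the induction. The principal obstacle I anticipate is ensuring that the constant $C(M,R,T)$ produced in this way is genuinely \emph{locally bounded} in $(M, R, T)$ rather than merely finite: the naive recursion yields $C_N$ of multiplicative type $(1+C_*)^N$ with $N \lesssim T/\tau_0(2M,2R)$, and $\tau_0$ itself degenerates as $M$ or $R$ grows. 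Verifying local boundedness therefore requires careful bookkeeping of the $C_*$-dependence and of the telescoping cost in the cross-interval estimates.
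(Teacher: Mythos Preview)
Your approach is correct and matches what the paper does: the paper does not give a detailed proof of this proposition, stating only that it ``is quite standard and again uses the iteration of~\eqref{e:cty1}'', which is precisely your strategy of partitioning $[0,T]$ into pieces of length at most $\tau_0(2M,2R)$ and propagating the local estimate. Your concern about local boundedness of $C(M,R,T)$ is addressed implicitly by the remark at the end of the proof of Theorem~\ref{stab_sol1}, where the authors observe that $1/\tau_0$ is bounded by powers of $\varrho_\gamma(\XX)$ and $\|\xi\|$, so $N \lesssim T/\tau_0(2M,2R)$ is itself locally bounded in $(M,R,T)$, and hence so is $(1+C_*)^N$.
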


 \subsection{Solutions to SPDEs} \label{SPDEs}
 
First of all we define the spaces of controlled rough paths that are allowed to blow up in finite time. For every Banach space $V$ define a new Banach space $\bar{V} = V \sqcup \{\infty\}$. The topology on this space is induced by the basis containing open balls of $V$ and the sets of the form $\{v \in V : \|v\|_V \geq N \} \sqcup \{\infty\}$ for every $N > 0$.
Using this we define the space of controlled rough paths that might blow up in finite time:  
\begin{equs}
	\hat{\cD}^{2\gamma,\beta,0}_{S,X}(&\R_+,\HH_\alpha) = \Big\{(u,u') \in C(\R_+,\bar{\HH}_{\alpha+2\gamma})\times C(\R_+,\bar{\HH}^d_{\alpha+2\gamma}): \exists\,\tau > 0 \\
	&(u,u')\restriction_{[0,\tau)} \in \cD^{2\gamma,\beta,0}_{S,X}([0,\tau ),\HH_\alpha),\quad (u_t,u'_t) = (\infty,\infty)\quad  \forall t \geq \tau \Big\}.
\end{equs}
The $\tau$ in the above definition is denoting a blow up time of $(u,u')$ and can be taken $\infty$ for the controlled rough paths which have finite $\cD^{2\gamma,\beta,0}_{S,X}$ norm on every compact interval. 

All our analysis was purely deterministic so far. There is a wide class of Gaussian processes that can be lifted almost surely to a rough path and thus our theory is giving a pathwise notion of solution for such SPDEs driven by these Gaussian processes. 
Equations that we are going to investigate are driven by Brownian motion and we now briefly recall how one defines 
a Brownian rough path. The following definition requires a proof, which can be found in \cite{friz}.
\begin{defn}
	\textbf{(i)} Let $(B_t)_{t\geq0} : \Omega \to \R^d $ be a $d$-dimensional Brownian motion defined on the probability space $(\Omega,\FF,\Prob)$ and define $\B^\Ito_{t,s} := \int_s^t \delta B_{r,s}\otimes dB_r$ as an It\^{o} integral. Then $\forall \gamma\in (1/3,1/2)$ and $T > 0$ for a.e. $\omega \in \Omega$
	$$ \textbf{B}^\Ito(\omega)=(\delta B(\omega),\B^\Ito(\omega)) \in \cC^{\gamma}([0,T],\R^d) \;.$$
	\textbf{(ii)} In addition define $\B^\Strat_{t,s} = \B^\Ito_{t,s} + \frac{1}{2}(t-s)\id$ then for a.e. $\omega \in \Omega$
	$$ \textbf{B}^\Strat(\omega)=(\delta B(\omega),\B^\Strat(\omega)) \in \cC_g^{\gamma}([0,T],\R^d)\;.$$
\end{defn}
One would like to know that the rough integrals defined earlier against these Brownian lifts 
coincide with It\^{o} (resp.\ Stratonovich)  integrals for a suitable class of integrands:
\begin{prop} \label{stoch_int}
	Let $(B_t)_{t\geq0} : \Omega \to \R^d $ be a $d$-dimensional Brownian motion defined on the filtered 
	probability space $(\Omega,(\FF_t)_{t \ge 0},\Prob)$ and let $(Y,Y') \in \hat{\cD}^{2\varepsilon,2\gamma,0}_{S,B}(\R_+,\HH_{-2\gamma})$ be such that $(Y,Y')$ is adapted to the filtration $(\FF_t)_{t\geq 0}$ and such that, for every $L>0$ there exists a stopping time $T_L$ and $\hat L > 0$ such that 
	 $\|Y_t\| + \|Y_t'\| \le \hat L$ almost surely for $t \le T_L$. For 
	$L > 0$ and $t>0$, set
	\begin{equ}\label{ito_L}
		Z^{L}_{t} = \int_0^t S_{t-r}Y_r\ind_{\{r < T_L\}} dB_r\;,
	\end{equ}
	where the integral is an It\^o integral.
	Then the process $Z^L$ has a continuous version (still denoted by $Z^L$) such that for any random time $t(\omega)$ with $0 \leq t(\omega) < T_L(\omega)$, the following equality with the rough integral holds almost surely:
	\begin{equ}\label{ito=rough}
		Z^{L}_{t(\omega)}(\omega) =\int_0^{t(\omega)} S_{t(\omega)-r}Y_r(\omega) d\textbf{B}^\Ito(\omega).
	\end{equ}
\end{prop}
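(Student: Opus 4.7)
The plan is to compare left Riemann-sum approximations of both integrals on the event $\{t(\omega)<T_L(\omega)\}$ and show that the discrepancy vanishes in $L^2(\Omega,\HH)$ thanks to the martingale property of the iterated integral $\B^{\Ito}$. I would begin by producing the continuous modification of $Z^L$: since $Y_r\ind_{\{r<T_L\}}$ is adapted and bounded by $\hat L$ in $\HH^d$, the factorization method for stochastic convolutions combined with the Burkholder inequality yields a modification with H\"older continuous paths.

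Fix now $\omega$ in a full-measure set on which $\mathbf{B}^{\Ito}(\omega)\in\cC^\gamma$ and $(Y,Y')(\omega)\in\cD^{2\varepsilon,2\gamma,0}_{S,B}([0,T_L(\omega)),\HH_{-2\gamma})$, and let $\CP_n$ be dyadic partitions of $[0,t(\omega)]$ with $|\CP_n|\to 0$. Applied pathwise, Theorem~\ref{integral} gives
\begin{equ}
\int_0^{t(\omega)}S_{t(\omega)-r}Y_r\,d\mathbf{B}^{\Ito}=\lim_{n\to\infty}\sum_{[u,v]\in\CP_n}S_{t(\omega)-u}\bigl(Y_u\,\delta B_{v,u}+Y'_u\,\B^{\Ito}_{v,u}\bigr),
\end{equ}
where all random quantities are evaluated at $\omega$. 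On the event $\{t(\omega)<T_L(\omega)\}$ the indicator in~\eqref{ito_L} equals one on $[0,t(\omega)]$, so the standard $L^2$-convergence of left Riemann sums for It\^o stochastic convolutions with bounded integrand gives
\begin{equ}
Z^L_{t(\omega)}=\lim_{n\to\infty}\sum_{[u,v]\in\CP_n}S_{t(\omega)-u}Y_u\,\delta B_{v,u}\qquad\text{in } L^2(\Omega,\HH).
\end{equ}

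It therefore suffices to prove that
\begin{equ}
\Delta_n:=\sum_{[u,v]\in\CP_n}S_{t(\omega)-u}Y'_u\,\B^{\Ito}_{v,u}\,\ind_{\{u<T_L\}}
\end{equ}
converges to $0$ in $L^2(\Omega,\HH)$. Since $Y'_u$ is $\FF_u$-measurable and $\B^{\Ito}$ is a martingale in its upper index, $\E[Y'_u\B^{\Ito}_{v,u}\mid\FF_u]=Y'_u\E[\B^{\Ito}_{v,u}\mid\FF_u]=0$; conditioning on $\FF_{u_2}$ for two intervals $[u_1,v_1],[u_2,v_2]$ with $v_1\leq u_2$ then shows the corresponding summands are orthogonal in $L^2$. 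The $\cD^{2\gamma,2\gamma,0}$ structure ensures $\|Y'_u\|_{\HH^{d\times d}}\leq\hat L$ for $u<T_L$ (the relevant interpolation space being $\HH^{d\times d}_{\alpha+2\gamma}=\HH^{d\times d}$ when $\alpha=-2\gamma$); combined with $\|S_{t-u}\|_{\LL(\HH,\HH)}\leq 1$ and $\E\|\B^{\Ito}_{v,u}\|^2\lesssim(v-u)^2$, this gives
\begin{equ}
\E\|\Delta_n\|_\HH^2\lesssim\hat L^2\sum_{[u,v]\in\CP_n}(v-u)^2\leq\hat L^2\,t(\omega)\,|\CP_n|\to 0.
\end{equ}
Passing to an $L^2$-convergent subsequence and invoking the continuity in $t$ of both $Z^L$ and the rough integral then yields the claimed a.s.\ identity at the random time $t(\omega)$.

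The main obstacle is the interplay between the two relevant regularity scales: the controlled-rough-path calculus naturally lives in $\HH_{-2\gamma}$, whereas both integrals take values in $\HH$, and one needs the stronger $\HH$-boundedness of $Y'$ coming from the $\cD^{2\gamma,2\gamma,0}$ structure to avoid spurious $(t-u)^{-4\gamma}$ factors from the semigroup estimate $\|S_{t-u}\|_{\LL(\HH_{-2\gamma},\HH)}\lesssim(t-u)^{-2\gamma}$, which would not be integrable for $\gamma>1/4$. A secondary point is that the It\^o Riemann-sum convergence is in $L^2$ for each fixed $t$, and one must verify that it holds uniformly enough in $t$ to justify evaluation at the random time $t(\omega)$ on the stopping-time event; this is exactly where the continuous modification of $Z^L$ enters.
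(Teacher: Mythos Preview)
Your approach is essentially the paper's: both reduce the comparison to showing that the ``second-order'' Riemann sum $\sum S_{t-u}Y'_u\B^{\Ito}_{v,u}$ vanishes in $L^2$, and both use exactly the same discrete-martingale orthogonality together with $\E|\B^{\Ito}_{v,u}|^2\lesssim(v-u)^2$ to get the bound $\hat L^2\,t\,|\CP_n|$.

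There is, however, a logical slip in your presentation that the paper avoids. You first \emph{fix $\omega$} and take dyadic partitions $\CP_n$ of $[0,t(\omega)]$, and then argue about $L^2(\Omega)$-limits and write ``$\E\|\Delta_n\|^2_\HH\lesssim\hat L^2\,t(\omega)\,|\CP_n|$''. These two viewpoints are incompatible: once $\omega$ is fixed, expectations make no sense, and conversely an $\omega$-dependent partition cannot be used inside an $L^2(\Omega)$ argument (the summands are no longer $\FF_u$-measurable because $u$ itself is random). The paper's clean fix is to work first with a \emph{deterministic} $t$ and a deterministic partition of $\R_+$ carrying the indicator $\ind_{\{u<t\wedge T_L\}}$; one then shows $Z^L_t=\tilde Z^L_t$ almost surely for each fixed $t$, passes to a continuous version, and only at the very end evaluates at the random time $t(\omega)$. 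Your final paragraph shows you are aware of this issue, but the body of the argument should be reorganised accordingly.
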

\begin{proof}
	Existence of the It\^o integral~\eqref{ito_L} follows from 
	$\int_0^{t}\|Y_s\|^2 \ind_{\{r < T_L\}}\,ds \leq \hat L^2 t$ and, since $T_L$ is a stopping time, the integrand is adapted to the 
	Brownian filtration. Let $\CP_n = \{s^n_k\}^\infty_{k=0}$ be a sequence of increasing countable subsets of $\R_+$ such that $\bigcup_n \CP_n$ is dense in $\R_+$ and $s^n_k < s^n_k+1$ for all $n,k \in \mathbb{N}$.  Denote by $\pi_n = \{[s^n_k,s^n_{k+1}]: s^n_k \in \CP_n, k\in \mathbb{N}\}$ the sequence of partitions formed from $\CP_n$ and $|\pi_n| = \sup_{k \geq 1}\{|s^n_{k+1}-s^n_k|\}$ is the size of partition. It follows that $Z^{L}_{t}$ is defined as a limit in probability:
	\begin{equ}[e:Itoprob]
		Z^{L}_{t} = \lim_{n \to \infty}\sum_{\substack{[u,v]\in \pi_n \\ u < t}} S_{t-u}Y_u\delta B_{v,u}\ind_{\{u < T_L\}} \; ,
	\end{equ}
	We can now extract a subsequence of partitions (which we still denote $\CP_n$) such that the above limit holds almost surely. On the other hand, since $(Y,Y') \in \hat{\cD}^{2\varepsilon,2\gamma,0}_{S,B}(\R_+,\HH_{-2\gamma})$, the rough integral 
	\begin{equ}
		\tilde{Z}^{L}_{t}(\omega) =\int_0^{t\wedge T_L(\omega)} S_{t-r}Y_r(\omega) d\textbf{B}^\Ito(\omega),
	\end{equ}
	exists and one can verify that it is equal to:
	\begin{equs}
		\tilde{Z}^{L}_{t}&= \lim_{n \to \infty}\sum_{\substack{[u,v]\in \pi_n \\ u < t\wedge T_L}}\bigl(S_{t-u}Y_u \delta B_{v,u} + S_{t-u}Y'_u \B^\Ito_{v,u}\bigr)\;.
	\end{equs} 
	We can therefore easily see that for every $t > 0$ the $L^2(\Omega)$ norm of the difference of these two integrals is:
	\begin{equ}[ito-error]
		\Big|Z^L_t - \tilde{Z}^{L}_{t}\Big|_{L^2(\Omega)} = \Big|\lim_{n \to \infty} \sum_{\substack{[u,v]\in \pi_n \\ u < t\wedge T_L}} S_{t-u}Y'_u\B^\Ito_{v,u} \Big|_{L^2(\Omega)}.
	\end{equ}
	%
	We will show now that the right hand side is zero. Define a (discrete time) martingale started at $M^n_0 = 0$ and with increments $M^n_{k+1} - M^n_k = S_{t-s^n_k}Y'_{s^n_k}\ind_{\{s^n_k < t\wedge T_L\}} \B^\Ito_{s_{k+1},s^n_k}$.
	\begin{equs}
		\Big|\sum_{\substack{[u,v]\in \pi_n \\ u < t\wedge T_L}} S_{t-u}Y'_u\B^\Ito_{v,u}\Big|^2_{L^2(\Omega)} &= \Big|\sum_{k=0}^{\infty} (M^n_{k+1} - M^n_k)\Big|^2_{L^2(\Omega)} = \sum_{k=0}^{\infty} |M^n_{k+1}- M^n_k|^2_{L^2(\Omega)}\\ 
		&\lesssim \hat L^2 \sum_{k=0}^{\infty}|\B^\Ito_{s^n_{k+1},s^n_k}\ind_{\{s^n_k < t\}} |^2_{L^2(\Omega)} \lesssim L^2 t |\pi_n|\; .
	\end{equs}
	We use the fact that all the infinite sums above are finite because of the presence of the indicator function. Moreover the last inequality is true because the Brownian scaling gives $|\B^\Ito_{v,u}|^2_{L^2(\Omega)} \lesssim |v-u|^2$. Since $\bigcup_n \CP_n$ is dense in $\R_+$ we have $|\pi_n| \to 0$ as $n \to \infty$. Therefore by Fatou's lemma right hand side of~\eqref{ito-error} is indeed zero thus showing that for all $t>0$ we have almost surely $Z^L_t = \tilde{Z}^L_t$. Now one can choose a continuous version of the It\^o integral $Z^L$ which is still equal almost surely to $\tilde{Z}^L_t$ for every $t > 0$. We can therefore evaluate $Z^L_t$ at a random time $0 \leq t(\omega) < T_L(\omega)$ to deduce that the identity
	\begin{equ}
		Z^L_{t(\omega)}(\omega) = \tilde{Z}^L_{t(\omega)}(\omega) = \int_0^{t(\omega)} S_{t(\omega)-r}Y_r(\omega) d\textbf{B}^\Ito(\omega)
	\end{equ}
holds almost surely.
\end{proof}

Before will now formalize the notion of a local in time solution for an It\^o SPDE.
\begin{defn}\label{ito_local} Let $(B_t)_{t\geq0} : \Omega \to \R^d $ be a $d$-dimensional Brownian motion defined on the filtered 
	probability space $(\Omega,(\FF_t)_{t \ge 0},\Prob)$. Let $\xi \in \HH$, $\delta \in [0,1)$ and consider locally Lipschitz continuous maps $N : \HH \to \HH_{-\delta}$ and $F: \HH \to \HH^d$.
	\begin{enumerate}[label=(\roman*)] 
		\item A local mild solution to an It\^o SPDE
		\begin{equ}\label{ito}
			du_t = Lu_t dt + N(u_t) dt + F(u_t) dB_t\; , 
		\end{equ}
		is a continuous stochastic process $u$ together with the stopping time $\tau$ such that almost surely on the event $\{t \leq \tau\}$, $u_t$ satisfies
		\begin{equ}\label{ito_mild}
			u_t = S_t \xi +  \int_0^t S_{t-s}N(u_s) \ind_{\{s < \tau\}}ds +\int_0^t S_{t-s}F(u_s) \ind_{\{s < \tau\}}dB_s\, ,
		\end{equ}
	where the last integral is taken in the sense of It\^o. We furthermore impose that there exists $L>0$ such that $\sup_{{0\leq t \leq \tau}} \|u_t\| \leq L$ almost surely. 
		\item We say $(u, \tau)$ is a maximal mild solution of~\eqref{ito} if $\lim_{t \to \tau} \|u_t\| = \infty$ almost surely and there exists a sequence of local mild solutions $(u^n, \tau^n)$ with increasing $\tau^n$ such that $\lim_{n \to \infty} \tau^n = \tau$ almost surely and $u^n_t = u_t$ almost surely on $\{t < \tau^n\}$.
	\end{enumerate}
\end{defn}
\begin{thm} \label{spde=rpde}
	Let $\xi \in \HH$ and functions $F$ and $N$ be as in Theorem~\ref{RPDE2}. Let $(B_t)_{t\geq0} : \Omega \to \R^d $ be a $d$-dimensional Brownian motion defined on the probability space $(\Omega,\FF,\Prob)$. Then there exist random blow up times $\tau_1, \tau_2 > 0$ and controlled rough paths $(u,F(u)) \in \DD^{2\gamma}_{B}([0,\tau_1),\HH)$, $(v,F(v)) \in \DD^{2\gamma}_{B}([0,\tau_2),\HH)$ such that they are almost surely maximal solutions of~\eqref{e:rpde1} with $\XX$ replaced by $\textbf{B}^\Ito$ and $\textbf{B}^\Strat$ respectively.
	
 In addition the above solutions are adapted processes when viewed as elements of $\hat{\cD}^{2\varepsilon,2\gamma,0}_{S,B}\big(\R_+,\HH_{-2\gamma}\big)$. As a consequence the following holds:
	\begin{enumerate}[label=(\roman*)]
		\item\label{ito_spde*} $(u,\tau_1)$ is a maximal mild solution to the It\^{o} SPDE:
		\begin{equ}\label{ito_spde}
			du_t = Lu_tdt + N(u_t)dt+F(u_t)dB_t\,,\quad u_0 =\xi \in \HH\, ,
		\end{equ}
		\item\label{strat_spde*} $(v,\tau_2)$ is a maximal mild solution to the It\^{o} SPDE:
		\begin{equ}\label{strat_spde}
			dv_t = Lv_tdt + \big(N(v_t)+\textstyle{\frac{1}{2}}DF(v_t)F(v_t)\big)dt+F(v_t)dB_t\,,\quad u_0 =\xi \in \HH\, .
		\end{equ}
	\end{enumerate}
\end{thm}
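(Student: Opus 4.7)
The plan is to construct $(u,F(u))$ and $(v,F(v))$ pathwise using Theorem~\ref{RPDE2} applied to the realizations $\textbf{B}^\Ito(\omega)$ and $\textbf{B}^\Strat(\omega)$, then patch the local solutions in the standard way to obtain maximal controlled rough paths $(u,F(u))\in\DD^{2\gamma}_B([0,\tau_1),\HH)$ and $(v,F(v))\in\DD^{2\gamma}_B([0,\tau_2),\HH)$, with $\tau_1,\tau_2$ defined as the explosion times of the corresponding $\DD^{2\gamma}_B$ norm (equivalently, of $\|u_t\|$, by Theorem~\ref{RPDE2}). The nontrivial probabilistic input is adaptedness: we need to verify that the Picard iteration in the proof of Theorem~\ref{RPDE2}, when started from the adapted process $(S_\cdot\xi,F(\xi))$, preserves adaptedness at each step. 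This follows because composition with $F$ acts pointwise in time, the Lebesgue integral $\int_0^tS_{t-r}N(u_r)dr$ depends only on $u$ restricted to $[0,t]$, and the rough integral is the limit~\eqref{e:integral1} of Riemann-type sums using only $B_u$ and $\mathbb{B}_{v,u}$ for $u,v\le t$; the contractivity bounds are pathwise, so the limiting fixed point is again adapted.

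For part~\ref{ito_spde*}, introduce the stopping times $T_L=\inf\{t\ge 0:\|u_t\|+\|F(u_t)\|>L\}\wedge L$. On $[0,T_L]$ both $u_t$ and $F(u_t)$ are bounded by some deterministic $\hat L$, so $(F(u),DF(u)F(u))$ satisfies the hypotheses of Proposition~\ref{stoch_int}. Applying that proposition yields, for every $t<T_L(\omega)$ and almost every $\omega$,
\begin{equ}
\int_0^t S_{t-r}F(u_r)\ind_{\{r<T_L\}}\,dB_r = \int_0^t S_{t-r}F(u_r(\omega))\,d\textbf{B}^\Ito(\omega),
\end{equ}
where the left-hand side is the continuous version of the It\^o integral. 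Substituting this identity into the rough equation~\eqref{e:rpde1} satisfied by $u$, together with the classical definition of the deterministic convolution against $N(u)$, gives the mild It\^o formulation~\eqref{ito_mild} up to $T_L$. Letting $L\to\infty$ and defining $u^n=u_{\cdot\wedge T_n}$ with $\tau=\tau_1$ exhibits $(u,\tau_1)$ as a maximal mild solution in the sense of Definition~\ref{ito_local}; the blow-up condition $\lim_{t\uparrow\tau_1}\|u_t\|=\infty$ is built into the construction of the maximal controlled rough path.

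For part~\ref{strat_spde*}, use the identity $\B^\Strat_{t,s}=\B^\Ito_{t,s}+\tfrac12(t-s)\id$ inside the definition~\eqref{e:integral1} of the rough integral. Since $(F(v),DF(v)F(v))\in\cD^{2\gamma}_{S,B}$ and $t\mapsto S_{t-r}DF(v_r)F(v_r)$ is continuous in $r$ on $[0,t]$, the difference between the Stratonovich and It\^o rough integrals equals
\begin{equ}
\lim_{|\CP|\to 0}\sum_{[u,v]\in\CP}S_{t-u}DF(v_u)F(v_u)\,\tfrac{v-u}{2} \;=\; \tfrac12\int_0^t S_{t-r}DF(v_r)F(v_r)\,dr,
\end{equ}
the limit being an ordinary Riemann sum. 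Combined with part~\ref{ito_spde*} applied to the Stratonovich rough integral (using Proposition~\ref{stoch_int} exactly as above, since $\textbf{B}^\Strat$ differs from $\textbf{B}^\Ito$ only in its second level), this transforms the rough equation for $v$ into the It\^o mild equation~\eqref{strat_spde} with the Stratonovich correction $\tfrac12DF(v)F(v)dt$ absorbed into the drift.

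The main obstacle is the interface between the deterministic rough integral, which exists only up to the random explosion time $\tau_i$, and Proposition~\ref{stoch_int}, which requires globally defined adapted bounded integrands. The localization through the stopping times $T_L$ and the adaptedness of the Picard iterates are the two ingredients that make this interface rigorous; once they are in hand, the identification of the two notions of solution and the derivation of the Stratonovich correction are direct.
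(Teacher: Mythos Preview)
Your proposal is correct and follows essentially the same route as the paper: pathwise construction via Theorem~\ref{RPDE2}, localization through stopping times $T_L$, identification of the rough and It\^o convolutions via Proposition~\ref{stoch_int}, and the Stratonovich correction computed by inserting $\B^\Strat_{t,s}=\B^\Ito_{t,s}+\tfrac12(t-s)\id$ into the Riemann sums~\eqref{e:integral1}. The one point of genuine difference is how adaptedness is obtained: you argue that each Picard iterate is adapted and pass to the limit, whereas the paper observes that the solution is a \emph{continuous} function of $(B,\B^\Ito)\restriction_{[0,t]}$ (by the stability result of Section~\ref{sec:continuityRPDE}) and hence measurable with respect to $\FF_t$. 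Both arguments are valid; the paper's is marginally cleaner because continuity of the solution map is already available and sidesteps any discussion of why the fixed-point limit of adapted iterates remains adapted in the relevant topology, while yours has the virtue of being self-contained and not relying on the stability estimates.
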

\begin{proof}
	We first show the result for $(u,F(u))$. Local solution theory for~\eqref{e:rpde1} with $\XX$ replaced by almost every realization of $\textbf{B}^\Ito$ is provided by Theorem~\ref{RPDE2}. The fact that $\tau_1$ is a stopping time is easy to verify. Note that the map 
	$$B\restriction_{[0,t]} \mapsto (B,\B^\Ito)\restriction_{[0,t]} \in \cC^{\gamma}([0,t],\R^d)$$
	is measurable. For almost every $\omega$ and every $t < \tau_1(\omega)$, the solution $(u,F(u)) \in \hat{\cD}^{2\varepsilon,2\gamma,0}_{S,B}\big([0,t],\HH_{-2\gamma}\big)$ to \eqref{e:rpde1} is a continuous image of 
	the noise $(B,\B^\Ito)\restriction_{[0,t]}$. Viewing $(u_t(\omega),F(u_t(\omega)))$ as an element of 
	$\hat{\cD}^{2\varepsilon,2\gamma,0}_{S,B(\omega)}\big(\R_+,\HH_{-2\gamma}\big)$ we deduce that is 
	adapted to 
	$$\sigma( B_{s,r}, \B^\Ito_{s,r}: 0\leq r\leq s\leq t) = \sigma(B_s : 0\leq s \leq t) = \FF_t\,.$$
	Let $L > 0$
	and define a stopping time $T_L = \inf\{t : \|u_t\| \geq L \}$ then the local boundedness of $F$ implies that there exists $\hat L >0$ such that almost surely for $t < T_L$:
	\begin{equ}\label{bddns_of_integrand}
		\|F(u_t)\| + \|DF(u_t)F(u_t)\| < \hat L\; .
	\end{equ}
	 For $t > 0$ define the process $u^L_t$ as:
	\begin{equ}
		u^L_t(\omega) = S_{t}\xi + \int_0^{t} S_{t-s}N(u_s(\omega))\ind_{\{s < T_L(\omega)\}} ds + \int_0^{t} S_{t-s}F(u_s(\omega))\ind_{\{s < T_L(\omega)\}} dB_s\,(\omega)\,,
	\end{equ}
where the existence of the It\^o integral is guaranteed by an almost sure bound~\eqref{bddns_of_integrand}.
By definition of our notion of solution to~\eqref{e:rpde1}, we furthermore know that, for any  
(random) time $t(\omega) < T_L(\omega)$, one has the identity
	\begin{equ}
		u_{t(\omega)}(\omega)=S_{t(\omega)}\xi + \int_0^{t(\omega)} S_{t(\omega)-s}N(u_s(\omega))ds + \int_0^{t(\omega)} S_{t(\omega)-s}F(u_s(\omega))d\textbf{B}^\Ito_s(\omega)\,.
	\end{equ}
By Proposition~\ref{stoch_int} and equation~\eqref{bddns_of_integrand}, we conclude that, almost surely, $u_{t(\omega)}(\omega) = u^L_{t(\omega)}(\omega)$, provided that 
we consider a continuous version of $u^L$. This also shows that $(u^L, T_L)$ is a local mild solution of the It\^o SPDE~\eqref{ito_spde}. Whenever $\|u_t\|$ is finite we can always restart the equation~\eqref{e:rpde1} (with $\XX$ replaced by $\textbf{B}^\Ito$) with initial condition $u_t$ and extend the solution further in time therefore almost surely $T_L \to \tau_1$ as $L \to \infty$. Moreover $T_L$ clearly increases as $L$ increases and $u^L_t = u_t$ on $\{t < T_L\}$ thus showing that $(u,\tau_1)$ is indeed a maximal solution of~\eqref{ito_spde}.

	Regarding the solution $(v,F(v))$ the proof is the same once we notice that
	\begin{equs}
		\int_0^t S_{t-s}F(v_s)d\textbf{B}^\Strat_s &= \int_0^t S_{t-s}F(v_s)d\textbf{B}^\Ito_s + \frac{1}{2}\int_0^t S_{t-s}v'_s ds \label{strat - ito}\\
		&= \int_0^t S_{t-s}F(v_s)d\textbf{B}^\Ito_s + \frac{1}{2}\int_0^t S_{t-s}DF(v_s)F(v_s) ds\; , 
	\end{equs}
	and that all the above integrals make sense as elements of $\HH$. Then we apply 
	Proposition~\ref{stoch_int} again for the rough integral with respect to $\textbf{B}^\Ito$ 
	in~\eqref{strat - ito} and the result follows.
\end{proof}
\begin{rem} \label{rem 4}
Whenever one develops a new approach to solve SPDEs, it is natural to ask that these solutions coincide with 
solutions given by other approaches, whenever both apply. This theorem tells us that indeed this is true. 
For our results, this theorem serves another role: it allows us to transfer properties known for the solutions 
to SPDEs in It\^{o} (or Stratonovich) form to the RPDE solution. This is useful since 
it might be simpler to obtain a priori estimates, global existence and Malliavin differentiability for the SPDEs 
rather than the corresponding RPDEs. For instance, global existence for almost every realisation of 
Brownian motion for the It\^o solutions can be used to show that continuity of the solution map~\eqref{e:cty1} is true for all $T>0$.
\end{rem}

 \subsection{Malliavin differentiability and the Jacobian}

In this subsection we show the Malliavin differentiability of the solutions to RPDEs 
driven by general Gaussian rough paths, using only that the solution does not blow up in finite time. 
Unfortunately the method is non constructive and only gives the knowledge that Malliavin 
derivative exists and lies in the Shigekawa-Sobolev space $\mathbb{D}^{1,2}_{\loc}$. In particular, it 
does not automatically imply that this Malliavin derivative is a controlled rough path itself and \slash or that it solves 
some RPDE. Nevertheless this is not so important for our analysis since our main result regarding the non-degeneracy 
of the Malliavin matrix does not require Malliavin differentiability per se. This is due to the fact that we 
will define Malliavin matrix only using the existence of linearisation of solution. Moreover if say equation 
of our interest is driven by the Brownian motion and we can show Malliavin differentiability then Malliavin 
derivative satisfies an SPDE and therefore almost surely the RPDE by Theorem~\ref{spde=rpde}. This restriction 
to the Brownian case is also performed because for general rough paths the a priori bounds are not easy to obtain. 
Despite the fact that we will later simply assume the Malliavin differentiability and won't explicitly use Theorem~\ref{mal_diff} we still present it together with the proof as a result on its own. 
Before we proceed, we quickly recall the Cameron-Martin theory for general centred Gaussian rough paths.

 Let $\Omega = C([0,T],\R^d)$ and let $X: \Omega \times [0,T] \to \R^d$ be the a canonical centred Gaussian process so that $X_t(\omega) = \omega(t)$. The Gaussian law of $X$ is completely determined by its covariance function $R_X: [0,T]^2 \to \R^{d\times d}$. For $p\geq 1$ define the 2D $p$-variation of $R$  on a rectangle $I\times I' \subseteq [0,T]^2$ to be:
 $$ \|R_X\|_{p, I\times I'} :=\Big( \sup_{P \in \pi(I)\atop {P' \in \pi(I')}} \sum_{[s,t] \in P \atop [s',t'] \in P'} |\E[\delta X_{t,s}\otimes \delta X_{t',s'}]|^p \Big)^{1/p}.$$
$\pi(I)$ denotes here the partitions of $I$. Similarly one can define $\|R_{X^i}\|_{p, I\times I'}$.
The Cameron-Martin space $\CM_T \subset C([0,T],\R^d)$ is a Hilbert space which consists of the paths $v_t = \E[ZX_t]$ for $Z $ 
lying in the first Wiener chaos $\mathcal{W}^1$ which is an $L^2$-closure of $\text{span}\{X^i_t\,:\,t\in [0,T],\; 1\leq i\leq d \}$. 
See \cite[Chap.~10, 11]{friz} for the description of the regularity of the Cameron Martin space and for precise conditions on  
the covariance function which guarantee that $X$ can be lifted almost surely to a rough path in a canonical way. In the case when 
the Gaussian process is a Brownian motion, we have $\CM_T = H^1([0,T],\R^d) = \{h: h(0) = 0\,\&\,\partial_t h \in L^2([0,T],\R^d) \}$.
 
 For $ \gamma \in (1/3,1/2)$ and a generic $(X,\X) \in \cC^\gamma([0,T],\R^d)$ let $h :[0,T] \to \R^d$ be sufficiently smooth, the translation operator of $\XX$ in the direction $v$ is defined by 
 $$T_h(\XX) := (X^h,\X^h),$$
for $X^h = X+\delta h$ and 
 $$\X^h_{t,s} = \X_{t,s} + \int_s^t \delta h_{r,s} dX_r + \int_s^t  X_{r,s} dh_r+\int_s^t \delta h_{r,s} dh_r.$$
 Here by sufficiently smooth we understand that all three integrals above make sense classically and moreover makes the operator $T_h$ a continuous map $\cC^\gamma$ to itself. In fact it is true for $h \in H^1$. Moreover if $\XX$ is the Brownian rough path (either It\^{o} or Stratonovich) we have that for almost every $\omega \in\Omega $ and every $h \in \CM_T$ we have:
\begin{equ} 
	T_h(\XX(\omega)) = \XX(\omega+h). \label{mal 1}
\end{equ}
 A similar result holds for a general Gaussian rough path with the regular enough covariance.
 
 Let $\XX$ be a centred Gaussian rough path which almost surely lies in $\cC^\gamma$ with covariance $R$ and let $\CM_T$ be its Cameron Martin space. Assume that for some $p \in [1,2)$ every $v \in \CM_T$ has finite $p$-variation $\|v\|_{\pvar,[s,t]}$ over the interval $[s,t] \subseteq [0,T]$ and it satisfies an inequality:
 \begin{equ} 
	\|h\|_{\pvar,[s,t]} \lesssim_{T,\gamma} \|h\|_{\CM_T} |t-s|^\gamma. \label{mal 2}
 \end{equ}
Then if $X$ almost surely satisfies equality~(\ref{mal 1}) it is easy to show that, almost surely for every $h \in \CM_T$,
\begin{equ} 
\|X^h-X\|_\gamma \lesssim \|h\|_{\CM_T}\quad \text{and}\quad \|\X^h-\X\|_\gamma \lesssim \|h\|_{\CM_T} (\|h\|_{\CM_T} + \|X\|_\gamma). \label{mal 3}
\end{equ}
We quickly recall the notion of Malliavin differentiability. Let $(\Omega,\CM,\Prob)$ be an abstract Wiener space where $\CM$ is the Cameron-Martin space. For any Hilbert space $\HH$ we say that the random variable $Y:\Omega \to \HH$ is Malliavin differentiable if there exists a random element $\cD Y: \Omega \to \CM\otimes \HH$ such that for all $h \in \CM$ the limit in probability 
$$\langle \cD Y,h\rangle_{\CM} = \lim\limits_{\varepsilon \to 0} \varepsilon^{-1}(T_{\varepsilon h}Y-Y),$$
exists. Here $(T_{h}Y)(\omega) = Y(\omega+h)$. This gives rise to a closed unbounded linear operator
$$\cD:L^2(\Omega, \HH)\to L^2(\Omega,\CM\otimes \HH).$$
The domain of this operator is denoted by $\mathbb{D}^{1,2}$. If we denote by $\FF$ the $\sigma$-algebra of the Wiener space $(\Omega,\CM,\Prob)$ then we say that $Y \in \mathbb{D}^{1,2}_{\loc}$ if there exists a sequence $(\Omega_n,Y_n)_{n\geq1} \subseteq \FF\times \mathbb{D}^{1,2}$ such that $\Omega_n \uparrow \Omega$ and $Y = Y_n$ almost surely on $\Omega_n$. See the book \cite{Nual:06} for an introduction to Malliavin Calculus. 

Having all this at hand we are ready to present a general statement on the Malliavin differentiability of the solution to the Stochastic RPDE driven by quite general Gaussian rough path and given that this solution does not explode until some deterministic time $T$.
 \begin{thm}[Malliavin differentiability] \label{mal_diff}
 	Let $(X_t)_{t\in [0,T]}$ be a d-dimensional, continuous Gaussian process with independent components defined on the probability space $(\Omega,\FF,\Prob)$. Let the covariance $R$ of $X$ be such that there exist $M<\infty$ and $p \in [1,2)$ such that for $i \in \{1,\ldots,d\}$ and $[s,t] \subseteq [0,T]$,
 	$$\|R_{X^i}\|_{p,[s,t]^2} \leq M |t-s|^{1/p}.$$
 	Let $\gamma \in (\frac{1}{3},\frac{1}{2p})$ and for almost every $\omega$ let $(u(\omega),F(u(\omega))) \in \DD^{2\gamma}_{X(w)}([0,\tau(\omega)), \HH)$ be a local mild solution to the Stochastic RPDE:
 	$$du_t(w) = Lu_t(w)dt + N(u_t(w))dt+F(u_t(w))dX_t(w),\quad u_0 =\xi \in \HH,$$
 	such that $\|u\|_{\infty,[0,T]} <\infty$ almost surely. Then for all $0 \leq t \leq T$ the solution $u_t$ is Malliavin differentiable and $u_t \in \mathbb{D}^{1,2}_{\loc}$.
 \end{thm}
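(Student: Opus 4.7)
The plan is to verify Malliavin differentiability of $u_t$ through an H-Lipschitz criterion in Cameron--Martin directions, combined with localisation.

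\textbf{Localisation.} Since $\|u\|_{\infty,[0,T]} < \infty$ almost surely and $\varrho_\gamma(\XX) < \infty$ almost surely (the latter by the assumed moments on the covariance and standard Gaussian rough-path estimates), the sets
\begin{equ}
\Omega_n := \bigl\{\omega : \|u(\omega)\|_{\infty,[0,T]} \le n,\ \varrho_{\gamma,[0,T]}(\XX(\omega)) \le n\bigr\}
\end{equ}
increase to $\Omega$. Pick a smooth cutoff $\chi_n \in C^\infty(\HH;[0,1])$ with $\chi_n = 1$ on $\{\|x\|\le n+1\}$ and $\chi_n = 0$ on $\{\|x\|\ge n+2\}$, and set $F_n = \chi_n F$, $N_n = \chi_n N$. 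Then $F_n \in C^3_{-2\gamma,0}$ with bounded derivatives and $N_n \in \Poly^{0,n}_{-\kappa,-\delta}$ with globally bounded growth. By Theorem~\ref{RPDE2} together with the a priori bound $\|u^{(n)}\|_\infty \lesssim_n 1$ that this bounded coefficients provide, the truncated RPDE admits a \emph{global} solution $u^{(n)} \in \DD^{2\gamma}_X(\R_+,\HH)$. On $\Omega_n$ the solutions $u$ and $u^{(n)}$ coincide on $[0,T]$ by local uniqueness.

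\textbf{H-Lipschitz estimate.} Since $X$ has independent components each with covariance of finite $p$-variation for some $p<2$, the Cameron--Martin space $\CM_T$ has a.s. finite $p$-variation and satisfies~\eqref{mal 2}, the translation identity~\eqref{mal 1} holds, and the shift estimates~\eqref{mal 3} are valid. Combining~\eqref{mal 3} with the global stability estimate from Proposition~\ref{stab_sol3} applied to the truncated equation (whose coefficients have bounded derivatives, so that Theorem~\ref{stab_sol1} extends globally in $T$ with constants depending only on $\varrho_\gamma(\XX)$ and $T$), we obtain, for every $h \in \CM_T$ and a.e. $\omega$,
\begin{equ}
\|u^{(n)}_t(\omega + h) - u^{(n)}_t(\omega)\|_{\HH} \;\le\; C_n\bigl(1+\varrho_\gamma(\XX(\omega))\bigr)^{\!k}\,\|h\|_{\CM_T}\bigl(1 + \|h\|_{\CM_T}\bigr),
\end{equ}
for some constant $C_n$ and integer $k$, uniformly in $t\in[0,T]$. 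In particular, $\omega \mapsto u^{(n)}_t(\omega)$ is Lipschitz along every Cameron--Martin direction with a Lipschitz constant that lies in $L^q(\Omega)$ for all $q<\infty$, by the Fernique-type integrability of $\varrho_\gamma(\XX)$ for Gaussian rough paths.

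\textbf{Conclusion via the Sugita criterion.} By the Sugita--Enchev--Stroock characterisation of $\mathbb{D}^{1,2}$ (see e.g.\ \cite[Prop.\ 4.1.3]{Nual:06}), a Hilbert-space valued random variable that is ray absolutely continuous in every Cameron--Martin direction with an $L^2$ stochastic Gateaux derivative belongs to $\mathbb{D}^{1,2}$. The H-Lipschitz bound above immediately gives ray absolute continuity along every $h \in \CM_T$ together with an a.s.~Gateaux derivative $D_h u^{(n)}_t$ satisfying $\|D_h u^{(n)}_t\| \le C_n(1+\varrho_\gamma(\XX))^k\|h\|_{\CM_T}$, and hence an $L^2$-integrable Malliavin derivative. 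Thus $u^{(n)}_t \in \mathbb{D}^{1,2}$. Since $u_t = u^{(n)}_t$ on $\Omega_n$ and $\Omega_n \uparrow \Omega$, this yields $u_t \in \mathbb{D}^{1,2}_{\loc}$.

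\textbf{Main obstacle.} The delicate point is to justify that the stability statements from Section~\ref{sec:continuityRPDE}, proved locally in time, can be upgraded to genuinely global Lipschitz estimates for the truncated equation with a constant which is polynomial in $\varrho_\gamma(\XX)$; this is where the Fernique integrability of Gaussian rough path norms is essential to turn an a.s.~local Lipschitz bound into an $L^2$ Lipschitz bound suitable for the Sugita criterion.
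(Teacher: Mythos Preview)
Your approach is in the right spirit but differs from the paper's and contains a genuine gap that the paper's argument is designed to avoid.

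\textbf{The gap.} Your key claim is the global $H$-Lipschitz bound
\[
\|u^{(n)}_t(\omega+h)-u^{(n)}_t(\omega)\|_{\HH}\le C_n\bigl(1+\varrho_\gamma(\XX(\omega))\bigr)^{k}\|h\|_{\CM_T}\bigl(1+\|h\|_{\CM_T}\bigr),
\]
with \emph{polynomial} dependence on $\varrho_\gamma(\XX)$. This is not delivered by the results of Section~\ref{sec:continuityRPDE}. The local stability estimate of Theorem~\ref{stab_sol1} holds on a time interval whose length is an inverse power of $\varrho_\gamma(\XX)$ and $\|\xi\|$; iterating it to reach a fixed time $T$ requires a number of steps that itself grows polynomially in $\varrho_\gamma(\XX)$, and each step multiplies the constant. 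The resulting dependence is of the type $\exp(\text{poly}(\varrho_\gamma))$, not polynomial, and Fernique integrability of $\varrho_\gamma(\XX)$ (which for Gaussian rough paths gives only Gaussian tails for $|X|_\gamma$ and sub-exponential tails for $|\X|_{2\gamma}$) does not compensate for this. Obtaining genuinely integrable Lipschitz constants for rough differential equations driven by Gaussian noise is a hard problem (cf.\ the Cass--Litterer--Lyons greedy-partition machinery for RDEs), and nothing of that strength is available in the paper. A secondary issue: your cutoff $\chi_n$ is based on $\|\cdot\|_\HH$, but membership in $C^3_{-2\gamma,0}(\HH,\HH^d)$ requires $F_n$ to act on $\HH_{-2\gamma}$, where $\|\cdot\|_\HH$ is not even defined; this can be repaired by cutting off in $\|\cdot\|_{\HH_{-2\gamma}}$, but it needs to be said.

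\textbf{How the paper circumvents this.} Rather than truncating the equation and asking for a global Lipschitz bound with an $L^2$ random constant, the paper keeps the original equation and localises \emph{a posteriori} by a cutoff in the Cameron--Martin distance to a good set. Concretely, with $B_n=\{\|u\|_{\infty}\le n/2,\ \varrho_\gamma\le n\}$, Proposition~\ref{stab_sol3} gives a deterministic radius $\sigma_n$ such that, for $\omega\in B_n$ and $\varrho_\gamma(\omega+h,\omega)\le \sigma_n$, the perturbed solution stays bounded by $n$ and obeys a Lipschitz estimate with a \emph{deterministic} constant $C(n,T)$. One then sets $u^n_t:=\phi(\rho_{G_n}/\sigma'_n)\,u_t$ for $G_n$ a $\sigma$-compact full-measure subset of an enlargement $A_n\supseteq B_n$, where $\rho_{G_n}(\omega)=\inf\{\|h\|_{\CM_T}:\omega+h\in G_n\}$. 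This product is shown to be Lipschitz in $h$ for $\|h\|_{\CM_T}\le \sigma'_n/3$ with constant depending only on $n$ and $T$, and Nualart's criterion (Exercise~1.2.9 / Proposition~4.1.3) then yields $u^n_t\in\mathbb{D}^{1,2}$. The point is that only a \emph{local} Lipschitz estimate in $h$ is required, with constants depending on $n$ in an arbitrary way; no control on the growth in $\varrho_\gamma(\XX)$ is needed, and the integrability burden disappears entirely.

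In short: your localisation by truncating the coefficients forces you to prove an integrability statement you do not have; the paper's localisation by cutting off in $\rho_{G_n}$ only needs the deterministic local stability of Proposition~\ref{stab_sol3}.
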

\begin{proof}
	Fix $\gamma  \in (\frac{1}{3},\frac{1}{2p})$, assumptions on the covariance $R$ guarantee (see \cite[Chap.~10]{friz}) that there is a canonical lift of $X$ to a rough path in $\cC^\gamma([0,T],\R^d)$ and that, for every $h \in \CM_T$,
\begin{equ}[e:boundh]
	\|h\|_{\pvar,[s,t]} \leq \|h\|_{\CM_T} |t-s|^{1/2p}.
\end{equ}
Moreover for almost every $\omega \in\Omega $ and every $h \in \CM_T$ we have
	$$T_h(\XX(\omega)) = \XX(\omega+h)\;,$$
	where $T_h$ denotes the translation operator by $h$ which is well-defined thanks to \eqref{e:boundh}
	(see Theorem 10.4, Proposition 11.2 and Theorem 11.5 in \cite{friz}). Because of this last property we will not distinguish between $T_h(\XX(\omega))$ and $\XX(\omega+h)$ and from now on we will simply write $\omega+h$ to denote any of them, we also abuse the notation and simply write $X(\omega) = \omega$.
	Fix $t \in [0,T]$ and define an event
	\begin{equ}
		B_n = \{\omega\;: \|u(\omega)\|_{\infty,[0,T]} \leq n/2, \varrho_\gamma(\omega) \leq n \}.
	\end{equ}
	 Let $\omega \in B_n$ then from Proposition~\ref{stab_sol3} we know that for such $\omega$ there exists a small number $\sigma_n$ (independent of $\omega$ and only dependent on $n$, $\xi$ and the equation itself) such that for all $h \in \Omega$ with $\varrho_\gamma(\omega+h,\omega) \leq \sigma_n$ we have $\|u(\omega+h)\|_{\infty,[0,T]} \leq 2 \|u(\omega)\|_{\infty,[0,T]}$. Assume that $\|h\|_{\CM_T} \leq \sigma'_n := \frac{\sigma_n}{2n}\wedge 1$ then by~(\ref{mal 3}) we have:
	 \begin{equ}
	 	\varrho_\gamma(\omega+h,\omega) \leq \|h\|_{\CM_T} (\|h\|_{\CM_T} + \varrho_{\gamma}(\omega)) \leq \sigma'_n(1+n) \leq \sigma_n\;.
	 \end{equ}
	This is showing that for the event
 \begin{equ}
 	A_n = \{\omega\;: \sup_{\|h\|_{\CM_T} \leq\sigma'_n}\|u_t(\omega+h)\| \leq n, \varrho_\gamma(\omega) \leq n \},
 \end{equ}
	we have $B_n \subseteq A_n$ and since  $\|u\|_{\infty,[0,T]} <\infty$ almost surely we have that $B_n \uparrow \Omega$ thus implying $A_n \uparrow \Omega$. It is also possible to find a $\sigma$-compact set $G_n \subset A_n$ such that $A_n/G_n$ is a null set. For $A \in \FF$ we define:
	$$\rho_A(\omega) = \inf\{\|h\|_{\CM_T}: h \in \CM_T\; \text{and}\; \omega+h \in A \}.$$
	Then define $u^n_t := \phi(\rho_{G_n}/\sigma'_n)u_t$ for $\phi \in C^\infty_c(\R)$ non-negative function such that $|\phi(t)| \leq 1$ and $|\phi'(t)| \leq 4$  for $\forall t$, $\phi(t) = 1 $ for $|t| \leq 1/3$ and $\phi(t) = 0$ for $|t| \geq 2/3$. Then we have that $u^n_t = u_t$ on $G_n$ and 
	$$\|u^n_t\| \leq \ind_{\{\rho_{G_n} \leq \frac{2\sigma'_{n}}{3}\}}\|u_t\| \leq n.$$
	Thus let $\|h\|_{\CM_T} \leq \sigma'_n/3$ therefore:
	\begin{equs}
		\|u^n_t(\omega+h) - u^n_t(\omega)\| &\leq \|(\phi(\rho_{G_n}(\omega+h)/\sigma'_n)-\phi(\rho_{G_n}(\omega)/\sigma'_n))u_t(\omega+h)\| \\
		&\quad+ \|\phi(\rho_{G_n}(\omega)/\sigma'_n)(u_t(\omega+h)-u_t(\omega))\|.
	\end{equs}
	For the second term we use Proposition~\ref{stab_sol3} and equation~(\ref{mal 3}) to deduce 
	$$\|\phi(\rho_{G_n}(\omega)/\sigma'_n)(u_t(\omega+h)-u_t(\omega))\| \lesssim_{T,n} \ind_{\{\rho_{G_n} \leq \frac{2\sigma'_{n}}{3}\}} \varrho_\gamma(\omega+h,\omega) \lesssim_{T,n} \|h\|_{\CM_T}.$$ 
	For the first term we proceed exactly as in Proposition 4.1.3 from \cite{Nual:06} to deduce that for $\|h\|_{\CM_T} \leq \sigma'_n/3$ 
	$$\|u^n_t(\omega+h) - u^n_t(\omega)\| \lesssim_{T,n} \|h\|_{\CM_T}.$$
	For the underlying constant which is deterministic and depends only on $T,\,n$, initial condition and the equation itself. Exercise 1.2.9 in \cite{Nual:06} shows that such local Lipschitz continuity guarantees that $(G_n,u^n_t)$ is the localizing sequence required for the definition of $\mathbb{D}^{1,2}_{loc}$. 
\end{proof}
Assume now that $F \in C^\infty_{-2\gamma,0}(\HH,\HH^d)$, and $N \in \Poly^{\infty,n}_{0,-\delta}(\HH)$ for some $n\geq1$. 
We then show that the Jacobian of the solution is related to the Malliavin derivative for RPDEs in the same way as for SDEs. 
The unique mild RPDE solution to the equation~(\ref{e:rpde2}) driven by the geometric rough path $\XX \in \cC^{\gamma}_g$ 
and passing through the point $u_{s} = \xi$ gives rise to the solution flow $\Phi^\xi_{t,s}(X) = (u_t,u'_t)$ up until the blow 
up time. More formally $$\Phi_{.,s} : \HH\times \cC^{\gamma}_g([s,T]) \to \hat{\DD}^{2\gamma}_X([s,T],\HH).$$
The space $\hat{\DD}^{2\gamma}_X$ is defined similarly to the space $\hat{\cD}^{2\gamma,\beta,0}_{S,X}$ from the previous subsection and takes into account the fact that the solution may blow up before the time $T$.
The derivative of the flow with respect to the starting point is called the Jacobian and is denoted by $J^X_{t,s}$. 
$$J^X_{t,s}\zeta := \frac{d}{d\varepsilon}\Phi^{\xi+\varepsilon\zeta}_{t,s}(X).$$
If $\XX$ is a rough path lifted from a smooth path and we can show that the solution to~(\ref{e:rpde2}) is global for every initial condition then from classical theory of PDEs one can show (say using implicit function theorem) that the Jacobian exists at every $\zeta \in \HH$ and will satisfy linearised equation:
\begin{equ}[e:jacobian]
dJ^X_{t,s}\zeta = LJ^X_{t,s}\zeta dt +DN(u_t)J^X_{t,s}\zeta dt + DF(u_t)J^X_{t,s}\zeta dX_t,\quad \, J^X_{s,s}\zeta = \zeta.
\end{equ}
Or in the mild form:
\begin{equ} 
	J^X_{t,s}\zeta = S_{t-s}\zeta + \int_s^tS_{r-s}DN(u_r)J^X_{r,s}\zeta dr + \int_s^tS_{r-s}DF(u_r)J^X_{r,s}\zeta dX_r. \label{mal 5}
\end{equ} 
From this representation and from the fact that $(u,u')$ is controlled by $\XX$ we deduce that if such $J^X_{t,s}\zeta $ satisfies this equation then it is also controlled by $\XX$.

Consider also the directional derivative of the flow in the direction of the noise:
$$D_h\Phi^\xi_{t,0}(X) = \frac{d}{d\varepsilon}\Phi^\xi_{t,0}(T_{\varepsilon h}X).$$
For $h$ sufficiently smooth. Once again if $X$ is lifted from a smooth path and $h$ is smooth then classical PDE theory is telling us that $D_h\Phi^\xi_{t,0}(X)$ exists and that due to variation of constants formula it satisfies:
$$D_h\Phi^\xi_{t,0}(X) =\int_0^t J^X_{t,s}F(u_s)dh_s.$$
The same passes to the geometric rough path in the limit:
\begin{prop} \label{duhamel}
Let $\gamma \in (1/3,1/2]$ and $\XX \in \cC^{\gamma}_g([0,T],\R^d)$. Assume that for $F \in C^\infty_{-2\gamma,0}(\HH,\HH^d)$, and $N \in 
\Poly^{\infty,n}_{0,-\delta}(\HH)$ solution to the equation~(\ref{e:rpde2}) exists in $\DD^{2\gamma}_X([0,T],\HH)$ for every initial condition $\xi \in \HH$. 
Let $h \in C^{\pvar}([0,T],\R^d)$ with complementary Young regularity $\gamma+1/p  > 1$. Then for $\forall \zeta \in \HH$ both $J^X_{t,s}\zeta$ 
$D_h\Phi^\xi_{t,0}(X)$ exist as elements of $\DD^{2\gamma}_X([0,T],\HH)$, $J^X_{t,s}\zeta$ satisfies RPDE~(\ref{mal 5}) and this Duhamel's formula holds:
\begin{equ} 
		D_h\Phi^\xi_{t,0}(X) =\int_0^t J^X_{t,s}F(u_s)dh_s, \label{mal 6}
\end{equ}
where the right hand side is well-defined as a Young integral.
\end{prop}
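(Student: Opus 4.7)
The plan is to use a smooth-approximation-plus-stability strategy. By definition of $\cC^\gamma_g$, I would choose smooth paths $X^n$ with canonical lifts $\XX^n \to \XX$ in $\cC^\gamma$, and simultaneously approximate $h$ by smooth $h^n$ converging in $p$-variation. For smooth data, equation~(\ref{e:rpde2}) is a classical mild PDE, so each of $J^{X^n}_{t,s}\zeta$, $D_{h^n}\Phi^\xi_{t,0}(X^n)$ and the identity~(\ref{mal 6}) exists in the classical sense via the implicit function theorem and variation of constants. Moreover, the global existence assumption combined with the iterative bound of Proposition~\ref{stab_sol3} guarantees that $u^n$ is well-defined on $[0,T]$ for large $n$, with uniform bounds.

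\emph{Construction of $J^X_{t,s}\zeta$:} First I would read~(\ref{mal 5}) as a \emph{linear} RPDE driven by $\XX$ whose coefficient fields are the controlled rough paths $r \mapsto DN(u_r)$ (from Lemma~\ref{compos4}) and $r \mapsto DF(u_r)$ (from Lemma~\ref{compos1}), with the product $DF(u_r)\,J^X_{r,s}\zeta$ handled as a controlled rough path via a Leibniz-type rule on Gubinelli derivatives. A fixed-point argument of the same flavour as Theorem~\ref{RPDE2}, applied on short intervals and iterated using linearity plus a Gr\"onwall bound (so that no term quadratic in $\|J^X\zeta\|$ appears), should yield a unique $J^X_{\cdot,s}\zeta \in \DD^{2\gamma}_X([s,T],\HH)$ globally on $[0,T]$.

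\emph{Identification with the flow derivative and passage to Duhamel:} Next, Proposition~\ref{stab_sol3} applied to initial conditions $\xi$ and $\xi+\varepsilon\zeta$ gives $\|u^{\xi+\varepsilon\zeta}-u^\xi\|_{\DD^{2\gamma}_X} = O(\varepsilon)$, and the error $w^\varepsilon := \varepsilon^{-1}(u^{\xi+\varepsilon\zeta}-u^\xi) - J^X\zeta$ will satisfy a linear RPDE of the type just solved but with a forcing term of order $\varepsilon$ coming from Taylor's remainder on $F$ and $N$; the linear stability estimate then gives $\|w^\varepsilon\|_{\DD^{2\gamma}_X}\to 0$, identifying $J^X\zeta$ as the Fr\'echet derivative of $\xi\mapsto\Phi^\xi(X)$. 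For the Duhamel formula, stability of the solution map gives $u^n\to u$, and re-applying the linear stability to the Jacobian equation yields $J^{X^n}_{\cdot,s}\zeta \to J^X_{\cdot,s}\zeta$ in $\DD^{2\gamma}_X$; in particular $s \mapsto J^{X^n}_{t,s}F(u^n_s)$ converges to $s \mapsto J^X_{t,s}F(u_s)$ in $C^\gamma$. Combined with $h^n\to h$ in $p$-variation and the hypothesis $\gamma+1/p>1$, continuity of Young integration passes the right-hand side of~(\ref{mal 6}) to the limit. On the left, by~(\ref{mal 3}) and continuity of $T_h$, the perturbation $T_{\varepsilon h^n}X^n$ converges to $T_{\varepsilon h}X$ in $\cC^\gamma$ uniformly in small $\varepsilon$, so stability of the solution map together with a uniform-in-$\varepsilon$ difference-quotient argument identifies the limit with $D_h\Phi^\xi_{t,0}(X)$ and closes the identity.

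The main obstacle is going to be treating~(\ref{mal 5}) rigorously as an RPDE: its coefficients have only the restricted spatial regularity inherited from Lemma~\ref{compos4}, while the unknown $J^X\zeta$ must be paired with them through a product-type operation that preserves the space $\DD^{2\gamma}_X$, so the Leibniz rule for Gubinelli derivatives has to be compatible with the loss of regularity caused by $DN(u)$. A secondary technical point is the interchange of the limits $\varepsilon\to 0$ (difference quotient) and $n\to\infty$ (smooth approximation) in the Duhamel step, which requires the uniform-in-$\varepsilon$ form of the stability bounds from Proposition~\ref{stab_sol3}.
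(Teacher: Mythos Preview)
Your approach is essentially the same as the paper's: smooth approximation of both $\XX$ and $h$, classical PDE theory for the approximants, then passage to the limit via the stability results (Theorem~\ref{stab_sol1} and Proposition~\ref{stab_sol3}). The paper's proof is in fact considerably more terse than yours---it simply takes $\XX^n = \XX^c(X^n)$ with $X^n$ smooth, notes that $J^{X^n}_{t,s}\zeta$, $D_{h^m}\Phi^\xi_{t,0}(X^n)$ and both identities~(\ref{mal 5}),~(\ref{mal 6}) hold classically, and then passes to the limit in $n,m$.

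One difference worth noting: you propose to construct $J^X_{\cdot,s}\zeta$ \emph{directly} as the fixed point of the linear RPDE~(\ref{mal 5}), which forces you to confront the product-of-controlled-paths issue you flag as your main obstacle. The paper sidesteps this entirely: it never solves~(\ref{mal 5}) by fixed point at the rough level, but simply \emph{defines} $J^X_{\cdot,s}\zeta$ as the limit of the classical Jacobians $J^{X^n}_{\cdot,s}\zeta$, and then the mild equation~(\ref{mal 5}) is inherited in the limit via the continuity of the rough integral (Lemma~\ref{stab_int}) and of composition (Lemma~\ref{stab_comp}). This is more economical and makes your ``main obstacle'' disappear. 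Similarly, the paper does not carry out a separate difference-quotient identification of $J^X\zeta$ with the flow derivative; it just transports the classical identification through the limit, so your concern about interchanging $\varepsilon\to 0$ and $n\to\infty$ does not arise in the paper's presentation.
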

\begin{proof}
 We proceed like in the similar result for RDEs from \cite{friz}. Let $\XX^n = \XX^c(X^n)$ be canonical lift of a smooth path $X^n$ such that $\XX^n$ approximates $\XX$ with $\sup_n \varrho_\gamma(\XX^n) \leq \varrho_\gamma(\XX)$. Then the RPDE solution $(u^n,F(\dot{u}^n))$ to the equation~(\ref{e:rpde2}) driven by $\XX^n$ lies in $\DD^{2\gamma}_{X^n}([0,T],\HH)$ and converges to $(u,F(u))$ in the $d_{2\varepsilon,2\gamma,\eta}$ metric from the global continuity result of  Theorem~\ref{stab_sol1}. Now if we take a smooth approximation of $h$ as well say $h^m$ then from above we know that $D_{h^m}\Phi^\xi_{t,0}(X^n)$ and $J^{X^n}_{t,s}\zeta$ and equations~(\ref{mal 5}) and~(\ref{mal 6}) are satisfied for these smooth approximations. Passing to a limit as $n$ and $m$ go to infinity we obtain the desired result. 
\end{proof}
\begin{defn}\label{Mal_matr}
	Assume that~(\ref{e:rpde2}) has global solutions for every initial condition in $\HH$. Then, for any $t>0$, define the Malliavin matrix $\cM_t : \HH \to \HH$ by
	\begin{equ} 
		\langle\cM_t\varphi,\varphi\rangle = \int^t_0 \langle J^X_{t,s}F(u_s),\varphi\rangle^2 ds. \label{mal 7}
	\end{equ}
\end{defn}

\subsection{Smoothing property of the solution}

Due to the smoothing properties of the semigroup we expect that the solution is going to have a better spatial regularity after some time. In fact we are going to show that if we start our equation from $\xi \in \HH$ then the  solution immediately 
belongs to $\HH_\beta$ for every positive $\beta$. 
\begin{prop} \label{smoothing}
  	Let $\gamma \in (1/3,1/2]$ and $\XX \in \cC^\gamma$, $\xi \in \HH$. Let $F$ and $N$ be as in Theorem~\ref{stab_sol1} and $(u,F(u)) \in \DD^{2\gamma}_X([0,T],\HH)$ be a solution to the equation~(\ref{e:rpde2}). Denote $M := \|u\|_{\infty,[0,T]}$ then for every $0<t<T$ and $\beta > 0$ we have that
  	$(u,F(u)) \in \DD^{2\gamma}_X([t,T],\HH_{\beta})$ and moreover there exist $\sigma = \sigma(\delta,\gamma,\beta)$, $C_M = C(M,T,F,N,\XX)$ such that:
  	\begin{equ} 
  		\|u\|_{\infty,\beta,[t,T]} \lesssim t^{-\beta}  \|u\|_{\infty,[0,T]} + C_M T^\sigma. 
  	\end{equ}
  \end{prop}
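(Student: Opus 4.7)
The plan is to exploit the smoothing of the semigroup together with the integration estimate~\eqref{e:integral2} by restarting the equation from midpoint $s=t/2$:
\begin{equ}
u_t = S_{t/2}u_{t/2} + \int_{t/2}^{t} S_{t-r}N(u_r)\,dr + \int_{t/2}^{t} S_{t-r}F(u_r)\,dX_r\;.
\end{equ}
I would estimate each term separately in $\HH_\beta$. For the first, the semigroup bound~\eqref{semigroup} gives $\|S_{t/2}u_{t/2}\|_{\HH_\beta}\lesssim (t/2)^{-\beta}\|u_{t/2}\|\lesssim t^{-\beta}M$. For the drift, using $\|S_{t-r}N(u_r)\|_{\HH_\beta}\lesssim (t-r)^{-\delta-\beta}\|N(u_r)\|_{\HH_{-\delta}}$ and the polynomial bound $\|N(u_r)\|_{\HH_{-\delta}}\lesssim (1+M)^n$, the integral is bounded by $C(t/2)^{1-\delta-\beta}(1+M)^n$, which is finite whenever $\beta+\delta<1$ and contributes to the $C_M T^\sigma$ part with $\sigma=1-\delta-\beta$.

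For the rough integral piece, Lemma~\ref{compos4} (applied with $\sigma=0$) gives that $(F(u),DF(u)F(u))\in \cD^{2\gamma,2\gamma,0}_{S,X}([0,T],\HH_{-2\gamma}^d)$ with norm controlled by $M$ and $\varrho_\gamma(\XX)$. Then Theorem~\ref{integral} applied with $\alpha=-2\gamma$ and $\beta' := 2\gamma+\beta<3\gamma$ (which forces $\beta<\gamma$) yields
\begin{equ}
\Big\|\int_{t/2}^{t}S_{t-r}F(u_r)\,dX_r - S_{t/2}F(u_{t/2})X_{t,t/2} - S_{t/2}DF(u_{t/2})F(u_{t/2})\X_{t,t/2}\Big\|_{\HH_\beta} \lesssim_{M,\varrho_\gamma(\XX)} (t/2)^{\gamma-\beta},
\end{equ}
while the two explicit ``local expansion'' terms are estimated by the semigroup smoothing as $\|S_{t/2}F(u_{t/2})\|_{\HH_\beta}|X_{t,t/2}|\lesssim (t/2)^{\gamma-\beta}|X|_\gamma\|F(u_{t/2})\|$ and similarly $(t/2)^{2\gamma-\beta}|\X|_{2\gamma}$ for the second. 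Collecting, for any $0<\beta<\gamma\wedge(1-\delta)$ one obtains $\|u_t\|_{\HH_\beta}\lesssim t^{-\beta}M+C_MT^\sigma$ for some $\sigma>0$, which is the desired bound in this regime.

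To reach arbitrary $\beta>0$, I would bootstrap. After the first step we know $u\in L^\infty([s,T],\HH_{\beta_0})$ for every $s>0$ and every $\beta_0<\gamma\wedge(1-\delta)$, with the claimed quantitative bound. Arguing exactly as at the end of the proof of Theorem~\ref{RPDE1}, one can in fact upgrade $(u,F(u))\in \DD^{2\gamma}_X([s,T],\HH_{\beta_0})$. Feeding this back into the same three-term decomposition (started now from $s=t/2$ with the base space $\HH_{\beta_0}$ instead of $\HH$), Lemma~\ref{compos4} and Theorem~\ref{integral} are applicable in the higher-regularity base space $\HH_{\beta_0-2\gamma}$, and we gain another $\gamma\wedge(1-\delta)$ orders of spatial regularity. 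Finitely many iterations then reach any prescribed $\beta>0$.

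The main technical obstacle is the bookkeeping in the bootstrap: each iteration must preserve the controlled-rough-path structure while tracking the $t^{-\beta}$ blow-up from the (only $\HH$-valued) initial datum and keeping the implicit constant polynomial in $M$ and $\varrho_\gamma(\XX)$, rather than compounding singularities $s^{-\beta_0}$ across iterations. This is handled by choosing each intermediate restart time proportional to $t$, so that at every stage only a single factor of $t^{-\beta_k}$ (with $\beta_k$ the current regularity) appears, while the remaining contributions are absorbed into a power $T^\sigma$ of the final time.
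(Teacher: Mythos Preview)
Your proposal is correct and follows essentially the same strategy as the paper: restart the mild formulation at an interior time, use semigroup smoothing on $S_{t-s}u_s$, the polynomial bound on the drift, and the improved spatial regularity estimate~\eqref{e:integral2} for the rough convolution to gain a small amount $\nu<\gamma\wedge(1-\delta)$ of spatial regularity, and then bootstrap. The paper's only additional ingredient is that, rather than invoking the end of Theorem~\ref{RPDE1} to upgrade the $L^\infty(\HH_{\nu})$ bound to the full $\DD^{2\gamma}_X([t,T],\HH_\nu)$ structure, it re-runs the fixed point argument in the smaller space $\DD^{2\gamma}_X([t,t+\tau],\HH_\nu)$ and iterates the time interval up to $T$; both routes are valid and amount to the same thing.
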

\begin{proof}
	From the mild formula of solution we get that:
	$$u_t = S_{t-s}u_s + \int_s^tS_{t-r}N(u_r)dr + \int_s^tS_{t-r}F(u_r)dX_r.$$ 
	then taking $0 < \nu < \gamma$ we can deduce from the property of the semigroup and bounds on rough integration:
	$$\|u_t\|_{\HH_\nu} \lesssim |t-s|^{-\nu}\|u_s\| + T^{1-\delta-\sigma}(1+\|u\|_{\infty,[0,T]})^n+\varrho_\gamma(X)\|(u,F(u))\|_{\DD^{2\gamma}_X(\HH)}T^{\gamma-\nu}.$$
	From the similar arguments of iteration as in Theorem~\ref{stab_sol1} we can get that there exist $C'_M$ which is locally bounded in $M$ such that $\|(u,F(u))\|_{\DD^{2\gamma}_X(\HH)}T^{\gamma-\nu} < C'_M$ therefore we indeed deduce that for all $0 \leq s < t <T$
	\begin{equ} 
		\|u_t\|_{\HH_\nu} \lesssim |t-s|^{-\nu}M + C_M T^\sigma, \label{e:smoothing}
	\end{equ}
	where $C_M = \varrho_\gamma(X)C'_M + (1+M)^n$ and $T^\sigma = T^{1-\delta-\nu}\wedge T^{\gamma-\nu}$. Thus since both functions $F$ and $N$ act on $\HH_\nu$ for $\nu > 0$ the same way as on $\HH$ we can now solve our equation with this new initial condition $u_t \in \HH_\nu$ which from~(\ref{e:smoothing}) satisfies $\|u_t\|_{\HH_\nu} \leq C(t^{-\nu}M + C_M T^\sigma) =: M_t$ (here constant $C$ is the constant that is discarded by the $\lesssim$ sign). This gives us that there exists $\tau = \tau(M_t) > 0 $ such that the solution map is invariant and a contraction on the space $\DD^{2\gamma}_X([t,t+\tau],\HH_\nu)$ (or rather on some ball in this space). Thus we get that $(u,F(u)) \in  \DD^{2\gamma}_X([t,t+\tau],\HH_\nu)$. Now from~(\ref{e:smoothing}) we get that all $0\leq s < t+\tau$
	$$\|u_{t+\tau}\|_{\HH_\nu} \lesssim |t+\tau-s|^{-\nu}M + C_M T^\sigma.$$
	Picking $s = \tau$ we get again $\|u_{t+\tau}\|_{\HH_\nu} \leq M_t$ and thus starting the equation from the initial condition $u_{t+\tau}$ and since $\tau$ only depended on $M_t$ we can get the solution on $\DD^{2\gamma}_X([t+\tau,t+2\tau],\HH_\nu)$. Thus summarising we get $(u,F(u)) \in  \DD^{2\gamma}_X([t,t+2\tau],\HH_\nu)$ with again $\|u_{t+2\tau}\|_{\HH_\nu} \leq M_t$. Bootstrapping this further since $\tau>0$ is fixed we can get to time $T$ in finite number of these iterations and indeed get $(u,F(u)) \in  \DD^{2\gamma}_X([t,T],\HH_\nu)$.
	
	In order to prove this proposition for arbitrary $\beta > \nu > 0$ denote $t_0 = t\beta/\nu$ and $M_0 = M$. Without loss of generality let $\beta/\eta$ be an integer. Denote by $M_1 = \|u\|_{\infty,\nu,[t_0,T]}$ thus from~(\ref{e:smoothing}) 
	$$M_1 \lesssim t_0^{-\nu}M_0 + C_{M_0} T^\sigma.$$
	 and we have our solution $(u,F(u)) \in  \DD^{2\gamma}_X([t_0,T],\HH_\nu)$. Therefore proceeding exactly the same like in the beginning of the proof we get for all $t_0\leq s < r$
	$$\|u_r\|_{\HH_{2\nu}} \lesssim |r-s|^{-\nu}M_1 + C_{M_1} T^\sigma.$$
	As a consequence we get the solution $(u,F(u)) \in  \DD^{2\gamma}_X([2t_0,T],\HH_{2\nu})$ with $M_2 = \|u\|_{\infty,2\nu,[2t_0,T]}$. Recursively we get for every $n$ such that $nt_0 < T$, $(u,u') \in  \DD^{2\gamma}_X([nt_0,T],\HH_{n\nu})$ and with $M_n :=\|u\|_{\infty,n\nu,[nt_0,T]}$ we have the recursive inequality 
	$$M_n \leq C(t^{-\nu}M_{n-1}+C_{M_{n-1}}T^\sigma),$$
	where we took into account that $t = t_0\nu/\beta$. Solving this recursive inequality we get that for some other constants $\sigma = \sigma(n)$ and $C_M = C(M,n)$ we have
	$M_n \lesssim t^{-n\nu}M + C_M T^\sigma$. Picking $n=\beta/\nu$ we indeed get the result. 
\end{proof}

Note that we actually use in the proof that time interval on which the solution map is contractive and invariant also depends on the norms of $F$ and $N$ which can be different when acting on $\HH_\nu$ for different values of $\nu$. But since we want to make an improvement of space regularity only by the finite amount $\beta$ we can pick the largest value of the norms of $F$ and $N$ only up to their action on $\HH_\beta$.

This smoothing property is going to help us to overcome the issue that solution $(u,F(u)) \in \DD^{2\gamma}_X([0,T],\HH)$ to an RPDE lives in a space $\HH_{-2\gamma}$ as a controlled rough path while just as a function of time $u_t \in \HH$. This makes it difficult to investigate the properties of $u_t$ in the Hilbert space $\HH$ since in this space we cannot make an advantage of the fact that $u$ is actually a controlled rough path.

 \section{Rough Fubini Theorem} \label{fubini}

In this section we will only work with the usual notion of the controlled rough path. We consider a wide class of 
processes $Y_{t,s}$ which are controlled rough paths in both of their time directions. For such processes double rough integral can be defined and we will show when the order of integration can be swapped.

\begin{defn}
	Let $\gamma \in (1/3,1/2]$ and $\XX \in \cC^{\gamma}$. We say that the process $Y: [0,T]\times [0,T] \to \R^{d \times d}$ is jointly controlled by $\XX$ and write $Y \in \cD^{2\gamma}_{2,X}([0,T]^2,\R^{d \times d})$ if for every fixed $s\in [0,T]$ we have that $Y_{\cdot,s}$ and $Y_{s,\cdot}$ are both controlled rough paths with respect to $\XX$. In this case we write: 
	 \begin{equs}
	 	Y_{v,s}-Y_{u,s} &= Y^{1}_{u,s} X_{v,u} + R^{1}_{v,u}(s),\\
	 	Y_{u,t}-Y_{u,s} &= Y^{2}_{u,s} X_{t,s} + R^{2}_{t,s}(u).
	 \end{equs}
Moreover we require for every fixed $s\in [0,T]$ that both $Y^{1}_{s,\cdot}$ and $Y^{2}_{\cdot,s}$ are controlled rough paths 
such that $Y^{1,2} = Y^{2,1}$, where we write: 
 \begin{equs}
 	Y^1_{u,t}-Y^1_{u,s} &= Y^{1,2}_{u,s} X_{t,s} + R^{1,2}_{t,s}(u),\\
 	Y^2_{v,s}-Y^2_{u,s} &= Y^{2,1}_{u,s} X_{v,u} + R^{2,1}_{v,u}(s).
 \end{equs}
Note that this also ensures that for every fixed $0 \leq u \leq v \leq T$ both $R^{1}_{v,u}(\cdot)$ and $R^{2}_{v,u}(\cdot)$ are controlled rough paths. This can be shown by verifying this nice formula:
\begin{equ} 
R^{1}_{v,u}(t)-R^{1}_{v,u}(s)-R^{2,1}_{v,u}(s) X_{t,s} = R^{2}_{t,s}(v)-R^{2}_{t,s}(u)-R^{1,2}_{t,s}(u) X_{v,u}. 
\end{equ}
Denote any side of this equality by $R(t,s,v,u)$. We call $Y^1,Y^2$ first order Gubinelli  derivatives and $Y^{1,2}$ second order Gubinelli derivatives. Strictly speaking, the whole tuple $(Y,Y^1,Y^2,Y^{1,2})$ is an element of $\cD^{2\gamma}_{2,X}$
since  $Y^1,Y^2,Y^{1,2}$ need not be unique.

We introduce the following seminorms: for any function $Z : [0,T]^2 \to \R^{d \times d}$
\begin{equ}
	\|Z\|_{\infty,\gamma} =\sup_{{0\leq s \leq T}}\sup_{{0\leq u<v\leq T}}\frac{| Z_{s,v}-Z_{s,u}|}{|v-u|^{\gamma}};\quad \|Z\|_{\gamma,\infty} = \sup_{{0\leq u \leq T}}\sup_{{0\leq s<t\leq T}}\frac{| Z_{t,u} - Z_{s,u}|}{|t-s|^{\gamma}}.
\end{equ}
For function $ Q: [0,T]^3 \to \R^{d \times d}$ that is written for times $v,u,s$ like $Q_{v,u}(s)$ we write:
$$\|Q\|_{2\gamma,\infty} = \sup_{{0\leq s \leq T}}\sup_{{0\leq u<v\leq T}}\frac{| Q_{v,u}(s)|}{\;|v-u|^{2\gamma}}.$$
Now we consider $Y \in \cD^{2\gamma}_{2,X}([0,T]^2,\R^{d \times d})$ with $(Y,Y^1,Y^2,Y^{1,2},R^1,R^2,R^{1,2},R^{2,1})$ as above.
\begin{equs}
	\vv Y^{1,2} \vv_\gamma &= \|Y^{1,2}\|_{\infty,\gamma}+\|Y^{2,1}\|_{\gamma,\infty},\\ [.4em]
	\|Y'\|_{\gamma} = \|&Y^1\|_{\gamma,\infty} + \|Y^2\|_{\infty,\gamma}+\vv Y^{1,2} \vv_\gamma 
\end{equs}
For convenience we write $\delta R^{1,2}_{t,s}(v,u) := R^{1,2}_{t,s}(v)-R^{2,1}_{t,s}(u)$ and $\delta R^{2,1}_{v,u}(t,s) := R^{2,1}_{v,u}(t)-R^{2,1}_{v,u}(s)$ and denote:
\begin{equs}
	\vv R \vv_{2\gamma,2\gamma} &= \sup_{{0\leq s<t\leq T}}\sup_{{0\leq u<v\leq T}}\frac{|R(t,s,v,u)|}{|v-u|^{2\gamma}\,|t-s|^{2\gamma}},\\
	\vv R^{1,2}\vv_{\gamma,2\gamma} &= \sup_{{0\leq s<t\leq T}}\sup_{{0\leq u<v\leq T}}\frac{|\delta R^{1,2}_{t,s}(v,u)|}{|v-u|^{\gamma}\,|t-s|^{2\gamma}},\\
	\vv R^{2,1}\vv_{2\gamma,\gamma} &= \sup_{{0\leq s<t\leq T}}\sup_{{0\leq u<v\leq T}}\frac{|\delta R^{2,1}_{v,u}(t,s)|}{|v-u|^{2\gamma}\,|t-s|^{\gamma}},\\
\end{equs}
The total norm of the remainder is then defined as
\begin{equs}
	\|R^Y\|_{2\gamma} = \|R^1\|_{2\gamma,\infty} + \|R^2\|_{2\gamma,\infty} &+\|R^{1,2}\|_{2\gamma,\infty}+ \vv R^{1,2}\vv_{\gamma,2\gamma}+\\
	&+\|R^{2,1}\|_{2\gamma,\infty}+\vv R^{2,1}\vv_{2\gamma,\gamma}+   \vv R \vv_{2\gamma,2\gamma}.
\end{equs}
Finally putting it all together we can define a seminorms on
$\cD^{2\gamma}_{2,X}([0,T]^2,\R^{d \times d})$ by: 
\begin{equ}
	\|(Y,Y')\|_{X,2\gamma} =\|Y'\|_{\gamma} + \|R^Y\|_{2\gamma},
\end{equ}
and a norm
\begin{equ}
\|Y;R\|_{\cD^{2\gamma}_{2,X}} = |Y_{0,0}| + |Y^1_{0,0}| +|Y^2_{0,0}| +|Y^{1,2}_{0,0}| + \|(Y,Y')\|_{X,2\gamma}.
\end{equ}
\end{defn}
The following lemma about properties of the above seminorms is easy to verify.
\begin{lem} \label{fubini_lem1}
Let $Z : [0,T]^2 \to \R^{d \times d}$ be any function so that $\vv Z\vv_\gamma := \|Z\|_{\infty,\gamma}+\|Z\|_{\gamma,\infty}$ is finite. Then: 
$$\|Z\|_{\infty,\infty} = \sup_{{0\leq s \leq T}}\sup_{{0\leq u \leq T}} |Z_{s,u}| \lesssim_T |Z_{0,0}| + \vv Z\vv_\gamma.$$
Moreover for $Y \in \cD^{2\gamma}_{2,X}([0,T]^2,\R^{d \times d})$ the following properties hold: 
\begin{equs}
	\|Y^1\|_{\infty,\gamma} &\lesssim_T (|Y^{1,2}_{0,0}|+\vv Y^{1,2} \vv_\gamma)|X|_\gamma+\|R^{1,2}\|_{2\gamma,\infty},\\
	\|Y^2\|_{\gamma,\infty} &\lesssim_T (|Y^{1,2}_{0,0}|+\vv Y^{1,2} \vv_\gamma)|X|_\gamma+\|R^{2,1}\|_{2\gamma,\infty},\\
	\|Y\|_{\infty,\gamma} &\lesssim_T (|Y^{2}_{0,0}|+\vv Y^{2} \vv_\gamma)|X|_\gamma+\|R^{2}\|_{2\gamma,\infty},\\
	\|Y\|_{\gamma,\infty} &\lesssim_T (|Y^{1}_{0,0}|+\vv Y^{1} \vv_\gamma)|X|_\gamma+\|R^{1}\|_{2\gamma,\infty}.
\end{equs}
\end{lem}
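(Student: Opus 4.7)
The plan is to prove both statements by purely elementary means: Part 1 is a direct triangle-inequality bound, and Part 2 follows by plugging the controlled-rough-path expansions from the definition of $\cD^{2\gamma}_{2,X}$ into Part 1 applied to the relevant Gubinelli derivative. No sewing-type or compactness argument is needed; the result is really a bookkeeping consequence of the definitions.

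For Part 1, I would write, for arbitrary $s,u \in [0,T]$, the two-step chain
\begin{equ}
Z_{s,u} = Z_{0,0} + (Z_{s,0}-Z_{0,0}) + (Z_{s,u}-Z_{s,0}),
\end{equ}
and bound $|Z_{s,0}-Z_{0,0}| \leq \|Z\|_{\gamma,\infty}\,s^\gamma \leq T^\gamma \|Z\|_{\gamma,\infty}$ and similarly $|Z_{s,u}-Z_{s,0}| \leq T^\gamma \|Z\|_{\infty,\gamma}$. Taking suprema over $(s,u)\in[0,T]^2$ yields the stated inequality with an implicit constant of the form $1 \vee T^\gamma$.

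For Part 2, the four bounds are all of the same flavour, so I will only describe the first in detail. Fixing $u \in [0,T]$ and two times $s < t$, the defining expansion of $Y^1_{u,\cdot}$ as a controlled rough path gives
\begin{equ}
|Y^1_{u,t} - Y^1_{u,s}| \leq |Y^{1,2}_{u,s}|\,|X_{t,s}| + |R^{1,2}_{t,s}(u)| \leq \|Y^{1,2}\|_{\infty,\infty}\,|X|_\gamma\,|t-s|^\gamma + \|R^{1,2}\|_{2\gamma,\infty}\,|t-s|^{2\gamma}.
\end{equ}
Dividing by $|t-s|^\gamma$, absorbing the extra $|t-s|^\gamma \leq T^\gamma$ into the implicit constant, and then applying Part 1 to the two-parameter function $Y^{1,2}$ (so that $\|Y^{1,2}\|_{\infty,\infty} \lesssim_T |Y^{1,2}_{0,0}|+\vv Y^{1,2}\vv_\gamma$) gives the first estimate. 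The second estimate is identical after swapping the roles of the two time arguments and using the symmetry assumption $Y^{1,2} = Y^{2,1}$ to express $\|Y^{2,1}\|_{\infty,\infty}$ in terms of $\vv Y^{1,2}\vv_\gamma$ again. The third and fourth estimates are even simpler: from $Y_{u,t}-Y_{u,s} = Y^2_{u,s}X_{t,s} + R^2_{t,s}(u)$ (resp.\ the analogous expansion in the first variable) the same calculation produces the required bounds, with Part 1 now applied to $Y^2$ (resp.\ $Y^1$) in place of $Y^{1,2}$.

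There is no real obstacle here; the only minor point to keep track of is the uniform dependence of the implicit constants on $T$, which is why I write $\lesssim_T$ throughout, and the fact that in the second estimate one must remember that $\vv Y^{1,2}\vv_\gamma$ as defined already contains $\|Y^{2,1}\|_{\gamma,\infty}$, so the same seminorm controls both symmetry directions. The entire argument is local in the two time variables and uses only the triangle inequality together with the explicit Hölder/remainder bounds built into the definition of $\cD^{2\gamma}_{2,X}$.
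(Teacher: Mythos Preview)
Your proposal is correct and is precisely the ``easy verification'' the paper alludes to when it omits the proof: Part~1 is a two-step triangle inequality along the coordinate axes, and each of the four bounds in Part~2 follows by inserting the relevant controlled expansion from the definition of $\cD^{2\gamma}_{2,X}$ and then invoking Part~1 to control the sup norm of the Gubinelli derivative that appears. There is nothing to add; the only bookkeeping point you correctly flag is that the symmetry $Y^{1,2}=Y^{2,1}$ lets the single seminorm $\vv Y^{1,2}\vv_\gamma$ cover both directions in the second estimate.
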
 

\begin{ex}\label{ex:control}
Let $K$ and $H$ be Banach spaces and $V \in \cD^{2\gamma}_{X}([0,T]^2,K)$, $Z \in \cD^{2\gamma}_{X}([0,T]^2,H)$. Let $B: K\times H \to \R^{d \times d}$ be a bilinear map. Then defining $Y_{u,s} := B(V_u,Z_s)$ we have $Y\in\cD^{2\gamma}_{2,X}([0,T]^2,\R^{d \times d})$.
Moreover with the abuse of notation when $B$ act on $K\otimes \R^d$ or $H\otimes \R^d$ component wise: 
\begin{equs}[3]
Y^{1}_{u,s} &= B(V'_u,Z_s) &\quad Y^{2}_{u,s} &= B(V_u,Z'_s)&\quad Y^{1,2}_{u,s} &= B(V'_u,Z'_s)\\
R^{1}_{v,u}(s) &=B(R^V_{v,u},Z_s) &\quad R^{2}_{t,s}(u) &= B(V_u,R^Z_{t,s}) &\quad R^{1,2}_{v,u}(s) &= B(R^V_{v,u},Z'_s)\\
R^{2,1}_{t,s}(u) &= B(V'_{u},R^Z_{t,s}) &\quad R(t,s,v,u) &= B(R^V_{v,u},R^Z_{t,s})\;.
\end{equs}
Later we will see a more sophisticated example where one cannot split $Y$ so easily into the inner product of two controlled rough paths.
\end{ex}
First we will show that integrating the jointly controlled rough path along one of the directions is creating a usual one time variable controlled rough path.
\begin{lem}\label{integrands}
	Let $\XX \in \cC^\gamma([0,T],\R^d)$ and $Y \in \cD^{2\gamma}_{2,X}([0,T],\R^{d\times d})$. Then writing
	\begin{equs}
		(V_r,V'_r) &:= \Big(\int_0^rY_{r,s}dX_s,Y_{r,r}+\int_0^rY^1_{r,s}dX_s\Big),\\
		(Z_r,Z'_r) &:= \Big(\int_r^tY_{s,r}dX_s, - Y_{r,r}+\int_r^tY^2_{s,r}dX_s\Big),
	\end{equs}
defines controlled rough paths $V \in \cD^{2\gamma}_X([0,T],\R^d)$ and $Z \in \cD^{2\gamma}_X([0,t],\R^d)$. 
\end{lem}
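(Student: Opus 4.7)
The plan is to verify the controlled-rough-path axioms for $V$ directly; the analysis for $Z$ is symmetric, with the roles of the two time variables exchanged. First, the rough integrals in the definitions of $V$ and $V'$ are well-defined in the classical Gubinelli sense: for fixed $r$, the paths $s\mapsto Y_{r,s}$ and $s\mapsto Y^1_{r,s}$ are controlled by $X$ with Gubinelli derivatives $Y^2_{r,\cdot}$ and $Y^{1,2}_{r,\cdot}$, respectively. I then split $V_r - V_{r'}$ by adding and subtracting $\int_0^{r'} Y_{r,s}\,dX_s$ and substitute $Y_{r,s} - Y_{r',s} = Y^1_{r',s}\,X_{r,r'} + R^1_{r,r'}(s)$; pulling the constant $X_{r,r'}$ out of the rough integral by linearity yields
\begin{equation*}
V_r - V_{r'} = \int_{r'}^r Y_{r,s}\,dX_s + X_{r,r'}\int_0^{r'} Y^1_{r',s}\,dX_s + \int_0^{r'} R^1_{r,r'}(s)\,dX_s,
\end{equation*}
so subtracting $V'_{r'}\,X_{r,r'} = Y_{r',r'}\,X_{r,r'} + X_{r,r'}\int_0^{r'} Y^1_{r',s}\,dX_s$ leaves
\begin{equation*}
R^V_{r,r'} = \Bigl(\int_{r'}^r Y_{r,s}\,dX_s - Y_{r',r'}\,X_{r,r'}\Bigr) + \int_0^{r'} R^1_{r,r'}(s)\,dX_s.
\end{equation*}

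The first bracket is $O(|r-r'|^{2\gamma})$ after further splitting it as $(\int_{r'}^r Y_{r,s}\,dX_s - Y_{r,r'}X_{r,r'}) + (Y_{r,r'} - Y_{r',r'})X_{r,r'}$, using the standard Gubinelli-integration bound on the first summand together with the elementary estimate $|Y_{r,r'} - Y_{r',r'}|\,|X_{r,r'}| \lesssim |r-r'|^{2\gamma}$ (combining $\|Y^1\|_{\gamma,\infty}$, $\|R^1\|_{2\gamma,\infty}$ with $|X_{r,r'}|\lesssim |r-r'|^\gamma$). The conceptual heart of the argument is the second integral: here the crucial structural fact (the ``nice formula'' recorded just before Example~\ref{ex:control}) is that $s\mapsto R^1_{r,r'}(s)$ is \emph{itself} a controlled rough path in $s$, with Gubinelli derivative $s\mapsto R^{2,1}_{r,r'}(s)$ and remainder $R(\cdot,\cdot,r,r')$. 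All of its controlled-path norms (sup, derivative sup, derivative H\"older seminorm, and $2\gamma$-remainder) carry a factor of $|r-r'|^{2\gamma}$ in view of $\|R^1\|_{2\gamma,\infty}$, $\|R^{2,1}\|_{2\gamma,\infty}$, $\vv R^{2,1}\vv_{2\gamma,\gamma}$, and $\vv R\vv_{2\gamma,2\gamma}$. A second application of the rough-integration estimate then produces $|\int_0^{r'} R^1_{r,r'}(s)\,dX_s| \lesssim_{T,\XX} |r-r'|^{2\gamma}$, closing the bound on $R^V$.

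For the H\"older regularity of $V'$, I decompose $V'_r - V'_{r'} = (Y_{r,r} - Y_{r',r'}) + \int_{r'}^r Y^1_{r,s}\,dX_s + \int_0^{r'}(Y^1_{r,s} - Y^1_{r',s})\,dX_s$: the first term is $O(|r-r'|^\gamma)$ by a two-step decomposition combining $Y^1, R^1$ with $Y^2, R^2$; the second is $O(|r-r'|^\gamma)$ by the leading-order Gubinelli bound applied to $s\mapsto Y^1_{r,s}$; and the third is controlled by observing that $s\mapsto Y^1_{r,s} - Y^1_{r',s}$ is a controlled rough path with derivative $s\mapsto Y^{1,2}_{r,s} - Y^{1,2}_{r',s}$ and remainder $s\mapsto R^{1,2}_{\cdot,\cdot}(r) - R^{1,2}_{\cdot,\cdot}(r')$, whose sup, derivative-sup, and remainder norms inherit a $|r-r'|^\gamma$ factor through $\|Y^1\|_{\gamma,\infty}$, $\|Y^{1,2}\|_{\gamma,\infty}$, and $\vv R^{1,2}\vv_{\gamma,2\gamma}$. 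The main obstacle is the bookkeeping between the remainders $R^1, R^2, R^{1,2}, R^{2,1}$ and the cross-term $R$, tracking which combination provides the $|r-r'|^{2\gamma}$ decay needed to close the $R^V$ estimate versus the $|r-r'|^\gamma$ decay needed for the H\"older control of $V'$. The structural insight that makes the whole scheme work is that $R^1_{r,r'}(\cdot)$ is itself a controlled rough path in $s$, which is precisely what allows rough integration to be applied a second time and extract the crucial extra $\gamma$-order of regularity.
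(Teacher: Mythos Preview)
Your proof is correct and follows essentially the same approach as the paper. The only cosmetic difference is in how the increment is split: the paper writes $V_v-V_u$ by expanding $Y_{v,s}=Y_{u,s}+Y^1_{u,s}X_{v,u}+R^1_{v,u}(s)$ over the whole interval $[0,v]$, producing the remainder $\int_u^v Y^1_{u,s}\,dX_s\,X_{v,u}+\int_0^v R^1_{v,u}(s)\,dX_s+\int_u^v(Y_{u,s}-Y_{u,u})\,dX_s$, whereas you first split the domain at $r'$ and only expand on $[0,r']$. Both routes rely on the same structural observation you correctly identify as the crux, namely that $s\mapsto R^1_{r,r'}(s)$ is itself controlled with derivative $R^{2,1}_{r,r'}(\cdot)$ and remainder $R(\cdot,\cdot,r,r')$, all carrying the $|r-r'|^{2\gamma}$ factor; your account of the bookkeeping is in fact more explicit than the paper's, which condenses this step into a single sentence.
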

\begin{proof}
A straightforward computation shows that:
	\begin{equs}
		V_v - V_u &= \int_0^vY_{v,s}dX_s - \int_0^uY_{u,s}dX_s = \Big(Y_{u,u} + \int_0^uY^1_{u,s}dX_s\Big) X_{v,u} \\
		&\quad+ \int_u^v Y^1_{u,s}dX_s \, X_{v,u} + \int_0^v R^1_{v,u}(s)dX_s + \int_u^v (Y_{u,s} - Y_{u,u}) dX_s\\
		& = V'_u  X_{v,u} + R^V_{v,u}.	
	\end{equs}
Using assumptions on the uniform norms on $Y^1$ and $R^1$ as well as bounds on the rough integrals one can indeed show that $|R^V_{v,u}| \lesssim_{Y,T} |v-u|^{2\gamma}$ and $|\delta V'_{v,u}| \lesssim_{Y,T} |v-u|^\gamma$. For $Z$ we have: 
\begin{equs}
	Z_v - Z_u &= \int_v^tY_{s,v}dX_s - \int_u^tY_{s,u}dX_s = \Big(-Y_{u,u} + \int_u^tY^2_{s,u}dX_s\Big) X_{v,u} \\
	&\quad-  \int_u^v Y^2_{s,u}dX_s \, X_{v,u} +\int_v^t R^2_{v,u}(s)dX_s - \int_u^v (Y_{s,u} - Y_{u,u}) dX_s\\
	&= Z'_u  X_{v,u} + R^Z_{v,u}.	
\end{equs}
One can easily show that $|R^Z_{v,u}| \lesssim_{Y,t} |v-u|^{2\gamma}$ and $|\delta Z'_{v,u}| \lesssim_{Y,t} |v-u|^\gamma$. 
\end{proof}
Before we proceed we need a good notion of a smooth approximation of the jointly controlled rough path with respect to the smooth approximation of the rough path. We refer the reader to the paper \cite{geometric} where the authors showed that 
\begin{equ}
	\cC^{\gamma}([0,T],\R^d) \cong  \cC^{\gamma}_g([0,T],\R^d)\oplus C^{2\gamma}([0,T], \R^{d \times d}),
\end{equ}
where $C^{2\gamma} \subset \CC^{2\gamma}$ is a closure of smooth functions with respect to the ${2\gamma}$-H\"{o}lder norm. This means that for every $\XX = (X,\X) \in \cC^{\gamma}([0,T],\R^d)$ there exists a unique $\XX^g = (X,\X^g) \in \cC^{\gamma}_g([0,T],\R^d)$ and a unique $f \in C^{2\gamma}([0,T], \R^{d \times d})$ with $f_0 = 0$ such that
\begin{equ}[e:decomp]
	\X_{t,s} = \X^g_{t,s} + \delta f_{t,s}\;.
\end{equ}
Having this decomposition one can show that for $(Y,Y') \in \cD^{2\gamma}_X([0,T], \R^d)$ the following integral formula holds:
\begin{equ}[e:decomp_int]
	\int_a^b Y_s\, d\XX_s = \int_a^b Y_s\,d\XX^g + \int_a^b Y'_s\cdot \,df_s\;,
\end{equ}
and the second integral on the right hand side makes perfect sense as a Young's integral, where we understand the product $Y'_s\cdot df_s$ of two $d\times d$ matrices as a Frobenius inner product: for $A, B \in \R^{d\times d}$ set $A\cdot B = \tr(A^TB)$.
 \begin{defn}\label{smoothapprox}
 	Let $\XX \in \cC^{\gamma}([0,T],\R^d)$, we say that $Y \in \cD^{2\gamma}_{2,X}([0,T]^2,\R^{d \times d})$ admits a smooth approximation if there exist sequences $X^n \in C^\infty([0,T],\R^d), f^n \in C^\infty([0,T],\R^{d\times d})$ and $Y^n \in \cD^{2\gamma}_{2,X^n}([0,T]^2,\R^{d \times d})$ such that for $\XX^n = \XX^c(X^n) + (0,\delta f^n)$ the following approximations hold:
\begin{equs}
\lim\limits_{n\to\infty}\varrho_\gamma(\XX,\XX^n) &= 0\;,\\
\lim_{n\to \infty}	|Y_{0,0}-Y^n_{0,0}| + |Y^1_{0,0}-Y^{1,n}_{0,0}| +|Y^2_{0,0}-Y^{2,n}_{0,0}| +|Y^{1,2}_{0,0}-Y^{1,2,n}_{0,0}| &= 0\;,\\
\lim_{n\to \infty} \|Y'-Y'^{\,n}\|_\gamma + \|R^Y-R^{Y,n}\|_{2\gamma} &= 0\;.
\end{equs}
 \end{defn}
An example of $Y$ that admits a smooth approximation can be $Y$ from Example~\ref{ex:control} since 
both $V$ and $Z$ are the usual controlled rough paths by $\XX$ and can be each smoothly 
approximated.
\begin{rem}\label{approx1d}
	We sketch an argument on why a classical (one time variable) controlled rough path $V$ can always be 
	smoothly 
	approximated. To see this, one first shows that the equality
	$\delta V_{t,s} = V'_{s} X_{t,s} + R^V_{t,s}$ is equivalent to showing that
	$V = V'\prec X + U$
	for some $U \in \CC^{2\gamma}$ and $\prec$ denotes some sort of the paraproduct (which is a 
	continuous bilinear map, see \cite{gubinelli2015paracontrolled} for the definition and properties of the Bony paraproduct on the Fourier space). This paraproduct $\prec$ can for example be defined by
	\begin{equ}
		f \prec g = \sum_{x,n} f(x) \scal{g, \psi_{n,x}} \psi_{n,x}\;,
	\end{equ}
	where $\psi_{n,x}$ is an $L^2$-normalised wavelet basis.
	Then one will simply take a smooth approximation of $X$ say $X^n$ and define $V'^{\,n}$ and $U^n$ by 
	mollifying $V'$ and $U$. Then defining $V^n := V'^{\,n}\prec X^n + U^n$ from continuity of the paraproduct 
	$\prec$ one can then show that $(V^n, V'^{\,n})$ converges to $(V,V')$ in the rough path metric.
	Though this smooth approximation of the classical rough path is not canonical we will see later on that for 
	our purposes we will only need an existence of some smooth approximation and we do not care which 
	particular one is it.
\end{rem}

Since for $Y \in \cD^{2\gamma}_{2,X}([0,T]^2,\R^{d \times d})$ we can integrate with respect to $\XX$ 
in both of its time directions, a natural question is if the order of integration matters. Even though we believe 
that Fubini like theorem holds for every 
$Y \in \cD^{2\gamma}_{2,X}([0,T]^2,\R^{d \times d})$ we only show the proof for $Y$ that can be smoothly 
approximated. For our purposes this is going 
to be sufficient since we will apply these results later to a case similar to Example~\ref{ex:control}. First we 
show how to swap the order of integration where the limits of the second integral are time variables that are 
also integrated. It turns out that unlike for usual integration, there is in general a correction term appearing for
non-geometric 
rough paths. For the purpose of clarity we use in this section notation for the rough integral using the 
bold letter $d\XX$, reserving $dX$ for Young integrals. 
\begin{thm} \label{fubini2}
	Let $\gamma \in (1/3,1/2]$, $\XX \in \cC^{\gamma}([0,T],\R^d)$ and let $Y \in \cD^{2\gamma}_{2,X}([0,T]^2,\R^{d \times d})$ admit a smooth approximation. Then 
	\begin{equ}
	\int_0^t\int_s^tY_{r,s}d\XX_rd\XX_s + \int_0^tY_{s,s}\cdot\,df_s = \int_0^t\int_0^rY_{r,s}d\XX_sd\XX_r - \int_0^tY_{s,s}\cdot\,df_s\;, \label{e:swap1}
	\end{equ}
where $f$ is the $C^{2\gamma}$ function appearing in \eqref{e:decomp}. In particular, note that one has a usual change of order of integration in the case where $\XX$ is geometric:
\begin{equ}
	\int_0^t\int_s^tY_{r,s}d\XX_rd\XX_s = \int_0^t\int_0^rY_{r,s}d\XX_sd\XX_r\;. \label{e:swap2}
\end{equ}
\end{thm}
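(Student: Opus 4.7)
The plan is to use the smooth approximation guaranteed by Definition~\ref{smoothapprox} to reduce the statement to a classical Fubini computation, and then pass to the limit by continuity of rough and Young integration. Pick sequences $X^n \in C^\infty$, $f^n \in C^\infty$ and $Y^n \in \cD^{2\gamma}_{2,X^n}$ with $\XX^n = \XX^c(X^n) + (0, \delta f^n) \to \XX$ in $\cC^\gamma$ and $Y^n \to Y$ jointly. Writing $A := \int_0^t \int_s^t Y_{r,s} d\XX_r d\XX_s$ and $B := \int_0^t \int_0^r Y_{r,s} d\XX_s d\XX_r$, the identity \eqref{e:swap1} is equivalent to $B - A = 2\int_0^t Y_{s,s} \cdot df_s$, so I will prove the corresponding identity $B^n-A^n = 2\int_0^t Y^n_{s,s}\cdot df^n_s$ at the level of the smooth approximation.

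Since $X^n$ is smooth, the decomposition formula \eqref{e:decomp_int} reduces every rough integral against $\XX^n$ to the sum of a Riemann and a Young integral:
$$\int_a^b Z\, d\XX^n = \int_a^b Z\, \dot{X}^n\,dr + \int_a^b Z'\cdot df^n,$$
for any $(Z,Z') \in \cD^{2\gamma}_{X^n}$. Applied to the inner integral $V^n(s) = \int_s^t Y^n_{r,s} d\XX^n_r$ this gives the sum $\int_s^t Y^n_{r,s}\dot X^n_r\,dr + \int_s^t Y^{n,1}_{r,s}\cdot df^n_r$, while Lemma~\ref{integrands} identifies its Gubinelli derivative as $(V^n)'(s) = -Y^n_{s,s} + \int_s^t Y^{n,2}_{r,s}\,d\XX^n_r$. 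Applying the decomposition again to the outer integral (and expanding $(V^n)'$, where $Y^{n,2}$ acquires the further Gubinelli derivative $Y^{n,2,1}$) writes $A^n$ as the sum of four ``bulk'' double integrals of types $Y^n \dot X^n \dot X^n\,dr\,ds$, $Y^{n,1}\cdot df^n\,\dot X^n\,dr\,ds$, $Y^{n,2}\,\dot X^n \cdot df^n\,dr\,ds$ and $Y^{n,1,2}\cdot df^n\cdot df^n\,dr\,ds$, plus the boundary contribution $-\int_0^t Y^n_{s,s}\cdot df^n_s$. An entirely analogous expansion of $B^n$ yields the same four bulk types plus the boundary contribution $+\int_0^t Y^n_{r,r}\cdot df^n_r$.

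The four bulk pairs agree under classical Fubini for smooth iterated Riemann/Young integrals: the pure $\dot X^n \dot X^n$ term is immediate, the two mixed $\dot X^n$--$df^n$ terms swap into each other, and the pure $df^n df^n$ term is symmetric under $(r,s)\leftrightarrow(s,r)$ precisely because $Y^{n,1,2}=Y^{n,2,1}$. Only the boundary terms fail to cancel, yielding the required identity at level~$n$. To pass to the limit, continuity of the rough integral (Theorem~\ref{integral}, applied first to the inner integral in the outer time variable and then to the resulting one-variable controlled rough path, together with Lemma~\ref{fubini_lem1} to bound the required seminorms in terms of the joint norm) gives $A^n\to A$ and $B^n \to B$. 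The Young integral $\int_0^t Y^n_{s,s}\cdot df^n_s \to \int_0^t Y_{s,s}\cdot df_s$ follows from $f^n \to f$ in $C^{2\gamma}$ (implicit in $\varrho_\gamma(\XX^n,\XX)\to 0$ via the decomposition \eqref{e:decomp}) and the convergence of $Y^n_{s,s}$ to $Y_{s,s}$ in $C^\gamma$ along the diagonal.

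The main obstacle is the careful bookkeeping of tensor contractions in the pure $df^n df^n$ double integral, whose Fubini symmetry is what makes the whole identity hinge precisely on the jointly-controlled compatibility condition $Y^{1,2}=Y^{2,1}$; the geometric case \eqref{e:swap2} then follows immediately since $f=0$ when $\XX$ is geometric.
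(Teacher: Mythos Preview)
Your proposal is correct and follows essentially the same approach as the paper: reduce to the smooth approximation via Definition~\ref{smoothapprox}, expand both sides using the decomposition \eqref{e:decomp_int}, match the four ``bulk'' double integrals by classical Fubini (using $Y^{1,2}=Y^{2,1}$ for the pure $df\,df$ pair), collect the two mismatched boundary terms $\pm\int Y_{s,s}\cdot df_s$, and pass to the limit. The paper differs only in presentation: it first treats the geometric case in isolation (where the boundary terms vanish) and spells out the convergence $A^n\to A$, $B^n\to B$ in detail---this step uses the full joint controlled norm to bound the remainder $R^V-R^{V^n}$ of the inner integral, which you correctly flag via Lemma~\ref{fubini_lem1} but pass over quickly; it is routine but not one-line.
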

\begin{proof}
First note that all the double integrals are well-defined due to Lemma~\ref{integrands}. We will first prove the theorem for the case of geometric rough path and then will use decomposition~(\ref{e:decomp}) in order to show the general case. Let's call the left hand side of~(\ref{e:swap2}) $L_t$ and right hand side $R_t$. The main idea is of approximation. Basically we want to show that:
\begin{equ}[e:approx]
	L_t = \lim\limits_{n \to \infty}\int_0^t\int_s^tY^{n}_{r,s}d\XX^n_rd\XX^n_s = \lim\limits_{n \to \infty}\int_0^t\int_0^rY^{n}_{r,s}d\XX^n_sd\XX^n_r = R_t.
\end{equ}
Here $\XX^n$ and $Y^n \in \cD^{2\gamma}_{2,X^n}([0,T]^2,\R^{d\times d})$ are as in the definition of smooth approximation. 
Since we can take $\XX^n$ to be geometric rough paths themselves then the middle equality in~(\ref{e:approx}) is perfectly valid. This because in the smooth and geometric case rough integrals agree with the classical integrals for which the middle equality in~(\ref{e:approx}) is certainly true. 
It remains to establish the two other equalities. We will only show the third equality of~(\ref{e:approx}).
Once again since rough path $\XX^n$ is smooth and geometric then all the rough integrals with respect to $\XX^n$ are in fact the classical integral. We denote $I^{n}_t = \int_0^t\int_0^rY^{n}_{r,s}d\XX^n_sd\XX^n_r = \int_0^t\int_0^rY^{n}_{r,s}dX^n_sdX^n_r$, $V_r = \int_0^rY_{r,s}d\XX_s$ and $V^n_r = \int_0^rY^n_{r,s}d\XX^n_s$. From Lemma~\ref{integrands} both $V$ and $V^n$ are controlled rough paths (respectively w.r.t. $\XX$ and $\XX^n$) and for Gubinelli derivative of $V$ we write $\dot{V}_r = Y_{r,r}+\int_0^rY^1_{r,s}d\XX_s$, and similarly $\dot{V}^n$ (In fact here you can see why does $Y^1$ and $Y^2$ also have to be rough paths). First write $M = \max\{\varrho_\gamma(X),\,\|(Y,Y')\|_{X,2\gamma}\}$. Because of convergence we can guarantee that eventually $\varrho_\gamma(X^n)+\|(Y^n,Y'^{\,n})\|_{X^n,2\gamma} \leq 3M$ and so using stability of integration similar to Lemma~\ref{stab_int} we get:
	\begin{equs}
		|R_t-I^n_t|&\leq t^\gamma|R-I|_\gamma \\
		&\lesssim \varrho_\gamma(\XX,\XX^n)  + |V_0-V^n_0| +  |\dot{V}_0-\dot{V}^n_0|+d_{X,X^n,2\gamma}(V,V^n) \\
		&= \varrho_\gamma(\XX,\XX^n) +|V_0-V^n_0| +  |\dot{V}_0-\dot{V}^n_0|+ |\dot{V}-\dot{V}^n|_\gamma+|R^{V}-R^{V^n}|_{2\gamma}.
	\end{equs}
	Now first three terms clearly converges to $0$ by approximation assumption. For the term $|\dot{V}-\dot{V}^n|_\gamma$ we can again use stability of integration and nice approximation assumption on $Y$ to deduce that $|\dot{V}-\dot{V}^n|_\gamma \to 0$ as $n \to \infty$. We will show in more details on how to treat $|R^{V}-R^{V^n}|_{2\gamma}$ term.
	Therefore we have:
	\begin{equs}
		R^{V}_{v,u}-R^{V^n}_{v,u} = \int_u^vY_{v,s}d\XX_s-Y_{u,u} X_{v,u} &- \int_u^vY^n_{v,s}dX^n_s+Y^n_{u,u} X^n_{v,u} \\
		&+ \int_0^uR^1_{v,u}(s)d\XX_s - \int_0^uR^{1,n}_{v,u}(s)dX^n_s.
	\end{equs}
	The last two terms are bounded by
	\begin{equs}
		|\int_0^uR&^1_{v,u}(s)d\XX_s - \int_0^uR^{1,n}_{v,u}(s)dX^n_s| \leq\\
		&\le |R^1_{v,u}(0)-R^{1,n}_{v,u}(0)|\,| X_{u,0}|+|R^{1,n}_{v,u}(0)|\,| X_{u,0}- X^n_{u,0}|\\
		&\quad+|R^{2,1}_{v,u}(0)-R^{2,1,n}_{v,u}(0)|\,|\X_{u,0}|+|R^{2,1,n}_{v,u}(0)|\,|\X_{u,0}-\X^n_{u,0}| + u^{3\gamma}| \Xi^n_{v,u}|_{3\gamma}.
	\end{equs}
	Here $| \Xi^n_{v,u}|_{3\gamma} = \sup_{0\leq s<t\leq T}\frac{|\Xi^n_{v,u}(t,s)|}{|t-s|^{3\gamma}}$ with
	\begin{equs}
		\Xi^n_{v,u}(t,s) &= R(t,s,v,u)( X_{t,s}- X^n_{t,s}) + (R(t,s,v,u)-R(t,s,v,u)^n) X^n_{t,s}\\
		&+\delta R^{2,1}_{v,u}(t,s)(\X_{t,s}-\X^n_{t,s}) + (\delta R^{2,1}_{v,u}(t,s) - \delta R^{2,1,n}_{v,u}(t,s))\X^n_{t,s}.
	\end{equs}
	We see that from approximation assumptions we indeed have 
	$$\sup_{0\leq u<v\leq T}\frac{|\int_0^uR^1_{v,u}(s)d\XX_s - \int_0^uR^{1,n}_{v,u}(s)dX^n_s|}{|v-u|^{2\gamma}}| \to 0 \quad \text{as}\quad n\to\infty.$$
	For the remaining terms in equality for $R^{V}_{v,u}-R^{V^n}_{v,u}$ we add and subtract $Y_{v,u} X_{v,u} - Y_{v,u}^n X^n_{v,u}$.  Then we use similar bounds for integrals to deduce: 
	\begin{equs}
		&\frac{|\int_u^vY_{v,s}dX_s-Y_{u,u} X_{v,u} -  \int_u^vY^n_{v,s}dX^n_s+Y^n_{u,u} X^n_{v,u}|}{|v-u|^{2\gamma}} \lesssim_{M,T}\\ \lesssim_{M,T}\;\varrho_\gamma(&\XX,\XX^n) +|Y^2_{0,0}-Y^{2,n}_{0,0}|+ \| Y^2-Y^{2,n}\| _{\infty,\gamma}+\| R^2-R^{2,n}\| _{\infty,2\gamma}.
	\end{equs}
	All terms converge to zero by assumption and so we are done proving the third equality in~(\ref{e:approx}). Proving the first equality of~(\ref{e:approx}) may seem to be more difficult since the integral inside is also $t$ dependent. But in fact it is easy to 
	check that it plays almost no role but requires a bit more computations similar to above. Thus we finish showing formula~(\ref{e:swap1}) for the geometric rough path.
	
	For the non geometric rough path the middle equality of~(\ref{e:approx}) is no longer true because of the presence of the correction term $f$. In fact using the integral formula~(\ref{e:decomp_int}) and Lemma~\ref{integrands}, denoting by $f^n$ a smooth approximation of $f$ we can show that:  
	\begin{equs}
		V^n_r &= \int_0^r Y^n_{r,s}d\XX^n_s = \int_0^r Y^n_{r,s}dX^n_s + \int_0^r Y^{2,n}_{r,s} \cdot df^n_s;\\
		\dot{V}^n_r &= Y^n_{r,r} + \int_0^r Y^{1,n}_{r,s}d\XX^n_s =  Y^n_{r,r} + \int_0^r Y^{1,n}_{r,s}dX^n_s +\int_0^r Y^{1,2,n}_{r,s}\cdot df^n_s; \\
		\int_0^t V^n_r\XX^n_r &= \int_0^t V^n_rdX^n_r + \int_0^t\dot{V}^n_r \cdot df^n_r.
	\end{equs}
Using that $\int_0^t\int_0^rY^{n}_{r,s}d\XX^n_sd\XX^n_r = \int_0^t V^n_r\XX^n_r$ and  putting all the above formulas together we get that: 
	\begin{equs}
	\int_0^t\int_0^rY^{n}_{r,s}d\XX^n_s&d\XX^n_r= \int_0^t\int_0^rY^{n}_{r,s}dX^n_sdX^n_r
		+ \int_0^t\int_0^r Y_{r,s}^{1,n} \,dX_s^n\cdot df_r^n\\
		&+ \int_0^t\int_0^r Y_{r,s}^{2,n} \,df_s^n\,dX_r^n
		+ \int_0^t\int_0^r Y_{r,s}^{1,2,n} \cdot df_s^n\cdot df_r^n
		+ \int_0^t Y_{r,r}^n\cdot df_r^n\;.
	\end{equs}
Similarly:
	\begin{equs}
	\int_0^t\int_s^tY^{n}_{r,s}d\XX^n_r&d\XX^n_s= \int_0^t\int_s^tY^{n}_{r,s}dX^n_rdX^n_s
	+ \int_0^t\int_s^t Y_{r,s}^{2,n} \,dX_r^n\cdot df_s^n\\
	&+ \int_0^t\int_s^t Y_{r,s}^{1,n} \cdot df_r^n\,dX_s^n
	+\int_0^t\int_s^t Y_{r,s}^{2,1,n} \cdot df_r^n\cdot df_s^n
	- \int_0^t Y_{s,s}^n\cdot df_s^n\;.
	\end{equs}
Therefore using $Y^{1,2}  = Y^{2,1}$ we get:
\begin{equs}
	\int_0^t\int_s^tY^{n}_{r,s}d\XX^n_rd\XX^n_s + \int_0^t Y_{s,s}^n \cdot df_s^n =  \int_0^t\int_0^rY^{n}_{r,s}d\XX^n_sd\XX^n_r - \int_0^t Y_{s,s}^n \cdot df_s^n.
\end{equs}
Letting $n$ go to infinity we indeed get~\eqref{e:swap1}.
\end{proof}
 \begin{thm}[Rough Fubini Theorem] \label{fubini1}
 	Let $\gamma \in (1/3,1/2]$, $\XX \in \cC^{\gamma}([0,T],\R^d)$ and $Y \in \cD^{2\gamma}_{2,X}([0,T]^2,\R^{d \times d})$ 
 	admitting a smooth approximation. Then for $[s,t] \subseteq [0,T]$ and $[u,v] \subseteq [0,T]$, one has the identity
 	\begin{equ}
 		\int_s^t\int_u^vY_{r,m}d\XX_rd\XX_m = \int_u^v\int_s^tY_{r,m}d\XX_md\XX_r.
 	\end{equ}
 \end{thm}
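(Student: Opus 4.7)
The plan is to mirror the strategy used for Theorem \ref{fubini2} but on a product region $[s,t] \times [u,v]$, where no diagonal correction arises. First I would verify well-definedness: for each fixed $m \in [s,t]$, the map $r \mapsto Y_{r,m}$ is a controlled rough path on $[u,v]$ by definition of $\cD^{2\gamma}_{2,X}$, so the inner integral $V_m := \int_u^v Y_{r,m}\,d\XX_r$ exists. A computation parallel to but simpler than Lemma \ref{integrands} (simpler because the limits $u,v$ do not depend on $m$) shows that $m \mapsto V_m$ is itself controlled by $\XX$ with Gubinelli derivative $V'_m = \int_u^v Y^2_{r,m}\,d\XX_r$, with remainder controlled via the $\cD^{2\gamma}_{2,X}$-norms on $Y^2$, $Y^{1,2}$ and the associated remainders $R^2, R^{2,1}$. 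Thus the left-hand side $\int_s^t V_m\,d\XX_m$ is well-defined, and symmetrically for the right-hand side.

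Next I would take the smooth approximation $(\XX^n, Y^n)$ of $(\XX, Y)$ from Definition \ref{smoothapprox}, with $\XX^n = \XX^c(X^n) + (0, \delta f^n)$. Using the integral formula \eqref{e:decomp_int}, each iterated rough integral against $d\XX^n$ decomposes into four iterated Riemann--Young integrals of the form $\int_s^t \int_u^v (\cdots)\,d\mu^n_r\,d\nu^n_m$ with $\mu^n, \nu^n \in \{X^n, f^n\}$, where the integrands involve, respectively, $Y^n$, $Y^{1,n}$, $Y^{2,n}$ or $Y^{1,2,n}$. On a rectangle each of these is a classical product integral (Riemann in one slot, Young in the other, or Young in both) for which Fubini's theorem applies directly; crucially, because the two integration variables range over independent intervals, none of the diagonal boundary contribution $\int Y_{s,s}\cdot df_s$ that appeared in Theorem \ref{fubini2} is generated here. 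Matching the four smooth pieces term by term, the identity holds at the level of the approximation.

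The last step is to pass to the limit by applying the stability estimate of Lemma \ref{stab_int} twice in succession: once to establish that $V^n \to V$ in the $d_{X, X^n, 2\varepsilon}$-metric on $\cD^{2\varepsilon}$ for some $\varepsilon < \gamma$ uniformly in $m$, and once again to transfer this convergence to the outer integral. The bounds on $\|R^V - R^{V^n}\|_{2\varepsilon}$ and on the differences of Gubinelli derivatives reduce, exactly as in the last portion of the proof of Theorem \ref{fubini2}, to the componentwise convergences provided by Definition \ref{smoothapprox}, together with the convergence of Young integrals against $df^n$. The main obstacle is this uniformity in $m$: one must show that the inner rough integration commutes in a quantitative way with the $\cD^{2\gamma}_{2,X}$-structure, so that the seminorms $\|Y'\|_\gamma$ and $\|R^Y\|_{2\gamma}$ control $V'$ and $R^V$ uniformly in all auxiliary parameters and analogously for $V^n$. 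Once this is in place, Lemma \ref{stab_int} applies at the outer level and the identity passes to the limit, completing the proof.
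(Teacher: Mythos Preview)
Your proposal is correct and follows essentially the same approach as the paper, which simply remarks that the theorem ``can be proved using the same argument of approximation and it is even easier to show than Theorem~\ref{fubini2}.'' Your observation that no diagonal correction $\int Y_{s,s}\cdot df_s$ arises on a rectangle is exactly the reason the paper calls this case easier, and your outline of the well-definedness and limit-passing steps matches the structure of the proof of Theorem~\ref{fubini2}.
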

One can prove this theorem using the same argument of approximation and it is even easier to show than the Theorem~\ref{fubini2}. Notice that when say in both integrals limits of integration are from $0$ to $t$ then Theorem~\ref{fubini1} is a corollary of Theorem~\ref{fubini2}. This is because the controlled rough path $(\int_0^t Y_{r,m}d\XX_m,\int_0^t Y^{1}_{r,m}d\XX_m)$ is a sum of two controlled rough paths
$(\int_0^r Y_{r,m}d\XX_m,Y_{r,r} + \int_0^r Y^{1}_{r,m}d\XX_m)$ and $(\int_r^t Y_{r,m}d\XX_m, -Y_{r,r} + \int_r^t Y^{1}_{r,m}d\XX_m)$ for every $r \in [0,t]$. Thus splitting the rough path $(\int_0^t Y_{r,m}d\XX_r,\int_0^t Y^{2}_{r,m}d\XX_r)$ similarly we get:
\begin{equs}
	\int_0^t\int_0^tY&_{r,m}d\XX_md\XX_r = \\
	&=\int_0^t\int_0^rY_{r,m}d\XX_md\XX_r - \int_0^tY_{s,s}\cdot df_s + \int_0^t\int_r^tY_{r,m}d\XX_md\XX_r + \int_0^tY_{s,s}\cdot df_s \\
	&=\int_0^t\int_m^tY_{r,m}d\XX_rd\XX_m + \int_0^tY_{s,s}\cdot df_s + \int_0^t\int_0^mY_{r,m}d\XX_rd\XX_m - \int_0^tY_{s,s}\cdot df_s \\ &=\int_0^t\int_0^tY_{r,m}d\XX_rd\XX_m.
\end{equs}
A natural question to ask is whether a double integral of $Y \in \cD^{2\gamma}_{2,X}([0,T]^2,\R^{d \times d})$ is itself an element of $\cD^{2\gamma}_{2,X}([0,T]^2,\R)$. We give the answer below but we do not study this question in much details since only Theorem~\ref{fubini2} is needed for our purposes.
\begin{lem} \label{fubini_lem2}
Let $\gamma \in (1/3,1/2]$, $\XX \in \cC^{\gamma}([0,T],\R^d)$ and $Y \in \cD^{2\gamma}_{2,X}([0,T]^2,\R^{d \times d})$ define:
$$Z_{t,v} = \int_0^t\int_0^vY_{s,r}d\XX_rd\XX_s.$$
Then $Z \in  \cD^{2\gamma}_{2,X}([0,T]^2,\R)$ and $Z^1_{t,v} = \int_0^vY_{t,r}d\XX_r$; $Z^2_{t,v} = \int_0^vY_{s,v}d\XX_r$; $Z^{1,2}_{t,v} = Y_{t,v}$. Moreover the map $Y \mapsto Z$ is continuous as a map $\cD^{2\gamma}_{2,X}([0,T]^2,\R^{d \times d}) \to \cD^{2\gamma}_{2,X}([0,T]^2,\R)$.
\end{lem}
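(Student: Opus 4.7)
The plan is to obtain $Z$ as an iterated rough integral, applying Lemma~\ref{integrands} twice. For each fixed $v$, set $W^v_s := \int_0^v Y_{s,r}\,d\XX_r$. The defining compatibility identity
\[R^1_{v,u}(t)-R^1_{v,u}(s)-R^{2,1}_{v,u}(s)\,X_{t,s}=R(t,s,v,u)\]
shows that $R^1_{s,u}(\cdot)$ is a controlled rough path in $r$ with Gubinelli derivative $R^{2,1}_{s,u}(\cdot)$ and $\CC^{2\gamma}$ remainder of size $|s-u|^{2\gamma}$. Expanding $Y_{s,r}-Y_{u,r}=Y^1_{u,r}\,X_{s,u}+R^1_{s,u}(r)$ inside the rough integral and using linearity shows that $W^v$ is a one-variable controlled rough path in $s$ with
\[(W^v)'_u=\int_0^v Y^1_{u,r}\,d\XX_r\,,\qquad R^{W^v}_{s,u}=\int_0^v R^1_{s,u}(r)\,d\XX_r\,,\]
and the standard rough integral bound combined with the seminorms $\|R^1\|_{2\gamma,\infty}$ and $\|R^{2,1}\|_{2\gamma,\infty}$ yields $|R^{W^v}_{s,u}|\lesssim|s-u|^{2\gamma}$. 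A second application of rough integration to $Z_{t,v}=\int_0^t W^v_s\,d\XX_s$ gives that $Z_{\cdot,v}$ is controlled by $\XX$ with Gubinelli derivative $Z^1_{u,v}=W^v_u=\int_0^v Y_{u,r}\,d\XX_r$, as claimed.

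The symmetric argument in the second variable shows $Z_{t,\cdot}$ is controlled by $\XX$ with $Z^2_{t,v}=\int_0^t Y_{s,v}\,d\XX_s$. To identify the second-order Gubinelli derivative I would expand
\[Z^1_{u,t}-Z^1_{u,s}=\int_s^t Y_{u,r}\,d\XX_r=Y_{u,s}\,X_{t,s}+Y^2_{u,s}\,\X_{t,s}+O\bigl(|t-s|^{3\gamma}\bigr)\,,\]
which simultaneously proves $Z^{1,2}_{u,s}=Y_{u,s}$ and furnishes the $|t-s|^{2\gamma}$ bound on the cross-remainder. The symmetric computation applied to $Z^2$ yields $Z^{2,1}_{u,s}=Y_{u,s}$, so the required compatibility $Z^{1,2}=Z^{2,1}$ is automatic.

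The remaining step is to verify the seminorms making up $\|R^Z\|_{2\gamma}$ and to read off continuity. The first-order remainders $R^{Z,1}, R^{Z,2}$ and the one-sided cross-remainders $R^{Z,1,2}, R^{Z,2,1}$ are each already expressed as rough integrals of pieces of $R^Y$ above, so their uniform H\"older bounds follow from a single application of the rough integration estimate to the relevant piece of the integrand. The mixed norms $\vv R^{Z,1,2}\vv_{\gamma,2\gamma}$, $\vv R^{Z,2,1}\vv_{2\gamma,\gamma}$ and the quadruple-indexed $\vv R^Z\vv_{2\gamma,2\gamma}$ are obtained by differencing these expressions in the opposite time variable and applying the rough integration bound once more; each resulting term is linear in one of the seven seminorms making up $\|R^Y\|_{2\gamma}$ and polynomial in $\varrho_\gamma(\XX)$. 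Continuity of $Y\mapsto Z$ then follows from the linearity of iterated rough integration together with these explicit bounds.

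The main obstacle is the bookkeeping required for the bi-H\"older and quadruple-indexed remainder estimates: the double rough integration generates a proliferation of terms, each of which must be matched with the correct one of the seven norms in $\|R^Y\|_{2\gamma}$ and shown to carry the right H\"older exponent in both time variables simultaneously. In particular, checking that the candidate $R^Z(t,s,v,u)$ is consistent whether computed from $R^{Z,1}$-differences or from $R^{Z,2}$-differences is an algebraic Fubini-type swap performed at the level of the remainder itself, which is the technically most delicate part of the verification.
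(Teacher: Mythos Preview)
The paper states this lemma without proof, so there is nothing to compare your argument against. Your outline is correct and is the natural way to proceed: fix one variable, observe that the inner integral is a one-parameter controlled rough path in the other variable (using the compatibility identity $R^1_{v,u}(t)-R^1_{v,u}(s)-R^{2,1}_{v,u}(s)\,X_{t,s}=R(t,s,v,u)$ to see that $R^1_{s,u}(\cdot)$ is itself controlled, so that $\int_0^v R^1_{s,u}(r)\,d\XX_r$ is well-defined and carries the factor $|s-u|^{2\gamma}$), then integrate again. One small remark: you describe this as ``applying Lemma~\ref{integrands} twice'', but that lemma treats integrals of the form $\int_0^r Y_{r,s}\,d\XX_s$ with a moving endpoint and therefore produces a boundary term $Y_{r,r}$ in the Gubinelli derivative; here the inner limit $v$ is fixed, so no such boundary term appears and the computation is in fact simpler than Lemma~\ref{integrands}. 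Your identification of $Z^{1,2}=Y$ via the one-step expansion of the rough integral $\int_s^t Y_{u,r}\,d\XX_r$ is exactly right, and the remaining mixed-H\"older bounds on $R^Z$ are indeed pure bookkeeping, each term matching one of the seven seminorms in $\|R^Y\|_{2\gamma}$ after a single application of the rough integration estimate.
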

 \begin{rem}
 	One can easily generalise the above results to functions in more time variables, giving rise to a generalised spaces $\cD^{2\gamma}_{k,X}([0,T]^k,\R^{d^k})$ for $k \in \N$. Another approach of defining the double integral is the approach of so called ``Rough sheets'' introduced in \cite{sheets}. However, 
to best of our knowledge, no statement like Theorem~\ref{fubini2} is known for Rough sheets.
 \end{rem}
It will also be useful to be able to rough integrals with usual Riemann integrals. Let $Y : [0,T]^2 \to \R^d$ be a process such that for each fixed $s \in [0,T]$, $Y_{\cdot,s} \in \cD^{2\gamma}_{X}([0,T],\R^d)$ is a controlled rough path and $Y_{s,\cdot} \in C([0,T],\R^d)$ is a continuous function. For such $Y$ we say that it admits a smooth approximation if there exist sequences $X^n \in C^\infty([0,T],\R^d), f^n \in C^\infty([0,T],\R^{d\times d})$ like in Definition~\ref{smoothapprox} and $Y^n : [0,T]^2 \to \R^d$ such that for each fixed $s \in [0,T]$, $Y^n_{\cdot,s} \in \cD^{2\gamma}_{X^n}([0,T],\R^d)$ and $Y^n_{s,\cdot} \in C([0,T],\R^d)$ such that 
$$
\lim_{n \to \infty} \sup_{{0\leq s \leq T}}(|Y_{0,s}-Y^n_{0,s}|+|Y'_{0,s}-Y^{',n}_{0,s}|+d_{X,X^n,2\gamma}(Y_{\cdot,s},Y^n_{\cdot,s})) = 0\;,
$$ 
where $X^n$ is a smooth function such that $\varrho_\gamma(\XX,\XX^n) \to 0$ as $n\to \infty$. The following 
theorem can then be proved using the same method as Theorem~\ref{fubini2}.
\begin{thm} \label{fubini3}
	Let $\gamma \in (1/3,1/2]$, $\XX \in \cC^{\gamma}([0,T],\R^d)$. Let $Y : [0,T]^2 \to \R^d$ be such that for each fixed $s \in [0,T]$, $Y_{\cdot,s} \in \cD^{2\gamma}_X([0,T],\R^d)$ and $Y_{s,\cdot} \in C([0,T],\R^d)$. Assume that $Y$ admits a smooth approximation as described above. Then we can perform the following exchange of the integrals: 
\begin{equ}
\int_0^t\int_s^tY_{r,s}d\XX_rds = \int_0^t\int_0^rY_{r,s}dsd\XX_r.
\end{equ}
\end{thm}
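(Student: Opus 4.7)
The approach is to adapt the smooth-approximation argument from Theorem~\ref{fubini2}: verify the identity at the smooth approximation level via classical Fubini, then pass to the limit using stability of the rough integral.

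First I would check that both sides are well-defined. The inner integral $W_s := \int_s^t Y_{r,s}\,d\XX_r$ of the LHS exists for each fixed $s$ because $Y_{\cdot,s}$ is a controlled rough path. The map $s \mapsto W_s$ is continuous: the smooth-approximation hypothesis forces $s \mapsto Y_{\cdot,s}$ to be continuous into $\cD^{2\gamma}_X$ (the $Y^n$ are jointly smooth and $\sup_s d_{X,X^n,2\gamma}(Y_{\cdot,s},Y^n_{\cdot,s}) \to 0$ uniformly), so continuity of $W$ follows from stability of rough integration. For the RHS, I would verify that $V_r := \int_0^r Y_{r,s}\,ds$ is itself a controlled rough path by a direct computation: splitting
$$V_v - V_u = \int_0^u (Y_{v,s} - Y_{u,s})\,ds + \int_u^v Y_{v,s}\,ds$$
and plugging in $Y_{v,s} - Y_{u,s} = Y'_{u,s}\,X_{v,u} + R^Y_{v,u}(s)$ produces the Gubinelli derivative $V'_u = \int_0^u Y'_{u,s}\,ds$ and remainder
$$R^V_{v,u} = \int_0^u R^Y_{v,u}(s)\,ds + \int_u^v Y_{v,s}\,ds,$$
which is of order $|v-u|^{2\gamma}$ (the second term uses $|v-u| \le T^{1-2\gamma}|v-u|^{2\gamma}$). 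Uniform-in-$s$ bounds on $Y_{\cdot,s}$ and its remainder follow from the triangle inequality with the smooth approximation.

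Next, for the smooth approximants $(X^n, f^n, Y^n)$, the identity is essentially classical. Using the decomposition $\XX^n = \XX^c(X^n) + (0,\delta f^n)$ together with formula \eqref{e:decomp_int}, the rough integrals against $\XX^n$ on both sides split into a Riemann--Stieltjes piece against the smooth $dX^n$ and a Young piece against $df^n$:
\begin{equs}
\int_0^t\!\int_s^t Y^n_{r,s}\,d\XX^n_r\,ds &= \int_0^t\!\int_s^t Y^n_{r,s}\,dX^n_r\,ds + \int_0^t\!\int_s^t Y^{',n}_{r,s}\cdot df^n_r\,ds,\\
\int_0^t\!\int_0^r Y^n_{r,s}\,ds\,d\XX^n_r &= \int_0^t\!\int_0^r Y^n_{r,s}\,ds\,dX^n_r + \int_0^t\!\int_0^r Y^{',n}_{r,s}\,ds\cdot df^n_r.
\end{equs}
Since $X^n$ and $f^n$ are smooth, classical Fubini applies to each piece separately, yielding equality of the two sides.

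Finally I would send $n \to \infty$. For the LHS, stability of rough integration (of Lemma~\ref{stab_int} type) combined with $\sup_s d_{X,X^n,2\gamma}(Y_{\cdot,s},Y^n_{\cdot,s}) \to 0$ gives $\int_s^t Y^n_{r,s}\,d\XX^n_r \to \int_s^t Y_{r,s}\,d\XX_r$ uniformly in $s$, and dominated convergence lets me pass through the outer $ds$-integral. For the RHS, the explicit formulas for $V'^{\,n}$ and $R^{V^n}$ together with the same uniform-in-$s$ convergence show that $V^n \to V$ in the controlled-rough-path sense with respect to $\XX^n \to \XX$, so another application of stability gives $\int_0^t V^n_r\,d\XX^n_r \to \int_0^t V_r\,d\XX_r$. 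The main obstacle is this last convergence, specifically verifying that $V^{',n} - V'$ tends to zero in the $\gamma$-Hölder seminorm and $R^{V^n} - R^V$ in the $2\gamma$-seminorm; this reduces to interchanging uniform-in-$s$ convergence of $Y^{',n}_{\cdot,s} \to Y'_{\cdot,s}$ and $R^{Y^n}(s) \to R^Y(s)$ with $ds$-integration, which is immediate since the smooth-approximation hypothesis is precisely formulated as uniform convergence over $s$.
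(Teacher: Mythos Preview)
Your proposal is correct and follows essentially the same approach the paper indicates: the paper states the result ``can be proved using the same method as Theorem~\ref{fubini2}'', i.e.\ the smooth-approximation argument, and its subsequent remark about the absence of a correction term matches exactly your observation that the Gubinelli derivative of $V_r=\int_0^r Y_{r,s}\,ds$ is $\int_0^r Y'_{r,s}\,ds$ with no diagonal $Y_{r,r}$ contribution, so the $df$-pieces on both sides coincide under classical Fubini.
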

Note that no correction term with $f$ from decomposition~(\ref{e:decomp}) arise in this case. This is 
because in the left hand side the rough integrand has a Gubinelli derivative $Y^{1}_{r,s}$ (meaning in the first 
time variable) and the rough integrand in the right hand side has a Gubinelli derivative $\int_0^rY^{1}_{r,s}ds$ 
which will not create any correction terms when proving the analogue of the middle equality 
of~(\ref{e:approx}).

\section{Weak formulation and It\^{o}'s formula for RPDEs} \label{weak&ito}

In this section we are going to give an equivalent notion of solution for~(\ref{e:rpde2})\dash the weak solution. Recall that in 
Theorem~\ref{RPDE1} where we obtain solutions to the fixed point problem~\eqref{e:MildSoln}, we used the spaces $\cD^{2\varepsilon,2\varepsilon,\varepsilon}_{S,X}([0,T],\HH_{-2\gamma})$ for $0<\varepsilon<\gamma$ in order to obtain suitable bounds on the 
term $\|F(u_t)-S_tF(\xi)\|_{\HH_{2\varepsilon-2\gamma}}$. On the other hand, the right hand side of~\eqref{e:MildSoln} makes sense as an 
element of $\HH$ for any controlled rough path $(u,u') \in \cD^{2\gamma,2\gamma,0}_{S,X}([0,T],\HH_{-2\gamma})$. This motivates us to give the following notions of solution:
\begin{defn} \label{weak}
	Let $\gamma \in (1/3,1/2]$ and $\XX = (X, \X) \in \cC^\gamma(\R_+,\R^d)$. Let $\xi \in \HH$, $F \in C^2_{-2\gamma,0}(\HH,\HH^d)$, and $N \in \Poly^{0,n}_{0,-\delta}(\HH)$ for some $n\geq 1$ and $1-\delta > \gamma$. We say that $(u,F(u)) \in \cD^{2\gamma,2\gamma,0}_{S,X}([0,T],\HH_{-2\gamma})$ is a mild solution of the equation
	\begin{equ}
		du_t = Lu_tdt +N(u_t)dt + F(u_t)dX_t\quad \text{and}\quad  u_0 = \xi \in \HH\,,
	\end{equ}
	if, for each $0\leq t \leq T$, the following identity holds:
	\begin{equ}
		u_t = S_t\xi + \int_0^t S_{t-r}N(u_r)dr+ \int_0^t S_{t-r}F(u_r)dX_r\,.
	\end{equ}
	We say that $(u,F(u)) \in \cD^{2\gamma,2\gamma,0}_{S,X}([0,T],\HH_{-2\gamma})$ is a weak solution if for every $h \in \HH_1$ and $0\leq t \leq T$ the following integral formula holds:
	\begin{equ} 
		\langle u_t, h \rangle = \langle u_0, h \rangle+ \int_0^t \langle u_s, Lh \rangle ds + \int_0^t \langle N(u_s), h \rangle ds + \int_0^t \langle F(u_s), h \rangle dX_s\,. \label{e:weak1}
	\end{equ}
\end{defn}
Note that since $N(u_s) \in \HH_{-\delta}$ and $\delta < 2/3$, $\langle N(u_s), h \rangle$ is well-defined. Moreover, since $2\gamma<1 $ and therefore $\HH_1 \subseteq \HH_{2\gamma}$, Proposition~\ref{space_equiv2} guarantees that $\langle F(u_s), h \rangle$ is a controlled rough path in the classical sense and the integral $\int_0^t \langle F(u_s), h \rangle dX_s$ is well-defined. We are going to prove that these two notions of solution are in fact equivalent. To prepare this proof, we have the following preliminary result:
\begin{lem} \label{weak_lem}
	Let $\XX \in \cC^{\gamma}([0,T],\R^d)$ for $\gamma \in (1/3,1/2]$. Then for every $h\in \HH$ and $(Y,Y') \in \cD^{2\gamma,2\gamma,0}_{S,X}([0,T],\HH^d_{-2\gamma})$ we have for each $0\leq t\leq T$:
\begin{equ} 
	\int_{0}^{t}\langle \int_{0}^{s}S_{s-r}Y_rd\XX_r, h \rangle ds = \int_{0}^{t} \int_{r}^{t}\langle S_{s-r}Y_r, h \rangle ds \,d\XX_r. 
\end{equ}
\end{lem}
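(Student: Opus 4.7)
The lemma is a Fubini-type interchange between a rough integral in $r$ and a Riemann integral in $s$, and I would prove it by smooth approximation, following the strategy used in the proofs of Theorems~\ref{fubini2} and~\ref{fubini3}: establish the identity at the smooth level (where everything reduces to classical integrals and ordinary Fubini applies), then pass to the limit via the stability results of Section~\ref{stability}.

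Via the decomposition~\eqref{e:decomp}, I would choose smooth $X^n, f^n$ so that $\XX^n := \XX^c(X^n) + (0,\delta f^n) \to \XX$ in $\cC^\gamma$, together with $(Y^n, Y^{'n}) \in \cD^{2\gamma,2\gamma,0}_{S,X^n}([0,T],\HH^d_{-2\gamma})$ approximating $(Y, Y')$ in the norms that enter Lemma~\ref{stab_int}. With $X^n$ smooth, the rough integral $\int_0^s S_{s-r}Y^n_r\, d\XX^n_r$ splits, via an analogue of~\eqref{e:decomp_int} for the semigroup setting, into a Riemann--Stieltjes integral against $X^n$ plus a Young integral against $f^n$, both commuting with the inner product $\langle\cdot, h\rangle$. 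Applying classical Fubini to the resulting double Riemann integrals, and then reapplying~\eqref{e:decomp_int} to the controlled rough path $(Z^n_v, Z^{'n}_v) := \bigl(\int_v^t \langle S_{s-v}Y^n_v, h\rangle\, ds, \int_v^t \langle S_{s-v}Y^{'n}_v, h\rangle\, ds\bigr)$ provided by Proposition~\ref{space_equiv2}, yields the identity of the lemma for the smooth approximations. The $f^n$-correction terms appear symmetrically on both sides and cancel, so no net correction remains, just as in the remark following Theorem~\ref{fubini3}.

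Passing to the limit $n\to\infty$: on the right-hand side, Lemma~\ref{stab_int} (together with the continuous dependence of $(Z, Z')$ on $(Y, Y')$ and $\XX$ from Proposition~\ref{space_equiv2}) delivers $\int_0^t Z^n_r\, d\XX^n_r \to \int_0^t Z_r\, d\XX_r$. On the left-hand side, the smoothing bound~\eqref{e:integral2} with $\alpha = -2\gamma$ and $\beta = 2\gamma$ controls $\int_0^s S_{s-r}Y_r\, d\XX_r$ in $\HH$ by a function of $s$ that is integrable on $[0,T]$; applying the same bound to the difference, together with Lemma~\ref{stab_int}, gives pointwise-in-$s$ convergence with an $L^1([0,T])$-dominating function, so that dominated convergence lets one interchange the limit with the outer $ds$-integral.

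The main obstacle is this uniform-in-$s$ control on the left-hand side: the stability constants of Lemma~\ref{stab_int} depend on $\varrho_\gamma(\XX^n)$ and the $\cD^{2\gamma,2\gamma,0}_{S,X^n}$-norm of $(Y^n, Y^{'n})$, both bounded uniformly in $n$, while $s$ enters only through the integration interval $[0,s] \subseteq [0,T]$ and through the operator $S_{s-\cdot}$, whose norm on $\HH_{-2\gamma}$ is uniformly bounded on $[0,T]$. Tracking these dependencies carefully, along the lines of the proofs of Theorems~\ref{fubini2} and~\ref{fubini3}, completes the argument.
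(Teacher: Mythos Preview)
Your proposal is correct and follows essentially the same approximation strategy as the paper: approximate $(Y,Y')$ and $\XX$ by smooth data, establish the identity at the smooth level via classical Fubini, and pass to the limit using the stability results of Section~\ref{stability} together with Proposition~\ref{space_equiv2}. The only minor differences are that the paper handles the left-hand side via the uniform bound $\|V^n - V\|_{\infty,\HH} \to 0$ coming directly from Lemma~\ref{stab_int} (rather than your pointwise-plus-dominated-convergence argument), and it glosses over the $f^n$ correction terms that you spell out more carefully; neither changes the substance of the proof.
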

\begin{proof}
	First note that by Remark~\ref{approx1d} and since by Proposition~\ref{space_equiv1}
	$\cD^{2\gamma,2\gamma,0}_{S,X} = \cD^{2\gamma,2\gamma,0}_{X}$, we can find a smooth 
	approximation of $(Y,Y')$, meaning that there exists a sequence of $\XX^n = (X^n,\X^n) \in \cC^\gamma$ with 
	$X^n$ smooth such that $\varrho_\gamma(\XX,\XX^n) \to 0$ as $n \to \infty$ and a 
	sequence $(Y^n,Y'^{\,n})\in \cD^{2\gamma,2\gamma,0}_{S,X^n}([0,T],\HH^d_{-2\gamma})$ such that
	$$d_{2\gamma,2\gamma,0}(Y,Y^{n}) \to 0\quad \text{as}\quad n\,\to \infty.$$ 
	By Proposition~\ref{space_equiv2}, $W^n_{t,r} = \int_{r}^{t}\langle S_{s-r}Y^n_r, h \rangle ds$ is a 
	controlled rough path with respect to $X^n$ and $W_{t,r} = \int_{r}^{t}\langle S_{s-r}Y_r, h \rangle 
	ds$ is a controlled rough path with respect to $X$. Therefore the following integrals can be defined in the 
	rough path sense: $Z^n_t = \int_0^t W^n_{t,r}d\XX^n_r$, $Z_t = \int_0^t W_{t,r}d\XX_r$. (The fact that $W$ also 
	depends on $t$ does not cause any difficulties in defining the integral). Similar arguments as in the proof of Theorem~\ref{fubini2} allow us to deduce that
	$$\|Z-Z^n\|_{\infty,[0,T]} \lesssim (d_{2\gamma,2\gamma,0}(Y,Y^{n}) + \varrho_\gamma(\XX,\XX^n))\|h\| 
	\to 0 \quad \text{as}\quad n\,\to\,\infty\,,$$
	thus $Z^n \to Z$ uniformly in time. Moreover we know from the stability of integration Lemma~\ref{stab_int} that for $V^n_s = 
	\int_{0}^{s}S_{s-r}Y^n_rd\XX^n_r$ and $V_s = \int_{0}^{s}S_{s-r}Y_rd\XX_r$ we have:
	$$\|V^n-V\|_{\infty,\HH} \lesssim d_{2\gamma,2\gamma,0}(Y,Y^{n}) + \varrho_\gamma(\XX,\XX^n)\to 
	0\quad \text{as}\quad n\,\to\,\infty\,.$$
	It is easy to see that smoothness of $X^n$ implies $\int_0^t\langle V^n_s,h\rangle ds = Z^n_t$ and thus:
	\begin{equs}
		\Big|&\int_{0}^{t}\langle \int_{0}^{s}S_{s-r}Y_rd\XX_r, h \rangle ds -\int_{0}^{t} \int_{r}^{t}\langle S_{s-r}Y_r, h \rangle ds \,d\XX_r\Big| =\\
		&=\Big|\int_0^t\langle V_s,h\rangle ds-\int_0^t\langle V^n_s,h\rangle ds + Z^n_t - Z_t\Big|
		\leq \int_0^t|\langle V_s-V^n_s,h\rangle| ds + |Z^n_t-Z_t| \leq\\ 
		&\leq T \|V^n-V\|_{\infty,\HH} \|h\| + \|Z-Z^n\|_{\infty,[0,T]} \to 0 \quad \text{as}\quad n\,\to\,\infty\,,
	\end{equs}
	therefore showing the result.
\end{proof}

With this at hand:
\begin{thm} \label{weak1}
	In the sense of Definition~\ref{weak}, mild and weak solutions are equivalent.
\end{thm}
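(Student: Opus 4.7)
The plan is to derive both implications from the same computation: rewriting $\langle S_{t-r}v, h\rangle$ via the semigroup generator identity $(S_{t-r} - \id)h = \int_r^t S_{s-r}Lh\,ds$ valid for $h \in \HH_1$, combined with self-adjointness of $S$, classical Fubini for the Bochner-integral drift term, and Lemma~\ref{weak_lem} for the rough-integral term. A key input is Lemma~\ref{compos4}, which guarantees that $(F(u), DF(u)F(u)) \in \cD^{2\gamma,2\gamma,0}_{S,X}([0,T],\HH_{-2\gamma}^d)$ whenever $(u,F(u))$ is, so that Lemma~\ref{weak_lem} actually applies.

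For mild $\Rightarrow$ weak, I would pair the mild formula with an arbitrary $h \in \HH_1$ and split each of the three terms $\langle S_t\xi, h\rangle$, $\int_0^t \langle S_{t-r}N(u_r), h\rangle\,dr$ and $\int_0^t \langle S_{t-r}F(u_r), h\rangle\,dX_r$ using the above identity. The semigroup term directly gives $\langle \xi, h\rangle + \int_0^t \langle S_s\xi, Lh\rangle\,ds$. The drift term becomes $\int_0^t \langle N(u_r), h\rangle\,dr + \int_0^t \int_r^t \langle S_{s-r}N(u_r), Lh\rangle\,ds\,dr$, whose double integral is swapped by ordinary Fubini; integrability uses $N(u_r)\in \HH_{-\delta}$ for $\delta<1$ and the estimate $\|S_{s-r}Lh\|_{\HH_\delta}\lesssim (s-r)^{-\delta}\|Lh\|$. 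The rough term becomes $\int_0^t \langle F(u_r), h\rangle\,dX_r + \int_0^t \int_r^t \langle S_{s-r}F(u_r), Lh\rangle\,ds\,dX_r$, and the last double integral is swapped by Lemma~\ref{weak_lem} applied with test function $Lh$ and integrand $F(u)$. Reassembling the three $ds$-integrals and recognising the mild formula for $u_s$ inside them yields $\int_0^t \langle u_s, Lh\rangle\,ds$, producing exactly~\eqref{e:weak1}.

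For weak $\Rightarrow$ mild, I would introduce the mild candidate
\begin{equ}
\tilde{u}_t := S_t\xi + \int_0^t S_{t-r}N(u_r)\,dr + \int_0^t S_{t-r}F(u_r)\,dX_r\;,
\end{equ}
which lies in $\HH$ by Theorem~\ref{integral}, and run the previous calculation with $\tilde u$ on the left-hand side (the integrands on the right are still built from the given $u$). This yields
\begin{equ}
\langle \tilde u_t, h\rangle = \langle \xi, h\rangle + \int_0^t \langle \tilde u_s, Lh\rangle\,ds + \int_0^t \langle N(u_s), h\rangle\,ds + \int_0^t \langle F(u_s), h\rangle\,dX_s\;.
\end{equ}
Subtracting~\eqref{e:weak1}, the $\xi$, $N$ and $F$ terms all cancel, and the difference $w := u - \tilde u \in L^\infty([0,T],\HH)$ (weakly continuous into $\HH$, continuous into $\HH_{-2\gamma}$) satisfies $w_0 = 0$ together with
\begin{equ}
\langle w_t, h\rangle = \int_0^t \langle w_s, Lh\rangle\,ds \qquad \forall h \in \HH_1\;.
\end{equ}
Uniqueness is then obtained by spectral truncation: for each $\Lambda>0$, the spectral projection $P_\Lambda$ of $-L$ onto $[0,\Lambda]$ maps $\HH$ into $\HH_\infty\subset\HH_1$, so testing against $h = P_\Lambda h_0$ and using $[P_\Lambda, L]=0$ reduces the identity to $\langle P_\Lambda w_t, h_0\rangle = \int_0^t \langle L_\Lambda P_\Lambda w_s, h_0\rangle\,ds$ with $L_\Lambda := L|_{P_\Lambda\HH}$ bounded. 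This is a bounded linear ODE with vanishing initial datum, hence $P_\Lambda w \equiv 0$; letting $\Lambda\to\infty$ gives $w\equiv 0$, so $u = \tilde u$ is mild.

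The main obstacle is the correct interchange of the rough integral with the $ds$-integral in the rough term, which is precisely the content of Lemma~\ref{weak_lem}; once this is available, the remainder of the argument is bookkeeping plus the standard uniqueness of the linear homogeneous equation $\partial_t w = Lw$. A minor technical point to check is that the bounds on $N(u_r)$ and $F(u)$ are uniform enough to justify the classical Fubini swap in the drift term and to make the rough integral representation of $\tilde u$ well-defined in $\HH$, both of which follow from $(u,F(u))\in \cD^{2\gamma,2\gamma,0}_{S,X}$ together with Theorem~\ref{integral} and Lemma~\ref{compos4}.
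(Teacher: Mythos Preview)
Your proposal is correct and rests on the same two ingredients as the paper: the identity $S_{t-r}h - h = \int_r^t S_{s-r}Lh\,ds$ for $h\in\HH_1$, and Lemma~\ref{weak_lem} for swapping the $ds$ and $dX_r$ integrals. The organisation differs slightly: the paper sets $\xi=0$, $N=0$ and starts from $\int_0^t\langle u_s,Lh\rangle\,ds$, substitutes the mild formula inside, applies Lemma~\ref{weak_lem}, and then recognises $\langle u_t,h\rangle$ at the end; you instead start from $\langle u_t,h\rangle$ via the mild formula, split each of the three terms, and reassemble the $ds$-integrals into $\int_0^t\langle u_s,Lh\rangle\,ds$. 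These are the same computation read in opposite directions.

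For Weak $\Rightarrow$ Mild the paper simply cites the standard SPDE argument in \cite{spde,daprato}, whereas you spell it out: define the mild candidate $\tilde u$, observe that the Mild $\Rightarrow$ Weak computation (with integrands still built from $u$) gives a weak identity for $\tilde u$, subtract, and kill the resulting homogeneous equation $\langle w_t,h\rangle=\int_0^t\langle w_s,Lh\rangle\,ds$ by spectral truncation. This is exactly the classical route and is more self-contained than the paper's treatment. One small remark: the fact that $\tilde u_t\in\HH$ (not just $\HH_{-2\gamma}$) uses the improved bound~\eqref{e:integral2} with $\beta=2\gamma$ together with $F(u)\in L^\infty([0,T],\HH^d)$, which is part of the $\cD^{2\gamma,2\gamma,0}_{S,X}$ hypothesis; you implicitly rely on this when invoking Theorem~\ref{integral}.
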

\begin{proof}
	Without loss of generality we can assume in both cases that $\xi = 0$ by replacing $(u,F(u))$ by $(u+S_\cdot \xi,F(u))$ (using $\ddh S_\cdot\xi = 0$).
	
\noindent\textbf{Mild $\Rightarrow$ Weak}. Assume also for simplicity that $N = 0$ since dealing with the drift term term is easier than with 
the diffusion term $F$. Now let $(u,F(u)) \in \cD^{2\gamma,2\gamma,0}_{S,X}([0,T],\HH_{-2\gamma})$ satisfy for $0\leq t \leq T$ 
	$$u_t = \int_0^t S_{t-s}F(u_s)dX_s\,.$$
	Let $h \in \HH_1$ be arbitrary. Then taking the inner product with $Lh$ and integrating from $0$ to $t$ gives
	\begin{equs}
		\int_0^t \langle u_s, Lh \rangle ds &= \int_0^t \langle \int_0^s S_{s-r}F(u_r)dX_r, Lh \rangle ds = \int_{0}^{t} \int_{r}^{t}\langle S_{s-r}F(u_r), Lh \rangle ds\,dX_r \\
		&=\int_{0}^{t} \langle F(u_r),\int_{r}^{t} S_{s-r}(Lh)ds \rangle dX_r \\
		&= \int_{0}^{t} \langle F(u_r),S_{t-r}h \rangle dX_r - \int_{0}^{t} \langle F(u_r),h \rangle dX_r\;,
	\end{equs}
	where we used Lemma~\ref{weak_lem} in the second equality together with $Lh \in \HH$. To conclude, it suffices to note that by Proposition~\ref{space_equiv2}  $\langle F(u_r), S_{t-r}h \rangle = \langle S_{t-r}F(u_r), h \rangle$ is itself a rough path and therefore
	$$\int_{0}^{t} \langle F(u_r),S_{t-r}h \rangle dX_r = \langle \int_{0}^{t} S_{t-r}F(u_r)dX_r, h \rangle  = \langle u_t, h \rangle\;.$$
\noindent\textbf{Weak $\Rightarrow$ Mild}. The proof is almost identical to the standard proof for SPDEs and can be found either in \cite{spde} or \cite{daprato}.
\end{proof}

The next lemma is a slight generalisation of Theorem~\ref{weak1}, but it has exactly the same proof, so we omit it.
\begin{lem} \label{weak2}
 Let $\sigma \geq 0$, $\alpha \in \R$. Let $(u,u') \in \cD^{2\gamma,2\gamma,0}_{S,X}([0,T],\HH_{\alpha-2\gamma})$ and $(v,v') = (v,F(u)) \in \cD^{2\gamma,2\gamma,0}_{S,X}([0,T],\HH_{\alpha-2\gamma-\sigma})$ satisfy the following weak equation for every $h \in \HH_{1-\alpha+\sigma}$:
 \begin{equ} 
 	\langle v_t, h \rangle = \langle v_0, h \rangle + \int_0^t \langle v_s, Lh \rangle ds + \int_0^t \langle N(u_s), h \rangle ds + \int_0^t \langle F(u_s), h \rangle dX_s. \label{e:weak2}
 \end{equ}
Here $F \in C^2_{\alpha-2\gamma,0}(\HH,\HH^d)$, and $N \in \Poly^{0,n}_{\alpha,-\delta}(\HH)$ is of polynomial type for some $n\geq 1$ and $1-\delta > \gamma$. Then the following mild formula holds in $\HH_{\alpha-\sigma}$: 
  \begin{equ} 
	v_t = S_tv_0 + \int_0^t S_{t-r}N(u_r)dr+ \int_0^t S_{t-r}F(u_r)dX_r. \label{e:weak3}
 \end{equ}
Moreover the converse is also true: (\ref{e:weak3}) implies~(\ref{e:weak2}) for every $h \in \HH_{1-\alpha+\sigma}$.
\end{lem}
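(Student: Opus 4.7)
The argument is a parameter-tracking upgrade of Theorem~\ref{weak1} with essentially no new mechanism required. First I would verify that every object in~\eqref{e:weak2} and~\eqref{e:weak3} is well-defined for $h \in \HH_{1-\alpha+\sigma}$. Since $v_s \in L^\infty([0,T],\HH_{\alpha-\sigma})$, the pairing $\langle v_s, Lh\rangle$ makes sense in the duality $\HH_{\alpha-\sigma} \times \HH_{-\alpha+\sigma}$. As $u_s \in L^\infty([0,T],\HH_\alpha)$ and $N \in \Poly^{0,n}_{\alpha,-\delta}$, one has $N(u_s) \in \HH_{\alpha-\delta}$, and the condition $1+\sigma \geq \delta$ (which holds since $\delta < 1$) places $h$ in the dual space. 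Lemma~\ref{compos4} gives $(F(u), DF(u)u') \in \cD^{2\gamma,2\gamma,0}_{S,X}([0,T], \HH^d_{\alpha-2\gamma})$, and the obvious shifted analogue of Proposition~\ref{space_equiv2} (proved identically) then ensures $(\langle F(u),h\rangle, \langle DF(u)u',h\rangle) \in \cD^{2\gamma}_X([0,T],\R)$, since $1+\sigma \geq 2\gamma$; this gives meaning to the rough integral in~\eqref{e:weak2}.

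For mild $\Rightarrow$ weak I would follow Theorem~\ref{weak1} line by line, taking $v_0 = 0$ without loss of generality and handling the $N$ and $F$ contributions separately. Applying $\langle\cdot, Lh\rangle$ to~\eqref{e:weak3} and integrating over $[0,t]$, I would swap the order of integration using Theorem~\ref{fubini3} on the drift term and the shifted analogue of Lemma~\ref{weak_lem} on the rough-integral term; the latter is established by the very same smooth-approximation proof given for Lemma~\ref{weak_lem}, now with $(F(u), DF(u)u')$ in the space $\cD^{2\gamma,2\gamma,0}_{S,X}([0,T],\HH^d_{\alpha-2\gamma})$ (smooth approximations exist by Proposition~\ref{space_equiv1} and Remark~\ref{approx1d}). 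The inner integral simplifies via $\int_r^t S_{s-r}(Lh)\,ds = S_{t-r}h - h$, and after using $\langle S_{t-r}G, h\rangle = \langle G, S_{t-r}h\rangle$ (together with the analogous identity for the drift term), one obtains~\eqref{e:weak2}.

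For the converse implication I would follow the classical SPDE pattern. The key step is to extend~\eqref{e:weak2} from constant to time-dependent test functions, yielding for sufficiently smooth $\phi : [0,t] \to \HH_{1-\alpha+\sigma}$ the identity
\begin{equ}
\langle v_t,\phi_t\rangle - \langle v_0,\phi_0\rangle = \int_0^t \langle v_s, \dot\phi_s + L\phi_s\rangle\,ds + \int_0^t \langle N(u_s),\phi_s\rangle\,ds + \int_0^t \langle F(u_s),\phi_s\rangle\,dX_s\,.
\end{equ}
Taking $\phi_s = S_{t-s}h$ for $h$ in a dense subspace of $\HH_{-\alpha+\sigma}$ collapses the first integral on the right (because $\partial_s S_{t-s}h = -LS_{t-s}h$) and moves the semigroup onto the integrands, producing~\eqref{e:weak3} after a density argument. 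The main technical obstacle is justifying the time-dependent test-function identity: since $s \mapsto S_{t-s}h$ is only smooth for $s$ strictly less than $t$, the identity must be approached via a cutoff together with a density argument in $h$, as in the classical treatments of~\cite{daprato,spde}. The rough integral term is controlled throughout by the shifted Proposition~\ref{space_equiv2} together with the stability estimates of Lemma~\ref{stab_int}, which ensure that the approximations converge in the appropriate topology.
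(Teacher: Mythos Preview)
Your proposal is correct and follows exactly the route the paper indicates: the paper omits the proof entirely, stating that it is identical to that of Theorem~\ref{weak1}, and your plan is precisely a parameter-shifted replay of that argument (Lemma~\ref{weak_lem} for the rough term, the semigroup identity $\int_r^t S_{s-r}Lh\,ds = S_{t-r}h - h$, and the classical time-dependent test-function trick from \cite{daprato,spde} for the converse). One small quibble: for the drift term in the mild $\Rightarrow$ weak direction you only need ordinary Fubini for two Riemann integrals, not Theorem~\ref{fubini3}, but this does not affect correctness.
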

Note that $h \in \HH_{1-\alpha+\sigma}$ guarantees that $\langle v_s, Lh \rangle$ is well-defined because of $v_s \in \HH_{\alpha-\sigma}$ and $Lh \in \HH_{-\alpha+\sigma}$.

A particularly important case is the choice $v_t = A(u_t)$ for some regular function $A \in C^2_{\alpha-2\gamma,-\sigma}(\HH)$. By 
Lemma~\ref{compos4}, for every $(u,u') \in \cD^{2\gamma,2\gamma,0}_{S,X}([0,T],\HH_{\alpha-2\gamma})$, we then have $(A(u)),DA(u)u') \in \cD^{2\gamma,2\gamma,0}_{S,X}([0,T],\HH_{\alpha-2\gamma-\sigma})$. The question we want to ask is whether $A(u_t)$ satisfies some mild formula like~\eqref{e:weak3}?
Before we answer this question we recall the definition of the bracket of a rough path:
\begin{defn}
	Let $V$ be a Banach space and $\XX \in \cC^\gamma([0,T],V)$, then its bracket is given by $[\XX]_t =  X_{t,0} \otimes  X_{t,0} - 2\Sym(\X_{t,0})$.
\end{defn}
From Chen's relation~\eqref{chen} it follows that
$$\delta [\XX]_{t,s} =  X_{t,s} \otimes  X_{t,s} - 2\Sym(\X_{t,s})\;,$$
and therefore $[\XX] \in C^{2\gamma}([0,T],V)$.
In particular $[\XX] = 0$ if and only if $\XX$ is a geometric rough path. Moreover the above implies that $[\XX]_t = -2\Sym(f_t)$ where $f \in C^{2\gamma}$ is as in the decomposition~\eqref{e:decomp}.
For example $[\mathcal{B}^\Ito]_t = t$ and $[\mathcal{B}^\Strat]_t=0$ almost surely. Now for a moment assume that $[\XX] = 0$ so that 
there is no ``It\^{o} correction'' and we can just apply the chain rule. Assume that $A$ is Fr\'echet differentiable and $u$ formally satisfies an equation $du_t = Lu_tdt +N(u_t)dt + F(u_t)dX_t$. Then heuristically we have:
\begin{equs}
	d(A(u_t)) &= DA(u_t)du_t =  DA(u_t)Lu_tdt +DA(u_t)N(u_t)dt + DA(u_t)F(u_t)dX_t  \\
	&=L A(u_t)dt +\big( DA(u_t)N(u_t) + [L,A](u_t)\big)dt + DA(u_t)F(u_t)dX_t .
\end{equs}
Here for any two differentiable functions $G \in C^1_{\alpha_1,\beta_1}(\HH)$, $H\in C^1_{\alpha_2,\beta_2}(\HH)$ we define the Lie bracket 
$$[G,H](u):= DH(u)G(u) - DG(u)H(u) \in C_{\alpha_1\vee \alpha_2,\beta_1+\beta_2}(H)\;.$$ 
Since $L$ is linear, we have $DL(u) = L$ for each $u \in \HH_\alpha$, therefore $[L,A](u_t) = DA(u_t)Lu_t-L A(u_t)$. Writing $\tilde{N}(u) = DA(u)N(u) + [L,A](u)$ and $\tilde{F}(u) = DA(u)F(u)$ then on a formal level $A(u_t)$ solves
$$d(A(u_t)) = L (A(u_t))dt + \tilde{N}(u_t)dt + \tilde{F}(u_t)dX_t.$$
This suggests that $A(u_t)$ satisfies the identity
$$A(u_t) = S_tA(u_0) + \int_0^t S_{t-r}\tilde{N}(u_r)dr+ \int_0^t S_{t-r}\tilde{F}(u_r)dX_r.$$
Before showing this result for the mild formulation rigorously we state a weak version of it: 
\begin{thm}[Weak It\^{o} formula] \label{ito1}
	 Let $\gamma \in (1/3,1/2]$, $\XX \in \cC^\gamma$, $\sigma \geq 0$, and $\alpha \in \R$. Let $(u,u') \in \cD^{2\gamma,2\gamma,0}_{S,X}([0,T],\HH_{\alpha-2\gamma})$ and $(v,F(u)) \in \cD^{2\gamma,2\gamma,0}_{S,X}([0,T],\HH_{\alpha-2\gamma-\sigma})$ be such that~(\ref{e:weak2}) 
	 holds for every $h \in \HH_{1-\alpha+\sigma}$ with $F$ and $N$ as stated there. Then, for every $\nu\geq 0$ and $A \in C^2_{\alpha-2\gamma-\sigma,-\nu}(\HH)$, one
	 has $(A(v),DA(v)F(u)) \in \cD^{2\gamma,2\gamma,0}_{S,X}([0,T],\HH_{\alpha-2\gamma-\sigma-\nu})$ and the following identity holds for every $h \in \HH_{{1-\alpha+\sigma+\nu}}$:
	 \begin{equs}
	 	\langle &A(v_t), h \rangle = \langle A(v_0), h \rangle + \int_0^t \langle DA(v_s)Lv_s+DA(v_s)N(u_s), h\rangle ds +\\ 
	 	+&\int_0^t \langle DA(v_s)F(u_s), h \rangle dX_s 
	 	+ \frac{1}{2}\int_0^t \langle D^2A(v_s)(F(u_s),F(u_s)),h \rangle d[\XX]_s. \label{e:ito1}
	 \end{equs}
\end{thm}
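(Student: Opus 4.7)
My plan is to reduce \eqref{e:ito1} to a mild identity and then prove the latter via a second-order Taylor expansion along a refining partition. First, by Lemma~\ref{compos4}, $(A(v), DA(v)F(u)) \in \cD^{2\gamma,2\gamma,0}_{S,X}([0,T], \HH_{\alpha-2\gamma-\sigma-\nu})$. Using the converse direction of Lemma~\ref{weak2}, it suffices to establish the mild analogue
\begin{equ}
A(v_t) = S_t A(v_0) + \int_0^t S_{t-r}\bigl(DA(v_r)N(u_r)+[L,A](v_r)\bigr)\,dr + \int_0^t S_{t-r}DA(v_r)F(u_r)\,dX_r + \tfrac12 \int_0^t S_{t-r} D^2A(v_r)(F(u_r),F(u_r))\,d[\XX]_r,
\end{equ}
which in turn reduces to the scalar identity obtained by pairing with a fixed $h \in \HH_{1-\alpha+\sigma+\nu}$.

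For this scalar identity, I would telescope $\langle A(v_t)-A(v_0), h\rangle$ along a partition $\CP$ of $[0,t]$ and Taylor-expand each increment to second order; the cubic remainder sums to zero as $|\CP|\to 0$ since $v$ is $\gamma$-H\"older with $3\gamma>1$. From the mild form of $v$ provided by Lemma~\ref{weak2} and the expansion in Theorem~\ref{integral}, one has
\begin{equ}
v_{s_2}-S_{s_2-s_1}v_{s_1} = S_{s_2-s_1}F(u_{s_1})X_{s_2,s_1} + S_{s_2-s_1}DF(u_{s_1})F(u_{s_1})\X_{s_2,s_1} + \int_{s_1}^{s_2}S_{s_2-r}N(u_r)\,dr + O(|s_2-s_1|^{3\gamma}).
\end{equ}
Feeding this into the first-order Taylor term $\langle DA(v_{s_1})(v_{s_2}-v_{s_1}),h\rangle$ and passing to the limit produces, after transferring $L$ from $(S_{s_2-s_1}-\id)v_{s_1}$ onto $h$ via duality, the drift terms $\int_0^t\langle DA(v_r)Lv_r + DA(v_r)N(u_r), h\rangle dr$, together with the first-level contribution of the rough integral $\int_0^t \langle DA(v_r)F(u_r), h\rangle\,dX_r$.

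The heart of the argument lies in the quadratic Taylor term: its only surviving contribution in the limit is $\tfrac12 D^2A(v_{s_1})(F(u_{s_1}), F(u_{s_1}))(X_{s_2,s_1}\otimes X_{s_2,s_1})$, all other cross terms being of order higher than one. Using the algebraic identity
\begin{equ}
X_{t,s}\otimes X_{t,s} = 2\Sym(\X_{t,s}) + \delta[\XX]_{t,s},
\end{equ}
I split this into a symmetric piece and a bracket piece. The symmetric piece combines with the $\X$-level $DF(u)F(u)$ term already present in the first-order expansion to form exactly the Gubinelli-derivative correction demanded by Theorem~\ref{integral} when identifying $\int_0^t \langle DA(v_r)F(u_r),h\rangle\,dX_r$, whose Gubinelli derivative with respect to $X$ is $\langle D^2A(v)(F(u),\cdot)F(u) + DA(v)DF(u)F(u), h\rangle$ (symmetrised in the two $\R^d$ indices). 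The bracket piece yields the Young integral $\tfrac12\int_0^t \langle D^2A(v_s)(F(u_s),F(u_s)), h\rangle\,d[\XX]_s$, which is well-defined because the integrand is $\gamma$-H\"older while $[\XX] \in C^{2\gamma}$ with $3\gamma>1$.

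The main obstacle will be the careful algebraic and analytic bookkeeping in this last paragraph: pinning down the Gubinelli derivative of $\langle DA(v)F(u),h\rangle$, verifying that the symmetric $\X$-level pieces from the Taylor remainder and from the expansion of $v$ recombine into precisely this Gubinelli derivative, and confirming that the leftover antisymmetric part is exactly $\tfrac12 D^2A(v_{s_1})(F(u_{s_1}),F(u_{s_1}))\,\delta[\XX]_{s_2,s_1}$. The higher-order residues are controlled via sewing-type estimates involving $|R^v|_{2\gamma}$, $\|v\|_{\gamma}$, $\varrho_\gamma(\XX)$ and the norms of the derivatives of $A$, with summability guaranteed by $3\gamma>1$.
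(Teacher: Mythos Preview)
Your Taylor-expansion approach is correct and essentially matches the paper's proof. The paper streamlines it by lifting $v$ to a rough path $\mathbf{v}=(v,\mathbb{V})$ and invoking the Itô formula for rough paths \cite[Prop.~5.6]{friz} directly, then identifying the first-order piece $\langle DA(v_m)\,\delta v_{r,m},h\rangle$ by applying the \emph{weak} equation~\eqref{e:weak2} with test function $DA^*(v_m)h\in\HH_{1-\alpha+\sigma}$ rather than by expanding the mild formulation as you do.

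Two small points. First, your detour through the mild analogue is unnecessary and slightly confused: the scalar identity you actually telescope, $\langle A(v_t)-A(v_0),h\rangle=\ldots$, \emph{is} the weak formula~\eqref{e:ito1}, not the mild one paired with $h$, so just aim for~\eqref{e:ito1} directly and drop the reduction step. Second, under the stated hypotheses the $\X$-level term in the expansion of $v$ (equivalently, the Gubinelli derivative of $F(u)$) is $DF(u)u'$, not $DF(u)F(u)$, since nothing here forces $u'=F(u)$. This slip — which the paper also makes — is harmless because that term is absorbed into the second-level correction of the rough integral $\int_0^t\langle DA(v_s)F(u_s),h\rangle\,dX_s$ regardless of its precise form.
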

\begin{proof}
	Without loss of generality we only consider the case $\sigma = 0$. By Proposition~\ref{space_equiv1}, $(v,v') = (v,F(u)) \in \cD^{2\gamma,2\gamma,0}_{X}([0,T],\HH_{\alpha-2\gamma})\subset \cD^{2\gamma}_{X}([0,T],\HH_{\alpha-2\gamma})$ and by the mild representation of Lemma~\ref{weak2} it satisfies: 
	$$v_t-v_s = F(u_s) X_{t,s} +DF(u_s)F(u_s)\X_{t,s} + R_{t,s}.$$
	Setting $v'_t = F(u_t)$ and $v''_t = DF(u_t)F(u_t)$, we note that $$(G,G') = (DA(v)v', D^2A(v)(v',v')+DA(v)v'')$$ is itself a controlled rough path in $\cD^{2\gamma}_X([0,T],\HH_{\alpha-2\gamma-\nu})$.
	As in \cite[Remark 4.11]{friz},  one can define $\mathbb{V}_{t,s} = \int_s^t(v_r-v_s)\otimes dv_s \in \CC_2^{2\gamma}(\HH_{\alpha-2\gamma}\otimes \HH_{\alpha-2\gamma})$, yielding a rough path $\textbf{v} = (v,\mathbb{V}) \in \cC^\gamma(\HH_{\alpha-2\gamma})$. From now on we are going to omit writing $\otimes$ and always understand say $\phi \psi$ for two elements of some Hilbert space $\phi$ and $\psi$ as their tensor product $\phi\otimes \psi$. By It\^{o}'s formula for 
	rough paths \cite[Prop.~5.6]{friz}  we have that:
	\begin{equs} 
		\delta A(v)_{t,0} = A(v_t) - A(v_0) &= \lim_{|\CP|\to 0} \sum_{[m,r] \in \CP} (DA(v_m)\delta v_{r,m} + D^2A(v_m)\mathbb{V}_{r,m}) \\ &\qquad+\frac{1}{2} \lim_{|\CP|\to 0} \sum_{[m,r] \in \CP} D^2A(v_m)\delta[\textbf{v}]_{r,m}. 
		\label{e:ito2}
	\end{equs}
	With convergence in $\HH_{\alpha-2\gamma-\nu}$.
	Now one can show that $\delta[\textbf{v}]_{r,m} = v'_m v'_m \delta[\XX]_{r,m} + o(|r-m|)$ and using that $\mathbb{V}_{r,m} = v'_mv'_m \X_{r,m} + o(|r-m|)$ we can take an inner product of~(\ref{e:ito2}) with $h \in \HH_{1-\alpha+\nu}$ on both sides we get:
	\begin{equs}
		\langle \delta A(v)_{t,0},h\rangle &= \lim_{|\CP|\to 0} \sum_{[m,r] \in \CP} \langle DA(v_m)(\delta v_{r,m})+\big(D^2A(v_m)v'_mv'_m\big)\X_{r,m},h\rangle \\ 
		&\qquad+\frac{1}{2} \lim_{|\CP|\to 0} \sum_{[m,r] \in \CP} \langle D^2A(v_m)v'_m v'_m,h\rangle \delta[\XX]_{r,m} \\
		&= \lim_{|\CP|\to 0} \sum_{[m,r] \in \CP} \rI_{r,m} + \lim_{|\CP|\to 0} \sum_{[m,r] \in \CP}\rII_{r,m}.\label{e:expression}
	\end{equs}
The second term in this expression converges to $\frac{1}{2}\int_0^t \langle D^2A(v_s)(v'_s,v'_s), h\rangle d[\XX]_s$,
interpreted  as a Young integral. 
	Since $v'_s = F(u_s)$, this gives the very last term in~(\ref{e:ito1}).
	
	To deal with the first term in \eqref{e:expression}, note that for a fixed value of $m$, one has 
	$\langle DA(v_m)\delta v_{r,m}, h\rangle = \langle \delta v_{r,m}, DA^*(v_m)h\rangle$ with $DA^*(v_m)h \in \HH_{1-\alpha}$,
	so that we can apply~(\ref{e:weak2}) for fixed $m$, yielding
	\begin{equs}
		\langle DA(v_m)&\delta v_{r,m}, h\rangle =\\
		&= \int_m^r\langle v_s, LDA^*(v_m)h\rangle ds + \int_m^r\langle N(u_s), DA^*(v_m)h\rangle ds \\ 
		&\quad+\int_m^r\langle F(u_s), DA^*(v_m)h\rangle dX_s \\
		&=\, \langle v_m, LDA^*(v_m)h\rangle (r-m)+\langle N(u_m), DA^*(v_m)h\rangle (r-m) \\
		&\quad+\langle F(u_m) X_{r,m}+DF(u_m)F(u_m)\X_{r,m}, DA^*(v_m)h\rangle + o(|r-m|) \\
		&=\langle DA(v_m)Lv_m+DA(v_m)N(u_m), h\rangle (r-m) \\ 
		&\quad+\langle DA(v_m)v'_m X_{r,m}+DA(v_m)v''_m\X_{r,m}, h\rangle + o(|r-m|).
	\end{equs}
	We conclude that one has
	\begin{equs}
		\rI_{r,m} 
		&= \langle DA(v_m)Lv_m+DA(v_m)N(u_m), h\rangle (r-m) \\
		&\quad+ \langle G_m X_{r,m} + G'_m \X_{r,m},h\rangle + o(|r-m|).
	\end{equs}
	Since $(G,G')$ is a controlled rough path, we then obtain
	$$\lim_{|\CP|\to 0} \sum_{[m,r] \in \CP} \rI_{r,m} = \int_0^t \langle DA(v_s)Lv_s+DA(v_s)N(u_s), h\rangle ds +\langle \int_0^t G_s dX_s, h\rangle.$$
	We conclude by recalling that $\int_0^t G_s dX_s = \int_0^t DA(v_s)v'_sdX_s = \int_0^t DA(v_s)F(u_s)dX_s$.
\end{proof}

Finally we state the main result of this section:
\begin{thm}[Mild It\^{o} formula] \label{ito2}
	Let $\gamma \in (1/3,1/2]$, $\XX \in \cC^\gamma$, $\sigma \geq 0$, and $\alpha \in \R$. Let $(u,u') \in \cD^{2\gamma,2\gamma,0}_{S,X}([0,T],\HH_{\alpha-2\gamma})$ and let $(v,F(v)) \in \cD^{2\gamma,2\gamma,0}_{S,X}([0,T],\HH_{\alpha-2\gamma-\sigma})$ be related to $u$ through  
	$$v_t = S_tv_0 + \int_0^t S_{t-r}N(u_r)dr+ \int_0^t S_{t-r}F(u_r)dX_r\;,$$
	with $F$ and $N$ as in Lemma~\ref{weak2}.
	Then, for any $\nu\geq 0$ and $A \in C^2_{\alpha-2\gamma,-\nu}(\HH)$, we have
	$(A(v),DA(v)F(u)) \in \cD^{2\gamma,2\gamma,0}_{S,X}([0,T],\HH_{\alpha-2\gamma-\sigma-\nu})$ and the following mild It\^{o} formula holds:
\begin{equs} 
	A(v_t) &= S_tA(v_0) + \int_0^t S_{t-r}(DA(v_r)N(u_r) +[L,A] (v_r)) dr \label{e:ito3}\\ 
	&\quad +\int_0^t S_{t-r}DA(v_r)F(u_r)dX_r+\frac{1}{2}\int_0^t S_{t-r} D^2A(v_r)(F(u_r),F(u_r))d[\XX]_r. 
\end{equs}
\end{thm}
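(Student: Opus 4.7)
The strategy is to go through the weak formulation: convert the mild equation for $v$ to its weak form using Lemma~\ref{weak2}, apply the weak It\^o formula (Theorem~\ref{ito1}) to $A(v)$, then convert the resulting weak equation back to a mild one. First, I would verify that $(A(v), DA(v)F(u)) \in \cD^{2\gamma,2\gamma,0}_{S,X}([0,T],\HH_{\alpha-2\gamma-\sigma-\nu})$ by applying the scalar analogue of Lemma~\ref{compos4} to $A$ and $(v,F(u))$ (using that $v \in L^\infty([0,T],\HH_{\alpha-\sigma})$, which ensures $A(v)$ makes sense).

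The first substantive step is to invoke Lemma~\ref{weak2} in its ($\Leftarrow$) direction to obtain, for every $h \in \HH_{1-\alpha+\sigma}$,
\[
\langle v_t,h\rangle = \langle v_0,h\rangle + \int_0^t\langle v_s,Lh\rangle\,ds + \int_0^t\langle N(u_s),h\rangle\,ds + \int_0^t\langle F(u_s),h\rangle\,dX_s.
\]
Plugging this into Theorem~\ref{ito1} yields a weak It\^o formula for $A(v)$ containing the term $\int_0^t\langle DA(v_s)Lv_s,h\rangle\,ds$. Using that $L$ is self-adjoint and the identity $DA(v)Lv = LA(v) + [L,A](v)$, I would rewrite
\[
\int_0^t\langle DA(v_s)Lv_s,h\rangle\,ds = \int_0^t\langle A(v_s),Lh\rangle\,ds + \int_0^t\langle [L,A](v_s),h\rangle\,ds,
\]
which both shifts $L$ onto the test function and produces the commutator term that appears in~\eqref{e:ito3}. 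The outcome is that $A(v)$ satisfies a weak equation of the same shape as~\eqref{e:weak2}, with drift $[L,A](v) + DA(v)N(u)$ and diffusion $DA(v)F(u)$, but augmented by the Young-integral summand $\frac{1}{2}\int_0^t\langle D^2A(v_s)(F(u_s),F(u_s)),h\rangle\,d[\XX]_s$.

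The last step is to apply Lemma~\ref{weak2} in its ($\Rightarrow$) direction to pass from this weak equation back to the mild form~\eqref{e:ito3}. For the three summands already appearing in~\eqref{e:weak2} this is immediate; the main obstacle is the additional Young-integral summand against $d[\XX]$, which Lemma~\ref{weak2} does not cover. I would resolve this by proving a Young-integral analogue of Lemma~\ref{weak_lem} of the form
\[
\int_0^t\Big\langle \int_0^s S_{s-r}g_r\,d[\XX]_r,\,Lh\Big\rangle\,ds = \int_0^t\int_r^t\langle S_{s-r}g_r,Lh\rangle\,ds\,d[\XX]_r,
\]
valid whenever $g$ is H\"older-regular enough that all Young integrals make sense. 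Since $[\XX] \in C^{2\gamma}$ is a classical H\"older path and $r \mapsto D^2A(v_r)(F(u_r),F(u_r))$ is H\"older of index at least $\gamma$ by the composition estimates of Lemma~\ref{compos4}, the condition $\gamma + 2\gamma > 1$ is satisfied and the identity follows by uniformly approximating $[\XX]$ by smooth paths and passing to the limit via continuity of Young integration. With this Fubini identity in hand, the standard variation-of-constants proof underlying Lemma~\ref{weak2} applies verbatim with $d[\XX]_r$ in place of $dr$ and gives the extra mild term $\tfrac{1}{2}\int_0^t S_{t-r}D^2A(v_r)(F(u_r),F(u_r))\,d[\XX]_r$, completing the proof of~\eqref{e:ito3}.
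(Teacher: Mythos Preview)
Your proposal is correct and follows essentially the same route as the paper: convert mild to weak via Lemma~\ref{weak2}, apply the weak It\^o formula (Theorem~\ref{ito1}), rewrite $DA(v)Lv = LA(v) + [L,A](v)$ to shift $L$ onto the test function, and convert back to mild form via Lemma~\ref{weak2}. The paper's proof is terser in that it simply invokes Lemma~\ref{weak2} ``and the fact that the last integral is well-defined as a Young integral'' without spelling out the Young--Fubini step you describe; your explicit treatment of the $d[\XX]$ term is a welcome elaboration rather than a deviation.
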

\begin{proof}
	By Lemma~\ref{weak2}, equation~\eqref{e:weak2} holds for $(v,F(u))$, so that~(\ref{e:ito1}) holds 
	for every $h \in \HH_{{1-\alpha+\sigma+\nu}}$ by Theorem~\ref{ito1}. We now make use of the fact that 
	\begin{equs}
		\langle DA(v_s)Lv_s+DA(v_s)N(u_s), h\rangle  &= \langle LA(v_s), h \rangle + \langle DA(v_s)N(u_s)+[L,A](v_s), h \rangle \\
		&=\langle A(v_s), Lh \rangle  + \langle DA(v_s)N(u_s)+[L,A](v_s), h \rangle,
	\end{equs}
where $\langle [L,A](v_s), h \rangle$ makes sense since $[L,A](v_s) \in \HH_{\alpha-\sigma-\nu-1}$ and $h \in \HH_{{1-\alpha+\sigma+\nu}}$. Thus we get the following weak equation: 
	\begin{equs}
		\langle A(v_t), h \rangle &= \langle A(v_0), h \rangle + \int_0^t \langle A(v_s), Lh \rangle ds + \int_0^t \langle DA(v_s)N(u_s)+[L,A](v_s), h \rangle ds \\
		&+ \int_0^t \langle DA(v_s)F(u_s), h \rangle dX_s + \frac{1}{2}\int_0^t \langle D^2A(v_s)(F(u_s),F(u_s)),h \rangle d[\XX]_s,
	\end{equs}
which itself implies the mild formula~(\ref{e:ito3}) by Lemma~\ref{weak2} and the fact that the last integral is well-defined as a Young integral.
\end{proof}

 \section{Backwards RPDEs} \label{R_BPDE}

We will briefly describe the method of solving rough backwards PDEs of the form: 
\begin{equ}  
dv_t = -Lv_tdt -N(v_t)dt - F(v_t)dX_t\,,\quad  v_T = \xi \in \HH. \label{e:bpde1}
\end{equ} 
For short we call them backwards RPDEs. We will quickly describe the theory of backwards controlled rough paths according to the semigroup. 
In many instances, the proofs of the results are virtually identical to the corresponding ones for forward controlled rough paths,
so we do not give them. We introduce an increment operator $\ddc : \CC_1 \to  \CC_2$ 
$$\ddc f_{t,s} = S_{t-s}f_t - f_s\;,$$
for a semigroup $S$ acting on a Banach space $V$. (We will actually assume that $S$ consists of selfadjoint operators on some Hilbert space $\HH$.) With this, we define a H\"{o}lder like space
\begin{equ}
\check{\CC}^\gamma = \{f \in \CC_1 : |\ddc f|_{\gamma,V}<\infty\}\;,
\end{equ}
and we endow it with a seminorm $\vv f\vv_{\gamma,V} = |\ddc f|_{\gamma,V}$ and a norm $\| f\|_{\check{\CC}^\gamma} = \vv f\vv_{\gamma,V} + \|f_T\|_V$. (We could have replaced $\|f_T\|_V$ by  $\|f\|_{\infty,V}$, which yields an equivalent norm.)
\begin{defn}
	Let $\XX \in \cC^{\gamma}([0,T],\R^d)$ for some $\gamma\in (1/3,1/2]$ and let $m\in \N$. We say that $(Y,Y')\in \check{\CC}^\gamma([0,T],\HH_\alpha^m) \times \check{\CC}^\gamma([0,T],\HH_\alpha^{m\times d})$ is backwards controlled by $\XX$ according to the semigroup $(S_t)_{t\geq 0}$ if the remainder term defined through
\begin{equ} 
R^Y_{t,s} = \ddc Y_{t,s} -S_{t-s}Y'_t X_{t,s}\;,
\end{equ}
is an element of $\CC^{2\gamma}_2\HH_\alpha^m$.
\end{defn}
This defines a space of controlled rough paths (according to the semigroup) 
$$(Y,Y') \in \cD^{2\gamma}_{S,X,\back}([0,T],\HH_\alpha^m).$$
We endow this space with a semi-norm (omitting $d$ and $m$ for notational convenience)
\begin{equ}
\| Y,Y'\| _{2\gamma,X,\alpha} =  \vv Y'\vv _{\gamma,\alpha} + |R^Y|_{2\gamma, \alpha}.
\end{equ}
It is easy to see that the space $\cD^{2\gamma}_{S,X,\back}([0,T],\HH_\alpha^m)$ is a Banach space with norm: 
$$\| Y,Y'\|_{\cD^{2\gamma}_{S,X,\back}} = \| Y_T\| _{\HH_\alpha^m} + \| Y'_T\|_{\HH_\alpha^{m\times d}}+\| Y,Y'\| _{2\gamma,X,\alpha}.$$
Here the endpoint $Y_T$ plays the same role as the starting point for forward controlled rough paths. This is justified by the inequality $\|Y\|_\infty \lesssim_T \|Y_T\| + \vv Y\vv_\gamma$. This also corresponds to the fact that for backwards RPDEs we don't know the initial condition but rather the terminal condition.

Similarly as for forward controlled rough paths for $\beta \in \R$ and $\eta \in [0,1]$ define a space  $$\cD^{2\gamma,\beta,\eta}_{S,X,\back}([0,T],\HH_\alpha) := \cD^{2\gamma}_{S,X,\back}([0,T],\HH_\alpha)\cap  \big(\check{\CC}^\eta([0,T],\HH_{\alpha+\beta})\times L^\infty([0,T],\HH_{\alpha+\beta}^{d})\big).$$
We introduce a norm on this space to be:
\begin{equ}
\|(Y,Y')\|_{\cD^{2\gamma,\beta,\eta}_{S,X,\back}} = \|Y_T\|_{\HH_{\alpha+\beta}} + \|Y'\|_{\infty,\alpha+\beta} +\vv Y\vv_{\eta,\alpha+\beta} + \|(Y,Y')\|_{2\gamma,X,\alpha}.
\end{equ}
Here we also make an abuse of notation by writing $\check{\CC}^0 = L^\infty$ for $\eta = 0$. Similarly to Lemma~\ref{compos1}, composition with regular functions maps $\cD^{2\gamma,2\gamma,\eta}_{S,X,\back}([0,T],\HH_\alpha)$ to $\cD^{2\gamma,2\gamma,0}_{S,X,\back}([0,T],\HH_\alpha)$ for every $\eta \in[0,1]$. 

For $(Y,Y') \in \cD^{2\gamma}_{S,X,\back}([0,T],\HH_\alpha^d)$ an integral $\int_{t}^{T}S_{r-t}Y_rdX_r$ can be defined  and
$$\Big(\int_{\cdot}^{T}S_{r-\cdot}Y_rdX_r,\,Y\Big) \in \cD^{2\gamma}_{S,X,\back}([0,T],\HH_\alpha).$$
Moreover, results analogous to Theorem~\ref{integral}, Lemma~\ref{compos1}, and Theorem~\ref{stab_sol1} are true and their proofs are 
almost the same. The main difference is that the role of the initial condition $Y_0$ is now played by the terminal condition $Y_T$. 
We can now state a theorem regarding solutions to backwards equations of the type arising in~(\ref{e:bpde1}).
\begin{thm}[Nonlinear backwards RPDEs] \label{BRPDE}
	Let $\gamma \in (1/3,1/2]$ and $\XX = (X, \X) \in \cC^\gamma(\R_+,\R^d)$. Then, given $\xi \in \HH$, $F \in C^3_{-2\gamma,0}(\HH,\HH^d)$, and $N \in \Poly^{0,n}_{0,-\delta}(\HH)$ for some $n\geq1$ and  $1-\delta > \gamma$, there exists $\tau \geq 0$ and a unique element $(v,v') \in \cD^{2\gamma,2\gamma,\gamma}_{S,X,\back}((\tau,T],\HH_{-2\gamma})$ such that $v' = F(v)$ and
\begin{equ}
v_t = S_{T-t}\xi + \int_t^T S_{r-t}N(v_r)dr+ \int_t^T S_{r-t}F(v_r)dX_r\,,\quad v_T = \xi \in \HH.
\end{equ}
We call the pair $(v,F(v))$ the mild local solution to the backwards RPDE
$$dv_t = -Lv_tdt -N(v_t)dt - F(v_t)dX_t\quad  and\quad  v_T = \xi \in \HH.$$
\end{thm}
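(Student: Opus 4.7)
The plan is to mimic exactly the proofs of Theorems~\ref{RPDE1} and~\ref{RPDE2}, replacing the forward increment $\ddh$ by the backward increment $\ddc$ and reversing the direction of the Riemann sums in the sewing construction. The natural candidate for the solution map is
\begin{equ}
\MM(v,v')_t := \Big(S_{T-t}\xi + \int_t^T S_{r-t}N(v_r)dr + \int_t^T S_{r-t}F(v_r)dX_r,\, F(v_t)\Big),
\end{equ}
acting on the closed unit ball $B_\ell$, centred at $t \mapsto \big(S_{T-t}\xi + S_{T-t}F(\xi)X_{T,t},\,S_{T-t}F(\xi)\big)$, inside $\cD^{2\varepsilon,2\gamma,0}_{S,X,\back}([T-\ell,T],\HH_{-2\gamma})$ for some $1/3<\varepsilon<\gamma$ and a small backward horizon $\ell>0$ to be chosen. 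Here the terminal point $v_T$ plays the role that $v_0$ played in the forward theory, and the goal is to show that for $\ell$ small (depending on $\|\xi\|$, $\varrho_\gamma(\XX)$, $F$, $N$) the map $\MM$ is a contractive self-map of $B_\ell$.

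I would split $\MM(v,v') = U + W$ with $U_t = S_{T-t}\xi + \int_t^T S_{r-t}N(v_r)dr$ (taking $U'\equiv 0$) and $W_t = \int_t^T S_{r-t}F(v_r)dX_r$ (with $W' = F(v)$). For $U$, exactly the computation in the proof of Theorem~\ref{RPDE2} (choosing $\beta$ with $\beta\le\delta$ and $1+\beta-\delta-2\varepsilon>0$, which is possible since $1-\delta>\gamma>\varepsilon$) gives
\begin{equ}
\|S_{t-s}U_s - U_t\|_{\HH_{-2\gamma}} \lesssim \ell^{1+\beta-\delta-2\varepsilon}(1+\|v\|_{\infty,2\varepsilon-2\gamma})^n |t-s|^{2\varepsilon},
\end{equ}
together with the analogous $\check\CC^\gamma$ bound, so $U$ contributes a factor $\ell^\sigma$ for some $\sigma>0$. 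For $W$, the backward version of Lemma~\ref{compos1} gives control of $(F(v),DF(v)F(v))$ in $\cD^{2\varepsilon}_{S,X,\back}$, and the backward version of Theorem~\ref{integral} then bounds $\|(W,F(v))\|_{2\varepsilon,X,-2\gamma}$ and $\vv \MM(v)-S_{T-\cdot}F(\xi)X_{T,\cdot}\vv_{\varepsilon,2\varepsilon-2\gamma}$ by $C_{F,\xi}\ell^{\gamma-\varepsilon}$, exactly as in~\eqref{e:integral2}--\eqref{e:integral3}. Invariance of $B_\ell$ follows. Contractivity is obtained in the same way from the backward analogues of Lemmas~\ref{stab_int} and~\ref{stab_comp} plus the local Lipschitz property of $N$, so Banach's fixed point theorem yields a unique $(v,F(v)) \in \cD^{2\varepsilon,2\gamma,0}_{S,X,\back}([T-\ell,T],\HH_{-2\gamma})$ solving the mild equation.

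Extending the construction backwards by successively restarting from $v_{T-\ell}$ (provided it lies in $\HH$) produces a maximal solution on some interval $(\tau,T]$, with blow-up $\|v_t\|\to\infty$ as $t\downarrow\tau$ when $\tau>-\infty$. To upgrade the regularity to $\cD^{2\gamma,2\gamma,\gamma}_{S,X,\back}$ I would repeat verbatim the bootstrap from the end of the proof of Theorem~\ref{RPDE1}: writing
\begin{equ}
v_t = S_{T-t}\xi + S_{T-t}F(\xi)X_{T,t} + S_{T-t}DF(\xi)F(\xi)\X_{T,t} + R_{T,t},
\end{equ}
the $\beta=0$ case of the backward sewing estimate shows $v\in L^\infty((\tau,T],\HH)$, and then applying the backward analogue of~\eqref{e:integral2} with $\beta=2\gamma$ to the local remainder $R_{t,s}$ yields $v\in \check\CC^\gamma((\tau,T],\HH)$ and $F(v)\in L^\infty((\tau,T],\HH_{2\gamma-2\gamma}^d)\cap \check\CC^\gamma((\tau,T],\HH^d_{-2\gamma})$, which is precisely the space $\cD^{2\gamma,2\gamma,\gamma}_{S,X,\back}$.

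I do not expect any genuinely new obstacle: the machinery already developed is designed to transfer. The only real care is in rederiving the three-index algebraic identities for $\ddc$ (the analogues of $\ddh g_{t,u,s}=g_{t,s}-g_{t,u}-S_{t-u}g_{u,s}$ and of the three-point decomposition of $\ddh \Xi$ used in the sewing lemma) with the correct signs, and checking that the approximating Riemann sum in the backward sewing uses partitions $[u,v]$ with the semigroup factor $S_{v-t}$ replaced by $S_{u-t}$. Once this bookkeeping is fixed, every estimate is a verbatim transcription of the forward case.
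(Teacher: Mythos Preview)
Your proposal is correct and matches the paper's approach: the paper does not give a separate proof of Theorem~\ref{BRPDE} but simply states that the backward analogues of Theorem~\ref{integral}, Lemma~\ref{compos1}, and the stability results hold with the terminal condition $Y_T$ replacing $Y_0$, and that the solution theory then follows exactly as in Theorems~\ref{RPDE1} and~\ref{RPDE2}. Your sketch spells out precisely this transcription, including the bootstrap to $\cD^{2\gamma,2\gamma,\gamma}_{S,X,\back}$, and correctly flags the only point requiring care, namely the bookkeeping for $\ddc$ and the direction of the semigroup factors in the backward sewing.
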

For a weak solution approach to both forward and backward rough PDEs we refer the reader to~\cite{FrizWeakRPDE}.

One can show that all the continuity results of Section~\ref{sec:continuityRPDE} are true for backwards RPDEs. The same is true for
a smoothing result analogous to Proposition~\ref{smoothing}, except that smoothing now takes place away from the terminal point $v_T = \xi$. 
One can show that solutions to backwards RPDEs coincide with solutions to backwards SPDEs in the case of Brownian motion. From now on, 
we will assume that we are in the setting of Theorem~\ref{BRPDE} with choices of $L$, $N$, $F$ and $X$ such that one can choose $\tau = 0$,
so that solutions exist (and are unique) on the whole of $[0,T]$. The following proposition establishes a connection between the forward and backward controlled rough paths.
\begin{prop} \label{forward_backward1}
Let $\gamma \in (1/3,1/2]$ and $\XX \in \cC^\gamma([0,T],\R^d)$. Let $\alpha,\beta$ be such that $\alpha+\beta+2\gamma \geq 0$ and let $(V,V') \in \cD^{2\varepsilon,2\gamma,0}_{S,X}([0,T],\HH_{\alpha})$ and $(Z,Z') \in \cD^{2\varepsilon,2\gamma,0}_{S,X,\back}([0,T],\HH_\beta)$. Then, setting 
$$Y_t := \langle V_t,Z_t \rangle \quad  \text{and} \quad  Y'_t := \langle V'_t,Z_t \rangle + \langle V_t,Z'_t \rangle\,,$$ 
we have $(Y,Y') \in \cD^{2\varepsilon}_{X}([0,T],\R)$ with bound
\begin{equ} 
|(Y,Y')|_{X,2\varepsilon} \lesssim_T (1+|X|_\gamma)\| V,V'\| _{\cD^{2\varepsilon,2\gamma,0}_{S,X}}\| Z,Z'\| _{\cD^{2\varepsilon,2\gamma,0}_{S,X,\back}}\;. \label{e:forward_backward1}
\end{equ}
\end{prop}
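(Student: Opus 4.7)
The plan is to exploit the self-adjointness of $(S_t)_{t\ge 0}$ in order to fuse the forward expansion of $V$ around $s$ with the backward expansion of $Z$ around $t$ into a single classical controlled-rough-path expansion of $Y=\langle V,Z\rangle$. First I would write
\begin{equ}
\delta Y_{t,s} = \langle V_t - S_{t-s}V_s,\,Z_t\rangle + \langle V_s,\,S_{t-s}Z_t - Z_s\rangle = \langle \ddh V_{t,s},\,Z_t\rangle + \langle V_s,\,\ddc Z_{t,s}\rangle,
\end{equ}
where in the first step I use $\langle S_{t-s}V_s,Z_t\rangle = \langle V_s,S_{t-s}Z_t\rangle$. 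Substituting $\ddh V_{t,s} = S_{t-s}V'_s X_{t,s} + R^V_{t,s}$ and $\ddc Z_{t,s} = S_{t-s}Z'_t X_{t,s} + R^Z_{t,s}$, and then unfolding $S_{t-s}Z_t = Z_s + \ddc Z_{t,s}$ and $S_{t-s}Z'_t = Z'_s + \ddc Z'_{t,s}$ under the pairings, one reads off the claimed Gubinelli derivative $Y'_s = \langle V'_s,Z_s\rangle + \langle V_s,Z'_s\rangle$ together with the four-term remainder
\begin{equ}
R^Y_{t,s} = \langle V'_s,\ddc Z_{t,s}\rangle X_{t,s} + \langle R^V_{t,s},Z_t\rangle + \langle V_s,\ddc Z'_{t,s}\rangle X_{t,s} + \langle V_s,R^Z_{t,s}\rangle.
\end{equ}

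The second step is to bound each of these four pieces by $|t-s|^{2\varepsilon}$. This reduces to checking that the pairings make sense as continuous extensions $\HH_a\times\HH_{-a}\to\R$ and then reading off the resulting H\"older rate. The term $\langle R^V_{t,s},Z_t\rangle$ pairs $\HH_\alpha$ (with rate $|t-s|^{2\varepsilon}$ via $|R^V|_{2\varepsilon,\alpha}$) with $\HH_{\beta+2\gamma}$, and $\langle V_s,R^Z_{t,s}\rangle$ pairs $\HH_{\alpha+2\gamma}$ with $\HH_\beta$ (rate $|t-s|^{2\varepsilon}$ via $|R^Z|_{2\varepsilon,\beta}$); both pairings are exactly justified by the hypothesis $\alpha+\beta+2\gamma\ge 0$. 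For the two $\ddc Z$-type pieces, the backward expansion combined with the continuous inclusion $\HH_{\beta+2\gamma}\hookrightarrow\HH_\beta$ gives bounds of the form $|\ddc Z_{t,s}|_{\HH_\beta}\lesssim (1+|X|_\gamma)\|Z,Z'\|_{\cD^{2\varepsilon,2\gamma,0}_{S,X,\back}}|t-s|^{\varepsilon}$, and similarly for $\ddc Z'_{t,s}$; multiplying by $X_{t,s}\sim |X|_\gamma|t-s|^\gamma$ and using $\varepsilon\le\gamma$ yields a contribution of order $|t-s|^{\varepsilon+\gamma}\le T^{\gamma-\varepsilon}|t-s|^{2\varepsilon}$.

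The third step is to bound the $\varepsilon$-H\"older seminorm of $Y'$. I would expand
\begin{equ}
\delta Y'_{t,s} = \langle \delta V'_{t,s},Z_t\rangle + \langle V'_s,\delta Z_{t,s}\rangle + \langle \delta V_{t,s},Z'_t\rangle + \langle V_s,\delta Z'_{t,s}\rangle,
\end{equ}
and trade the reduced $\ddh$-H\"older control of $V,V'$ against ordinary $\delta$-H\"older control using the commutator bound $\|\tilde S_t u\|_{\HH_\alpha}\lesssim t^{2\gamma}\|u\|_{\HH_{\alpha+2\gamma}}$ from \eqref{semigroup}, and analogously for $Z,Z'$ via $\ddc$; the $L^\infty\HH_{\cdot+2\gamma}$ components built into the $\cD^{2\varepsilon,2\gamma,0}_{S,X}$ and $\cD^{2\varepsilon,2\gamma,0}_{S,X,\back}$ norms are precisely what is needed to absorb the $\tilde S$-errors. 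The same condition $\alpha+\beta+2\gamma\ge 0$ handles every pairing, and collecting the prefactors of $(1+|X|_\gamma)$ together with the component norms of $V,V',Z,Z'$ yields \eqref{e:forward_backward1}.

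The main obstacle is not any single estimate but the systematic bookkeeping: the forward and backward remainders naturally live in $\HH_\alpha$ and $\HH_\beta$, while the paths $V,V',Z,Z'$ themselves live in $\HH_{\alpha+2\gamma}$ and $\HH_{\beta+2\gamma}$, so the eight pairings that appear (four in $R^Y$ and four in $\delta Y'$) are genuinely asymmetric between the two arguments. The key observation making the proof work is that in each of these eight pairings the two factors saturate the single duality condition $\alpha+\beta+2\gamma\ge 0$; once this is noticed, the rest is essentially a careful application of \eqref{semigroup} and of the defining controlled-rough-path estimates.
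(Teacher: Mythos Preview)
Your proof is correct and follows essentially the same route as the paper's. The paper carries out the identical self-adjointness trick to split $\delta Y_{t,s}$ into $\langle \ddh V_{t,s},Z_t\rangle + \langle V_s,\ddc Z_{t,s}\rangle$, arrives at the same four-term remainder (written there with $S_{t-s}Z_t - Z_s$ and $S_{t-s}Z'_t - Z'_s$ in place of your $\ddc Z_{t,s}$ and $\ddc Z'_{t,s}$), and then notes that the bound \eqref{e:forward_backward1} is a direct consequence, singling out exactly the two pairings $\langle R^V_{t,s},Z_t\rangle$ and $\langle V_s,R^Z_{t,s}\rangle$ as the ones forcing the hypothesis $\alpha+\beta+2\gamma\ge 0$.
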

\begin{proof}
	The proof is a straightforward computation where we use the fact that $S_t$ is a selfadjoint operator on $\HH$ for any time $t \geq 0$.
	By the definition of controlled rough path
	\begin{equs}
		\langle V_t,Z_t \rangle - \langle V_s,Z_s \rangle &= \langle V_t,Z_t \rangle -\langle S_{t-s}V_s,Z_t \rangle +\langle V_s,S_{t-s}Z_t \rangle - \langle V_s,Z_s \rangle \\
		&= (\langle S_{t-s}V'_s,Z_t \rangle + \langle V_s,S_{t-s}Z_t' \rangle) X_{t,s} + \langle R^V_{t,s},Z_t \rangle + \langle V_s,R^Z_{t,s} \rangle.
	\end{equs}
	Now 
	$$\langle S_{t-s}V'_s,Z_t \rangle + \langle V_s,S_{t-s}Z_t' \rangle = Y'_s + \langle V'_s,S_{t-s}Z_t - Z_s\rangle + \langle V_s,S_{t-s}Z_t' - Z_s' \rangle.$$
	We can therefore write 
	$$R^Y_{t,s} = \langle R^V_{t,s},Z_t \rangle + \langle V_s,R^Z_{t,s} \rangle + \big(\langle V'_s,S_{t-s}Z_t - Z_s\rangle + \langle V_s,S_{t-s}Z'_t - Z_s' \rangle\big) X_{t,s}.$$
	The bound~(\ref{e:forward_backward1}) is then an easy consequence of decomposition above. The requirement on exponents $\alpha$ and $\beta$ is necessary since we want to bound terms like:
	$$|\langle R^V_{t,s},Z_t \rangle|\leq \|R^V_{t,s}\|_{\HH_\alpha}\|Z_t\|_{\HH_{\beta+2\gamma}}\quad \text{and}\quad |\langle V_s,R^Z_{t,s} \rangle|\leq \|V_s\|_{\HH_{\alpha+2\gamma}}\|R^Z_{t,s}\|_{\HH_{\beta}}.$$
	Here we need $\alpha+\beta+2\gamma\geq 0$ so that we can use the Cauchy-Schwarz inequality. 
\end{proof}

We just showed that the inner product of a forward controlled rough path with a backward controlled rough path is a controlled rough path in 
the usual sense. Assuming that these controlled rough paths solve respectively some RPDE and backwards RPDE in the mild sense, we can ask 
ourselves whether their inner product also satisfies an integral equation. It turns out that this is true and this inner product in fact solves 
an RDE:
\begin{prop} \label{forward_backward2}
	Let $1/3 < \varepsilon < \gamma < 1/2$ and $\XX \in \cC^\gamma([0,T],\R^d)$. Let $\delta \leq 1$ and simultaneously $\alpha+\beta+4\gamma-\delta \geq 0$ and $\alpha+\beta+2\gamma\geq 0$. Let $V \in \cD^{2\varepsilon,2\gamma,0}_{S,X}([0,T],\HH_{\alpha})$ and $Z \in \cD^{2\varepsilon,2\gamma,0}_{S,X,\back}([0,T],\HH_{\beta})$ be such that they satisfy these mild forward and backward equations on $[0,T]$:
	\begin{equs}
		V_t &= S_tV_0 + \int_0^t S_{t-r}N_rdr+ \int_0^t S_{t-r}F_rdX_r,\\
		Z_t &= S_{T-t}Z_T + \int_t^T S_{r-t}\tilde{N}_rdr+ \int_t^T S_{r-t}\tilde{F}_rdX_r,
	\end{equs}
	for some $F \in \cD^{2\varepsilon,2\gamma,0}_{S,X}([0,T],\HH^d_{\alpha})$, $\tilde{F} \in \cD^{2\varepsilon,2\gamma,0}_{S,X,\back}([0,T],\HH^d_{\beta})$ and functions $N\in L^\infty([0,T],\HH_{\alpha+2\gamma-\delta})$, $\tilde{N} \in L^\infty([0,T],\HH_{\beta+2\gamma-\delta})$.
	
	Then $Y_t := \langle V_t,Z_t \rangle \in \cD^{2\varepsilon}_{X}([0,T], \R)$ is a controlled rough path that satisfies the following integral formula: 
	\begin{equs} 
		Y_t = \langle V_0,Z_0 \rangle + \int_0^t(\langle N_s,Z_s \rangle - \langle V_s,\tilde{N}_s \rangle)ds + \int_0^t(\langle F_s,Z_s \rangle - \langle V_s,\tilde{F}_s \rangle) dX_s\\
		+ 2\int_0^t \langle F_s, \tilde{F}_s \rangle \cdot df_s\;,\quad \label{e:forward_backward2}
	\end{equs}
where the function $f \in C^{2\gamma}([0,T],\R^{d \times d})$ is the one appearing in the decomposition~(\ref{e:decomp}) 
of the rough path $\XX$. 
	In particular, if $\XX$ is geometric and $\tilde{N},\tilde{F}$ are the adjoints of $N$ and $F$, then $Y_t$ is constant in time: $$Y_t = \langle V_0,Z_0 \rangle  = \langle Y_T,Z_T \rangle \quad \text{for  every $ t \in [0,T]$.}$$
\end{prop}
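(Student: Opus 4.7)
The plan is to prove \eqref{e:forward_backward2} by smooth approximation, reducing it to a classical integration-by-parts identity. Using Remark~\ref{approx1d} (and its backward analogue) together with Proposition~\ref{space_equiv1}, I would approximate $F \in \cD^{2\varepsilon,2\gamma,0}_{S,X}$ and $\tilde F \in \cD^{2\varepsilon,2\gamma,0}_{S,X,\back}$ by sequences $F^n, \tilde F^n$ controlled by a smooth path $X^n$, with matching convergence of the Gubinelli derivatives $(F^n)' \to F'$ and $(\tilde F^n)' \to \tilde F'$. Simultaneously, I would approximate $\XX$ by $\XX^n := \XX^c(X^n) + (0, \delta f^n)$ with smooth $f^n\to f$ in $C^{2\gamma}$, and regularise $N, \tilde N, V_0, Z_T$ in the natural spaces. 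Let $V^n, Z^n$ denote the mild solutions of the forward and backward equations with this data; by Theorem~\ref{stab_sol1} and its backward analogue, $V^n \to V$ and $Z^n \to Z$ in the required topologies.

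For smooth $X^n$ and $f^n$, the decomposition \eqref{e:decomp_int} rewrites each rough integral in the mild equations as a Riemann integral against $X^n$ plus a Young integral against $f^n$, so that $V^n$ and $Z^n$ satisfy the classical strong equations
\begin{equ}
\dot V^n_s = LV^n_s + N^n_s + F^n_s\dot X^n_s + (F^n)'_s\cdot\dot f^n_s,\quad \dot Z^n_s = -LZ^n_s - \tilde N^n_s - \tilde F^n_s\dot X^n_s - (\tilde F^n)'_s\cdot\dot f^n_s.
\end{equ}
Setting $G^n_s := \langle F^n_s, Z^n_s\rangle - \langle V^n_s, \tilde F^n_s\rangle$ and using selfadjointness of $L$, the ordinary product rule and integration in $s$ give
\begin{equs}
\langle V^n_t, Z^n_t\rangle - \langle V^n_0, Z^n_0\rangle &= \int_0^t(\langle N^n_s, Z^n_s\rangle - \langle V^n_s, \tilde N^n_s\rangle)\,ds + \int_0^t G^n_s\,dX^n_s \\
&\quad + \int_0^t \bigl(\langle (F^n)'_s, Z^n_s\rangle - \langle V^n_s, (\tilde F^n)'_s\rangle\bigr)\cdot df^n_s.
\end{equs}

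To identify this with the approximated RHS of \eqref{e:forward_backward2}, I would apply Proposition~\ref{forward_backward1} with $V' = F$ and $Z' = -\tilde F$ to compute the Gubinelli derivative
\begin{equ}
G'_s = \langle F'_s, Z_s\rangle - 2\langle F_s, \tilde F_s\rangle - \langle V_s, \tilde F'_s\rangle,
\end{equ}
and then use \eqref{e:decomp_int} on $\int_0^t G^n_s\,d\XX^n_s$. Direct algebraic rearrangement gives
\begin{equ}
\int_0^t G^n_s\,d\XX^n_s + 2\int_0^t \langle F^n_s, \tilde F^n_s\rangle\cdot df^n_s = \int_0^t G^n_s\,dX^n_s + \int_0^t\bigl(\langle (F^n)'_s, Z^n_s\rangle - \langle V^n_s, (\tilde F^n)'_s\rangle\bigr)\cdot df^n_s,
\end{equ}
so the classical identity above is exactly the approximation of \eqref{e:forward_backward2}.

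Passing to the limit is then routine: Lemma~\ref{stab_int} handles the rough integrals and the standard continuity of Young integration handles the $df^n$ terms and the Riemann integrals, giving \eqref{e:forward_backward2}. The final sentence of the proposition is immediate: for geometric $\XX$ one has $f = 0$, and if $\tilde N, \tilde F$ are the adjoints of $N, F$ then $\langle N_s, Z_s\rangle = \langle V_s, \tilde N_s\rangle$ and $\langle F_s, Z_s\rangle = \langle V_s, \tilde F_s\rangle$, so every integrand vanishes and $Y_t = Y_0$. The main technical hurdle is the joint smooth approximation of $(F, F')$, $(\tilde F, \tilde F')$, and $\XX$ in topologies strong enough that all rough, Young, and Riemann integrals converge simultaneously; adapting Remark~\ref{approx1d}'s paraproduct construction to the semigroup-controlled spaces and to the backward setting is where the bookkeeping concentrates.
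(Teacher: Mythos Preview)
Your approach works and is genuinely different from the paper's. The paper does not approximate globally; instead it manipulates the mild formulas directly. Using self-adjointness of $S$ it writes
\[
Y_t - Y_0 = \Bigl\langle \int_0^t S_{t-r}F_r\,dX_r,\, Z_t\Bigr\rangle - \Bigl\langle V_0,\, \int_0^t S_r\tilde F_r\,dX_r\Bigr\rangle,
\]
then splits each term to isolate $\int_0^t G_s\,dX_s$ and collects the remainder $R_t$ as a difference of \emph{double} rough integrals of the jointly controlled process $W_{r,s} = \langle F_s, S_{r-s}\tilde F_r\rangle$. The rough Fubini theorem (Theorem~\ref{fubini2}), with its $\int\cdot\,df$ correction for non-geometric $\XX$, then identifies $R_t = 2\int_0^t\langle F_s,\tilde F_s\rangle\cdot df_s$. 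In other words, the paper's proof is precisely the application for which Section~\ref{fubini} was built; your route bypasses that machinery entirely, trading it for a heavier global approximation argument. Both are valid; yours is arguably more elementary, while the paper's is more modular and keeps the approximation confined to the proof of Theorem~\ref{fubini2} itself.

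Two minor corrections to your write-up. First, Theorem~\ref{stab_sol1} concerns nonlinear fixed-point RPDEs; for the linear integral expressions defining $V$ and $Z$ you want Lemma~\ref{stab_int} (and its backward analogue) directly. Second, for the strong equations $\dot V^n = LV^n + \ldots$ to hold pointwise you need $V^n_s \in \mathrm{Dom}(L)$ for all $s$, which time-regularisation alone (Remark~\ref{approx1d}) does not provide; you must also spatially regularise $V_0^n, Z_T^n, F^n, \tilde F^n, N^n, \tilde N^n$ (e.g.\ by applying $S_{1/n}$) so that everything takes values in $\HH_\infty$. This is routine but should be stated.
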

\begin{proof}
	Note that the assumptions on $\alpha,\beta$ and $\delta$ are necessary for all the integrals in~(\ref{e:forward_backward2}) to make sense. (If $\delta = 1$ we only need the assumption $\alpha+\beta+4\gamma-\delta \geq 0$). We  will assume that $N= \tilde{N}= 0$ since the 
	drift term $dt$ is even simpler to treat than the $dX_t$ term. (Note though that Theorem~\ref{fubini3}  needs to be used at some point to swap the order of integration in integrals of the type $\int_0^t\int_s^t\langle N_r,S_{r-s}\tilde{F}_s \rangle \,dr\,dX_s$).
	Using the mild equations for $V_t$ and $Z_t$ we get: 
	\begin{equs}
		Y_t - Y_0 &= \langle V_t - S_tV_0,Z_t \rangle + \langle V_0,S_tZ_t - Z_0\rangle \\ 
		&=\Big\langle \int_0^t S_{t-r}F_rdX_r ,Z_t \Big\rangle - \Big\langle V_0,\int_0^t S_{r}\tilde{F}_rdX_r\Big\rangle. 
	\end{equs}
	We can move the inner products inside the integration by examining the proof of the Sewing Lemma, Theorem~\ref{sewing}, 
	and ideas similar to Proposition~\ref{space_equiv2}. Since $S$ is selfadjoint, we get:
	$$Y_t - Y_0 = \int_0^t(\langle F_r,Z_r \rangle - \langle V_r,\tilde{F}_r \rangle) dX_r +R_t.$$
	We will show that $R_t = 2\int_0^t \langle F_s, \tilde{F}_s \rangle \cdot df_s$ for every $t$, which then implies the result. One has the identity
	\begin{equs}
		R_t &= \int_0^t \langle F_r,S_{t-r}Z_t - Z_r \rangle dX_r+ \int_0^t\langle V_r - S_rV_0,\tilde{F}_r \rangle dX_r\\ 
		&= - \int_0^t \Big\langle F_r, \int_r^t S_{s-r}\tilde{F}_s dX_s \Big\rangle dX_r+ \int_0^t\Big\langle \int_0^r S_{r-s}F_sdX_s,\tilde{F}_r \Big\rangle dX_r \\
		&=- \int_0^t  \int_r^t \langle F_r, S_{s-r}\tilde{F}_s \rangle dX_s dX_r + \int_0^t \int_0^r \langle S_{r-s}F_s,\tilde{F}_r \rangle dX_s dX_r. 
	\end{equs}
	Setting $W_{r,s} = \langle  F_s,S_{r-s}\tilde{F}_r \rangle$, we would like to show that one can apply our version of Fubini's theorem, Theorem~\ref{fubini2}. We are almost in the situation of Theorem~\ref{fubini2}: the only difference is that $W_{r,s}$ is defined only for $r \geq s$ because of the presence of the semigroup $S_{r-s}$. But if one examines the proof of Theorem~\ref{fubini2}, one can see that 
	we can always require that $r\geq s$ in our computations. Here we have
	$$W^1_{r,s} = \langle F'_s, S_{r-s}\tilde{F}_r \rangle;\quad  W^2_{r,s} = \langle S_{r-s}F_s, \tilde{F}'_r \rangle;\quad  W^{1,2}_{r,s} = \langle S_{r-s}F'_s, \tilde{F}'_r \rangle.$$
	The remainders $R^1,R^2,R^{1,2},R^{2,1}$ are also easy to determine. 
	Since by Remark~\ref{approx1d} both $F$ and $\tilde{F}$ admit a smooth approximation then so does $W_{r,s}$ in the sense of Definition~\ref{smoothapprox}.
	Thus we can indeed swap the integrals for $W_{r,s}$ like in Theorem~\ref{fubini2}, deducing:
	\begin{equs}
		\int_0^t  \int_r^t \langle F_r, S_{s-r}\tilde{F}_s \rangle dX_s dX_r &+ \int_0^t\langle F_s, \tilde{F}_s \rangle \cdot df_s =\\
		&= \int_0^t \int_0^r \langle S_{r-s}F_s,\tilde{F}_r \rangle dX_s dX_r - \int_0^t\langle F_s, \tilde{F}_s \rangle \cdot df_s.
	\end{equs}
	We conclude that $R_t = 2\int_0^t \langle F_s, \tilde{F}_s \rangle \cdot df_s$ and hence we are done. 
\end{proof}

\subsection{Adjoint of the Jacobian}

From now on for simplicity we denote the Jacobian of the solution to the~(\ref{e:rpde2}) by $J_{t,s}$, omitting the reference to the noise $X$. In the later results we would like to use the adjoint of the Jacobian of the solution $J^*_{t,s}$. For instance, this appears in the expression for the Malliavin matrix $\langle\cM_t\varphi,\varphi\rangle = \int_0^t \langle F(u_s), J^*_{t,s}\varphi\rangle ds$. It would then
be useful to know that $J^*_{t,s}$ also solves an RPDE. Unfortunately, having a mild formulation for $J_{t,s}$ is not enough to deduce a 
mild formulation for $J^*_{t,s}$. Therefore we go the other way around: we `guess' an equation for $J^*_{t,s}$ and then show that the 
solution to this equation is indeed the adjoint of the Jacobian. In fact it will be more convenient to work with the backwards equation for the adjoint. This is because Proposition~\ref{forward_backward2} then gives us an explicit expression for $\scal{J_{r,s}\varphi , J^*_{t,r} \psi}$ for any $\psi,\varphi \in \HH$. A natural guess is to take a backwards analogue of~\eqref{e:jacobian} where we formally take adjoints of the linear maps $DN$ and $DF$, so that our ansatz for $J^*_{t,s}$ is:
\begin{equ}
	d J^*_{t,s}  = - L J^*_{t,s} ds - DN^*(u_s)J^*_{t,s} ds - DF^*(u_s) J^*_{t,s} dX_s,\qquad J^*_{t,t} = \id\;.
\end{equ}
The next proposition shows that this guess is indeed correct.
\begin{prop} \label{adj_jac1}
	Let $\XX \in \cC_g^\gamma([0,T], \mathbb{R}^{d \times d})$ be a geometric rough path. Let $(u,F(u)) \in \DD^{2\gamma}_X([0,T],\HH)$ be 
	the solution to~(\ref{e:rpde2}) with $F$ and $N$ as in Proposition~\ref{duhamel}. For every $t \in [0,T]$ and every $\varphi \in \HH$, 
	let $(K_{t,\cdot},K'_{t,\cdot}):=(K_{t,\cdot},DF^*(u_\cdot)K_{t,\cdot}) \in \cD^{2\gamma,2\gamma,\gamma}_{S,X,\back}([0,t],\HH_{-2\gamma})$ be the solution to the 
	backwards equation
	\begin{equ} 
		K_{t,s}\varphi = S_{t-s}\varphi  + \int^t_sS_{r-s}DN^*(u_r)K_{t,r}\varphi dr + \int^t_sS_{r-s}DF^*(u_r)K_{t,r}\varphi dX_r. \label{e:adj_jac1}
	\end{equ}
Then $K$ is the adjoint of the Jacobian: $K_{t,s} = J^*_{t,s}$ for all $0\leq s\leq t \leq T$.
\end{prop}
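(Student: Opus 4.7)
The plan is to use Proposition~\ref{forward_backward2} (the forward--backward pairing formula) applied to a forward controlled rough path built from the Jacobian and a backward controlled rough path built from $K$. Fix $s \le t$ and arbitrary $\varphi, \psi \in \HH$, and on the interval $[s,t]$ set
\begin{equ}
V_r := J_{r,s}\varphi\;,\qquad Z_r := K_{t,r}\psi\;.
\end{equ}
By Proposition~\ref{duhamel}, $V \in \DD^{2\gamma}_X([s,t],\HH) = \cD^{2\gamma,2\gamma,\gamma}_{S,X}([s,t],\HH_{-2\gamma})$ and satisfies the forward mild equation with $F_r = DF(u_r)J_{r,s}\varphi$ and $N_r = DN(u_r)J_{r,s}\varphi$, while by hypothesis $Z \in \cD^{2\gamma,2\gamma,\gamma}_{S,X,\back}([s,t],\HH_{-2\gamma})$ and satisfies the backward mild equation~\eqref{e:adj_jac1} with $\tilde F_r = DF^*(u_r)K_{t,r}\psi$ and $\tilde N_r = DN^*(u_r)K_{t,r}\psi$. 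Thus the regularity and polynomial-type assumptions of Proposition~\ref{forward_backward2} are met (possibly after shrinking $\varepsilon<\gamma$ via Proposition~\ref{space_equiv1}).

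By the crucial assumption that $\XX$ is geometric, the decomposition~\eqref{e:decomp} gives $f=0$, so the $df$-correction in~\eqref{e:forward_backward2} drops out and one obtains
\begin{equ}
\langle V_r, Z_r\rangle = \langle V_s, Z_s\rangle + \int_s^r\!\bigl(\langle N_q, Z_q\rangle - \langle V_q, \tilde N_q\rangle\bigr)\,dq + \int_s^r\!\bigl(\langle F_q, Z_q\rangle - \langle V_q, \tilde F_q\rangle\bigr)\,dX_q\;.
\end{equ}
The definitions of $\tilde F_q$ and $\tilde N_q$ as pointwise adjoints now force every integrand to vanish identically: for example $\langle F_q, Z_q\rangle = \langle DF(u_q)V_q, Z_q\rangle = \langle V_q, DF^*(u_q)Z_q\rangle = \langle V_q, \tilde F_q\rangle$, and the same computation applies to $N$ versus $\tilde N$. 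Hence $r \mapsto \langle V_r, Z_r\rangle$ is constant on $[s,t]$.

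Evaluating at the endpoints, using $J_{s,s} = \id$ and $K_{t,t} = \id$, we obtain
\begin{equ}
\langle \varphi, K_{t,s}\psi\rangle = \langle V_s, Z_s\rangle = \langle V_t, Z_t\rangle = \langle J_{t,s}\varphi, \psi\rangle = \langle \varphi, J^*_{t,s}\psi\rangle\;.
\end{equ}
Since $\varphi, \psi \in \HH$ were arbitrary, this yields $K_{t,s} = J^*_{t,s}$ as desired.

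The main obstacle will be verifying the technical hypotheses needed to apply Proposition~\ref{forward_backward2}: one must check that the pairs $(V, F(\cdot)V)$ and $(Z, F^*(\cdot)Z)$ genuinely fit into the spaces $\cD^{2\varepsilon,2\gamma,0}_{S,X}$ and $\cD^{2\varepsilon,2\gamma,0}_{S,X,\back}$ respectively, that the duality pairings make sense at the level of regularity indices (in particular the constraints $\alpha+\beta+2\gamma \geq 0$ and $\alpha+\beta+4\gamma-\delta\geq 0$), and that the adjoints $DF^*(u_r)$ and $DN^*(u_r)$ inherit sufficient regularity from $F\in C^\infty_{-2\gamma,0}$ and $N\in\Poly^{\infty,n}_{0,-\delta}$ to serve as coefficients in the backward equation~\eqref{e:adj_jac1}. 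Once these bookkeeping checks are in place, the proof reduces to the single algebraic observation that pointwise adjointness makes the drift and rough-integral contributions in~\eqref{e:forward_backward2} cancel.
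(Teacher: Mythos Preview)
Your overall strategy is exactly the paper's: pair $V_r=J_{r,s}\varphi$ with $Z_r=K_{t,r}\psi$, apply Proposition~\ref{forward_backward2}, and use adjointness to see that all integrands cancel so that $\langle V_r,Z_r\rangle$ is constant. However, the step you flag as ``bookkeeping'' is not mere bookkeeping and in fact fails as written.

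Both $V$ and $Z$ live, as controlled rough paths, in $\cD^{2\gamma,2\gamma,\gamma}_{S,X}([s,t],\HH_{-2\gamma})$ (forward and backward respectively), so in the notation of Proposition~\ref{forward_backward2} one has $\alpha=\beta=-2\gamma$. The hypothesis $\alpha+\beta+2\gamma\ge 0$ then reads $-2\gamma\ge 0$, which is false. Shrinking the H\"older exponent $\varepsilon<\gamma$ via Proposition~\ref{space_equiv1} does nothing for this spatial constraint. The paper resolves this by invoking the smoothing property (Proposition~\ref{smoothing} and its backward analogue): for any $\varepsilon>0$, $J_{r,s}\varphi$ lies in arbitrarily high $\HH_\alpha$ for $r\ge s+\varepsilon$, and $K_{t,r}\psi$ lies in arbitrarily high $\HH_\beta$ for $r\le t-\varepsilon$. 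One therefore applies Proposition~\ref{forward_backward2} only on $[s+\varepsilon,t-\varepsilon]$, obtains $\langle J_{t-\varepsilon,s}\varphi,K_{t,t-\varepsilon}\psi\rangle=\langle J_{s+\varepsilon,s}\varphi,K_{t,s+\varepsilon}\psi\rangle$, and then sends $\varepsilon\to 0$ using the continuity of $r\mapsto J_{r,s}\varphi$ and $r\mapsto K_{t,r}\psi$ in $\HH$ (which follows from their mild representations). This limiting argument is the missing ingredient in your proof.
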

\begin{proof}
	We want to show that $\langle J_{t,s}\varphi,\psi\rangle = \langle \varphi,K_{t,s}\psi\rangle $ for all $\varphi,\psi \in \HH$. Set $Y_r = \langle J_{r,s}\varphi,K_{t,r}\psi\rangle$ and note that thanks to the smoothing property of the solutions, Proposition~\ref{smoothing}, 
	the regularity assumptions of Proposition~\ref{forward_backward2} are satisfied for $(Y_r,Y'_r) \in \cD^{2\gamma}_X([s+\varepsilon,t-\varepsilon],\R)$ for all $\varepsilon > 0$. Moreover, since $\XX$ is geometric, $Y_t$ satisfies the equation
	\begin{equs}
		Y_r &= Y_{s+\varepsilon} + \int_{s+\varepsilon}^r (\langle DN(u_v)J_{v,s},K_{t,v} \rangle - \langle J_{v,s},DN^*(u_v)K_{t,v} \rangle)dv \\
		&\quad+ \int_{s+\varepsilon}^r(\langle DF(u_v)J_{v,s},K_{t,v} \rangle - \langle J_{v,s},DF^*(u_v)K_{t,v} \rangle)dX_v = Y_{s+\varepsilon}.
	\end{equs}
	Since the terms inside the integrals cancel each other, we have $Y_{t-\varepsilon} = Y_{s+\varepsilon}$, i.e.
	$$\langle J_{t-\varepsilon,s}\varphi,K_{t,t-\varepsilon}\psi\rangle = \langle J_{s+\varepsilon,s}\varphi,K_{t,s+\varepsilon}\psi\rangle.$$
	But from the mild representation of $K_{t,r}$ and $J_{r,s}$, we see that both of these lie in the space $\CC([s,t],\HH)$ as functions of the $r$ variable. We can therefore take the limit of the above expression as $\varepsilon$ goes to zero to obtain:
	$$\langle J_{t,s}\varphi,K_{t,t}\psi\rangle = \langle J_{s,s}\varphi,K_{t,s}\psi\rangle.$$
	Recalling that $J_{s,s}\varphi = \varphi $ and $K_{t,t}\psi = \psi$, we get the desired result. 
\end{proof}
\begin{prop} \label{adj_jac2}
	Let $\XX \in \cC_g^\gamma([0,T],\R^d)$ and $\gamma \in (1/3,1/2]$. Let $(u,F(u)) \in \DD^{2\gamma}_X([0,T],\HH)$ be the solution to~(\ref{e:rpde2}) with $F$ and $N$ as in Proposition~\ref{duhamel}. Let $K_{t,s}$ be the adjoint of the Jacobian. Let $\nu \geq 0$ and a function $A \in C^2_{0,-\nu}(\HH)$. Fix $0\leq t \leq T$, $\varphi \in \HH$ and set $Z^\varphi_A(r) := \langle A(u_r),K_{t,r}\varphi \rangle$. Then $Z^\varphi_A \in \cD^{2\gamma}_X([s,t],\R)$ for every $0<s\leq t$ and solves the RDE
	$$dZ^\varphi_A(r) = Z^\varphi_{[L+N,A]}(r)dr + Z^\varphi_{[F,A]}(r)dX_r,$$
	which in its integral form reads for all $r \in [s,t]$:
	\begin{equ} 
	Z^\varphi_A(r) = Z^\varphi_A(s) + \int^r_sZ^\varphi_{[L+N,A]}(v)dv + \int^r_sZ^\varphi_{[F,A]}(v)dX_v. \label{e:adj_jac2}
	\end{equ}
\end{prop}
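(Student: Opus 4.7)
The plan is to apply the mild It\^o formula (Theorem~\ref{ito2}) to derive a mild equation for $A(u_r)$ on $[s,t]$, and then to combine this with the backwards mild equation~(\ref{e:adj_jac1}) satisfied by $K_{t,\cdot}\varphi$ via Proposition~\ref{forward_backward2}. Since $\XX$ is geometric, both $[\XX]$ and the function $f$ from the decomposition~(\ref{e:decomp}) vanish, so that neither the It\^o correction in Theorem~\ref{ito2} nor the $df$ correction in Proposition~\ref{forward_backward2} appears. What survives will be precisely the Lie bracket combinations $[L+N,A]$ and $[F,A]$ demanded by~(\ref{e:adj_jac2}).

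The first step is to exploit the smoothing property, Proposition~\ref{smoothing}: because $s>0$ one has $u_s \in \HH_\sigma$ for $\sigma$ arbitrarily large, so that restarting the RPDE at time $s$ yields $(u,F(u)) \in \DD^{2\gamma}_X([s,t],\HH_\sigma)$. Applying Theorem~\ref{ito2} to $A$ on the interval $[s,t]$, with $\sigma$ in the role of $\alpha$ there and $[\XX]=0$, produces
\begin{equ}
A(u_r) = S_{r-s}A(u_s) + \int_s^r S_{r-v}\bigl(DA(u_v)N(u_v) + [L,A](u_v)\bigr)dv + \int_s^r S_{r-v}DA(u_v)F(u_v)\,dX_v\;,
\end{equ}
together with the assertion that $(A(u),DA(u)F(u))$ belongs to $\cD^{2\gamma,2\gamma,0}_{S,X}([s,t],\HH_{\sigma-2\gamma-\nu})$.

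The second step is to invoke Proposition~\ref{forward_backward2} with $V_r = A(u_r)$ and $Z_r = K_{t,r}\varphi$, reading off the forward drift and diffusion from the above mild formula and the backward drift and diffusion from~(\ref{e:adj_jac1}). By taking $\sigma$ large enough, the constraints $\alpha+\beta+2\gamma \geq 0$ and $\alpha+\beta+4\gamma-\delta\geq 0$ required in that proposition are easily met. Geometricity kills the $df$ integral and leaves
\begin{equs}
Z^\varphi_A(r) - Z^\varphi_A(s) &= \int_s^r\bigl(\langle DA(u_v)N(u_v)+[L,A](u_v), K_{t,v}\varphi\rangle - \langle A(u_v), DN^*(u_v)K_{t,v}\varphi\rangle\bigr)dv\\
&\quad + \int_s^r\bigl(\langle DA(u_v)F(u_v), K_{t,v}\varphi\rangle - \langle A(u_v), DF^*(u_v)K_{t,v}\varphi\rangle\bigr)dX_v\;.
\end{equs}

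The remaining step is purely algebraic: the adjoint relations $\langle DN(u)A(u), K\rangle = \langle A(u), DN^*(u)K\rangle$ and $\langle DF(u)A(u), K\rangle = \langle A(u), DF^*(u)K\rangle$ (the latter understood componentwise) collapse the integrands to $\langle [L+N,A](u_v),K_{t,v}\varphi\rangle = Z^\varphi_{[L+N,A]}(v)$ and $\langle [F,A](u_v),K_{t,v}\varphi\rangle = Z^\varphi_{[F,A]}(v)$ respectively, yielding~(\ref{e:adj_jac2}). I expect the main obstacle to be not this final simplification but the bookkeeping of regularity exponents: one must verify that by exploiting smoothing the controlled rough path $A(u)$ can be placed in a space dual to the one naturally hosting $K_{t,\cdot}\varphi$, so that Proposition~\ref{forward_backward2} actually applies. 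This is where the assumption $s>0$ is crucial.
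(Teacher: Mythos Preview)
Your proposal is correct and follows essentially the same route as the paper: apply the mild It\^o formula (Theorem~\ref{ito2}) to obtain a mild equation for $A(u_r)$, then pair it with the backwards mild equation~(\ref{e:adj_jac1}) for $K_{t,\cdot}\varphi$ via Proposition~\ref{forward_backward2}, exploiting geometricity to kill both the $[\XX]$ and $df$ corrections, and finally identify the Lie brackets via the adjoint relations. The paper additionally remarks that $K_{t,\cdot}$ itself becomes arbitrarily smooth on $[s,t)$ by backward smoothing, but as you note, pushing the spatial regularity of $u$ high enough via Proposition~\ref{smoothing} already suffices to satisfy the exponent constraints of Proposition~\ref{forward_backward2}, so your version of the bookkeeping is adequate.
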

\begin{proof}
	First we use the mild It\^{o} formula Theorem~\ref{ito2} to determine the mild equation for $A(u_s)$. Note that since $\XX$ is 
	geometric, its bracket $[\XX]$ vanishes. We then use the mild formulation~(\ref{e:adj_jac1}) for $K_{t,s}$ and the fact that it is arbitrarily smooth on $[s,t)$ together with mild equation for $u_r$ and the fact that it is arbitrarily smooth on $(0,t]$ to ensure that we can apply Proposition~\ref{forward_backward2} to derive the equation for $Z^\varphi_A(r)= \langle A(u_r),K_{t,r} \rangle$. It is easy then to verify that this is indeed  Equation~\ref{e:adj_jac2}.
\end{proof}

\section{Spectral properties of the Malliavin matrix} \label{spectral}

For this section we consider a special case when the multiplicative noise term is given by 
$$F(u_t)dX_t = \sum_{i=1}^d F_i(u_t)dX^i_t,$$ where $F_i \in C^\infty_{-2\gamma,0}(\HH)$ are smooth functions. We also assume $N$ is smooth and belongs to $\Poly^{\infty,n}_{0,-\delta}$ and 
consider the collections of Lie brackets $\cA_k$ defined recursively by:
$$\cA_0  = \{F_i\, :\, 1\leq i \leq d\};\quad \cA_{k+1}  =\cA_k \cup \{[L+N,A],\,[F_i,A]\, :\, A \in \cA_k,\, 1\leq i \leq d\}.$$ 
Note that, at worst, elements of $\cA_k$ decrease the spatial regularity by $k$ i.e.\ send $\HH_k$ to $\HH$.
Our aim is to show that, under a version of H\"{o}rmander's condition appropriate for this context, 
one obtains a bound on the Malliavin matrix 
of the kind $\Prob (\inf_{\varphi}\langle \cM_T\varphi,\varphi\rangle \leq \varepsilon ) \lesssim_{T,p} \varepsilon^p$ for every $p\geq 1$ (we will specify 
precisely over which class of $\varphi$ we take the infimum later). The proof is in the same spirit as the proof of H\"{o}rmander's theorem for SDEs using Malliavin 
calculus techniques, see for instance \cite{Mal76,ProofShort}. It essentially goes by contradiction: assuming that $\langle \cM_T\varphi,\varphi\rangle$ is small,~(\ref{mal 7}) then implies that $\langle J_{T,s}F(u_s),\varphi\rangle$ is small. 
In the SDE case the solution to the equation with good enough 
vector fields generates a smoothly invertible flow, so it is possible to factor the Malliavin matrix as
$$\cM_T = J_{T,0}\hat{\cM}_TJ^*_{T,0}\,;\quad  \langle\hat{\cM}_T\varphi,\varphi\rangle = \int^T_0 \langle J^{-1}_{s,0}F(u_s),\varphi\rangle^2 ds.$$
Then the process $s \mapsto  \langle J^{-1}_{s,0}F(u_s),\varphi\rangle$ is a semimartingale and one can use Norris's lemma 
\cite{norris1986simplified} to deduce by induction over $k$ that $s \mapsto \langle J^{-1}_{s,0}A(u_s),\varphi\rangle$ is small for every vector field $A \in \cA_k$. 
H\"{o}rmander's condition then guarantees that the span of the $\cA_k$ at every point is dense in $\HH$, which 
contradicts the fact that all the $\langle J^{-1}_{s,0}A(u_s),\varphi\rangle$ are small by considering $s=0$.

The problem with this argument is that solutions to parabolic SPDEs do not produce a smoothly invertible flow, so that 
the Jacobian $J_{s,t}$ is not invertible. In \cite{hypoelipticity} where the authors deal with the case of additive noise
and polynomial nonlinearities, 
they use a version of Norris's lemma for Wiener polynomials instead of semimartingales. In our setting, we 
consider rough integration instead of It\^{o} integration, 
which allows us to use a version of Norris's lemma for rough paths.
Before stating it, we recall the notion of H\"{o}lder roughness from \cite{hairer2013regularity}:
\begin{defn}
	Let $\theta \in (0,1)$, a path $X : [0,T] \to \R^d$ is said to be $\theta$-H\"{o}lder rough if there exists a constant $L_\theta(X)$ such that for all $s \in [0,T]$, all $\varepsilon \in (0,T/2]$ and every $z \in \R^d$ with $|z| = 1$, there exists a $t \in [0,T]$ such that
	$$|t-s|\leq \varepsilon\quad and\quad |(z, X_{t,s})| > L_\theta(X)\varepsilon^p.$$ 
We denote the largest such $L_\theta(X)$ the modulus of $\theta$-H\"{o}lder roughness of X.
\end{defn}
Here $(x,y)$ denotes the scalar product on $\R^d$.
In \cite{hairer2013regularity} it was proved that if $X$ is a fractional Brownian motion with Hurst parameter $H\leq 1/2$, its sample paths are almost surely $\theta$-H\"{o}lder rough for every $\theta > H$. Moreover, there exist constants $M$ and $c$ independent of $X$ such that 
$$\Prob(L_\theta(X)\leq \varepsilon | \FF^X_0) \leq M\exp(-c\varepsilon^{-2}),$$
for every $\varepsilon \in (0,1)$, so that in particular $\E[L^{-p}_\theta(X)] < \infty$ for every $p \geq 0$.

With this at hand we are ready to state one more result from \cite{hairer2013regularity}, namely the
aforementioned version of Norris's lemma for rough paths.
\begin{thm} \label{norris}
	Let $\gamma \in (1/3,1/2]$ and $(X,\X) \in \cC^\gamma([0,T],\R^d)$ be $\theta$-H\"{o}lder rough for $\theta < 2\gamma$. Let $V \in\CC^\gamma([0,T],\R)$ and $(Y,Y')\in \cD^{2\gamma}_X([0,T],\R^{d})$ and set 
	$$Z_t = Z_0 + \int_0^tV_s ds + \int_0^t Y_s dX_s.$$
Then there are constants $q>0$ and $r>0$ such that, setting 
$$R := 1 + L_\theta(X)^{-1} + \varrho_\gamma(X) + |(Y,Y')|_{X,2\gamma} + |V|_{\CC^\gamma}\;,$$
we have the bound $\|Y\|_{\infty}+\|V\|_\infty \lesssim_T R^q \|Z\|^r_\infty$ on $[0,T]$.
\end{thm}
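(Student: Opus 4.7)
The strategy is the classical argument for Norris-type estimates adapted to the rough path setting, exactly as in \cite{hairer2013regularity}. The idea is to play two forces against each other: the hypothesis that $\|Z\|_\infty$ is small forces the increments $\delta Z_{t,s}$ to be small on every scale, while the $\theta$-H\"older roughness of $X$ guarantees that the increments $X_{t,s}$ span every direction of $\R^d$ on every small scale. Balancing the two will force $Y$ to be small, and once $Y$ is controlled the same balancing recovers control on $V$.

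Concretely, for the first step I would fix $s \in [0,T]$, set $z = Y_s/|Y_s|$ (if $Y_s \neq 0$), and apply the definition of roughness at scale $\varepsilon$ to find $t$ with $|t-s|\leq \varepsilon$ and $|(z,X_{t,s})| > L_\theta(X)\varepsilon^\theta$. Using the rough integration bound from the sewing lemma one has
\begin{equ}
Y_s X_{t,s} = \delta Z_{t,s} - \int_s^t V_r\,dr - Y'_s \X_{t,s} - \Xi_{t,s}\;,
\end{equ}
where $|\Xi_{t,s}| \lesssim |(Y,Y')|_{X,2\gamma}\, \varepsilon^{3\gamma}$. Estimating each term separately,
\begin{equ}
|\delta Z_{t,s}| \leq 2\|Z\|_\infty,\qquad \Big|\int_s^t V_r\,dr\Big| \leq \|V\|_\infty\varepsilon,\qquad |Y'_s \X_{t,s}| \leq \|Y'\|_\infty |\X|_{2\gamma}\varepsilon^{2\gamma},
\end{equ}
and combining with the roughness lower bound gives
\begin{equ}
L_\theta(X)\varepsilon^\theta |Y_s| \leq 2\|Z\|_\infty + \|V\|_\infty\varepsilon + C R\,\varepsilon^{2\gamma} + C R\,\varepsilon^{3\gamma}\;.
\end{equ}
Since $\theta < 2\gamma$, the exponents on the right are all strictly larger than $\theta$, so choosing $\varepsilon$ as a suitable small power of $\|Z\|_\infty$ (times negative powers of $R$) yields $|Y_s| \lesssim R^{q_1}\|Z\|_\infty^{r_1}$ uniformly in $s$ for some $q_1,r_1>0$.

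Once $\|Y\|_\infty$ is under control, I would repeat a one-dimensional variant of the argument to recover $V$. Rearranging the same identity as
\begin{equ}
\int_s^t V_r\,dr = \delta Z_{t,s} - Y_s X_{t,s} - Y'_s \X_{t,s} - \Xi_{t,s}\;,
\end{equ}
each term on the right is now bounded by $\|Z\|_\infty$ or by $R$ times a positive power of $\varepsilon$ larger than $\varepsilon$ itself (using the freshly obtained bound on $Y$). Writing $\int_s^t V_r\,dr = V_s(t-s) + O(|V|_\gamma \varepsilon^{1+\gamma})$ and choosing $\varepsilon$ again as a small power of $\|Z\|_\infty$ yields $|V_s| \lesssim R^{q_2}\|Z\|_\infty^{r_2}$. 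Taking $q = \max(q_1,q_2)$ and $r = \min(r_1,r_2)$ gives the claimed bound.

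The main obstacle is a purely bookkeeping one: keeping track of which constants depend on $T$, $\gamma$, $\theta$ and on the various seminorms bundled into $R$, and choosing the interpolation exponents $\varepsilon = \|Z\|_\infty^a R^{-b}$ consistently in both steps so that $q$ and $r$ come out uniform. Since $X$ is only $\theta$-H\"older rough and not $\gamma$-H\"older rough, the exponent mismatch $\theta < 2\gamma$ is crucial for the interpolation to close; this is where the condition $\theta < 2\gamma$ in the statement is used in an essential way.
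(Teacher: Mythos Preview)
The paper does not give its own proof of this statement: Theorem~\ref{norris} is simply quoted from \cite{hairer2013regularity} (the paper introduces it as ``one more result from \cite{hairer2013regularity}''), so there is no in-paper argument to compare against. Your sketch is exactly the argument of \cite{hairer2013regularity}: extract $Y_s X_{t,s}$ from $\delta Z_{t,s}$ via the rough integral expansion, use $\theta$-H\"older roughness to lower-bound $|(z,X_{t,s})|$ on a suitable scale $\varepsilon$, optimise in $\varepsilon$ to control $\|Y\|_\infty$, and then feed this back to isolate $V_s(t-s)$ and control $\|V\|_\infty$.

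One small point worth tightening in your write-up: in the first step you bound $\|V\|_\infty\varepsilon$ and $\|Y'\|_\infty|\X|_{2\gamma}\varepsilon^{2\gamma}$ on the right, but at that stage you do not yet control $\|V\|_\infty$ or $\|Y'\|_\infty$ directly. This is harmless because $\|V\|_\infty \lesssim_T |V|_{\CC^\gamma}\le R$ and $\|Y'\|_\infty \lesssim_T |Y'_0|+|Y'|_\gamma$, the latter being part of (or bounded by) the controlled-path data entering $R$; just make sure the quantity you call $|(Y,Y')|_{X,2\gamma}$ is interpreted as the full norm (including $|Y_0|,|Y'_0|$) rather than only the seminorm, otherwise the inequality $\|Y'\|_\infty\lesssim R$ need not hold. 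With that convention fixed, the interpolation closes exactly as you describe.
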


We will now work with a solution $(u,u')$ to~(\ref{e:rpde2}) starting from $u_0 \in \HH$ driven by the path $(X,\X) \in \cC_g^{\gamma}(\R_+,\R^d)$ and vector fields $N,F_i$ as described in the beginning of this section. $K_{\cdot,\cdot}$ denotes the adjoint of Jacobian as in  Section~\ref{R_BPDE}. Fix $T$ and a smaller time $0 < s < T$, let $1/3 < \eta < \gamma < 1/2$ be such that $\eta$ is close to $\gamma$. From now on fix the quantity:
\begin{equs}
	R_{s}(u_0) = 1 + L_\theta(X)^{-1} + \varrho_\gamma(X) &+ \|(u,u')\|_{\cD^{2\eta,2\gamma,0}_{S,X}([s,T],\HH_{\alpha})}\\
	&+\|(K_{T,\cdot}\,,K'_{T,\cdot})\|_{\cD^{2\eta,2\gamma,0}_{S,X,\back}([s,T],\LL^{-2\gamma})}\;, \label{dominater}
\end{equs}
for $\alpha \geq 0$ big enough to be determined later. If any of the quantities above explodes on the interval $[s,T]$ we simply write $R_{s}(u_0) = \infty$.

 As in Proposition~\ref{adj_jac2} define for $\varphi\in \HH$ and a vector field $A$, a function $Z_A^\varphi(r) = \langle A(u_r),K_{T,r}\varphi \rangle$. From Proposition~\ref{forward_backward1} and Lemma~\ref{compos4} it follows inductively that for every $k \in \N_0$ there exists a constant $C_k$ depending on $T$ and $L,N,F_i$ such that for all $A \in \cA_k$ we have:
 \begin{equ}[zcompos]
 	|(Z^\varphi_A,(Z^\varphi_A)')|_{X,2\gamma,[s,T]} \leq C_k R_{s}(u_0)^2.
 \end{equ}
The above holds true if in the definition of $R_s$ we take $\alpha$ big enough (depending on $k$) so that the assumptions on spatial regularities of Proposition~\ref{forward_backward1} would be satisfied.
\begin{rem} \label{rem 8}
 Note that we only impose high spatial regularity on the solution $u_r$ and not on $K_{T,s}$. This will give us the advantage of being able to use the fact that $K_{T,T} $ is the identity.
\end{rem}
The following two results are almost exact analogues of the finite-dimensional statements from \cite{hairer2013regularity}. Proposition~\ref{adj_jac2} allows us to carry out the same techniques.
\begin{lem} \label{first_iteration}
	Let $T>0$ then for all $0< s < T$, there exist $q,r > 0$ and $M$ independent of $\XX,\varphi,u_0$ 	 
	such that for all $A \in \cA_0$ the following bound holds:
\begin{equ}[iter1]
	\|Z^\varphi_A\|_{\infty,[s,T]} \leq M R^{\,q}_{s}(u_0) \langle \cM_T\varphi,\varphi\rangle^{r}.
\end{equ}
for all $\varphi \in \HH$ such that $\|\varphi\| =1$ and all initial conditions $u_0\in\HH$.
\end{lem}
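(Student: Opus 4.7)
The strategy is to combine the identity
\begin{equ}
\langle\cM_T\varphi,\varphi\rangle \;=\; \sum_{i=1}^d \|Z^\varphi_{F_i}\|_{L^2([0,T])}^2\,,
\end{equ}
(which follows from Definition~\ref{Mal_matr} and Proposition~\ref{adj_jac1}, after rewriting $\langle J_{T,s}F_i(u_s),\varphi\rangle = \langle F_i(u_s),K_{T,s}\varphi\rangle = Z^\varphi_{F_i}(s)$) with the $\gamma$-Hölder regularity of $Z^\varphi_A$ implicit in \eqref{zcompos}, closing the argument through a standard $L^2$--$\CC^\gamma$ interpolation. In particular, for every $A\in\cA_0$, restricting the integral to $[s,T]$ yields $\|Z^\varphi_A\|_{L^2([s,T])}\le\langle\cM_T\varphi,\varphi\rangle^{1/2}$.

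To convert the controlled-rough-path bound \eqref{zcompos} into a usable $\gamma$-Hölder estimate on $Z^\varphi_A$ itself, I would expand the defining relation
\begin{equ}
Z^\varphi_A(t)-Z^\varphi_A(u)\;=\;(Z^\varphi_A)'(u)\,X_{t,u}+R^{Z^\varphi_A}_{t,u}\,,
\end{equ}
bound $\|(Z^\varphi_A)'\|_{\infty,[s,T]} \le \|(Z^\varphi_A)'(T)\| + \|(Z^\varphi_A)'\|_{\gamma,[s,T]}(T-s)^\gamma$, and exploit the fact that at the terminal time $K_{T,T} = \id$, so $(Z^\varphi_A)'(T)_j = \langle[F_j,A](u_T),\varphi\rangle$ is controlled by a polynomial in $R_s(u_0)$ (using $\|\varphi\|=1$, smoothness of the $F_i$, and $\|u_T\|\le C R_s(u_0)$). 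Combined with $|X|_\gamma \le R_s(u_0)$ and \eqref{zcompos}, this yields
\begin{equ}
|Z^\varphi_A|_{\gamma,[s,T]}\;\le\; C\,R_s(u_0)^{q_0}
\end{equ}
for some explicit $q_0>0$, with $C$ independent of $\XX$, $\varphi$ and $u_0$.

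The interpolation step is elementary: if $M=\|f\|_{\infty,[s,T]}$ is attained at $t_0$ and $\delta = (M/(2|f|_\gamma))^{1/\gamma}$, then either $\delta \le T-s$ and $|f|\ge M/2$ on an interval of length $\delta$, so $\|f\|_{L^2}^2 \ge \delta(M/2)^2$, or $\delta > T-s$ and $|f|\ge M/2$ on all of $[s,T]$. Rearranging both cases gives
\begin{equ}
\|f\|_{\infty,[s,T]}\;\le\; 2\,|f|_{\gamma,[s,T]}^{1/(2\gamma+1)}\,\|f\|_{L^2([s,T])}^{2\gamma/(2\gamma+1)} + 2(T-s)^{-1/2}\|f\|_{L^2([s,T])}\,.
\end{equ}
Applied to $f = Z^\varphi_A$ with the estimates of the previous two steps, and observing that \eqref{iter1} is trivial when $\langle\cM_T\varphi,\varphi\rangle\ge 1$ (so the second term can be absorbed), we obtain \eqref{iter1} with $r = \gamma/(2\gamma+1)$ and $q = q_0/(2\gamma+1)$.

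\textbf{Main difficulty.} There is no substantial obstacle in this base case: all the genuinely technical work has already been packaged into \eqref{zcompos} (itself a consequence of Proposition~\ref{forward_backward1} and Lemma~\ref{compos4}). The only points requiring care are that the constants be uniform in $\varphi$ — guaranteed by the linearity of $K_{T,\cdot}$ in $\varphi$ and the normalisation $\|\varphi\|=1$ — and in $u_0$, which is automatic since all $u_0$-dependence has been absorbed into $R_s(u_0)$. The real work will come in the inductive step, where one must upgrade such an $\|Z^\varphi_A\|_\infty$ bound for $A\in\cA_k$ to one for every $A\in\cA_{k+1}$ via Proposition~\ref{adj_jac2} and the rough-path Norris lemma (Theorem~\ref{norris}).
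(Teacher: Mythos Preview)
Your approach is correct and essentially identical to the paper's: both rewrite the Malliavin matrix as $\langle\cM_T\varphi,\varphi\rangle=\sum_i\|Z^\varphi_{F_i}\|_{L^2[0,T]}^2$, invoke an $L^2$--$\CC^\gamma$ interpolation (the paper cites it, you derive it), and bound the H\"older quantity by a power of $R_s(u_0)$ via \eqref{zcompos}. You are in fact slightly more careful than the paper in one respect: \eqref{zcompos} only controls the controlled--rough--path \emph{seminorm}, so to get $|Z^\varphi_A|_{\gamma,[s,T]}$ one does need to supply a pointwise value of $(Z^\varphi_A)'$, and evaluating at $T$ using $K_{T,T}=\id$ is exactly the right move. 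One minor inaccuracy: your explicit formula $(Z^\varphi_A)'(T)_j=\langle[F_j,A](u_T),\varphi\rangle$ is not quite right --- combining Proposition~\ref{forward_backward1} with the backward Gubinelli derivative of $K_{T,\cdot}$ from Proposition~\ref{adj_jac1} does not produce the Lie bracket here --- but this is immaterial, since all you actually use is that $(Z^\varphi_A)'(T)$ is a finite expression in $u_T$ and $\varphi$ bounded by a polynomial in $R_s(u_0)$.
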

\begin{proof}
	Note that we have 
	$$\langle \cM_T\varphi,\varphi\rangle = \sum_{i=1}^d\int_0^T \langle F_i(u_r),K_{T,r}\varphi\rangle^2dr =  \sum_{i=1}^d \|Z^\varphi_{F_i}\|^2_{L^2[0,T]}.$$ 
	Consider an interpolation inequality (see \cite{hairer2011ergodicity} Lemma A.3):
	$$\|f\|_\infty \leq 2\max\{T^{-1/2}\|f\|_{L^2},\|f\|^{2\gamma/(2\gamma+1)}_{L^2}|f|^{1/(2\gamma+1)}_{\CC^\gamma}\}.$$
	Since the final time is fixed, the $L^2$ norm is controlled by the $\gamma$-H\"{o}lder norm, so 
	$$\|Z^\varphi_{F_i}\|_{\infty,[s,T]}^{2\gamma+1} \lesssim_s \|Z^\varphi_{F_i}\|^{2\gamma}_{L^2[s,T]}|Z^\varphi_{F_i}|_{\CC^\gamma[s,T]}.$$
	The first term is clearly controlled by $\langle \cM_T\varphi,\varphi\rangle ^{2\gamma}$ since $[s,T] \subset [0,T]$ and the second term is controlled by $C_0R_{s}(u_0)^2$ by~\eqref{zcompos}.
\end{proof} 

Now we show that the same holds for any vector field in $\cA_k$.
\begin{lem} \label{k_iteration}
	Let $T>0$ and $(X,\X) \in \cC_g^\gamma([0,T],\R^d)$ be $\theta$-H\"{o}lder rough for $\theta < 2\gamma$. Then for all $0< s < T$ and every $k \in \N_0$ there exist $q_k,r_k > 0$ and $M_k$ independent of $\XX,\varphi,u_0$ such that for all $A \in \cA_k$ the following bound holds:
	\begin{equ}[iterk]
	\|Z^\varphi_A\|_{\infty,[s,T]} \leq M_k R^{\,q_k}_{s}(u_0) \langle \cM_T\varphi,\varphi\rangle^{r_k},
	\end{equ}
	for all $\varphi \in \HH$ such that $\|\varphi\| =1$ and all initial conditions $u_0 \in \HH$.
\end{lem}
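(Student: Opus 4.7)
The plan is to proceed by induction on $k$, with the base case $k=0$ handled by Lemma~\ref{first_iteration}. For the inductive step, fix $A \in \cA_k$ and invoke Proposition~\ref{adj_jac2}, which expresses $Z^\varphi_A$ as the solution to the rough differential equation
\begin{equ}
Z^\varphi_A(r) = Z^\varphi_A(s) + \int^r_s Z^\varphi_{[L+N,A]}(v)\,dv + \sum_{i=1}^d \int^r_s Z^\varphi_{[F_i,A]}(v)\,dX^i_v\;,
\end{equ}
on the interval $[s,T]$. The idea is to use the rough Norris lemma (Theorem~\ref{norris}) to transfer smallness of $Z^\varphi_A$ (already provided by the inductive hypothesis) to smallness of both the drift $Z^\varphi_{[L+N,A]}$ and each rough integrand $Z^\varphi_{[F_i,A]}$. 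Since any element of $\cA_{k+1}$ is either itself in $\cA_k$ or of the form $[L+N,A]$ or $[F_i,A]$ with $A\in\cA_k$, this will close the induction.

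To apply Theorem~\ref{norris} in a clean way I need, for each $A\in \cA_k$, uniform control of the controlled-rough-path norm of $(Z^\varphi_{[F,A]},(Z^\varphi_{[F,A]})')$ and of the $\gamma$-H\"older norm of $Z^\varphi_{[L+N,A]}$ on $[s,T]$. Both bounds are provided at level $k+1$ by the estimate~\eqref{zcompos}, which is built inductively from Proposition~\ref{forward_backward1} and Lemma~\ref{compos4} and gives control by $C_{k+1}R_s(u_0)^2$. Plugging these into Theorem~\ref{norris} yields exponents $q,r>0$ and a constant $C$ (independent of $\varphi,u_0,\XX$) with
\begin{equ}
\|Z^\varphi_{[L+N,A]}\|_{\infty,[s,T]} + \sum_{i=1}^d \|Z^\varphi_{[F_i,A]}\|_{\infty,[s,T]} \leq C\,R_s(u_0)^{q}\, \|Z^\varphi_A\|_{\infty,[s,T]}^{\,r}\;.
\end{equ}
Combining this with the inductive bound at level $k$ and absorbing constants produces exponents $q_{k+1},r_{k+1}>0$ and a constant $M_{k+1}$ such that \eqref{iterk} holds for every $B\in\cA_{k+1}$, which closes the induction.

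The main obstacle is of a regularity nature rather than a dynamical one. Each bracket with $L+N$ costs one order of spatial smoothness, so in order to even make sense of $Z^\varphi_{[L+N,A]}$ for $A\in\cA_k$ one needs $u_r$ to take values in $\HH_\alpha$ with $\alpha$ growing with $k$; this is why the exponent $\alpha$ entering~\eqref{dominater} must be allowed to depend on $k$, and why the bound is stated on $[s,T]$ with $s>0$, so that the smoothing property of Proposition~\ref{smoothing} supplies the required spatial regularity as soon as $r\geq s$. Concurrently, the compatibility condition $\alpha+\beta+2\gamma\geq 0$ required by Proposition~\ref{forward_backward1} must be checked at each depth, which again amounts to taking $\alpha$ sufficiently large (depending on $k$). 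Once this regularity budget has been secured up to depth $k+1$, everything else is a mechanical iteration of the rough Norris lemma along the bracket hierarchy.
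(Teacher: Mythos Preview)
Your proposal is correct and follows essentially the same approach as the paper: induction on $k$ with base case Lemma~\ref{first_iteration}, the RDE representation from Proposition~\ref{adj_jac2}, and an application of the rough Norris lemma (Theorem~\ref{norris}) with the required norms controlled via~\eqref{zcompos}. Your additional remarks on the spatial-regularity bookkeeping (why $\alpha$ in~\eqref{dominater} must grow with $k$ and why one works on $[s,T]$ with $s>0$) are accurate and match the paper's surrounding discussion.
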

\begin{proof}
	Define first for $A \in \cA_k$ a quantity
	$$R_A = 1 + L_\theta(X)^{-1} + \varrho_\gamma(X) + |(Z^\varphi_{[F,A]},(Z^\varphi_{[F,A]})')|_{X,2\gamma} + |Z^\varphi_{[L+N,A]}|_{\CC^\gamma},$$
	with all the norms taken on the interval $[s,T]$.
	Note that from~\eqref{zcompos} we have $R_A \leq C_k R^2_{s}(u_0)$ uniformly over all $\|\varphi\| = 1$. Assume now that the statement is true for $k$. Let $A \in \cA_k$, denote $F_0 = L+N$ then by Proposition~\ref{adj_jac2} we have the representation for all $s\leq r\leq T$:
	$$Z^\varphi_A(r) = Z^\varphi_A(s) + \int^r_sZ^\varphi_{[F_0,A]}(v)dv + \int^r_sZ^\varphi_{[F,A]}(v)dX_v.$$
	Therefore we can apply the ``rough Norris lemma'', Theorem~\ref{norris}, to deduce:
	\begin{equs}
		\sup_{i=0\ldots d}\|Z^\varphi_{[F_i,A]}\|_{\infty,[s,T]} &\leq M R^q_A \|Z^\varphi_A\|^r_{\infty,[s,T]} \\
		&\leq M C^q_k M^r_k R^{2q}_{s}(u_0) R^{\,r\times q_k}_{s}(u_0) \langle \cM_T\varphi,\varphi\rangle^{r\times r_k}.
	\end{equs}
	Since all $B \in \cA_{k+1}$ are of the form $[F_i,A]$ for $i = 0,\ldots, d$ and $A \in \cA_k$ we conclude by induction.
\end{proof}
\begin{rem} \label{no_ass}
	Note that both of the above lemmas are purely deterministic. Moreover we did not make any assumption on the solution or its Jacobian. In fact if the solution explodes before time $T$ or if it does not have a Jacobian on the interval $[s,T]$ one should simply read the inequalities~\eqref{iter1} and~\eqref{iterk} as trivial statements ``$\infty \leq \infty$''.
\end{rem}
We will now present the precise assumptions on the noise and solution which will enable us to prove our H\"ormander's theorem. We start with 
an assumption on the driving noise which guarantees that it gives rise to a well-behaved rough path, but is also sufficiently
irregular to kick the solution around in a very non-degenerate way.
\begin{ass}\label{ass3}
For some $\gamma \in ({1\over 3},{1\over 2})$, the random rough path $\XX=(X,\X) \in \cC_g^\gamma([0,T],\R^d)$ is the canonical lift of a $d$-dimensional, continuous Gaussian process 
$X$ with independent components defined on some underlying probability space $(\Omega,\FF,\Prob)$. We also assume that there exist $M<\infty$ and $p \in [1,1/2\gamma)$ such that for $i \in \{1,\ldots,d\}$ and $[s,t] \subseteq [0,T]$, the covariances of $X^i$ satisfy
	$$\|R_{X^i}\|_{p,[s,t]^2} \leq M |t-s|^{1/p}.$$
We also assume that $X$ is almost surely $\theta$-H\"{o}lder rough for some $\theta < 2\gamma$ and moreover that all inverse moments of 
its modulus of $\theta$-H\"{o}lder roughness are bounded, i.e.\ $\E[L^{-q}_\theta(X)] < \infty$ for all $q \ge 1$.
\end{ass}
With the driving noise $\XX$ at hand, we assume the global existence of the solution:
\begin{ass} \label{ass1}
	Let $\XX$ be as in Assumption~\ref{ass3} and let $\{F_i\}_1^d \subset C^\infty_{-2\gamma,0}(\HH)$, $N \in  \Poly^{\infty,n}_{0,-\delta}(\HH)$ for $\delta < 1-\gamma$. We assume that for every initial condition $u_0$,~(\ref{e:rpde2}) has a global solution $(u,F(u)) \in \DD^{2\gamma}_X(\R_+,\HH)$ for almost every realisation of $\XX$. We also assume that the Jacobian $J_{t,s}$ and its adjoint $K_{t,s}$ exist for all times and satisfy the corresponding mild equations~\eqref{mal 5} and~\eqref{e:adj_jac1} respectively.
\end{ass}

In the parabolic case, we cannot expect to have a bound on $\inf_{\|\varphi\| = 1} \langle \cM_T\varphi,\varphi\rangle $ 
since this would imply the invertibility of the Malliavin matrix, contradicting the fact that $\cM_T$ is a compact operator. 
Instead, we consider an orthogonal projection $\Pi :\HH \to \HH$ with finite-dimensional range and, 
for $a \in (0,1)$, we define $\mathscr{S}_a \subset \HH$ to be
$$\mathscr{S}_a = \{\varphi \in \HH : \|\varphi\| = 1,\, \|\Pi\varphi\| \geq a \}.$$
For $k \in \N_0$ define the positive symmetric quadratic form-valued function $Q_{k}$ such that for all $u \in \HH_{\infty}$
$$\langle\varphi,Q_{k}(u)\varphi\rangle = \sum_{A \in \cA_k}\langle\varphi, A(u) \rangle^2.$$
With this notation we assume that the following non-degeneracy condition holds on the Lie brackets in $\cA_k$:
\begin{ass} \label{ass2}
	Assume that $\{F_i\}_1^d \subset C^\infty_{-2\gamma,0}(\HH)$, $N \in  \Poly^{\infty,n}_{0,-\delta}(\HH)$, $\delta < 1-\gamma$. 
	Moreover assume that for some orthogonal projection $\Pi :\HH \to \HH$ and for every $1>a > 0$, there exists 
	$k \in \N_0$ as well as a continuous function $\Lambda_a : \HH \to (0,\infty)$ such that 
	$$\inf_{\varphi \in \mathscr{S}_a} \langle \varphi,Q_{k}(u)\varphi\rangle \geq \Lambda_a(u),$$
	for every $u \in \HH_{\infty}$.
\end{ass}

Finally, we assume that we have good enough control on the solution to be able to ``fight'' the loss of control
generated by regions where the function $\Lambda_a$ from Assumption~\ref{ass2} is small.

\begin{ass} \label{ass4}
	We assume that Assumptions~\ref{ass3},~\ref{ass1},~\ref{ass2} hold and that there exist two functions $\Phi_1,\Phi_2: \HH\times [0,\infty)  \to [0,\infty)$ such that the following two growth assumptions are true:
\begin{itemize}
	\item[\textbf{(1)}] For all $p\geq 1$ and all $a \in (0,1)$ there exist $K_1$ such that for the solution $u$ to~(\ref{e:rpde2}), the
	 inverse moment bound
	$$\E[\Lambda^{-p}_a(u_T)] \leq K_1 \Phi_1^p(u_0,T),$$
	holds for every initial condition $u_0\in\HH$.
	
	\item[\textbf{(2)}] For $\alpha := (k-2\gamma+\delta)\vee 0$ (where $\delta$ is from $\Poly^{\infty,n}_{0,-\delta}$ and $k$ is from assumption~\ref{ass2}), some $1/3 < \eta \leq \gamma < 1/2$ and for all $p\geq 1$ there exist $K_2$ such that
	$$\E\Big[\|(u,F(u))\|^p_{\cD^{2\eta,2\gamma,0}_{S,X}(I_T,\HH_{\alpha})}+\|(K_{T,\cdot}\,,K'_{T,\cdot})\|^p_{\cD^{2\eta,2\gamma,0}_{S,X,\back}(I_T,\LL^{-2\gamma})}\Big] \leq K_2 \Phi_2^p(u_0,T),$$
	for every initial condition $u_0\in\HH$, where $I_T = [T/2,T]$.
\end{itemize}
\end{ass}

The requirement for $\alpha$ is coming from the fact that for all $A \in \cA_k$ the inner product $\langle A(u_s), K_{T,s}\varphi\rangle$ should satisfy the assumptions of Proposition~\ref{forward_backward1} on spatial regularities and the fact that the assumption of Lemma~\ref{compos4} for $N$ should be satisfied ($N$ has to be a smooth function on the level of rough path regularity, which is $\alpha$ in this case).

Note that among all the assumptions we do not a priori assume that the solution is Malliavin differentiable, but this does follow from assumptions~\ref{ass3} and~\ref{ass1}, combined with Theorem~\ref{mal_diff}. 
Moreover, Definition~\ref{Mal_matr} of $\cM_T$ does \textit{not} coincide with the definition of the Malliavin matrix in general
(see \cite{Nual:06}), but it always makes sense whenever Jacobian is well-defined. In the particular case where $\XX$ is a Brownian motion,
we will see  in the proof of Theorem~\ref{densities} below that two definitions agree and 
our version of H\"ormander's theorem provides a statement for the usual Malliavin matrix.
With all these assumptions at hand we are ready to present the main result of this article.
\begin{thm} \label{main}
Let $T>0$ and let the noise $\XX \in \cC_g^\gamma$ satisfy Assumption~\ref{ass3}. Let $0\leq \delta < 1-\gamma$ and assume that $N$ and $F_i$ 
satisfy Assumption~\ref{ass2} for some orthogonal projection $\Pi :\HH \to \HH$, $a \in (0,1)$, $k \in \N_0$ and continuous function $\Lambda_a : \HH \to (0,\infty)$. Assume that $(u,F(u))$  solving~(\ref{e:rpde2}) satisfies Assumptions~\ref{ass1} and~\ref{ass4}. Then there exists a function $\Phi_T : \HH \to [0,\infty)$ such that for every $p\geq 1$ there exist a constant $C_p$ such that the operator 
$\cM_T$ defined in~\eqref{mal 7} satisfies the bound
\begin{equ} 
	\Prob(\inf_{\varphi \in \mathscr{S}_a}\langle\cM_T\varphi, \varphi \rangle \leq \varepsilon) \leq C_p \Phi_T^p(u_0) \varepsilon^p,
\end{equ}
for every initial condition $u_0$. Here $C_p$ is independent of the initial condition.
\end{thm}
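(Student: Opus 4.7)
The plan is to combine the pathwise inductive bounds from Lemmas~\ref{first_iteration} and~\ref{k_iteration} on the quantities $Z^\varphi_A(r)=\langle A(u_r),K_{T,r}\varphi\rangle$ with the non-degeneracy condition of Assumption~\ref{ass2} applied at the terminal time $r=T$ (where $K_{T,T}=\id$), and then turn the resulting deterministic implication into a probability estimate via Markov's inequality and the moment bounds of Assumption~\ref{ass4}.

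First I would work on the full-measure event on which Assumptions~\ref{ass3} and~\ref{ass1} both hold, so that a global solution, its Jacobian, its adjoint, and a $\theta$-H\"{o}lder rough lift of $X$ are all well defined. Suppose that some $\varphi=\varphi(\omega)\in\mathscr{S}_a$ satisfies $\langle \cM_T\varphi,\varphi\rangle\leq \varepsilon$. Applying Lemma~\ref{k_iteration} on the interval $[T/2,T]$, every $A\in\cA_k$ then obeys
\begin{equ}
\|Z^\varphi_A\|_{\infty,[T/2,T]}\ \leq\ M_k\,R_{T/2}(u_0)^{q_k}\,\varepsilon^{r_k}.
\end{equ}
Evaluating at $r=T$ and using $K_{T,T}=\id$ gives $|\langle A(u_T),\varphi\rangle|\leq M_k R_{T/2}(u_0)^{q_k}\varepsilon^{r_k}$, so summing over $A\in\cA_k$ and invoking Assumption~\ref{ass2} yields
\begin{equ}
\Lambda_a(u_T)\ \leq\ \langle\varphi,Q_k(u_T)\varphi\rangle\ =\ \sum_{A\in\cA_k}|Z^\varphi_A(T)|^2\ \lesssim_k\ R_{T/2}(u_0)^{2q_k}\,\varepsilon^{2r_k}.
\end{equ}

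The remaining probabilistic step is then routine. For any $\beta\in(0,r_k/q_k)$ I would split
\begin{equs}
\Prob\Big(\inf_{\varphi\in\mathscr{S}_a}\langle\cM_T\varphi,\varphi\rangle&\leq \varepsilon\Big)\\
\leq\ \Prob\bigl(R_{T/2}(u_0)\geq \varepsilon^{-\beta}\bigr)&+\Prob\bigl(\Lambda_a(u_T)\leq C\,\varepsilon^{2r_k-2q_k\beta}\bigr),
\end{equs}
and bound each piece by Markov's inequality. The first term is controlled by arbitrary negative powers of $\varepsilon$ thanks to Assumption~\ref{ass4}(2), which also controls inverse moments of $L_\theta(X)$ entering $R_{T/2}$ via Assumption~\ref{ass3}. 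The second is handled by the inverse-moment bound on $\Lambda_a(u_T)$ in Assumption~\ref{ass4}(1). Choosing $\beta$ small and the Markov exponents large as functions of $p$ produces the required $\varepsilon^p$ decay with constants polynomial in $\Phi_1(u_0,T)$ and $\Phi_2(u_0,T)$; these are then packaged into $\Phi_T(u_0)$.

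The main obstacle is the uniform validity of the bound~\eqref{zcompos} on the $\cD^{2\gamma}_X$-norms of the controlled rough paths $(Z^\varphi_A,(Z^\varphi_A)')$ for every $A\in\cA_k$ and $\varphi\in\mathscr{S}_a$: one must check that the exponent $\alpha=(k-2\gamma+\delta)\vee 0$ in Assumption~\ref{ass4}(2) is large enough for Proposition~\ref{forward_backward1} and Lemma~\ref{compos4}, applied inductively up to level $k$, to yield a bound depending only on $R_{T/2}(u_0)$. This is precisely why one works on $[T/2,T]$ rather than $[0,T]$: the smoothing statement of Proposition~\ref{smoothing} (and its backward counterpart for $K_{T,\cdot}$) places the forward trajectory and the adjoint flow into spaces of arbitrarily high spatial regularity away from the initial time, while the terminal endpoint causes no problem since $K_{T,T}=\id$ maps every $\HH_\beta$ to itself.
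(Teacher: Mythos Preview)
Your proposal is correct and follows the same core argument as the paper: use Lemma~\ref{k_iteration} on $I_T=[T/2,T]$ to bound $|Z^\varphi_A(T)|=|\langle A(u_T),\varphi\rangle|$ via $K_{T,T}=\id$, then combine with Assumption~\ref{ass2} to obtain the deterministic implication $\Lambda_a(u_T)\lesssim R_{T/2}(u_0)^{2q_k}\langle\cM_T\varphi,\varphi\rangle^{2r_k}$, and finally turn this into the probability bound with Markov.

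The only difference is in the packaging of the final probabilistic step. The paper rearranges the deterministic inequality to $\langle\cM_T\varphi,\varphi\rangle\geq M\,\Lambda_a(u_T)^{r}\,R_{T/2}(u_0)^{-q}$ uniformly over $\varphi\in\mathscr{S}_a$, applies Markov once to the product, and then splits the expectation with Cauchy--Schwarz; you instead introduce a threshold $\varepsilon^{-\beta}$ and split into two events before applying Markov. Both are standard and yield the same conclusion with $\Phi_T$ polynomial in $\Phi_1,\Phi_2$. One small remark: your closing paragraph invokes backward smoothing for $K_{T,\cdot}$, but this is not needed---Assumption~\ref{ass4}(2) only asks for $K$ in $\LL^{-2\gamma}$, and the bound~\eqref{zcompos} (cf.\ Remark~\ref{rem 8}) is arranged so that only $u$ must carry the high spatial regularity $\alpha$.
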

\begin{proof}
	Fix $\varphi \in \mathscr{S}_a$, an initial condition $u_0 \in \HH$, and let $\cA_k$ and $Q_{k}$ be as in Assumption~\ref{ass2}. 
	Since $K_{T,T}\varphi = \varphi$ and, by Proposition~\ref{smoothing}, $u_T \in \HH_\infty$ almost surely, we have:
	\begin{equs}
		\Lambda_a(u_T) &\leq \langle \varphi, Q_{k}(u_T)\varphi\rangle \lesssim \max_{A \in \cA_k} \langle \varphi, A(u_T)\rangle^2 =  \max_{A \in \cA_k} \langle K_{T,T}\varphi, A(u_T)\rangle^2  \\ 
		&=\max_{A \in \cA_k} |Z_A^\varphi(T)|^2
		\leq \max_{A \in \cA_k} \|Z^\varphi_A\|^2_{\infty,I_T} \leq M^2_k R^{\,2q_k}_{T/2}(u_0) \langle \cM_T\varphi,\varphi\rangle^{2r_k},
	\end{equs}
	where $R_{T/2}(u_0)$ is defined by~\eqref{dominater} with $\alpha = (k-2\gamma+\delta)\vee 0$. This shows the existence of some $q,r,M >0$ independent of noise and initial condition such that for all $\varphi \in \mathscr{S}_a$
	$$\langle \cM_T\varphi,\varphi\rangle \geq M \Lambda^{r}_a(u_T) R^{-q}_{T/2}(u_0).$$
	Therefore we can use Markov's inequality to deduce for every $p\geq 1$:
	\begin{equs}
		\Prob&(\inf_{\varphi \in \mathscr{S}_a}\langle\cM_T\varphi, \varphi \rangle \leq \varepsilon) \leq \Prob(M \Lambda^{r}_a(u_T) R^{-q}_{T/2}(u_0) \leq \varepsilon)\\
		&\leq M^{-p}\varepsilon^{p} \E[\Lambda^{-pr}_a(u_T) R^{pq}_{T/2}(u_0)] \leq M^{-p}\varepsilon^p \E[\Lambda^{-2pr}_a(u_T)]^{1/2}\E[R^{2pq}_{T/2}(u_0)]^{1/2} \\
		&\lesssim \varepsilon^p \Phi_1^{pr}(u_0,T)\big(1 +\E[L^{-2pq}_\theta(X)+\varrho^{2pq}_\gamma(X)]+\Phi_2^{2pq}(u_0,T)\big)^{1/2}.
	\end{equs}
	Here  we used Cauchy-Schwarz in the third inequality and, in the last inequality, we used~\ref{ass4}. Finally from~\ref{ass3} the expectation $\E[L^{-2pq}_\theta(X)+\varrho^{2pq}_\gamma(X)]$ is always bounded by some constant $C_p$ and thus taking for instance 
	\begin{equ}
		\Phi_T(u) = \Phi^r_1(u,T)(1+\Phi^{2q}_2(u,T))\, ,
	\end{equ}
	gives the result.
\end{proof} 
We now give an extra condition on the solution $u$ which will guarantee the smoothness of the densities of finite-dimensional projections:
\begin{ass}\label{ass5}
In addition to previous assumptions assume that for all $T>0$ there exists a function $\Psi_T: \HH \to [0,\infty)$ such that for all $p\geq 1$ there exist $K_3$ with:
\begin{equ}
	\E\Big[\Phi^p_T(u_T)\Big] \leq K_3 \Psi_T^p(u_0),
\end{equ}
for every initial condition $u_0$ and function $\Phi$ from the Theorem~\ref{main}. 
\end{ass}

\begin{thm}\label{densities}
	Assume that the rough path $\XX = (B,\B^\Strat)$. Let $(u,u')$ be a global solution to~(\ref{e:rpde2}) like in Theorem~\ref{main}, and assume that it also satisfies~\ref{ass5}. Assume also that the image of the orthogonal projection $\Pi$ (from assumption~\ref{ass2}) is finite-dimensional. Then for all $t>0$ the law of $\Pi u_t$ has a smooth density with respect to Lebesgue measure on $\Pi(\HH)$.
\end{thm}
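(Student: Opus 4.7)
The strategy is to combine Theorem~\ref{main} with the classical Malliavin calculus criterion for smoothness of densities. Since the driving rough path is $\XX=(B,\B^\Strat)$, Theorem~\ref{spde=rpde} identifies $(u,F(u))$ with the unique global mild solution of the Stratonovich SPDE~\eqref{SPDE1}, so we may freely use the standard Wiener-space Malliavin calculus. Fix an orthonormal basis $\{e_1,\ldots,e_m\}$ of $\Pi(\HH)$ and set $F^i = \scal{u_t,e_i}$ for $i=1,\ldots,m$; the law of $\Pi u_t$ is the image of that of $(F^1,\ldots,F^m)$ under the canonical isomorphism $\Pi(\HH)\cong\R^m$.

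The first step is to upgrade the local differentiability of Theorem~\ref{mal_diff} to $u_t\in\mathbb{D}^{\infty}(\HH)$. Writing the equations for the iterated directional derivatives along Cameron--Martin directions, one obtains at each order a linear SPDE of the same form as~\eqref{e:jacobian}, with inhomogeneities that are polynomial in the lower-order derivatives, in $u$ and in $J$. The moment assumptions on $\|u\|_{L^\infty([0,T],\HH)}$ and $\|J\|_{L^\infty}$, together with Assumption~\ref{ass5}, allow to close a Gronwall-type induction and yield $L^p$-bounds on each iterated derivative, hence $F^i\in\mathbb{D}^{\infty}$ for every $i$. In parallel, Proposition~\ref{duhamel} gives $\scal{\cD u_t,h}_{\CM} = \int_0^t J_{t,s}F(u_s)\,\dot h_s\,ds$ for $h\in\CM_T$, so the Malliavin covariance matrix of $F=(F^1,\ldots,F^m)$ equals
\begin{equ}
(\gamma_F)_{ij} \;=\; \int_0^t \scal{J_{t,s}F(u_s),e_i}\scal{J_{t,s}F(u_s),e_j}\,ds \;=\; \scal{\cM_t e_i,e_j}\,,
\end{equ}
i.e.\ precisely the matrix of $\Pi\cM_t\Pi^*$ in the basis $\{e_i\}$.

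The next step is to show $(\det\gamma_F)^{-1}\in L^p(\Omega)$ for every $p\ge 1$. Any unit vector $\varphi\in\Pi(\HH)$ satisfies $\Pi\varphi=\varphi$, hence $\varphi\in\mathscr{S}_{1/2}$, so applying Theorem~\ref{main} with $a=1/2$ and $T=t$ yields
\begin{equ}
\Prob\bigl(\lambda_{\min}(\gamma_F)\le \varepsilon\bigr) \;\le\; C_p\,\Phi_t^p(u_0)\,\varepsilon^p\,,\qquad p\ge 1\,,
\end{equ}
where $\lambda_{\min}(\gamma_F)$ denotes the smallest eigenvalue. Since $u_0$ is deterministic, $\Phi_t(u_0)$ is a finite constant, so $\lambda_{\min}(\gamma_F)^{-1}\in\bigcap_p L^p(\Omega)$, and the elementary bound $\det\gamma_F \ge \lambda_{\min}(\gamma_F)^m$ then gives $(\det\gamma_F)^{-1}\in \bigcap_{p\ge 1}L^p(\Omega)$.

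With $F\in\mathbb{D}^{\infty}(\R^m)$ and $(\det\gamma_F)^{-1}\in\bigcap_{p\ge 1}L^p(\Omega)$, the classical criterion of Malliavin calculus (see \cite[Thm.~2.1.4]{Nual:06}) concludes that the law of $F$ has a density in $C^\infty(\R^m)$, hence so does the law of $\Pi u_t$ on $\Pi(\HH)$. The main obstacle is the $\mathbb{D}^{\infty}$-upgrade in the second step: while standard in principle for Brownian SPDEs with smooth, polynomially bounded coefficients and good a priori bounds, it requires verifying that each iterated Malliavin derivative lies in the appropriate $L^p(\Omega;\HH)$-space and that the object obtained from the classical SPDE Malliavin calculus coincides almost surely with the one suggested by the rough-path linearisation~\eqref{e:jacobian}; this second compatibility is precisely what the equivalence~\ref{spde=rpde} was designed to provide.
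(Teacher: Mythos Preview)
Your argument is correct, and its overall architecture (identify with the It\^o SPDE via Theorem~\ref{spde=rpde}, upgrade to $\mathbb{D}^\infty$ from the a~priori moment bounds, identify the Malliavin covariance of $\Pi u_t$ with $\Pi\cM_t\Pi$ via Duhamel's formula, then invoke Theorem~\ref{main} and \cite[Thm.~2.1.4]{Nual:06}) matches the paper's. There is, however, one genuine difference worth flagging. For the invertibility step you apply Theorem~\ref{main} \emph{directly} on $[0,t]$ with the deterministic initial datum $u_0$: since $\Phi_t(u_0)$ is then just a finite constant, the bound $\Prob(\lambda_{\min}(\gamma_F)\le\varepsilon)\le C_p\Phi_t^p(u_0)\varepsilon^p$ follows immediately and Assumption~\ref{ass5} is never used in this step. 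The paper instead replaces $t$ by $2t$, splits $[0,2t]$ at $t$, uses the Markov property to write $\cM^\Pi_{2t}\ge \Pi\cM_{t,2t}(u_t)\Pi$, applies Theorem~\ref{main} conditionally on $\FF_t$ with the \emph{random} initial condition $u_t$, and only then takes expectations, which produces $\E[\Phi_t^p(u_t)]$ and forces the use of Assumption~\ref{ass5}. Your route is shorter and shows that, as stated, Theorem~\ref{densities} does not actually require~\ref{ass5} for the Malliavin-matrix bound; the paper's detour is the natural one if one also wants bounds that are uniform over initial conditions (as needed for the asymptotic strong Feller property mentioned in the introduction). A minor remark: in your $\mathbb{D}^\infty$ step the relevant moment input is Assumption~\ref{ass4}, not~\ref{ass5}; this is also what the paper invokes there.
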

\begin{proof}
Since~\eqref{e:rpde2} is driven by the Stratonovich lift of Brownian motion to the space of rough paths, it follows from Proposition~\ref{spde=rpde}
that $u$ coincides almost surely with the solution to the corresponding It\^o SPDE. It follows from the
growth assumption~\ref{ass4} that we can derive an SPDE for the Malliavin derivative of any order and the Jacobian of any order. 
Moreover, using Duhamel's formula similar to \eqref{mal 6} for higher order Malliavin derivatives and moment assumptions from~\ref{ass4} 
one concludes that for every  $t > 0$, $u_t$ belongs to the space $\mathbb{D}^{\infty}$ of Malliavin smooth random variables whose 
Malliavin derivatives of all orders have moments of all orders. (For more details in the additive case see 
\cite[Thm~8.1]{hypoelipticity}.) Since $u_t$ is Malliavin smooth and $\Pi$ is a bounded linear map (hence smooth), we 
deduce that $\Pi u_t$ is also Malliavin smooth. By \cite[Chap.~2]{Nual:06}  it remains to show that the Malliavin matrix of $\Pi u_t$ 
denoted by $\cM^\Pi_t$ is almost surely invertible and has moments of all orders. Just for notational convenience we will consider 
$\Pi u_{2t}$ instead of $\Pi u_t$. First we can view the element $u_{2t}$ as an element of the probability space with 
Gaussian structure induced by the increments of $B$ over the interval $[t,2t]$ and, as in~\cite[Chap.~1]{Nual:06}, we view increments of $B$ 
over $[0,t]$ as irrelevant randomness. This shows that almost surely 
\begin{equ}[e:boundMall]
\cM^\Pi_{2t} = \Pi \cM_{t}(u_{0})\Pi + \Pi \cM_{t,2t}(u_{t})\Pi \ge \Pi \cM_{t,2t}(u_{t})\Pi\;,
\end{equ}
where $\cM_{t,2t}(u_{t})$ is defined like in~\eqref{mal 7} but over the interval $[t,2t]$, and we treat $u_{t}$ as 
an ``initial condition'' at time $t$. 
Recall that $\FF_s$ is the natural filtration of the underlying Brownian motion, denote $\KK = \Pi(\HH)$ then we have:
	\begin{equs}
		\Prob(\inf_{\varphi \in \KK;\, \|\varphi\| = 1}\langle\Pi\cM_{t,2t}&(u_{t})\Pi\varphi, \varphi \rangle \leq \varepsilon\vert \FF_{t}) = \Prob(\inf_{\varphi \in \KK;\, \|\varphi\| = 1}\langle\cM_{t,2t}(u_{t})\varphi, \varphi \rangle \leq \varepsilon\vert \FF_{t})\\
		&\leq\Prob(\inf_{\varphi \in  \mathscr{S}_a}\langle\Pi\cM_{t,2t}(u_{t})\Pi\varphi, \varphi \rangle \leq \varepsilon\vert \FF_{t}) \leq C_p \Phi_t^p(u_t) \varepsilon^p.
	\end{equs} 
	We have used above that $\Pi \varphi =\varphi$ and therefore for every $a \in (0,1)$
	$$\{\varphi \in \KK;\, \|\varphi\| = 1\} \subset \{\varphi \in \HH;\, \|\varphi\| = 1, \|\Pi \varphi\| \geq a\} =  \mathscr{S}_a.$$
	Moreover in the last inequality we have used the Markov property of the solution. Taking the expectation and using the
	bound \eqref{e:boundMall}, we see that 
	\begin{equs}
		\Prob(\inf_{\varphi \in \KK;\, \|\varphi\| = 1}\langle\cM^\Pi_{2t}\varphi, \varphi \rangle &\leq \varepsilon) \leq C_p\varepsilon^p\E\Big[\Phi^p_t(u_t)\Big] \leq C_pK_3 \Psi_t^p(u_0)\varepsilon^p.
	\end{equs}
	This guarantees the invertibility of Malliavin matrix $\cM^\Pi_{2t}$ on $\KK$ and that $(\cM^\Pi_{2t})^{-1}$ has moments of all orders, thus finishing the proof.
\end{proof}

\section{Examples} \label{examples}

As mentioned before, we will restrict ourselves to the Brownian rough path in all examples since 
Assumption~\ref{ass1} is a priori not known to hold for more general Gaussian rough paths. 
We will focus on equations driven by Brownian motion that have 
global solutions as well as a Jacobian, which will imply Assumption~\ref{ass1} from 
Section~\ref{SPDEs}. We will then show examples of noises for which H\"{o}rmander's condition, 
Assumption~\ref{ass2} is satisfied. The moment bounds for the rough path norms of solution and Jacobian 
for Assumption~\ref{ass4} part (2) might not be easy to obtain in general and require a closer look as a separate 
problem on its own. We decide to postpone the study of such moments but refer the reader to  
\cite{friz2013integrability} where this question was answered for the rough SDE case.

We want to point out that the present work is indeed a generalisation of the additive case from \cite{hypoelipticity} since 
Assumption~\ref{ass2} is a slight modification of Assumption~C.2 from that article. 
Consider an equation which has both an additive and a multiplicative noise:
\begin{equ} 
	du_t = Lu_tdt + N(u_t)dt + \sum_{i=1}^kg_i dB^i_t \;+ \sum_{i=k+1}^dF_i(u_t)dB^i_t, \label{ex1}
\end{equ}
for $g_i \in \HH_\infty$. Note that as a Fr\'echet derivative $Dg_i = 0$ thus the Lie brackets $[g_i,g_j]=0$ and there is no contribution from Lie brackets of this additive part. Only an interplay of $[L+N,g_i]$, $[L+N,F_i]$, $[F_j,g_i]$, $[F_j,F_i]$ and of higher order Lie brackets contributes to Assumption~\ref{ass2}. In particular,
if \eqref{ex1} satisfies Assumption~\ref{ass2} for $F_i = 0$, then it also satisfies it for $F_i \neq 0$.

We now give a simple criteria for when the Assumption~\ref{ass2} is satisfied and moreover the function $\Lambda_a$ can be taken constant.
\begin{prop} \label{prop ex}
 Let $L,N,F_i$ be as in Theorem~\ref{main}. Define the set $\cA \subset \HH$ of all possible constant directions created by the Lie 
 brackets of the above vector fields, namely 
 $$\cA = \bigcup_{k\geq 0} \{ A \in \cA_k:\;\forall u\, \in \HH_k\, ,\, A(u)=A(0) \in \HH\}.$$
If the linear span of $\cA$ is dense in $\HH$, then for every finite rank orthogonal projection $\Pi : \HH \to \HH$ and every $a \in (0,1)$ H\"{o}rmander's condition~\ref{ass2} is satisfied for some $k$. Moreover, the function $\Lambda_a$ can be chosen as a constant 
depending on $\Pi$ and $a$. As a consequence, part $(1)$ of the condition~\ref{ass4} is trivially satisfied too. 
\end{prop}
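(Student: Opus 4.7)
The plan is to reduce the statement to a purely linear approximation argument using the density hypothesis, and then verify part (1) of Assumption~\ref{ass4} trivially. Fix a finite rank orthogonal projection $\Pi$, let $n = \dim \Pi(\HH)$ and pick an orthonormal basis $\{e_1,\ldots,e_n\}$ of $\Pi(\HH)$, and fix $a \in (0,1)$. First I would use the density of $\mathrm{span}\,\cA$ in $\HH$ to produce, for a small $\varepsilon > 0$ to be chosen at the end, a finite family $A_1,\ldots,A_N \in \cA$ and real scalars $(\alpha_{ij})_{1\leq i\leq N,\,1\leq j\leq n}$ such that $v_j := \sum_{i=1}^N \alpha_{ij} A_i$ satisfies $\|v_j - e_j\| < \varepsilon$ for every $j$. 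Each $A_i$ belongs to some $\cA_{k_i}$ by definition of $\cA$, and since the collections are nested, $k := \max_i k_i$ yields $\{A_1,\ldots,A_N\} \subset \cA_k$; this is the integer I would use in Assumption~\ref{ass2}.

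The crucial point is that every $A_i \in \cA$ is constant as a function of $u$, hence for any $u \in \HH_\infty$ and any $\varphi \in \mathscr{S}_a$,
\begin{equation*}
\langle \varphi, Q_k(u)\varphi\rangle = \sum_{A \in \cA_k} \langle \varphi, A(u)\rangle^2 \;\geq\; \sum_{i=1}^N \langle \varphi, A_i\rangle^2\;,
\end{equation*}
and the right hand side is independent of $u$. Viewing $\alpha = (\alpha_{ij})$ as an $N \times n$ matrix and setting $x_i := \langle \varphi, A_i\rangle$, the identity $\langle \varphi, v_j\rangle = (\alpha^T x)_j$ together with the operator bound $\|\alpha^T x\|^2 \le \|\alpha\|^2 \|x\|^2$ gives
\begin{equation*}
\sum_{i=1}^N \langle \varphi, A_i\rangle^2 \;\geq\; \|\alpha\|^{-2} \sum_{j=1}^n \langle \varphi, v_j\rangle^2\;,
\end{equation*}
so everything reduces to a uniform lower bound on $\sum_{j=1}^n \langle \varphi, v_j\rangle^2$ over $\mathscr{S}_a$.

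To bound that sum, I would decompose $\langle \varphi, v_j\rangle = \langle \Pi\varphi, e_j\rangle + \langle \varphi, v_j - e_j\rangle$ using $e_j \in \Pi(\HH)$ and selfadjointness of $\Pi$. Writing $a_j := \langle \Pi\varphi, e_j\rangle$ and $b_j := \langle \varphi, v_j - e_j\rangle$, Parseval's identity gives $\sum_j a_j^2 = \|\Pi\varphi\|^2$, while $|b_j| \leq \|v_j - e_j\| < \varepsilon$ forces $\sum_j b_j^2 \leq n\varepsilon^2$. Cauchy-Schwarz in $\R^n$ then yields
\begin{equation*}
\sum_{j=1}^n \langle \varphi, v_j\rangle^2 \;\geq\; \sum_j a_j^2 - 2\Big|\sum_j a_j b_j\Big| \;\geq\; \|\Pi\varphi\|^2 - 2\sqrt{n}\,\varepsilon\,\|\Pi\varphi\|\;.
\end{equation*}
Since $\|\Pi\varphi\| \geq a$, fixing once and for all $\varepsilon < a/(4\sqrt{n})$ in the initial approximation makes this bound at least $\|\Pi\varphi\|(\|\Pi\varphi\| - a/2) \geq a^2/2$. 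Combining the inequalities, I conclude $\langle \varphi, Q_k(u)\varphi\rangle \geq a^2/(2\|\alpha\|^2) =: \Lambda_a$, a strictly positive constant depending only on $\Pi$ and $a$, which is exactly Assumption~\ref{ass2} with $\Lambda_a$ constant.

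Part~(1) of Assumption~\ref{ass4} is then automatic: $\Lambda_a(u_T)^{-p} = \Lambda_a^{-p}$ is a deterministic finite constant, so the inverse moment bound holds with $\Phi_1 \equiv 1$ and $K_1 = \Lambda_a^{-p}$. I do not anticipate any real obstacle in this argument; the only quantitative step is the calibration of $\varepsilon$ to the pair $(a, n)$, which drops out of a straightforward Cauchy-Schwarz expansion.
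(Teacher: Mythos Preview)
Your argument is correct and self-contained. The paper does not actually prove this proposition in-line; it simply cites \cite[Lem.~8.3]{hypoelipticity}, so there is no detailed proof here to compare against. That said, your approach---approximate an orthonormal basis of $\Pi(\HH)$ by finite linear combinations of constant directions, pass to a common $k$ by nestedness of the $\cA_k$, and then use an elementary operator-norm and Cauchy--Schwarz argument to extract a uniform positive lower bound over $\mathscr{S}_a$---is exactly the natural proof and is almost certainly what the cited lemma does. One cosmetic point: when you write $\sum_{A\in\cA_k}\langle\varphi,A(u)\rangle^2 \ge \sum_{i=1}^N \langle\varphi,A_i\rangle^2$ you are implicitly assuming the $A_i$ are distinct elements of $\cA_k$; this is harmless since any repetitions can be absorbed into the coefficient matrix $\alpha$, but worth a half-line remark.
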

A proof of this statement can be found in \cite[Lem.~8.3]{hypoelipticity}. 
This criterion is 
the one that we are going to use in our next examples.
\begin{rem}
If in addition \eqref{e:rpde2} is driven by Brownian motion and solutions $u_t$ satisfy Assumption~\ref{ass5}, then the above proposition and Theorem~\ref{densities} guarantee that $\Pi u_t$ has a smooth density with respect to Lebesgue measure for every surjective linear map $\Pi\colon \HH \to \R^n$.
\end{rem}

\subsection{Stochastic Navier-Stokes equation in 2-d}

The Navier-Stokes equation describes the time evolution of incompressible fluid and is given by
$$\partial_t u_t(x)  + (u_t(x)\cdot \nabla) u_t(x) = \Delta u_t(x) -\nabla p_t(x) + \xi(t,x,u_t(x)),\quad  \nabla \cdot u_t = 0.$$
For $u_t(x) \in \R^2$ is a velocity field, $p(t,x)$ is a pressure, and $\xi$ is a noise term describing an external force acting on the fluid. We decide to work with the equation with spatial variable lying on the two dimensional sphere $x \in \mathbb{S}^2$. Moreover because of divergence free assumption we can work with the vorticity formulation of this equation which can be written as: 
\begin{equ} 
	dw_t = \Delta w_t dt + N(w_t)dt + \sum_{i=1}^k f_idB^i_t\, + \sum_{j=1}^n \big(f_{j+k} + w_t g_{j})\, dB^{k+j}_t,\quad w_0 \in L^2(\mathbb{S}^2,\R).	\label{ex2}
\end{equ}
Here $(B^i_t)_{1\leq i \leq n+k}$ are mutually independent Brownian motions on $\R$, our noise is a mixture of the additive an linear multiplicative noise. $\Delta = - \nabla^{*}\nabla$ is the negative Bochner Laplacian on $\mathbb{S}^2$. The non-linearity $N(w)$ is given by $N(w) = B(w,w)$ for the symmetric operator
$$B(v,w) = \frac{1}{2}(\nabla \cdot (v K w) +\nabla \cdot (w K v) ).$$
Operator $K$ is an operator that reconstructs velocity field from the vorticity:
$$u = K w = - \curl \Delta^{-1} w$$
(See more details on the derivation of these equation and their further study in \cite{temam1993inertial}.) Here the Hilbert space $\HH = L^2(\mathbb{S}^2,\R)$ and the interpolation spaces generated by the Laplacian will be $\HH_\alpha = H^{2\alpha}(\mathbb{S}^2,\R)$, 
the usual Sobolev spaces on the sphere. We assume that all the functions $g_i,f_i \in H^\infty(\mathbb{S}^2,\R)$. Note that later for the reaction-diffusion equation we will consider also a polynomial noise, but here if we take polynomials of higher order than just linear 
it is expected that the blow up created by the diffusion part will not be compensated by the drift part and so it is hopeless to get any global bounds on the solution. In \cite{hypoelipticity}, the authors show that indeed such $N$ is a smooth function 
$\HH_\alpha \to \HH_{\alpha-\delta}$ for any $\delta > 1/2$ and $\alpha \geq 0$. Since Brownian motion can be lifted to a 
Stratonovich rough path almost surely in $\cC^\gamma$ for every $\gamma < 1/2$ we get in particular that $1-\gamma > 1/2$ and so we 
can indeed take $1/2 < \delta < 1- \gamma$ so that non-linearity $N$ falls into our framework. Also for any $g,f \in \HH_\infty$ an affine function $F(u) = u g +f$ is trivially smooth as a function $\HH_\alpha \to \HH_\alpha$ for every $\alpha \in \R$ and sends bounded subsets of $\HH_\alpha$ to bounded subsets of $\HH_\alpha$.

Note that for affine functions of the type $F_i(u) = u g_i +f_i$ we have 
$$[F_i,F_j](u) = (ug_i+f_i)g_j - (ug_j+f_j)g_i = f_ig_j - f_jg_i.$$ 
These Lie brackets of affine functions between each other will therefore produce additional constant directions for the spread of the noise. We define the set of functions recursively:
\begin{equs}
	A_0 &= \{f_i: 1\leq i \leq k\}\cup \{f_{i+k}g_j - f_{j+k}g_i: 1\leq i,j \leq n\},\\  
	A_{k+1} &= \{B(g,h),\, g_j h : g,h\in A_k, 1 \leq j \leq n\}.
\end{equs}
Here the terms $B(g,h)$ will arise from second order Lie brackets $[[\Delta+N,g],h] = B(g,h)$ for any $g,h \in \HH_\infty$. Terms $h g_j$ will arise from Lie brackets of constant part with the affine part of the noise: $[h,F_j](u) = hg_j$ for $F_j(u) = ug_j+f_{j+k}$. Clearly, the linear span of $\bigcup_{k\geq 0} A_k$ is contained in the linear span of $\cA$ from Proposition~\ref{prop ex}. In particular if linear span of $\bigcup_{k\geq 0} A_k$ is dense in $\HH$ then condition~\ref{ass2} for~\eqref{ex2} is satisfied for every finite rank orthogonal projection $\Pi$.

\subsection{Reaction-Diffusion equations and Ginzburg-Landau equation}

The reaction diffusion equation in $m$ dimensions are equations of the form
\begin{equ} 
	du_t = \Delta u\,dt + f \circ u_t \,dt + \sum_{i=1}^n p_i(u_t) \,dB^i_t, \label{ex3}
\end{equ}
with $u_t :D \to \R^l$ where $D$ is either an $m$-dimensional torus or a nice $m$-dimensional domain (compact smooth $m$-dimensional Riemannian manifold or bounded open subset of $\R^m$ with smooth boundary), then $\Delta$ is a Laplace or Laplace-Beltrami operator on $D$. 
Here, the non-linearity is given by the Nemitsky operator of composition with a polynomial $f$. 
To guarantee that this operator is smooth, the Hilbert space $\HH$ must be an algebra, which can be satisfied by taking $\HH$ to be a 
Sobolev space of high enough order: $H^k(D,\R^l)$ is an algebra if $k > m/2$. Now for $p_i$ we can take also polynomials with coefficients being functions in $\HH_\infty$: i.e. can have 
$$p_i(u) = \sum_{j=1}^{k_i}g^i_j f_j^i\circ u.$$
For polynomials $f_j^i : \R^l \to \R^l$ and constants $g_j^i \in \HH_\infty$. Recall that we must require $p_i$ to be smooth functions 
$\HH_\alpha \to \HH_\alpha$ for all $\alpha \geq -2\gamma$. If the degree of any of these polynomials $f_j^i$ is greater than one, then for the $p_i$ to be of this kind we must require $\HH_{-2\gamma}$ to be an algebra, where $\gamma$ is a regularity of Brownian rough path. 
Thus taking $\HH = H^k(D,\R^l)$ we have that $\HH_{-2\gamma} = H^{k-4\gamma}$ and therefore we have the requirement on $k$ to satisfy 
$k > m/2 +4\gamma$. This means that it will be more beneficial to take as low as possible rough path regularity $\gamma$. For Brownian 
motion, we can take $\gamma$ arbitrarily close to $1/3$ so that all the above theory would be still true. For instance 
for $m = 1$, one can take $k = 2$ but for both $m = 2$ and $m=3$ one can take $k=3$. The higher the degree of differentiability $k$ 
is taken, the more difficult it might be to obtain the a priori bounds~\ref{ass4}. Note that if $\HH_{\alpha}$ is an algebra for $\alpha \geq -2\gamma$ then clearly any polynomial sends bounded subsets of $\HH_\alpha$ bounded subsets of $\HH_\alpha$.

An assumption on $f$ sufficient to avoid explosion is  that, if we write $f  = \sum_{k=0}^d f_k$ where 
$f_k$ is a k-linear map $\R^l$ into itself,  then
$$\langle f_d(u,\ldots ,u,v),\,v\rangle_{\R^l} < 0,$$
for every $u,v \in \R^l\backslash \{0\}$. 
This will guarantee global existence at least in the additive case (see \cite[Sec.~8.3]{hypoelipticity}).

If we stick to the case when $l = 1$ then polynomials will produce the following Lie brackets: for $p,q \in \N_0$ such that $p+q \geq 1$ and $g,h \in \HH_\infty$
$$[g u^p, h u^q](u) = (q-p) gh u^{p+q-1}.$$
Note that new constant directions will arise from the Lie bracket of a constant and a linear term, but for instance second 
iterated Lie bracket with a square also creates a new constant direction:
$$[f,[g,hu^2]] = 2 f g h.$$
In general, the $k$th iterated Lie bracket of polynomials of total degree $k$ produces a constant direction. 
Potential appearance of the higher order polynomials can eventually create enough constant directions so that their linear span 
is dense in $\HH$. We do not present a general interplay of all such Lie brackets for general polynomials, since it might be quite 
cumbersome and in the actual example it might be simpler to compute all the Lie brackets by hand. Nevertheless we explicitly 
provide an example below.

\subsubsection{Ginzburg-Landau equation}
This is the equation given by
\begin{equs} 
	du_t(x) = \Delta u_t dt + (u_t(x) &- u^3_t(x))dt + \sum_{i=1}^k f_i(x)dB^i_t \\
	&+ \sum_{j=1}^n \big(f_{j+k}(x) + u_t(x) g_{j}(x) \big)dB^{k+j}_t,\quad u_0 \in H^1(\mathbb{T},\R).
\end{equs}
Here $f_i,g_i \in C^\infty(\mathbb{T},\R)$, we can take $\HH = H^1$ since the noise is linear and thus we only require $\HH$ to be an algebra which $H^1$ is in one dimension. Similarly to the Navier-Stokes example, define:
\begin{equs}
A_0 &= \{f_i: 1\leq i \leq k\}\cup \{f_{i+k}g_j - f_{j+k}g_i: 1\leq i,j \leq n\},\\  
A_{k+1} &= \{g_j h : h\in A_k, 1 \leq j \leq n\} \cup \{h_1h_2h_3 : h_i \in A_k\}.
\end{equs}
If $\bigcup_{k} A_k$ is dense in $\HH$, then condition~\ref{ass2} for~\eqref{ex2} is satisfied. In particular only two instances of noise is enough and the following equation satisfies the assumption~\ref{ass2}:
\begin{equs}
	du_t(x) = \Delta u_t dt + (u_t(x) - u^3_t(x))dt &+ \big(\sin(x)+u_t(x)\cos(x)\big)dB^1_t\\
	&+ \big(\cos(x)-u_t(x)\sin(x)\big)dB^2_t.
\end{equs}
Since if we call $F_1(u) = \sin(x)+u(x)\cos(x)$ and $F_2(u) = \cos(x)-u(x)\sin(x)$
then:
$$[F_1,F_2](u) = \cos^2(x)+\sin^2(x) = 1,\;$$ 
$$[[F_1,F_2],F_1] = \cos(x),\quad [[F_1,F_2],F_2] = -\sin(x),$$
$$[[[F_1,F_2],F_1],F_1] = \cos^2(x),\quad [[[F_1,F_2],F_2],F_1] = -\sin(x)\cos(x)\ldots $$
Proceeding similarly we see that we can produce any term of the form $\sin^k(x) \cos^\ell(x)$, which creates a basis 
for $H^1(\mathbb{T},\R)$ and thus H\"{o}rmander's condition is trivially satisfied.
\bibliographystyle{Martin}
\bibliography{bib_list.bib}

\end{document}